\newtheorem{thm}{Theorem}[section]
\newtheorem{prp}[thm]{Proposition}
\newtheorem{lem}[thm]{Lemma}
\newtheorem{cor}[thm]{Corollary}
\newtheorem{remark}[thm]{Remark}
\newtheorem{definition}[thm]{Definition}
\newcommand{\I}{\mathbf{I}} % set of real numbers
\newcommand{\II}{\mathbf{I\hspace{-0.5mm}I}} % set of real numbers
\newcommand{\R}{\mathbb{R}} % set of real numbers
\renewcommand{\P}{\mathbb{P}}
\newcommand{\E}{\mathbb{E}} % esperance
\renewcommand{\l}{\ell}
\newcommand{\FDR}{\mbox{FDR}}
\newcommand{\BH}{\mbox{BH}}
\newcommand{\mtc}{\mathcal}
\newcommand{\wt}[1]{{\widetilde{#1}}}
\newcommand{\wh}[1]{{\widehat{#1}}}
\newcommand{\ol}[1]{\overline{#1}}
\newcommand{\ind}{{\mathds{1}}}
\newcommand{\cB}{{\mtc{B}}}
\newcommand{\cH}{{\mtc{H}}}
\newcommand{\cN}{{\mtc{N}}}
\newcommand{\FDP}{\mbox{FDP}}
\newcommand{\TDP}{\mbox{TDP}} 
\newcommand{\TDR}{\mbox{TDR}}
\renewcommand{\star}{*}
\def\cB{\mathcal{B}}
\def\cG{\mathcal{G}}
\def\cH{\mathcal{H}}
\def\cN{\mathcal{N}}
\def\cP{\mathcal{P}}
\def\c\Gamma(Y){\mathcal{\Gamma(Y)}}
\def\b\Gamma(Y){\mathbf{\Gamma(Y)}}
\def\beq{\begin{equation}}
\def\eeq{\end{equation}}
\def\beqn{\begin{eqnarray*}}
\def\eeqn{\end{eqnarray*}}
\begin{document}

\begin{frontmatter}

\title{False discovery rate control with unknown null distribution: is it possible to mimic the oracle?}

\runtitle{FDR control with unknown null distribution}

\begin{aug}
\author{Etienne Roquain and Nicolas Verzelen}
\runauthor{Roquain, E. and Verzelen, N.}
%\date{06/11/2015}
\end{aug}

\begin{abstract}
Classical multiple testing theory prescribes the null distribution, which is often a too stringent assumption for nowadays large scale experiments. This paper presents theoretical foundations to understand the limitations caused by ignoring the null distribution, and how it can be properly learned from the (same) data-set, when possible. We explore this issue in the case where the null distributions are Gaussian with an unknown rescaling parameters (mean and variance) and the alternative distribution is let arbitrary. While an oracle procedure in that case is  the Benjamini Hochberg procedure applied with the true (unknown) null distribution, we pursue the aim of building a procedure that asymptotically mimics the performance of the oracle (AMO in short). 
Our main result states that an AMO procedure exists if and only if the sparsity parameter $k$ (number of false nulls) is of order less  than $n/\log(n)$, where $n$ is the total number of tests. 
Further sparsity boundaries are derived for general location models where the shape of the null distribution is not necessarily Gaussian. Given our impossibility results, we also pursue a weaker objective, which is to find a confidence region for the oracle. To this end, we develop a distribution-dependent confidence region for the null distribution. As practical by-products, this provides a goodness of fit test for the null distribution, as well as a visual method assessing the reliability of empirical null multiple testing methods. Our results are illustrated with numerical experiments and a companion vignette \cite{RVvignette2020}. 
\end{abstract}

\begin{keyword}[class=AMS]
\kwd[Primary ]{62G10}
\kwd[; secondary ]{62C20}
\end{keyword}

\begin{keyword}
 \kwd{Benjamini-Hochberg procedure}\kwd{false discovery rate} \kwd{minimax} 
  \kwd{multiple testing}\kwd{robust theory}\kwd{sparsity} \kwd{null distribution} \end{keyword}

\end{frontmatter}

\section{Introduction}

\subsection{Background}

In large-scale data analysis, the practitioner routinely faces the problem of simultaneously testing a large number $n$ of null hypotheses. 
In the last decades, a wide spectrum of multiple testing procedures have been developed. Theoretically-founded control of the amount of false rejections are provided notably by controlling the false discovery rate (FDR), that is, the average proportion of errors among the rejections, as done by the famous Benjamini Hochberg procedure (BH), introduced in \cite{BH1995}. 
Among these procedures, various types of power enhancements have been proposed by taking into account the underlying structure of the data.
For instance, let us mention adaptation to the quantity of signal \cite{BKY2006,BR2009,Sar2008,LB2019}, to the signal strength \cite{RW2009,CS2009,HZZ2010,IH2017,Dur2019}, to the spatial structure \cite{PGVW2004,SC2009,RBWJ2019,DBNR2018}, or to data dependence structure \cite{LS2008,FKC2009,Fan2012,GHS2013,DR2015b,Fan2017},  among others.

\begin{figure}[h!]
\includegraphics[scale=0.45]{nullwrong}
\vspace{-0.5cm}
\caption{
{Histograms of the test statistics (rescaled to be all marginally standard Gaussian), for three datasets presented by Efron: \cite{Golub99}  (top-left); \cite{Hed2001} (top-right); \cite{van2003} (bottom-left); and \cite{BGH2010} (bottom-right). Pictures reproducible from the vignette \cite{RVvignette2020}.
}} \label{fig:nullwrong}
\end{figure}

Most of these theoretical studies --- and in general, of the FDR controlling procedures developed in the multiple testing literature --- rely on the fact that the null distribution of the test statistics is exactly known, either for finite $n$ or asymptotically. 
However, in common practice, this null distribution is often {\it mis-specified}: 
\begin{itemize}
\item {\it The null distribution can be wrong}. This phenomenon, pointed out in a series of pioneering papers \cite{Efron2004, Efron2007b,  Efron2008,Efron2009b} and studied further in \cite{Sch2010, AS2015, Ste2017,SS2018} is illustrated in Figure~\ref{fig:nullwrong} for four classical datasets.  
As one can see,  the theoretical null distribution $\mathcal{N}(0,1)$ does not describe faithfully the overall behavior of the measurements. One reason invoked by the aforementioned papers is the presence of correlations, that is, co-factors, that modify the shape of the null.
As a result, using this theoretical null distribution into a standard multiple testing procedure (e.g., BH) can lead to an important resurgence of false discoveries. Markedly, this effect can be even more severe than simply ignoring the multiplicity of the tests (see \cite{RVvignette2020}),  and thus the benefit of using a multiple testing correction can be lost.  
\item {\it The null distribution can be unknown}.
Data often come from raw measurements that have been ``cleaned" via many sophisticated normalization processes, and  
the practitioner has no prior belief in what the null distribution should be. Hence,  the null distribution is implicitly defined as the "background noise" of the measurements and searching signal in the data turns out to make some assumption on this background (typically Gaussian) and find outliers, defined as items that significantly deviate from the background. This occurs for instance in astrophysics \cite{SCS1999,Astro2001,SMB2017}, for which devoted procedures are developed, but without full theoretical justifications.
\end{itemize}
To address these issues,  Efron popularized the concept of {\it empirical null distribution}, that is, of a null distribution estimated from the data, in the works \cite{ETST2001,Efron2004, Efron2007b,  Efron2008,Efron2009b} notably through the {\it two-group mixture model} and the {\it local fdr} method. This paved the way for many extensions \cite{JC2007,SC2009,CS2009,CJ2010,PB2012,NM2014,heller2014,CSWW2019,RRV2019}, which make this type of technics widely used nowadays, mostly in genomics \cite{encode2007,zablocki2014,JY2016,amar2017extracting} but also in other applied fields, as neuro-imaging, see, e.g., \cite{lee2016}. However, this approach suffers from a lack of  theoretical justification, even in the original setting described by Efron, where the null distribution is assumed to be Gaussian.

\subsection{Aim}
In this work, we propose to fill this gap: we consider the issue of controlling the FDR when the null distribution is Gaussian $\mathcal{N}(\theta,\sigma^2)$, with {\it unspecified} scaling parameters $\theta\in\R$, $\sigma>0$ (general location models will be also dealt with). In addition, according to the original framework considered in \cite{BH1995}, the alternative distributions are let arbitrary, which  provides a setting both general and simple. 
We address the following issue: 
\begin{center}
{\it {When the null distribution is unknown}, is it possible to build a procedure that both control the FDR at the nominal level and has a power asymptotically mimicking the oracle?}
\end{center}
In addition, as classically considered in multiple testing theory (see, e.g., \cite{Dic2014}), we aim here for a {\it strong} FDR control, valid for any data distribution  (in a given sparsity range). 
For short, a procedure enjoying the two properties delineated above is called an {\it AMO} procedure in the sequel. To achieve this aim,  we should choose an appropriate notion of ``oracle", which corresponds to the default procedure that one would perform if the scaling parameters $\theta,\sigma^2$ were known. Due to the popularity of the BH procedure, we define the oracle procedure as the BH procedure using the null distribution  $\mathcal{N}(\theta,\sigma^2)$ with the true parameters $\theta,\sigma^2$. This choice is also suitable because the BH procedure controls the FDR under arbitrary alternatives \citep{BH1995} while it has optimal power against classical alternatives \citep{AC2017,RRJW2020}.

\subsection{Our contribution}
We consider  a setting where the statistician observes  {\it independent} real random variables $Y_i$, $1\leq i \leq n$. 
Among these $n$ random variables, {$n-k$ follow the unknown null distribution $\cN(\theta,\sigma^2)$ and the remaining $k$ follow arbitrary and unknown distributions. The upper bound $k$ on the number of false nulls is referred henceforth as the sparsity parameter. 
The latter plays an important role in our results. A reason is that having many observations under the null (that is, $k$  small) facilitates {\it de facto} the problem of estimating the null distribution.
As argued in Section~\ref{sec:scalingnull}, this setting is both an extension of the two-group model of~\cite{Efron2008} and of Huber's contamination model.

In this manuscript, we first establish that, when $k$ is much larger than $n/\log(n)$, no AMO exists. Hence, any multiple testing procedure
 either violates the FDR control or is less powerful than the BH oracle procedure. In particular, any local fdr-type procedure that controls the FDR has a sub-oracle power. Conversely, any  local fdr-type procedure that mimics the oracle power violates the FDR control.
Hence, the usual protocole of applying blindly such approaches  %and hoping for the best,
 is questionable when the data contain more than a constant portion of signal (say, when the number of alternatives is of order $10\%$). On the other hand, when  $k$  is much smaller than $n/\log(n)$, a simple procedure that first computes corrected $p$-values by plugging robust estimators of $\theta$ and $\sigma^2$ and then applies  a classical Benjamini-Hochberg (BH) procedure \cite{BH1995}, is established to be AMO.  This type of procedures is
referred below as plug-in BH procedures. Figure~\ref{fig:interpretbis} displays the behavior of these procedures for different plugged values $u,s^2$ for $\theta,\sigma^2$ (true, mis-specified or estimated). This simple example illustrates that using a wrong scaling of the null distribution can lead to poor  performances, with either an uncontrolled increase of false discoveries (top-right panel), or an uncontrolled decrease of true discoveries (bottom-left panel). By contrast, fitting the null distribution with robust estimator of of $\theta$ and $\sigma^2$ seems to nearly mimic the oracle BH procedure.

\begin{figure}[h!]
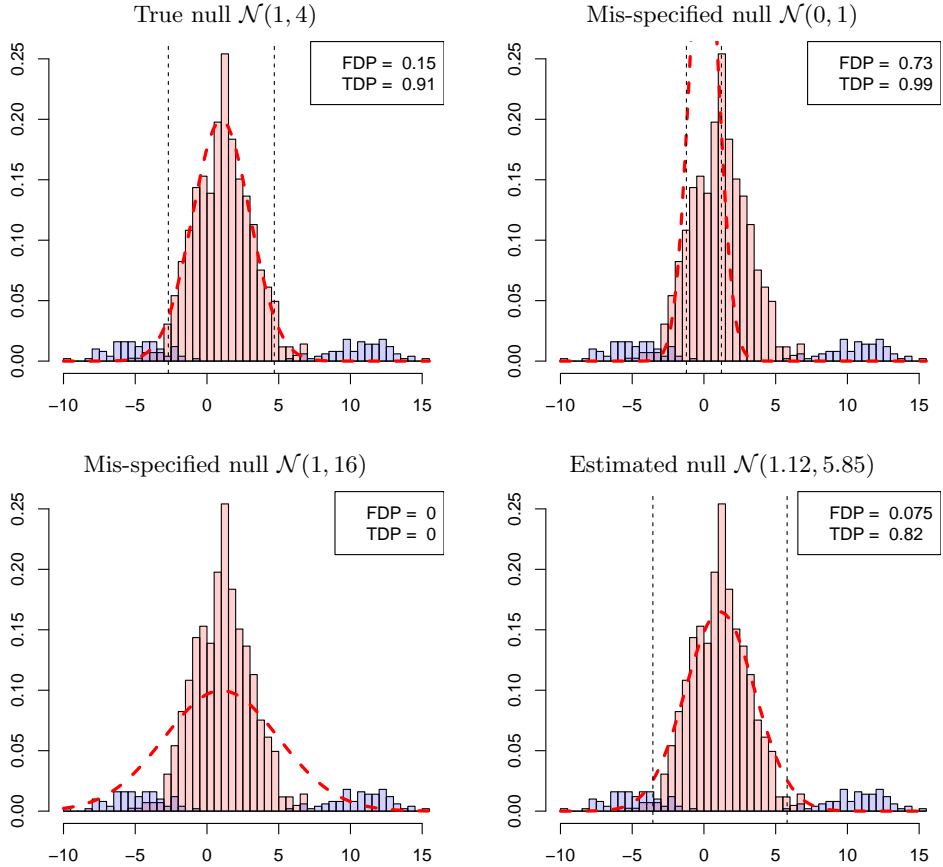

\begin{tabular}{cc}
\hspace{-8mm}True null $\mathcal{N}(1,4)$ &\hspace{-8mm}Mis-specified null $\mathcal{N}(0,1)$\vspace{-1cm} \\
\hspace{-8mm}\includegraphics[scale=0.55]{Oracle}&\hspace{-8mm}\includegraphics[scale=0.55]{Plugged2}\vspace{-5mm} \\
\hspace{-8mm}Mis-specified null $\mathcal{N}(1,16)$ &\hspace{-8mm} Estimated null $
  \mathcal{N}(1.12,5.85)$ \vspace{-1cm}\\%1.119456 5.854816
\hspace{-8mm}\includegraphics[scale=0.55]{Plugged1}&\hspace{-8mm}\includegraphics[scale=0.55]{Plugged3}\vspace{-5mm} 
 \end{tabular}
\vspace{-0.5cm}
\caption{
Illustration of the plug-in BH procedure with different plugged null distributions. The data have been generated as independent $Y_i\sim \mathcal{N}(\theta+\mu_i,\sigma^2)$, for $\mu_i=0$, $1\leq i \leq n_0$, $\mu_i=5$, $n_0+1\leq i \leq n_0+n_1/2$, and $\mu_i=-3$, $n_0+n_1/2+1\leq i \leq n$ for $n=1000$, $n_0=850$, $n_1=150$, $\theta=1$, $\sigma^2=4$, $\alpha=0.2$.
Each panel displays the same overlap of the two following histograms of the data: colored in pink, the histogram of the $Y_i$, $1\leq i \leq n_0$, generated under the null; colored in blue, the (rescaled) histogram of the $Y_i$, $n_0+1\leq i\leq n$, generated under the alternative. 
The plug-in BH procedure is applied at level $\alpha=0.2$ and its rejection threshold is displayed by the vertical dashed lines:  the rejected null hypotheses correspond to the $Y_i$'s above the most-right vertical dashed line and below the most-left vertical dashed line.  The FDP is the ratio of the false rejection number to the total rejection number, see \eqref{equ-FDP} below. The TDP is the ratio of the true rejection number to the total number of false nulls ($n_1$), see \eqref{equ-TDP} below. The plug-in BH procedure uses rescaled $p$-values $p_i(u,s)=2\overline{\Phi}(|Y_i-u|/s)$, $1\leq i\leq n$, where $\overline{\Phi}$ is the tail distribution of the standard normal distribution, see \eqref{equ-pvalues} below, using different values of $u,s$. 
Top-left: $u=1$,  $s^2=4$; top-right: $u=0$,  $s^2=1$; 
bottom-left: $u=1$,  $s^2=16$;
bottom-right: $u=\wt{\theta}\approx 1.12$, $s^2=\wt{\sigma}^2\approx 5.85$, which are values derived from standard robust estimators, see  \eqref{equ:estimators} below.
} \label{fig:interpretbis}
\end{figure}
 
 This analysis is the central result of the paper. It is then extended in several directions:  first, we show a stronger impossibility result: FDR control and power mimicking are two aims that are incompatible across the delineated boundary; typically, any procedure achieving an oracle power below the boundary entails FDR violation above the boundary. 
 Second, we pinpoint the boundary where AMO is possible when only the mean parameter $\theta$ is unknown or, more generally,  when the density of the null distribution is arbitrary but only known up to a location parameter. 

Finally, given our impossibility results, one can legitimately ask whether obtaining AMO procedures is not too demanding. Hence, we also investigate the weaker aim of obtaining confidence regions for the oracle procedure. 
 This is achieved by developing a confidence region for the null distribution. Then, candidate region sets for the oracle procedure are rejection sets of plug-in procedures using nulls of that region. % allows to point out candidates for the oracle procedure. 
The stability of these rejection sets then provides to the statistician a visual method to assess whether they can be confident in the plugged BH procedure. 
Interestingly, this confidence region can take various forms depending on the considered data set, as shown in  Section~\ref{sec:adaptive} and in the attached vignette \cite{RVvignette2020}. This illustrates that this region is distribution-dependent and goes beyond a minimax analysis that is based on worst-case distributions.
 In addition, this approach can be easily adapted to  any plug-in method--not necessarily of the BH type-- and to  any candidate distribution family for the null, which increases the application scope of our approach.

\subsection{Work related to our AMO boundaries}

Many work are related to the theory developed here.
First, as already mentioned, a wide literature has grown around the concept of "empirical null distribution", elaborating upon the work of Efron. While his proposal originally relies on the Gaussian null class, more sophisticated classes have been proposed later \cite{Sch2010, AS2015,SS2018} to better modeling null coming from a  multivariate correlated Gaussian vector. This results in a parametric family with much more parameters to fit. Related to this,  
estimating the parameters of the null has been considered in a more challenging multivariate factor model, see, e.g., \cite{LS2008,FKC2009,Fan2012,Fan2017}.  While the authors provide 
 error bounds for the inferred factor models, none of these work establish FDR controls of the corresponding corrected BH procedure.

In fact, it turns out that only few work have provided theoretical guarantees for using an empirical null distribution into a multiple testing procedure, even for the simple Gaussian case. 
 \cite{JC2007} and \cite{CJ2010} proposed a method to estimate the null in a particular context, but without evaluating the impact of such an operation when plugged into a multiple testing procedure. Such an attempt has been made by \cite{Gho2012}, who showed that the FDR control is maintained under the assumption that incorporating the empirical null distribution is an operation that can make the BH procedure only more conservative. Nevertheless, this assumption is admittedly difficult to check. Other studies have been developed in the one-sided context, for which contaminations (that is non-null measurements) are assumed to come only from the right-side (say) of the global measurement distribution. In that case, the left-tail of the  distribution can be used to learn the null. Such an idea has been exploited in \cite{CDRV2018} to estimate the scaling parameters $\theta$ and $\sigma^2$ within the null $\mathcal{N}(\theta,\sigma^2)$ from the left-quantiles of the observed data. 
Doing so, they show that the plug-in BH procedure has performances close (asymptotically in $n$)  to those of the BH procedure using the true unknown scaling. 
In addition, relaxing the Gaussian-null assumption, an FDR controlling procedure has been introduced in  \cite{BC2015,AC2017}, by only assuming the symmetry of the null. In that case, the null is implicitly learned by estimating the number of false discoveries occurring at the right-side of the null from its left-side.
However, the one-sided contamination model is not the most common practical situation where signal can arise at both sides of the null distribution. 
The case for which the alternative distributions are let arbitrary and potentially two-sided is more difficult than the one-sided case and will be considered throughout the paper. 

 Let us mention few additional related studies with mis-specified null: in \cite{BLS2010}, the null is unknown and estimated from an independent sample, so the setting is completely different. In \cite{BKZ2014}, the authors  study the effect of non-normality over the BH procedure using $p$-values  calibrated with the Gaussian distribution. This is substantially different from our problem, where the null is assumed Gaussian with an uncertainty in the parameters. Next,  \cite{pollard2004choice} also discuss the choice of a null distribution, but the aim is to build a null that ensures a valid FWER-type error rate control, which is a goal markedly different from here. 

Next, maybe on a more conceptual side, our work can be seen as a {\it frequentist minimax robust} study of empirical null distributions: first, we do assume that {there exists a true null distribution} and we try to estimate it to produce our inference. Second, we let the alternative be arbitrary, which means that the AMO properties should hold whatever the alternatives.  For the FDR control, this is classically referred to as  {\it strong} control of the FDR, see, e.g., \cite{Dic2014}.  For the power mimicking, this is new to our knowledge and requires to use a suitable notion power, compatible with a worst-case analysis.   
Third, the proofs of our impossibility results borrow some ideas from the literature on robust estimation and classical Huber contamination model \citep{huber1964robust, huber2011robust}.

Finally, as in many statistical studies in high-dimension, see, e.g., \cite{DJ2004,bickel2009simultaneous,bogdan2011asymptotic,javanmard2019false}, the sparsity plays an important role in our results.

\subsection{Notation and presentation of the paper}

\noindent 
\smallskip

\noindent{\bf Notation}. For two sequences  $u_n$ and $v_n$, $u_n\gg v_n$ means $v_n=o(u_n)$. Given a real number $x$, $\lfloor x\rfloor$ and $\lceil x\rceil$ respectively denote the lower and upper integer parts of $x$. 
Given a finite set $A$, its cardinal is denoted $|A|$.
Given $x$, $y$, $x\wedge y$ (resp. $x\vee y$) stands for the minimum (resp. maximum) of $x$ and $y$.  For $Y\sim P$, the corresponding probability is denoted $\P_{Y\sim P}$ or simply $\P$ when there is no confusion. The density of the standard normal distribution is denoted $\phi$ whereas $\ol{\Phi}$ stands for its tail distribution function, that is, $\ol{\Phi}(z)=\P(Z\geq z)$, $z\in\R$, $Z\sim\mathcal{N}(0,1)$. Finally, given a vector $v\in \R^n$,  we denote by $v_{(i)}$ the $i$-th order statistic of $v$, that is, the $i$-th smallest entry of $v$. \\

\noindent 
{\bf Organization of the paper}. 
The setting and the main results are described in Section~\ref{sec:scalingnull}.
While they are formulated in an asymptotical manner for simplicity, more accurate non-asymptotical counterparts  are provided in Section~\ref{sec:non-asymp}: an impossibility result is given in Section~\ref{sec:lb} (with a corollary given in Section~\ref{sec:complementlb}) and a matching upper-bound is provided in Section~\ref{sec:ub}. 
Section~\ref{sec:sigmaknown} is devoted to study the situation where the variance of the null is known, while Section~\ref{sec:generallocation} provides extensions to a general location model. The null confidence region  is presented in Section~\ref{sec:adaptive}, which is  illustrated on synthetic and real data sets.
A discussion is given in Section~\ref{sec:discussion}.
Numerical experiments, proofs, lemmas, and auxiliary results are deferred to the appendix. An application of our approach on real data sets is developed carefully in a devoted vignette, see \cite{RVvignette2020}.

 \section{Setting and presentation of the main results}\label{sec:scalingnull}

\subsection{Framework for testing an unknown null} 
To formalize our setting, 
 we resort to a variation of Huber's model \citep{huber1964robust}. Let us observe  {\it independent} real random variables $Y_i$, $1\leq i \leq n$. The distribution of the vector $Y=(Y_i)_{1\leq i \leq n}$ in $\R^n$ is denoted by $P=\otimes_{i=1}^n P_i$. We assume that most of the $P_i$'s follows the same (null) distribution while the others are  ``contaminated" and can be arbitrary. 
Also, following the setting used by Efron \citep{Efron2004}, we shall assume in this manuscript  
that this null distribution is of the form $\mathcal{N}(\theta,\sigma^2)$ for some unknown scaling $(\theta,\sigma)\in \R\times (0,\infty)$ (except in Sections~\ref{sec:generallocation}~and~\ref{sec:adaptive}  where different or more general nulls are considered).
Formally, this leads to assume that $P=\otimes_{i=1}^n P_i$ belongs to the collection $\mathcal{P}$ of all 
distributions satisfying
\begin{equation}\label{equ-assumpfond}
\mbox{there exists } (\theta,\sigma)\in \R \times (0,\infty) \mbox{ such that } |\{i\in\{1,\dots,n\} \::\: P_i = \mathcal{N}(\theta,\sigma^2)\}|>n/2.
\end{equation}
In other words, \eqref{equ-assumpfond}
 ensures that there exists a scaling $(\theta,\sigma)$ such that more than half of the $P_i$'s are $ \mathcal{N}(\theta,\sigma^2)$.
While \eqref{equ-assumpfond} may be surprising at first sight, it is a minimal condition to make the problem identifiable with respect to the unknown null distribution.
 For $P\in \mathcal{P}$, we denote 
  by $(\theta(P),\sigma(P))$ this unique couple.
This allows us to  formulate the multiple testing problem:
\begin{center}
$H_{0,i}: ``P_i =  \mathcal{N}(\theta(P),\sigma^2(P))"$ against $H_{1,i}: ``P_i \neq  \mathcal{N}(\theta(P),\sigma^2(P))"$,
\end{center}
for all $1\leq i \leq n$.
We underline that $H_{0,i}$ is not a point mass null hypotheses, that is, ``$P_i=P^0$", for some known distribution $P^0$, nor a composite null of the type ``$P_i$ is a Gaussian distribution", but  a point mass null hypothesis with value depending on all the marginals $(P_j,1\leq j\leq n)$.

{
Let us introduce some notation. We denote by $\cH_0(P) = \{ 1\leq i\leq n \::\: P \mbox{ satisfies } H_{0,i}\}$ the set of true null hypotheses, by $n_0(P)=|\cH_0(P)|$ its cardinal and by $\cH_1(P)$ its complement in $\{1,\dots,n\}$. We also let  $n_1(P)=|\cH_1(P)|=n-n_0(P)$, so that $n_1(P)<n/2$ by \eqref{equ-assumpfond}. 
As an illustration, if $P=\otimes_{i=1}^n P_i$ is given by 
$$
(P_i,1\leq i\leq n) = \big(\:P_1\:,\: P_2 \:,\: \mathcal{N}(1,4)\:,\: \mathcal{N}(1,4) \:,\: P_5\:,\:  \mathcal{N}(1,4)\:,\: \mathcal{N}(1,4)\:\big)
$$
for $n=7$ and some distributions $P_1,P_2,P_5$ on $\R$ that are all different from $ \mathcal{N}(1,4)$, we have $\theta(P)=1$, $\sigma^2(P)=4$, $\cH_0(P) =\{3,4,6,7\}$ and $n_1(P)=3$.
}

{
Finally, we will sometimes consider an asymptotic situation where $n$ tends to infinity. In that case, the quantities $\mathcal{P}$, $P$, $Y$ (and those related) are all depending on $n$, but we remove such dependences in the notation for the sake of clarity.}

\subsection{Comparison with classical Huber's model and two-group models} 

Originally, Huber's contamination model was introduced in robust statistics as a mixture model with density $h=(1-\pi)\phi_{\theta,\sigma} + \pi f$ where $\pi\in [0,1/2)$, $\phi_{\theta,\sigma}$ stands for the density of a $\mathcal{N}(\theta, \sigma^2)$ and $f$ is an arbitrary density. When sampling according to this model, one observes a proportion close to $(1-\pi)$ of data sampled from the normal distribution and a proportion close to $\pi$ of contaminated data. In our framework, the contaminated data account for the false null hypotheses and non-contaminated data for the true null hypotheses. Note that this mixture model interprets as a specific random instance of our model introduced in the previous section. Indeed, one can sample $n$ observations according to $h$
by first generating $n$ Bernoulli random variables $Z_i\in \{0,1\}$ with parameter $\pi$. Next, if $Z_i=0$, then $Y_i$ is sampled according to $\phi_{\theta,\sigma}$, whereas if, $Z_i=1$,  $Y_i$ is sampled according to $f$. As a consequence, conditionally to the $Z_i$'s, the distribution $P$ of $Y$ satisfies $n_1(P)= \sum_{i=1}^n Z_i$ and $(\theta(P),\sigma(P))=(\theta,\sigma)$, at least when $\sum_{i=1}^n Z_i < n/2$. Besides, all false null distributions $P_i$ are identically distributed according to $f$. This random instance representation of our model is central for proving impossibility results in Section~\ref{sec:lb}. 
In the multiple testing terminology, Huber's contamination model can be interpreted as a two-group model where the null distribution is a normal distribution with unknown parameters and the alternative distribution is let completely arbitrary. Let us mention that many versions of null/alternative families have been considered in the literature, as mixture of Gaussian distributions \cite{SC2007,JC2007,CJ2010} or well chosen parametric families \cite{SS2018}. The one we choose is in accordance with the original proposition of \cite{Efron2008} while being suitable to ensure the strong FDR control. 
Furthermore, note that, in general, letting the alternative distribution arbitrary leads to identifiability problem for the null in the corresponding two-group model, see Figure~\ref{lbsigma_intro} below. However, one advantage of using a model with a fixed mixture is that identifiability is always ensured under Assumption~\ref{equ-assumpfond}.

\subsection{Criteria}

A multiple testing procedure is defined as a measurable function $R$ taking as input the data $Y$ and returning a subset $R(Y)\subset \{1,\dots,n\}$ corresponding to the set of rejected null hypotheses among $(H_{0,i},1\leq i\leq n)$. 
The amount of false positives of $R$ (type I errors) is classically measured by the false discovery proportion   of $R$:
\begin{equation}\label{equ-FDP}
\FDP(P, R(Y)) = \frac{|R(Y) \cap \cH_0(P) |}{|R(Y)|\vee 1},
\end{equation}
see \cite{BH1995}.
The expectation $\FDR(P,R)=\E_{Y\sim P} [\FDP(P, R(Y))]$ is the 
false discovery rate of the procedure $R$.
The amount of true positives of $R$ is measured by
\begin{equation}\label{equ-TDP}
\TDP(P, R(Y)) = \frac{|R(Y) \cap \cH_1(P)|}{n_1(P)\vee 1}\ ,
\end{equation}
 and  corresponds to the proportion of (correctly) rejected nulls among the set of false null hypotheses.
 It has been often used as a power metric for multiple testing procedures, see, e.g. \cite{BH1995,RW2009, AC2017,RRJW2020}.

\subsection{Plug-in BH procedures}

In our study, an important class of procedure are the BH procedures with rescaled $p$-values, that we call the plug-in BH procedures. This corresponds to first estimating the null distribution $(\theta(P),\sigma(P))$ and then plugging it into BH. 

Since Benjamini-Hochberg (BH) procedure is defined through the $p$-value family, we first define, for $u\in \mathbb{R}$ and $s>0$, the rescaled $p$-values 
\begin{equation}\label{equ-pvalues}
p_i(u,s) =2\ol{\Phi}\left(\frac{|Y_i-u|}{s}\right)  , \:\: u\in\R, \:\: s>0, \:\:1\leq i \leq n ,
\end{equation}
which corresponds to the situation where $\theta(P)$, $\sigma(P)$ have been estimated by $u$, $s$, respectively.
By convention, the value $s=+\infty$ is allowed here, which gives a rescaled $p$-value always equal to $1$.
The oracle $p$-values are then given by
 \begin{equation}\label{equ-pvaluesperfect}
p^{\star}_i = p_i(\theta(P),\sigma(P))  ,\:\:1\leq i \leq n\ .
\end{equation}

\begin{definition}\label{def:rescaledBH}
Let $\alpha\in (0,1)$, $u\in\R$, $s>0$ and $P\in\mathcal{P}$.
The plug-in BH procedure of level $\alpha$ with scaling $u$ and $s$ is given by
\begin{align}
\BH_\alpha(Y;u,s)&=\{1\leq i \leq n \::\: p_i(u,s)\leq T_\alpha(Y;u,s) \};  \label{BHdef}\\
&=\{1\leq i \leq n \::\: p_i(u,s)\leq T_\alpha(Y;u,s) \vee (\alpha/n)\};  \nonumber\\
T_\alpha(Y;u,s) &= \max\left\{ t \in[0,1]\::\: \sum_{i=1}^n \mathds{1}\{p_i(u,s) \leq t\} \geq  n t/\alpha\right\}.
\label{Tdef}
\end{align}
In particular, the oracle BH procedure (of level $\alpha$) is defined as the plug-in BH procedure (of level $\alpha$)  with scaling $\theta(P)$ and $\sigma(P)$, that is, is defined by $\BH^\star_\alpha(Y)=\BH_\alpha(Y;\theta(P),\sigma(P))$.
\end{definition}

When not ambiguous, we will sometimes drop $Y$ in the notation $\BH_\alpha(Y;u,s)$, $T_\alpha(Y;u,s)$, $\BH^\star_\alpha(Y)$ for short.
The oracle procedure $\BH^\star_\alpha$ corresponds to the situation where the true scaling $(\theta(P),\sigma(P))$ is directly plugged into the BH procedure and is therefore  the oracle procedure in our study.  
In our framework,  the $p$-values $p_i^\star$ are all independent, with the property $p_i^\star\sim U(0,1)$ whenever $i\in\cH_0(P)$. Hence, it is well known  \citep{BH1995,BY2001} that its FDR satisfies the following:
\begin{align}
\forall P \in \mathcal{P} ,\:\:\: \FDR(P,\BH^\star_\alpha) = \alpha n_0(P)/n \label{FDRcontrolBH}\ .
\end{align}
To mimic $\BH^\star_\alpha$, natural candidates  are the plug-in BH procedures $\BH_\alpha(\wh{\theta},\wh{\sigma})$, for some suitable estimators $\wh{\theta}$, $\wh{\sigma}$ of $\theta(P)$, $\sigma(P)$ (by convention, the value $\wh{\sigma}=\infty$ is allowed here). In the sequel, $(\wh{\theta}$, $\wh{\sigma})$ is called a rescaling. 

\subsection{AMO procedures}

To evaluate how a procedure is mimicking  $\BH^\star_\alpha$ on some sparsity range, let us define the following notation: for any procedure $R(Y)\subset\{1,\dots,n\}$,  any sparsity parameter $k\in [1,n/2]$ and any level $\alpha\in(0,1)$, we let
\begin{align}
\I(R,k)
&=\sup_{\substack{P \in \mtc{P}\\ n_1(P)\leq k}} \{\FDR(P,R)  \};
\label{defI}\\
\II(R,k,\alpha)&=\sup_{\substack{P \in \mtc{P}\\ n_1(P)\leq k}} \left\{\P_{Y\sim P}\left(  \TDP(P,R)< \TDP(P,\BH^\star_{\alpha})\right)\right\}.\label{defII}
\end{align}
Note that $\I(\BH^\star_\alpha,k)= \alpha$ for any $k$ by \eqref{FDRcontrolBH}.
In particular, the control $\I(\BH^\star_\alpha,n)\leq \alpha$ is uniform on $P\in \mtc{P}$ meaning that any least favorable configuration does not deteriorate the FDR. This is strong FDR control, on the range of distributions with at most $k$ false nulls. 
The criterion $\II(R,k,\alpha)$ is a type II risk defined relatively to $\BH^\star_\alpha$: it is small when the TDP of R  is at least as large as the one of $\BH^\star_\alpha$, with a large probability. In particular, the map $\alpha\mapsto \II(R,k,\alpha)$ is nondecreasing. 
Then, a procedure is said to mimic the oracle if 
it maintains the strong FDR control while having a small relative type II risk.

\begin{definition}\label{defmimic}
Let $R=(R_\alpha)_{\alpha\in (0,1)}$ be a sequence of multiple testing procedure, both depending on the nominal level $\alpha$ and of the number  $n$ of tests. 
For a given sparsity sequence $k_n\in[1,n/2)$, the procedure sequence $R$ 
 is  asymptotically mimicking the oracle BH procedure, AMO in short, whenever the two following properties hold: there exists a positive sequence 
$\eta_n\to 0$
 such that
 \begin{align}
\limsup_n \sup_{\alpha\in(1/n,1/2)}\{\I(R_\alpha,k_n)- \alpha\} &\leq 0;\label{IRkn}\\
 \lim_n \sup_{\alpha\in(1/n,1/2)}\{ \II(R_\alpha,k_n,\alpha(1-\eta_n))\} &=0\label{IIRkn}. 
 \end{align}
 Furthermore, if $\wh{\theta}$ and $\wh{\sigma}$ are two (sequence of) estimators of $\theta(P)$ and $\sigma(P)$, respectively, the rescaling $(\wh{\theta},\wh{\sigma})$  is said to be AMO if the sequence of plug-in BH procedure $(\BH_\alpha(\wh{\theta},\wh{\sigma}))_{\alpha\in (0,1)}$ is AMO. 
\end{definition} 

{In this definition, the performances of the oracle BH procedure are mimicked both in terms of FDR and TDP.}
{Note that the power statement is made slightly weaker than one could expect at first sight, with a slight decrease of the level in $\BH^\star_{\alpha(1-\eta_n)}$. 
Since $\eta_n$ converges to $0$, this modification is very light. In addition, if one wants a comparison with the  oracle procedure $\BH^\star_{\alpha}$ (without modification of the level), the convergence \eqref{IIRkn} can be equivalently replaced by  $ \lim_n \sup_{\alpha\in(1/n,1/2)}\{ \II(R_{\alpha(1+\eta_n)},k_n,\alpha))\}=0.$ This would not change our results.
}
Also, we underline that, while the statements \eqref{IRkn} and \eqref{IIRkn} are formulated in an asymptotic manner for compactness, all our results will be non-asymptotic.

\begin{remark}
{The oracle BH procedure is the reference procedure here. Since there is yet no general theory proving that it is optimal in a universal way, one can legitimately ask whether this choice is reasonable. Other proposals have been made, see e.g.   \cite{rosset2020optimal}, { but the procedures there are much more complex and rely on distributional assumptions under the alternative}. Here, we choose the BH procedure because: 1) it is widely used and thus is a meaningful benchmark 2) it is simple and thus allows for a full theoretical analysis and 3) it has been shown to be optimal in some specific regime, see \cite{AC2017,RRJW2020}.} 
\end{remark}

\begin{remark}
Instead of stochastically comparing the true discovery proportions in~\eqref{defII}, an alternative could have been to compare their expectations. 
{The expectation of the TDP, called the true discovery rate (TDR), is the standard notion of power in the literature, see for instance~\cite{RW2009, AC2017,RRJW2020}, where specific classes of alternative distributions are considered. Here, the TDR is not a suitable measure of power, because the alternative distribution is let  completely free in \eqref{defII}. As a result, in some cases, the TDR is maximized by trivial procedures that typically reject no null hypothesis with probability $1-\alpha$ and reject all null hypotheses with probability $\alpha$. As such procedures are obviously undesirable, we focus on the stronger asymptotic stochastic domination TDP property required in~\eqref{IIRkn}. }
\end{remark}

\subsection{Robust estimation of $(\theta(P),\sigma(P))$}

Since our framework allows arbitrary % (and potentially adversarial)
 alternative distributions, we consider simple robust estimators for $(\theta(P),\sigma(P))$ defined by 
\begin{equation}\label{equ:estimators}
\wt{\theta} = Y_{(\lceil n/2\rceil)};\:\:
\wt{\sigma} =  U_{(\lceil n/2\rceil)}/\ol{\Phi}^{-1}(1/4),
\end{equation}
where $U_i=|Y_i-Y_{(\lceil n/2\rceil)}|$ and $\ol{\Phi}^{-1}(1/4)\approx 0.674$. 
{While  $\wt{\theta}$ is the sample median, $\wt{\sigma}$ corresponds to a suitable rescaling of  $U_{(\lceil n/2\rceil)}$, the median absolute deviation (MAD) of the sample.  }
 Under the null, the variables $|Y_i-\theta|/\sigma $ are i.i.d. and distributed as the absolute value of a standard Gaussian variable. Hence, taking the median of the $|Y_i-\theta|$ should be a robust estimator of $\sigma$ times the median of the absolute value of a standard Gaussian variable, that is, of $\sigma\:\ol{\Phi}^{-1}(1/4)$. Rescaling suitably this quantity and replacing $\theta$ by $\wt{\theta} $ leads to the definition of $\wt{\sigma}$.
The two estimators defined by \eqref{equ:estimators} are   minimax optimal; see e.g.~\cite{chen2018robust} for a result  in a slightly different mixture model. We will use here specific properties of these estimators, to be found in Section~\ref{sec:estimationrate}.

\subsection{Presentation of the results}\label{sec:presentationresults}

\subsubsection{Main result}

We now state the main result of the paper.
\begin{thm}\label{mainth}
In the setting of Section~\ref{sec:scalingnull} and according to Definition~\ref{defmimic}, the following holds:
\begin{itemize}
\item[(i)] for a sparsity $k_n\gg n/\log(n)$,
there exists no (sequence of) AMO procedure that is AMO;
\item[(ii)] for a sparsity $k_n\ll n/\log(n)$, the sequence of plug-in BH procedure $(\BH_\alpha(\wt{\theta},\wt{\sigma}))_{\alpha\in (0,1)}$ is AMO, for the scaling $(\wt{\theta},\wt{\sigma})$ given by standard robust estimators \eqref{equ:estimators}. 
\end{itemize}
\end{thm}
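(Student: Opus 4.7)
\medskip

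\noindent\textbf{Proof proposal.} The theorem naturally splits into a matching lower bound (i) and upper bound (ii) at the threshold $k_n \asymp n/\log n$, and my plan is to treat each independently. The common thread is that the estimation rate for a location/scale parameter under a Huber-type contamination model is of order $k/n$ (up to $1/\sqrt{n}$ parametric noise), while the BH oracle rejects $p$-values of order $\alpha/n$ up to $\alpha k/n$, which after inverting $\overline{\Phi}$ corresponds to thresholds at distance of order $\sqrt{\log(n/k)}$ from $\theta(P)$. Multiplying these two shows that $(k/n)\sqrt{\log(n/k)}\to 0$ iff $k\ll n/\log n$, which is precisely the boundary appearing in the statement.

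For part (ii), I would first invoke the concentration properties of $(\wt\theta,\wt\sigma)$ (provided by Section~\ref{sec:estimationrate}) to obtain that, with probability at least $1-o(1)$ uniformly over $P\in\mathcal{P}$ with $n_1(P)\le k_n$, one has $|\wt\theta-\theta(P)|/\sigma(P)\lesssim k_n/n + 1/\sqrt{n}$ and $|\wt\sigma/\sigma(P)-1|\lesssim k_n/n+1/\sqrt{n}$. The next step is to translate these estimation errors into multiplicative control of the plug-in $p$-values: for any $t$ in the relevant range $[\alpha/n,1]$, and any null index $i$, compute the ratio $p_i(\wt\theta,\wt\sigma)/p_i^\star$, and show via Gaussian tail Mills-ratio bounds that this ratio is $1+o(1/\log n)$ uniformly for $p$-values up to order $\alpha k_n/n$. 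Because $k_n/n\cdot\log(n/k_n)=o(1)$ under $k_n\ll n/\log n$, this uniform multiplicative control transfers to the BH threshold: $T_\alpha(\wt\theta,\wt\sigma)/T_\alpha^\star \to 1$ in probability. From there, \eqref{FDRcontrolBH} plus the stability of the empirical CDF of the $p_i^\star$ yields both \eqref{IRkn} and \eqref{IIRkn} with an explicit slack $\eta_n$ of the same order.

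For part (i), I would use a two-point Le Cam argument leveraging the identifiability structure of Huber's model discussed in Section~\ref{sec:scalingnull}. Fix $\alpha\in(1/n,1/2)$ and let $\delta_n \asymp k_n/n$. Construct two mixtures $h_0=(1-\pi)\phi_{0,1}+\pi f_0$ and $h_1=(1-\pi)\phi_{0,1+\delta_n}+\pi f_1$ with $\pi\asymp k_n/n$, where $f_0,f_1$ are chosen so that $h_0\equiv h_1$ on the bulk and they differ only in the tail region where the BH oracle thresholds of $\mathcal{N}(0,1)$ and $\mathcal{N}(0,(1+\delta_n)^2)$ disagree. The resulting product measures $P_0^{\otimes n}$ and $P_1^{\otimes n}$ satisfy $\mathrm{TV}(P_0^{\otimes n},P_1^{\otimes n})\lesssim n(\pi\cdot \mathrm{TV}(f_0,f_1))^2$, which can be made bounded away from $1$ by construction. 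Meanwhile the oracle rejection sets $\BH^\star_\alpha$ under $P_0$ and $P_1$ differ by a number of indices of the oracle order (because $\delta_n\sqrt{\log(n/k_n)}\not\to 0$ when $k_n\gg n/\log n$). A standard Le Cam two-point inequality then shows that any procedure $R$ achieving \eqref{IIRkn} under $P_0$ must, with non-negligible probability under $P_1$, reject indices whose null distribution under $P_1$ is exactly $\mathcal{N}(0,(1+\delta_n)^2)$, producing a FDR strictly larger than $\alpha$ and contradicting \eqref{IRkn}.

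The main obstacle, in my view, lies in calibrating the lower-bound construction so that three constraints hold simultaneously: (a) $P_0,P_1\in\mathcal{P}$ with $n_1\le k_n$ (which requires controlling the random $\sum Z_i$ in the mixture representation), (b) the total variation between the full product measures is bounded, and (c) the oracle BH sets differ by enough indices to produce a detectable power/FDR gap. The subtlety is that the BH threshold is data-dependent and random, so one cannot simply compare deterministic quantiles; I expect to need a careful coupling argument ensuring the oracle thresholds $T_\alpha^\star$ under $P_0$ and $P_1$ are concentrated and provably separated. On the upper-bound side, the delicate point is handling the regime where the oracle makes very few rejections (say of order $1$), in which case the slack $\eta_n$ in \eqref{IIRkn} must be introduced to avoid pathological exact-matching requirements — this is precisely the role of the $1-\eta_n$ correction in Definition~\ref{defmimic}.
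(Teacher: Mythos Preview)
Your upper-bound plan for (ii) is essentially the paper's strategy and would go through: concentrate $(\wt\theta,\wt\sigma)$ at rate $k_n/n+n^{-1/2}$, convert this via Mills-ratio bounds into a multiplicative perturbation $1+\eta$ on the $p$-values with $\eta\asymp (k_n/n)\log(n/\alpha)$, and sandwich the plug-in BH threshold between oracle thresholds at levels $\alpha(1\pm\eta)$. One technical point you gloss over is that the FDR is not monotone in the rejection set, so the sandwiching alone does not give \eqref{IRkn}; the paper handles this by a leave-one-out device (replacing $Y_i$ by $\pm\infty$ to form $Y^{(i)}$), which makes $p_i^\star$ independent of the estimated threshold $T_\alpha(Y^{(i)};\wt\theta^{(i)},\wt\sigma^{(i)})$ and recovers the classical $\FDR=\alpha n_0/n$ computation up to the $1+\eta$ factor. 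You would need something like this.

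For (i), your plan has a genuine gap. You propose two marginals $h_0\neq h_1$ that agree on the bulk and differ in the tails, and then bound $\mathrm{TV}(P_0^{\otimes n},P_1^{\otimes n})$. But in the regime $k_n\gg n/\log n$, any construction with $h_0\neq h_1$ will have product TV essentially $1$: the two null densities $\phi_{0,1}$ and $\phi_{0,(1+\delta_n)^2}$ differ by order $\delta_n\asymp k_n/n$ in $L^1$, and absorbing this into the alternatives $\pi f_j$ with $\pi\asymp k_n/n$ forces $f_0,f_1$ to differ by order $1$; the resulting $\chi^2$ or Hellinger between $h_0$ and $h_1$ is at least of order $\pi$, so $n$-fold tensorization blows up. The paper sidesteps this entirely by making the two marginals \emph{exactly equal}: one sets
\[
h \;=\; \max\bigl((1-\pi_1)\phi,\ (1-\pi_2)\phi^V_{\sigma_2}\bigr)
\;=\; (1-\pi_1)\phi+\pi_1 f_1 \;=\; (1-\pi_2)\phi^V_{\sigma_2}+\pi_2 f_2,
\]
with $f_1,f_2$ the normalized positive parts of the two differences, and tunes $\sigma_2$ so that $f_1,f_2$ are genuine densities. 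Then $Q=h^{\otimes n}$ admits two mixture representations with different nulls, the data carry \emph{zero} information about which one is correct, and the dichotomy ``$|R|>0$ vs.\ $|R|=0$'' under $Q$ forces either $\FDR$ inflation (via the $f_2$-representation, where alternatives sit near zero and $\P(Z_{2,i}=0\mid Y)$ is bounded below) or power loss (via the $f_1$-representation, where the oracle rejects $\asymp\sqrt{n/\log n}$ hypotheses). No TV bound is needed or used. Your constraint (b) is not an obstacle to be overcome but a symptom that the construction should be changed.
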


Part (i) of Theorem~\ref{mainth} (lower bound) { means that, when the proportion of true alternative hypotheses is much largen than $1/\log(n)$, it is not possible to perform as well as an oracle that knows the null distribution in advance.}
Obtaining negative results on FDR control has received recently some attention in multiple testing literature \cite{AC2017,RRJW2020,CR2018} in various contexts, and in restriction to the class of thresholding-based procedures. 
Here, our impossibility holds for {\it any} multiple testing procedure. 
The proof of our lower bound relies on a Le Cam's  two-point reduction scheme. Namely, it is derived by identifying two mixture distributions on $\R^n$ that are indistinguishable while corresponding to  distant null distributions (see Figure~\ref{lbsigma_intro}) and by studying the impact of such 
fuzzy configuration on the FDR and TDP metrics. 
While this argument is classical in the estimation or (single) testing literature (see, e.g., \citealp{MR2724359} and \citealp{DJ2006}), it is to our knowledge new in the multiple testing context.

Part (ii) of Theorem~\ref{mainth} (upper bound) is proved in Section~\ref{sec:ub}. For this, we extend the ideas used in \cite{CDRV2018} to accommodate the new two-sided geometry of the test statistics. In particular, correcting the $Y_i$'s by $\wh{\theta}$ changes the order of the $p$-values, which was not the case in the one-sided situation. Our proof relies on the symmetry of the Gaussian distribution and on special properties of the BH procedure rejection set when removing one element of the $p$-value family, see, e.g., \cite{FZ2006}.
Also note that  the scaling $(\wt{\theta},\wt{\sigma})$ does not use the knowledge of $k_n$, which means that these estimators are adaptive with respect to the sparsity $k_n$ on the range $k_n\ll n/\log(n)$.

\subsubsection{Extending the scope of the main result}\label{complresults}

We provide three complementary results. First, in the testing literature, type I error rate controls are generally favored over type II error rate controls. In our framework, we can always design a plug-in BH procedure that controls the FDR 
%control the FDR of a plug-in procedure 
by simply setting $\wh{\sigma}=\infty$, which is equivalent to taking $R(Y)=\emptyset$ (no rejection). In view of this remark, we can re-interpret the statement of Theorem~\ref{mainth} as follows:
\begin{itemize}
\item[(i)] in the dense regime ($k_n\gg n/\log (n)$), it is possible to achieve \eqref{IRkn} but not with \eqref{IIRkn};
\item[(ii)] in the sparse regime ($k_n\ll n/\log (n)$), it is possible to achieve both \eqref{IRkn} and \eqref{IIRkn}.
\end{itemize}
A natural question is then: can we achieve the best of the two worlds? Is that possible to find a rescaling satisfying \eqref{IRkn} in the dense regime and both \eqref{IRkn} and \eqref{IIRkn} in the sparse regime?
We establish in Section~\ref{sec:complementlb} that such a procedure does not exist, see Corollary~\ref{cor:adaptation}. As a consequence, any procedure controlling the FDR in the dense regime is not {AMO} in the sparse regime. Conversely, 
any {AMO} procedure  in the sparse regime is not able to control the FDR in the dense regime.
This is the case in particular for the plug-in procedure $\BH_\alpha(\wt{\theta},\wt{\sigma})$ considered in Theorem~\ref{mainth} (ii). More formally, combining Corollary~\ref{cor:adaptation} ($\alpha=c_3/2$) and Theorem~\ref{THMBORNESUP}  below establishes the following result.

\begin{cor}\label{corbadnews}
There exist numerical constants $\alpha_0\in(0,1/2)$ and $c>0$ such that for any sequence $u_n \to \infty$, 
$$\liminf_n \{\I(\BH_{\alpha_0}(\wt{\theta},\wt{\sigma}),u_n n/\log (n))- \alpha_0\}>c.$$
\end{cor}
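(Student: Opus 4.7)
The plan is to chain the two results explicitly mentioned in the statement: Theorem~\ref{THMBORNESUP}, which shows that the plug-in BH procedure with robust estimators achieves AMO in the sparse regime, and Corollary~\ref{cor:adaptation}, which rules out simultaneous FDR control in the dense regime and power mimicking in the sparse regime. The point is that the offending procedure $\BH_{\alpha_0}(\wt\theta,\wt\sigma)$ \emph{does} mimic the oracle in the sparse regime, so by the impossibility statement it \emph{must} violate FDR control in the dense regime, and by the quantitative formulation of Corollary~\ref{cor:adaptation} this violation is of constant order.

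Concretely, I would proceed as follows. First, pick any fixed sparsity sequence $\kappa_n \ll n/\log n$ (for instance $\kappa_n=\lfloor n/\log^2 n\rfloor$) and apply Theorem~\ref{THMBORNESUP} to obtain that the sequence $(\BH_\alpha(\wt\theta,\wt\sigma))_{\alpha\in(0,1)}$ is AMO at sparsity $\kappa_n$. In particular, condition \eqref{IIRkn} holds for this procedure, uniformly in $\alpha\in(1/n,1/2)$. Second, instantiate Corollary~\ref{cor:adaptation} at the nominal level $\alpha_0=c_3/2$, where $c_3$ is the numerical constant featuring in the corollary (chosen so that $\alpha_0\in(0,1/2)$). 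The corollary then asserts that any procedure satisfying \eqref{IIRkn} on some sparsity $\ll n/\log n$ must, at level $\alpha_0$, exceed the nominal FDR by a positive universal constant $c>0$ along every dense-regime sparsity sequence $k_n \gg n/\log n$.

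Combining the two steps, $\BH_{\alpha_0}(\wt\theta,\wt\sigma)$ fulfills the hypothesis of Corollary~\ref{cor:adaptation} by virtue of Theorem~\ref{THMBORNESUP}, so it must fail \eqref{IRkn} in the dense regime quantitatively. For any $u_n\to\infty$, the sequence $k_n=\lfloor u_n n/\log n\rfloor$ belongs to the dense regime ($k_n\gg n/\log n$), hence
$$\liminf_n \{\I(\BH_{\alpha_0}(\wt\theta,\wt\sigma), u_n n/\log n)- \alpha_0\}\ \geq\ c > 0,$$
which is exactly the claim.

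The only delicate step is the second one: one has to make sure that Corollary~\ref{cor:adaptation} really provides a \emph{quantitative} constant-order gap rather than just a non-adaptation statement, and that this gap is independent of the specific dense sequence $u_n n/\log n$. This should follow directly from the formulation of Corollary~\ref{cor:adaptation}, because the corollary is a two-point--type impossibility obtained at a fixed level $\alpha=c_3/2$ whose proof (via the mixture construction borrowed from the lower bound of Section~\ref{sec:lb}) exhibits an explicit dense-regime distribution on which the FDR is lower bounded by $\alpha_0+c$ for some universal $c$. Once this is in hand, the same $c$ applies to every sequence $u_n\to\infty$, yielding the lim-inf bound above.
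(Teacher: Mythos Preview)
Your plan matches the paper's: take the contrapositive of Corollary~\ref{cor:adaptation} with $\alpha=\alpha_0=c_3/2$ and feed in the power bound of Theorem~\ref{THMBORNESUP}. However, the chaining in your step~2 is not quite right. Corollary~\ref{cor:adaptation} does \emph{not} say that a procedure satisfying \eqref{IIRkn} at \emph{some} pre-fixed sparsity $\kappa_n\ll n/\log n$ violates the FDR at \emph{every} dense sparsity. It couples a specific dense level $k_2=\log(1/\epsilon)\,n/\log n$ to a specific sparse level $k_1=\epsilon^{1/(c_2\log(2/\alpha))}\log(1/\epsilon)\,e\,n/\log n$, and its contrapositive requires $\II(R,k_1,\alpha/2)<1/5$ at that particular $k_1$. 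For slowly growing $u_n$ (e.g.\ $u_n=\log\log n$) the resulting $k_1$ exceeds your choice $\kappa_n=n/\log^2 n$, so knowing $\II(\BH_{\alpha_0},\kappa_n,\alpha_0/2)<1/5$ does not yield $\II(\BH_{\alpha_0},k_1,\alpha_0/2)<1/5$.

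The fix is to drop the pre-fixed $\kappa_n$ and apply Theorem~\ref{THMBORNESUP} directly at the $k_1$ dictated by the corollary. Given $u_n$, set $v_n=u_n\wedge\tfrac14\log n$ (the truncation ensures $\epsilon=e^{-v_n}>n^{-1/2}$ and $k_2<n/2$) and $k_2=v_n n/\log n$; the corresponding $k_1$ satisfies $k_1\log(n)/n=e^{-v_n/C}v_n e\to 0$ with $C=c_2\log(2/\alpha_0)$, so for large $n$ the $\eta$ of Theorem~\ref{THMBORNESUP} is at most $0.05$ and hence $\II(\BH_{\alpha_0},k_1,\alpha_0/2)\leq \II(\BH_{\alpha_0},k_1,\alpha_0(1-\eta))\leq e^{-\sqrt n}<1/5$. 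The contrapositive of Corollary~\ref{cor:adaptation} then gives $\I(\BH_{\alpha_0},k_2)>c_3$, and by monotonicity of $\I$ in its second argument, $\I(\BH_{\alpha_0},u_n n/\log n)-\alpha_0\geq \I(\BH_{\alpha_0},k_2)-\alpha_0>c_3-c_3/2=c_3/2$.
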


Second, in Section~\ref{sec:sigmaknown}, we show an analogue of Theorem~\ref{mainth} when $\sigma=\sigma(P)$ is supposed to be known. Hence, the only unknown null parameter is $\theta$ and the class of rescaling is restricted to those of the form $(\wh{\theta}, \sigma)$, where $\wh{\theta}$ is an estimator of $\theta$. 
We establish that the sparsity boundary is slightly modified  in this case: impossibility is shown for $k_n  \gg n/\log^{1/2}(n)$, while $(\wt{\theta},\sigma)$  is {AMO} for $k_n  \ll n/\log^{1/2}(n)$ (Theorem~\ref{mainthsigmaknown}).
While the upper-bound part is similar to the upper-bound part of Theorem~\ref{mainth} above, the lower bound arguments have to be adapted to the case where only the location parameter is unknown. More precisely, we establish two types of lower-bounds. We first develop a lower bound valid for any multiple testing procedure  (Theorem~\ref{thm:lower_sigmaknown}), which follows the same philosophy as the lower-bound developed in Theorem~\ref{mainth} (via Theorem~\ref{THM:LOWER_GENERAL}). Next, we provide a refined lower bound specifically tailored to plug-in BH type procedures. Contrary to the previous lower bounds, it does not state type I error/type II error trade-offs but it establishes that uniform control of the FDR is alone already out of reach.
Namely, this result shows that, on the sparsity range $k_n  \gg n/\log^{1/2}(n)$, any plug-in procedure exhibits a FDP close to $1/2$ and makes around $n^{3/4}$ false discoveries, this on an event of probability close to $1/2$ (see Theorem~\ref{THM:THETACHAPFORFDR}). 
{Intuitively, this comes from the fact that $\wh{\sigma}=\sigma$ is fixed to the true value and thus cannot compensate the estimation error of $\wh{\theta}$, which irremediably leads to many false discoveries in that regime.}

Third, we extend our results to the case where the null distribution has a known symmetric density $g$ with an unknown location parameter, see Section~\ref{sec:generallocation}. Therein, we derive lower bounds in two different regimes, when $k_n/n$ tends to zero (Theorem~\ref{thm:lower_location}) and when $k_n/n$ is of order constant (Theorem~\ref{thm:lower_location2}). Also, we provide a general upper bound matching the lower bounds under assumptions on $g$ (Theorem~\ref{THMBORNESUPgeneral}).
As expected, the sparsity boundary depends on $g$. 
For instance, for $\zeta$-Subbotin null $g(x)=L^{-1}_\zeta \:e^{-|x|^\zeta/\zeta}$, $\zeta>1$, the boundary is proved to be $k_n\asymp n/ (\log(n))^{1-1/\zeta}$ (Corollary~\ref{cor:subottin}), which recovers the Gaussian case for $\zeta=2$. {For the Laplace distribution $g(x)=e^{-|x|}/2$, {AMO} scaling  is possible as long as $k_n\ll n$ (Corollary~\ref{cor:laplace}). Finally, we further explore the behavior of any procedure for the Laplace distribution  on the boundary when $k_n$ is of the same order as $n$ (Proposition~\ref{prp:lower_laplace}).}

\subsubsection{Confidence region for the null and applications}\label{region}
Our previous analysis shows that, when the sparsity is not strong enough, we cannot hope to build a procedure that mimics the properties of the oracle BH procedure. This holds in the minimax sense, that is, this impossibility is shown to be met under a least favorable configuration (see Figure~\ref{lbsigma_intro} below). However, if the underlying distribution $P$ is reasonably far from this distribution, it is not necessarily impossible to mimic the oracle. {Hence, for some specific data sets that are not sparse, one can possibly reliably estimate the null distribution and plug a BH procedure. This raises the issue of deriving data-driven and distribution-dependent measures of the reliability of plug-in null estimation methods. This is the topic of} Section~\ref{sec:adaptive}. 
Our main result there is a general, non-asymptotic, confidence region for the null distribution (Theorem~\ref{thmconfidence}). The latter holds without any assumption on the sparsity and on the null distribution.  It only requires an upper-bound $k$ on the number of false nulls.  
This induces a goodness of fit test for any given null distribution (Corollary~\ref{cor:testnull}) or even any family of null distributions (Corollary~\ref{cor:testfamilynull}). As shown in the vignette, for several data sets, the theoretical null $\mathcal{N}(0,1)$ is rejected while the family of Gaussian null is accepted. This reinforces the interest in using Gaussian empirical nulls, as Efron suggested in the first place. 
Another application of the confidence region is  a confidence set for the rejection set (or number) of the oracle BH procedure (Corollary~\ref{cor:region}). This result is certainly weaker than the aforementioned AMO property, but is valid for all distributions $P$, even those being above the boundary. It can be use to make practical recommandations:
if the rejection set (or number) is fairly unchanged over the confidence region, then the user knows approximately the rejection set (or number) of the oracle. By contrast, if the empty rejection set lies in that region, the user should probably make no rejection. We suggest to visualize this phenomenon via colored/annotated confidence region (Figure~\ref{fig:heatmap}).
Finally, we underline that this region can be applied with any type of null distribution, not necessarily Gaussian, because our confidence region is nonparametric.

\subsubsection{Numerical experiments}

In Section~\ref{sec:numexp}, we provide numerical experiments that illustrate our results. The simulations corroborate the theoretical findings and can be summarized as follows: 
\begin{itemize}
\item the plug-in BH procedure used with robust estimators is mimicking the oracle for $k$ small enough; 
\item it is improved by local fdr-type methods for standard alternatives. However, the latter are less robust to extreme alternatives; 
\item all procedures fail to mimic the oracle when $k$ is large;
\item these results are qualitatively similar under weak dependence between the measurements.
 \end{itemize}
 In addition, simulations are made under equi-correlation of the $Y_i$, $1\leq i\leq n$, which is an elementary factor model see \cite{Fan2012}. Interestingly, the methods here mimic the {\it conditional} true null, that is, the null distribution {\it conditionally} on the factor. Hence, they are able both to remove the dependence and to reduce the variance of the noise. This corroborates previous results in the literature, see, e.g., \cite{Efron2007,Fan2012}. In addition, this phenomenon also holds with complex real data dependencies, as we illustrate in the vignette
 \cite{RVvignette2020}. Markedly, we also show there that this convenient property is not met when estimating the null via a classical permutation-based approach. 

\section{Non asymptotical bounds}\label{sec:non-asymp}

\subsection{Lower bound}\label{sec:lb}

To prove part (i) of Theorem~\ref{mainth}, we establish a more general, non asymptotic, impossibility result.

\begin{thm}\label{THM:LOWER_GENERAL}
There exist numerical positive constants  $c_1$--$c_5$  such that the following holds for  all $n\geq c_1$ and any $\alpha\in (0,1)$. Consider any two positive numbers $ k_1\leq k_2$ satisfying
\beq\label{eq:condition_lower_k_0_general}
c_2\frac{n\log\left(2/\alpha\right)}{\log(n)}\left[1+ \log\left(\frac{k_2}{k_1}\right)\right]\leq k_2  < n/2 \ .
\eeq
For any multiple testing procedure $R$ 
such that 
$$
\FDR(P,R)\leq c_3 \ , \mbox{ for any $P \in \mtc{P}$ with $n_1(P)\leq k_2$ },
$$
there exists some $P\in \cP$ with  $n_1(P)\leq k_1$ such that we have 
\begin{align}
\P_{Y\sim P}(|R(Y)\cap \cH_1(P)|= 0)&\geq 2/5\,;\nonumber\\
\P_{Y\sim P}\left[|\BH^{\star}_{\alpha/2}\cap \cH_1(P)|\geq c_4\alpha^{-1}\Big\{\frac{n}{\log n}\Big\}^{1/2}\right]&\geq 1 - e^{-c_5 \alpha^{-1}\{n/\log(n)\}^{1/2}}\geq 4/5\ .
\label{THM:LOWER_GENERAL_TDP}
\end{align}
In particular, 
we have that $\I(R,k_2)\leq c_3$ implies $\II(R,k_1,\alpha/2)\geq 1/5$.
\end{thm}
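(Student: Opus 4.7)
I would use a Le Cam two-fuzzy-hypothesis argument built on the Huber-mixture representation of Section~\ref{sec:scalingnull}. The idea is to construct two joint laws $M_0,M_1$ on $(Y,Z)\in\R^n\times\{0,1\}^n$ whose $Y$-marginals coincide but whose underlying null distributions differ. Under $M_0$, take $Y_i\stackrel{iid}{\sim}\phi_{\theta_0,\sigma_0}$ (no latent alternative). Under $M_1$, for a suitable $\pi$ of order $k_1/n$, draw $Z_i\stackrel{iid}{\sim}\mathrm{Ber}(\pi)$ and then $Y_i\mid Z_i{=}0\sim\phi_{\theta_1,\sigma_1}$, $Y_i\mid Z_i{=}1\sim f_1$, with $(\theta_1,\sigma_1)\neq(\theta_0,\sigma_0)$ and $f_1:=\pi^{-1}[\phi_{\theta_0,\sigma_0}-(1-\pi)\phi_{\theta_1,\sigma_1}]$, the unique alternative density enforcing marginal matching. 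A concrete workable choice is $\theta_0=\theta_1=0$, $\sigma_0=1$, $\sigma_1=1-\pi$: the elementary bound $\phi_{0,1}(x)/\phi_{0,1-\pi}(x)\geq 1-\pi$ (attained at $x=0$) ensures $f_1\geq 0$. With this construction the $Y$-marginal equals $\phi_{0,1}^{\otimes n}$ under both models, so any statistic of $Y$ --- in particular $R(Y)$ and $\BH^\star_{\alpha/2}(Y)$ --- has identical law, while the underlying true scaling differs by $(0,1)$ vs.\ $(0,1-\pi)$.

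\textbf{FDR side via indistinguishability.} Applying the theorem's hypothesis to the deterministic law $P_0=\phi_{0,1}^{\otimes n}\in\cP$, which has $n_1(P_0)=0\leq k_2$, yields $\P_{M_0}(|R|\geq 1)=\FDR(P_0,R)\leq c_3$, and the marginal coincidence transports this to $\P_{M_1}(|R\cap\cH_1(Z)|=0)\geq\P_{M_1}(|R|=0)\geq 1-c_3$. A Chernoff bound on $|Z|\sim\mathrm{Bin}(n,\pi)$ --- tuned so that $\E|Z|\leq k_1/2$ --- shows the event $A:=\{|Z|\leq k_1\}$ has probability at least $9/10$ for $k_1$ above a universal constant. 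On $A$ the conditional product law $P^{(Z)}$ lies in $\cP$ with $(\theta(P^{(Z)}),\sigma(P^{(Z)}))=(\theta_1,\sigma_1)$ and $n_1(P^{(Z)})=|Z|\leq k_1$, so averaging over $Z\in A$ and invoking the probabilistic method on $z\mapsto\P_{P^{(z)}}(|R\cap\cH_1(z)|=0)$ produces a deterministic $z^\star\in A$ for which claim~(i) holds, provided $c_3$ is below a universal constant.

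\textbf{Oracle BH side.} Under $M_1$ conditional on $Z$, the oracle $p$-values $p_i=2\ol{\Phi}(|Y_i|/\sigma_1)$ are $U[0,1]$ for $i\in\cH_0(Z)$ but stochastically much smaller for $i\in\cH_1(Z)$, because $f_1$'s tails inherit those of $\phi_{0,1}$ rather than $\phi_{0,\sigma_1}$. Mills-ratio asymptotics give $\ol{\Phi}(\sigma_1 u)/\ol{\Phi}(u)\approx\sigma_1^{-1}\exp(u^2(1-\sigma_1^2)/2)$, so the asymptotic BH fixed point $G(t)=2t/\alpha$ with $t=2\ol{\Phi}(u)$ yields $(u^\star)^2\asymp n\log(2/\alpha)/k_1$, and the rejection count is $\asymp(n/\alpha)\ol{\Phi}(u^\star)\asymp\alpha^{-1}\sqrt{n/\log n}$ precisely in the regime carved out by~\eqref{eq:condition_lower_k_0_general}. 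A Bernstein-type bound on $G_n-G$ in a narrow threshold window around $t^\star$, combined with Bernoulli concentration for the count of alternatives satisfying $p_i\leq T_{\alpha/2}$, promotes this to the sharp exponential tail $1-\exp(-c_5\alpha^{-1}\sqrt{n/\log n})$ claimed in~\eqref{THM:LOWER_GENERAL_TDP}.

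\textbf{Combining and main obstacle.} Let $g_1(z),g_2(z)$ denote the two conditional probabilities in (i) and (ii); both averages $\E[g_j(Z)\ind_A]$ are close to $1$, so Markov applied to $1-g_j$ makes each marginal event hold with large $Z$-probability and a union bound delivers a common $z^\star\in A$ for which both statements hold under $P^{(z^\star)}$. The final clause $\II(R,k_1,\alpha/2)\geq 1/5$ then follows by inclusion-exclusion: the intersection of the two $Y$-events has probability $\geq 2/5+4/5-1=1/5$, and on that intersection $\TDP(P,R)=0<\TDP(P,\BH^\star_{\alpha/2})$. The central technical hurdle is the sharp concentration of $|\BH^\star_{\alpha/2}\cap\cH_1(Z)|$ around $\alpha^{-1}\sqrt{n/\log n}$ at exponential scale, which demands uniform control of $G_n$ in a vanishing-width window of thresholds together with tight Mills-ratio estimates; the logarithmic correction $\log(k_2/k_1)$ in~\eqref{eq:condition_lower_k_0_general} is precisely the probability budget that must be spent on the rare event $\{|Z|\leq k_1\}$ when $k_1\ll k_2$, so calibrating $\pi$ against both sparsity parameters is the delicate point where all constants must be balanced.
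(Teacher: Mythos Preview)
Your construction works when $k_1=k_2$ but has a genuine gap for $k_1<k_2$, and the last paragraph misdiagnoses where the $\log(k_2/k_1)$ term comes from.

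\medskip

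\textbf{The gap.} In your $M_0/M_1$ pair, the null under $M_1$ is $\mathcal{N}(0,\sigma_1^2)$ with $\sigma_1=1-\pi$, and nonnegativity of $f_1=\pi^{-1}[\phi_{0,1}-(1-\pi)\phi_{0,\sigma_1}]$ forces $\sigma_1\geq 1-\pi$; in other words the null separation is at most $\pi\sim k_1/n$. Now redo your own Mills-ratio computation for the oracle BH fixed point under $M_1$: one needs $u^2(1-\sigma_1^2)\gtrsim\log(2/\alpha)$ together with $u\leq\sqrt{\log n}$, hence $\pi\gtrsim\log(2/\alpha)/\log n$, i.e.\ $k_1\gtrsim n\log(2/\alpha)/\log n$. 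But~\eqref{eq:condition_lower_k_0_general} is a condition on $k_2$, not $k_1$; it allows for instance $k_2=n/4$ and $k_1=n^{1-\delta}$ (cf.\ Corollary~\ref{cor:adaptation}). In that regime your oracle-BH step yields no usable lower bound on $|\BH^\star_{\alpha/2}\cap\cH_1|$, and the proof collapses. Relatedly, your FDR step only invokes the hypothesis at the pure Gaussian $P_0$ (where $n_1=0$), so $k_2$ never actually enters your argument; this is why the proof cannot distinguish the roles of $k_1$ and $k_2$.

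\medskip

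\textbf{What the paper does differently.} The paper builds a \emph{single} mixture density
\[
h=\max\big((1-\pi_1)\phi,\ (1-\pi_2)\phi^V_{\sigma_2}\big)=(1-\pi_1)\phi+\pi_1 f_1=(1-\pi_2)\phi^V_{\sigma_2}+\pi_2 f_2,
\]
with $\pi_j=k_j/(2n)$ and $\sigma_2>1$ chosen so that both $f_1,f_2$ are densities. Crucially \emph{both} representations are nontrivial mixtures. The FDR violation is argued under the $(\phi^V_{\sigma_2},f_2)$ representation at sparsity $\leq k_2$: on $\{|R|>0\}$ one lower-bounds $\P(Z_{2,i}=0\mid Y)$ by a universal constant, so the $\FDR$ must exceed $c_3$ for some $P$ with $n_1(P)\leq k_2$. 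The power statement is argued under the $(\phi,f_1)$ representation at sparsity $\leq k_1$. The constraint $\int f_2=1$ is what forces the null separation to be large: Lemma~\ref{lem:sigma_2} gives $\sigma_2-1\gtrsim \pi_2/[1+\log(\pi_2/\pi_1)]$, so the separation is driven by $k_2$, not $k_1$. This is the real origin of the $[1+\log(k_2/k_1)]$ factor in~\eqref{eq:condition_lower_k_0_general}; it is \emph{not} a probability budget for a rare $\{|Z|\leq k_1\}$ event (with $\pi\leq k_1/(2n)$ that event is not rare, and with $\pi\sim k_2/n$ it is exponentially rare in $k_2$, far too small to use).

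\medskip

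\textbf{Summary.} For $k_1=k_2$ your simpler pairing (pure Gaussian vs.\ contaminated) is a legitimate alternative route and even uses a weaker hypothesis. For $k_1<k_2$ you need the two-sided mixture construction so that the FDR contradiction lives at sparsity $k_2$ while the oracle-power statement lives at sparsity $k_1$; the $\log(k_2/k_1)$ correction then enters through the geometry of $\sigma_2$ (Lemma~\ref{lem:sigma_2}), not through conditioning on $Z$.
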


 Theorem \ref{THM:LOWER_GENERAL} states that, for any procedure $R$, either the FDR is not controlled at the nominal level $\alpha\leq c_3$ for all $P$ with $n_1(P)\leq k_2$ or that there exists a distribution $P$ with $n_1(P)\leq k_1$ such that $R$ does not make any correct rejection with positive probability while the oracle procedure $\BH^{\star}_{\alpha/2}$ make at least (of the order of) $\{n/\log(n)\}^{1/2}$ correct rejections with probability close to one.

Now, let us show that Theorem~\ref{THM:LOWER_GENERAL} implies part (i) of Theorem~\ref{mainth}. 
Consider any sequence $k_n$ with $n/2> k_n \gg n/(\log n)$, any sequence $\eta_n\to 0$, an arbitrary sequence of procedure $(R_\alpha)_{\alpha\in (0,1)}$,  and choose $\alpha =(c_3\wedge 1)/2$. Clearly, for $n$ large enough, the sparsity parameters $k_1=k_2=k_n$ satisfy the requirements of Theorem~\ref{THM:LOWER_GENERAL} and thus, for $n$ large,  either 
$\I(R_\alpha,k_n)-\alpha> (c_3\wedge 1)/2$ or $\II(R_\alpha,k_n,\alpha/2)\geq 1/5$.
This entails that \eqref{IRkn} and \eqref{IIRkn} cannot hold simultaneously.

\medskip 

{
Let us provide some high-level ideas of the proof of Theorem~\ref{THM:LOWER_GENERAL};  we refer to Section~\ref{sec:proofTHM:LOWER_GENERAL} for the details. As explained in Section~\ref{sec:scalingnull}, two-group models can be viewed as random instances of our setting and it suffices to prove that no AMO procedure in this setting. Let us assume that $Y_i$, $1\leq i \leq n$, are i.i.d. and have a common distribution given by the mixture density
\begin{equation}\label{equ-twogroup_LF}
h=(1-\pi)\phi + \pi f_1,
\end{equation}
where $\phi$ is the density of the standard Gaussian distribution, $f_1$ is the density of the alternative and 
 $\pi\in (0,1)$ is a prescribed proportion of signal. This density is depicted in the left panel of Figure~\ref{lbsigma_intro}, for some specific choice of $\pi$ and $f_1$. Here, the alternative density looks nicely separated from the null $\phi$, which indicates that the oracle procedure should  typically make some rejections.  By contrast, consider the situation depicted in the right panel of Figure~\ref{lbsigma_intro}, where the null density is given by $\phi_{\sigma_2}(\cdot)=\sigma_2^{-1} \phi(\cdot/\sigma_2)$ and the alternative is given by a density $f_2$, concentrated near $0$. In that situation, the alternative density are not well distinguishable from the null so that the oracle procedure, which ``knows" what is the null distribution,  makes no rejection with high probability to ensure a correct FDR control. 

\begin{figure}[h!]
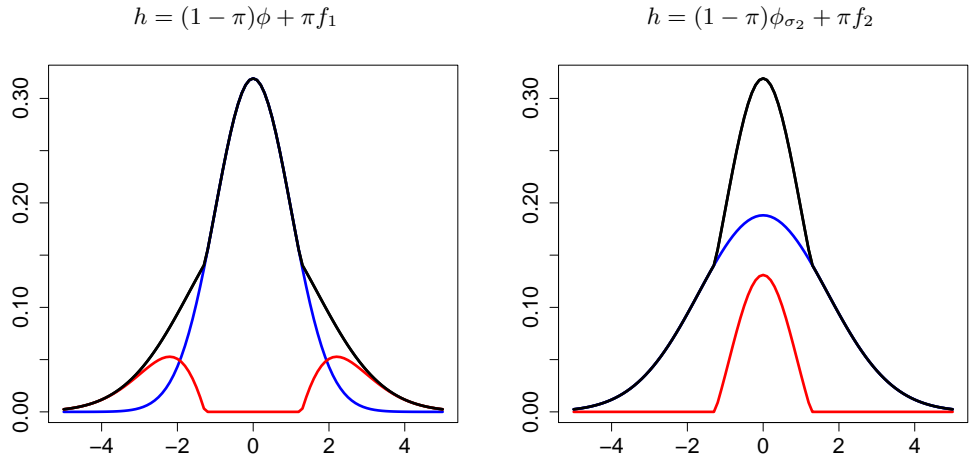

\begin{tabular}{ccc}
$h=(1-\pi)\phi + \pi f_1$  & $h=(1-\pi) \phi_{\sigma_2}+\pi f_2$ \vspace{-0.5cm}\\
 \includegraphics[scale=0.45]{lbsigma1_intro}&\hspace{-0.5cm}\includegraphics[scale=0.45]{lbsigma2_intro}
 \end{tabular}
\vspace{-0.5cm}
\caption{Left: the density $h$ given by \eqref{equ-twogroup_LF} (black), interpreted as a mixture between the null $\mathcal{N}(0,1)$ ($(1-\pi)\phi$ in blue) and the alternative $f_1$ ($\pi f_1$ in red). Right: the same $h$ interpreted as a mixture between the null $\mathcal{N}(0,\sigma_2^2)$ ($(1-\pi)\phi_{\sigma_2} $ in blue) and the alternative $f_2$ ($\pi f_2$ in red). $\pi=1/5$, $\sigma_2^2\approx 2.88$. 
} \label{lbsigma_intro}
\end{figure}
The point is that $f_1$ and $f_2$ are chosen so that the two mixture densities in the left and right panel coincide. Hence, when the data are generated by this mixture, {\it any data-driven procedure} cannot decipher whether the data arise from $(\phi,f_1)$ or $(\phi_{\sigma_2},f_2)$. As a result, a data-driven procedure is not able to mimic the behavior of the oracle as soon as the distribution of the rejection number of the oracle highly differs in the two situations. Quantifying precisely the latter provides a condition on the sparsity parameter $\pi=\pi_n$, namely $\pi_n \gg 1/\log(n)$, under which no AMO procedure exists. 

}

\subsection{Upper bound}\label{sec:ub}

In this section, we prove Part (ii) of Theorem~\ref{mainth}. 
The following result states FDR and power {oracle inequalities}.

\begin{thm}\label{THMBORNESUP}
In the setting of Section~\ref{sec:scalingnull}, there exist universal constants $c_1$, $c_2>0$ such that the following holds for all $n\geq c_1$ and  $\alpha\in (0,0.5)$. Consider any number $k \leq 0.1 n$ such that $\eta=c_2 \log(n/\alpha) \left((k/n)\vee n^{-1/6}\right)\leq 0.05$. Then, we have
\begin{align}
& \I(\BH_\alpha(\wt{\theta},\wt{\sigma}),k)  \leq   \alpha (1+\eta)+ e^{-n^{1/2}}\ ;\label{eq:upper_FDR_thm} \\
& \II(\BH_{\alpha}(\wt{\theta},\wt{\sigma}),k,\alpha(1-\eta)) \leq e^{-n^{1/2}} \ .\label{eq:lower_TDP_thm}
\end{align}
\end{thm}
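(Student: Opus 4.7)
The overall strategy is to sandwich $\BH_{\alpha}(\wt{\theta},\wt{\sigma})$ between oracle BH procedures at slightly perturbed levels, on a high-probability event on which the robust estimators $(\wt{\theta},\wt{\sigma})$ are close to $(\theta(P),\sigma(P))$, and then to transfer this closeness to FDR and power statements via structural properties of BH. The first step is a concentration bound for the estimators: using the results developed in Section~\ref{sec:estimationrate} for the sample median and the rescaled MAD, I show that on an event $\Omega_{0}$ of probability at least $1-e^{-n^{1/2}}$, one has $|\wt{\theta}-\theta(P)|\leq \sigma(P)\,\delta$ and $|\wt{\sigma}/\sigma(P)-1|\leq \delta$ with $\delta = O\bigl((k/n)\vee n^{-1/6}\bigr)$, the two terms corresponding respectively to the adversarial bias from the $k$ contaminated observations and to the stochastic fluctuation at the required exponential-deviation level, uniformly over $P\in\cP$.

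Setting $V_{i}=(Y_{i}-\theta(P))/\sigma(P)$, $\tau=(\wt{\theta}-\theta(P))/\sigma(P)$ and $\rho=\sigma(P)/\wt{\sigma}$, the identity $|Y_{i}-\wt{\theta}|/\wt{\sigma}=\rho|V_{i}-\tau|$ combined with $\bigl||V_{i}-\tau|-|V_{i}|\bigr|\leq|\tau|$ gives the pointwise sandwich
\[
2\ol{\Phi}\bigl(\rho(|V_{i}|+|\tau|)\bigr)\;\leq\; p_{i}(\wt{\theta},\wt{\sigma})\;\leq\; 2\ol{\Phi}\bigl(\rho(|V_{i}|-|\tau|)_{+}\bigr).
\]
Combining with a quantitative Gaussian tail-ratio estimate of the form $\ol{\Phi}(x(1+\varepsilon))/\ol{\Phi}(x)=1+O(x^{2}\varepsilon)$, valid as long as $x^{2}\varepsilon\lesssim 1$, I deduce on $\Omega_{0}$ that every index $i$ with $p_{i}^{\star}\geq \alpha/n$ satisfies $p_{i}(\wt{\theta},\wt{\sigma})/p_{i}^{\star}\in[1-\eta,\,1+\eta]$ for $\eta$ of the claimed order $\log(n/\alpha)\,\delta$. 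The Gaussian symmetry about $\theta(P)$ is essential here: although the shift $\tau$ may reorder $p$-values across the two tails, the same tail bound applies on both sides.

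By self-consistency of BH (see~\eqref{Tdef}), the plug-in threshold $T=T_{\alpha}(\wt{\theta},\wt{\sigma})$ satisfies $\sum_{i=1}^{n}\mathds{1}\{p_{i}(\wt{\theta},\wt{\sigma})\leq T\}\geq nT/\alpha$; the previous pointwise comparison then yields $\sum_{i=1}^{n}\mathds{1}\{p_{i}^{\star}\leq (1+\eta)T\}\geq n(1+\eta)T/(\alpha(1+\eta))$, which is the BH inequality at oracle level $\alpha(1+\eta)$, so $(1+\eta)T\leq T^{\star}_{\alpha(1+\eta)}$. The mirror argument provides $(1-\eta)T\geq T^{\star}_{\alpha(1-\eta)}$, hence the sandwich $\BH^{\star}_{\alpha(1-\eta)}\subset \BH_{\alpha}(\wt{\theta},\wt{\sigma})\subset \BH^{\star}_{\alpha(1+\eta)}$ on $\Omega_{0}$. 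The left inclusion immediately implies~\eqref{eq:lower_TDP_thm}, since $\TDP$ is monotone under inclusion of the rejection set and $\P(\Omega_{0}^{c})\leq e^{-n^{1/2}}$.

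The FDR bound~\eqref{eq:upper_FDR_thm} is the main technical obstacle, because $\FDP$ is \emph{not} monotone under inclusion of rejection sets and the sandwich from the previous step is therefore insufficient by itself. My plan is to use a leave-one-out decoupling: for each $i\in\cH_{0}(P)$, I replace $(\wt{\theta},\wt{\sigma})$ by estimators $(\wt{\theta}_{-i},\wt{\sigma}_{-i})$ computed from $Y_{-i}$. These are within $O(1/n)$ of the original estimators (an error absorbed into $\eta$), are independent of $Y_{i}$, and make $p_{i}(\wt{\theta}_{-i},\wt{\sigma}_{-i})$ almost uniform on $[0,1]$ conditionally on the rest of the sample. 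Combining with the drop-one identity for the BH rejection set (cf.~\cite{FZ2006}) bounds each null's contribution to the FDR by $\alpha(1+\eta)/n$ and delivers~\eqref{eq:upper_FDR_thm}, up to the $e^{-n^{1/2}}$ contribution coming from $\P(\Omega_{0}^{c})$. The genuine difficulties are (i) obtaining the leave-one-out stability uniformly in $P\in\cP$ under $n_{1}(P)\leq k$, and (ii) reconciling the location shift $\tau\neq 0$, which truly reorders $p$-values across the two tails, with the BH combinatorics; both are handled through the Gaussian symmetry already exploited in the pointwise comparison.
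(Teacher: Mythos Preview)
Your plan is sound and tracks the paper's strategy closely: concentrate $(\wt{\theta},\wt{\sigma})$, translate this into a multiplicative comparison of rescaled and oracle $p$-values, sandwich the plug-in BH between oracle BH procedures at perturbed levels, and handle the FDR via a leave-one-out decoupling. The TDP half of your argument is essentially the paper's (it is exactly the content of~\eqref{equ:forFNR} in Lemma~\ref{lem:forFNR}).

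The substantive difference is your leave-one-out construction. You delete $Y_i$ and form $(\wt{\theta}_{-i},\wt{\sigma}_{-i})$ from the remaining $n-1$ observations, arguing these are $O(1/n)$-close to $(\wt{\theta},\wt{\sigma})$ and independent of $Y_i$, so that $p_i(\wt{\theta}_{-i},\wt{\sigma}_{-i})$ is \emph{approximately} uniform given $Y_{-i}$. The paper instead replaces $Y_i$ by $\mathrm{sign}(Y_i-\theta)\cdot\infty$ and forms $(\wt{\theta}^{(i)},\wt{\sigma}^{(i)})$ from this modified sample. This sign-based replacement buys two things: (i) whenever $p_i(\wt{\theta},\wt{\sigma})$ is below the BH threshold (hence below $\alpha<1/2$), one gets the \emph{exact} identities $\wt{\theta}^{(i)}=\wt{\theta}$, $\wt{\sigma}^{(i)}=\wt{\sigma}$ and $T_\alpha(Y^{(i)};\wt{\theta}^{(i)},\wt{\sigma}^{(i)})=T_\alpha(Y;\wt{\theta},\wt{\sigma})$ (Lemma~\ref{cor:pvaluei}), so no $O(1/n)$ bookkeeping is needed on the threshold side; and (ii) by Gaussian symmetry, $|Y_i-\theta(P)|$ is independent of $Y^{(i)}$ (Lemma~\ref{lem:indep}), so after a single application of the $p$-value comparison the argument lands on the \emph{exact} oracle $p$-value $p_i^{\star}$, which is exactly $U(0,1)$ and exactly independent of the modified threshold. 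Your deletion route is viable but forces you to propagate the estimator perturbation through \emph{all} $p$-values $p_j$, $j\neq i$, in order to compare $T_\alpha(Y;\wt{\theta},\wt{\sigma})$ with a $Y_{-i}$-measurable threshold, and then to control the conditional law of $p_i(\wt{\theta}_{-i},\wt{\sigma}_{-i})$ rather than using exact uniformity. Both routes reach the same bound; the paper's avoids one layer of approximation.

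One minor point: your pointwise ratio statement $p_i(\wt{\theta},\wt{\sigma})/p_i^{\star}\in[1-\eta,1+\eta]$, restricted to $p_i^{\star}\geq\alpha/n$, is not quite the right formulation for the sandwich (the backward inclusion must also cover indices with $p_i^{\star}<\alpha/n$). The paper states the comparison at the level of rejection events, $\mathds{1}\{p_i(u',s')\leq t\}\leq\mathds{1}\{p_i(u,s)\leq(1+\eta)t\}$ for all $t\in[t_0,\alpha]$ (Corollary~\ref{cor:pvaluemaj}), which sidesteps the issue cleanly.
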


Let us check that Theorem~\ref{THMBORNESUP} implies (ii) of Theorem~\ref{mainth}. If $\log(n)k_n/n$ tends to zero and $\alpha\in(1/n,1/2)$,  we have $\eta\leq 2 c_2 \log(n) \left(\frac{k_n}{n}\vee n^{-1/6}\right)$ which is smaller than $0.05$ for $n$ large enough, and by \eqref{eq:upper_FDR_thm} above and \eqref{FDRcontrolBH},
\begin{align*}
\sup_{\alpha\in(1/n,1/2)}\{\I(\BH_\alpha(\wt{\theta},\wt{\sigma}),k)-\alpha\} 
 \leq \eta+ e^{-n^{1/2}},
\end{align*}
which converges to $0$ as $n$ grows to infinity. 
This gives \eqref{IRkn} for $(\wh{\theta},\wh{\sigma})=(\wt{\theta},\wt{\sigma})$. 
Similarly,  \begin{align*}
\sup_{\alpha\in(1/n,1/2)}\{  \II(\BH_{\alpha}(\wt{\theta},\wt{\sigma}),k,\alpha(1-\eta))\} 
\leq e^{- n^{1/2}}\rightarrow 0,
 \end{align*}
which gives \eqref{IIRkn} for $(\wh{\theta},\wh{\sigma})=(\wt{\theta},\wt{\sigma})$ and $\eta_n=\eta$. \\

The proof of Theorem~\ref{THMBORNESUP} is given in Section~\ref{sec:proofupperbound}. The general argument can be summarized as follows. Observing  that the estimators $\wt{\theta},\wt{\sigma}$ converge at the rate $n_1(P)/n+ n^{-1/2}$  (Lemma~\ref{lem:esti}), we mainly have to quantify the impact of these errors on the FDR/TDP metrics.
To show \eqref{eq:upper_FDR_thm}, we establish that the FDR metric is at worst perturbed by the estimation rate  multiplied by $\log(n/\alpha)$. Here, $\alpha/n$ corresponds to the smallest $p$-value threshold of the BH procedure. This can be shown by studying how the $p$-value process is affected by misspecifying the scaling parameters (Lemma~\ref{lem:forFNR}). A difficulty stems from the fact that the FDR metric is not monotonic in the rejection set, so that specific properties of BH procedure and of the estimators $\wt{\theta},\wt{\sigma}$ are required (Lemmas~\ref{cor:pvaluei} and \ref{lem:indep}).
The second result \eqref{eq:lower_TDP_thm} is proved similarly, the main difference being that we need a  slight decrease in the level $\alpha$ (Lemma~\ref{lem:forFNR}) of the oracle procedure $\BH^{\star}_{\alpha}$ to compare the BH thresholds $T_\alpha({\theta}(P),{\sigma}(P))$ and $T_{\alpha}(\wh{\theta},\wh{\sigma})$. 
This results in a level $\alpha(1-\eta)$ instead of $\alpha$ in \eqref{eq:lower_TDP_thm}.

\subsection{Relation between FDR and power across the boundary}
\label{sec:complementlb}
Theorem~\ref{mainth} establishes that it is impossible to perform as well as the oracle BH procedure when $k_n\gg n/\log(n)$. As simultaneously controlling the FDR and power mimicking is out of reach, one may require that, at least, the FDR is controlled. 
Theorem~\ref{THM:LOWER_GENERAL}, applied with $k_1<k_2$, shows that controlling the FDR in the dense case has consequences on the relative type II risk in the sparse case. More precisely, for some $\epsilon>0$, Condition~\eqref{eq:condition_lower_k_0_general} and $k_1\leq k_2$ is satisfied for 
 $k_2=  \log(1/\epsilon) n/\log(n)$ and $k_1=  \epsilon^{(c_2\log(2/\alpha))^{-1}}\log(1/\epsilon) e \frac{n}{ \log n}$ (for $\epsilon$ in a specific range), which entails the following result.

\begin{cor}\label{cor:adaptation}
Consider the same numerical constants $c_1$--$c_3$  as in Theorem~\ref{THM:LOWER_GENERAL} above. Take  any  $\alpha\in (0,c_3)$, any $n\geq c_1$ 
and fix any 
$
 \epsilon\in (n^{-1/2} ;(\alpha/2)^{c_2}).
$
Then for any procedure $R$ with $\I(R,k_2)\leq c_3$ for a sparsity $k_2=  \log(1/\epsilon) n/\log(n)$, we have $\II(R,k_1,\alpha/2)\geq 1/5$ for a sparsity $k_1=\epsilon^{(c_2\log(2/\alpha))^{-1}}\log(1/\epsilon) e n/ \log n$.
In particular, if $n^{-1/4}<(\alpha/2)^{c_2}$,  we have  for any procedure $R$,
\begin{itemize}
\item if $\I(R,n/4)\leq \alpha$, then $\II(R,n^{1-\delta} e/4,\alpha/2)\geq 1/5$;
\item if  $\II(R,n^{1-\delta} e/4,\alpha/2)< 1/5$, then $\I(R,n/4)>c_3$,
\end{itemize}
where we let $\delta=1/(4c_2\log(2/\alpha))>0$.
\end{cor}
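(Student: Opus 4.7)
The plan is to apply Theorem~\ref{THM:LOWER_GENERAL} directly, with the parameters $(k_1,k_2)$ prescribed by the corollary statement. The key observation is that these explicit formulas have been engineered so that the sparsity condition \eqref{eq:condition_lower_k_0_general} is saturated (in fact becomes an equality) for every $\epsilon$ in the admissible range; the free parameter $\epsilon$ just parametrizes the trade-off between $k_1$ and $k_2$ along the boundary.

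First, I would verify the three requirements of Theorem~\ref{THM:LOWER_GENERAL}. A short algebraic computation gives
\[
\log(k_2/k_1) \;=\; \frac{\log(1/\epsilon)}{c_2\log(2/\alpha)} \;-\; 1,
\]
so that the bracketed factor $1+\log(k_2/k_1)$ cancels exactly with the prefactor $c_2\log(2/\alpha)$, leaving
\[
c_2\,\frac{n\log(2/\alpha)}{\log n}\bigl[1+\log(k_2/k_1)\bigr] \;=\; \frac{n\log(1/\epsilon)}{\log n} \;=\; k_2,
\]
i.e.\ the main inequality in \eqref{eq:condition_lower_k_0_general} becomes an equality. The remaining constraints $k_2<n/2$ and $k_1\leq k_2$ reduce respectively to $\epsilon>n^{-1/2}$ and $\epsilon^{(c_2\log(2/\alpha))^{-1}} e \leq 1$, i.e.\ $\epsilon\leq (\alpha/2)^{c_2}$; both are guaranteed by the hypothesis $\epsilon\in(n^{-1/2},(\alpha/2)^{c_2})$. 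Theorem~\ref{THM:LOWER_GENERAL}, applied with its $\alpha$ taken to be the corollary's $\alpha$, then yields $\II(R,k_1,\alpha/2)\geq 1/5$ whenever $\I(R,k_2)\leq c_3$, which is the first assertion.

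For the ``in particular'' statement I would specialize to $\epsilon=n^{-1/4}$, which lies in the admissible range precisely under the standing hypothesis $n^{-1/4}<(\alpha/2)^{c_2}$. Substituting gives $k_2=n/4$ and
\[
k_1 \;=\; n^{-1/(4c_2\log(2/\alpha))}\cdot\frac{\log n}{4}\cdot\frac{e n}{\log n} \;=\; \frac{e\,n^{1-\delta}}{4},
\]
with $\delta=1/(4c_2\log(2/\alpha))$. The first bullet then follows from the previous paragraph, since $\alpha<c_3$ means $\I(R,n/4)\leq\alpha$ implies $\I(R,k_2)\leq c_3$; the second bullet is simply the contrapositive of the first.

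I do not anticipate any genuine obstacle: all the substance is concentrated in Theorem~\ref{THM:LOWER_GENERAL}, and this corollary is essentially a change of variables that exposes one convenient family of $(k_1,k_2)$ lying on the curve $k_2\asymp c_2(n\log(2/\alpha)/\log n)[1+\log(k_2/k_1)]$. The only mildly delicate step is recognizing in advance the parametrization that saturates \eqref{eq:condition_lower_k_0_general}, and this is already baked into the corollary's statement.
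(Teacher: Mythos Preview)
Your approach is exactly the one the paper uses: verify that the explicit choices of $k_1,k_2$ saturate condition~\eqref{eq:condition_lower_k_0_general} and then invoke Theorem~\ref{THM:LOWER_GENERAL}. Your algebraic checks are correct.

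There is one small slip in the final sentence. The second bullet is \emph{not} the contrapositive of the first bullet: the contrapositive of ``$\I(R,n/4)\leq\alpha\Rightarrow\II\geq 1/5$'' only yields $\I(R,n/4)>\alpha$, which is weaker than the claimed $\I(R,n/4)>c_3$ since $\alpha<c_3$. What you actually need is the contrapositive of the \emph{first assertion of the corollary itself} (the implication with threshold $c_3$), specialized to $\epsilon=n^{-1/4}$: that gives ``$\I(R,n/4)\leq c_3\Rightarrow\II\geq 1/5$'', whose contrapositive is precisely the second bullet. This is a one-word fix, but as written the logic is off.
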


In plain words, the above corollary entails that a procedure $R$ controlling the FDR up to a sparsity $\log(1/\epsilon) \frac{n}{\log n}$  (that is of order larger than or equal to the boundary $n/\log(n)$ of Theorem~\ref{mainth}), suffers from a {power loss} in a sparse setting where $n_1(P)$ is of order $\epsilon^{(c_2\log(2/\alpha))^{-1}}\log(1/\epsilon)  \frac{n}{ \log n} $, for which {AMO} is theoretically possible (as stated in Theorem~\ref{THMBORNESUP}). As $\epsilon$ decreases, $R$ is assumed to control the FDR in denser settings and becomes over-conservative in sparser settings. The case $\epsilon=n^{-1/4}$, requiring that the FDR is controlled at the nominal level up to a sparsity $n/4$ enforces a power loss in some "easy" settings where $n_1(P)/n$ is polynomially small. 
In other words, if we require FDR control in the dense regime, we will pay a high power price in the "easy" regime where {AMO} is achievable. Conversely, any AMO procedure in sparse regime violates the FDR control in the dense regime. Corollary~\ref{corbadnews} formalizes this fact with the plug-in BH procedure of Theorem~\ref{THMBORNESUP}.

\section{Known variance}\label{sec:sigmaknown}

This section is dedicated to the simpler case where $\sigma(P)$ is known to the statistician, so that only the mean $\theta(P)$ has to be estimated. In this setting, it turns out that the boundary  for {AMO} is $n/\log^{1/2}(n)$ instead of $n/\log(n)$. 

\begin{thm}\label{mainthsigmaknown}
In the setting of Section~\ref{sec:scalingnull} and according to Definition~\ref{defmimic}, the following holds:
\begin{itemize}
\item[(i)] for a sparsity $k_n$ with $k_n \log^{1/2}(n)/n\gg 1$,  there exists no (sequence of) procedure that is AMO;  
\item[(ii)] for a sparsity $k_n$ with $k_n \log^{1/2}(n)/n = o(1)$, 
 the scaling $(\wt{\theta},\sigma(P))$ given by \eqref{equ:estimators}  is {AMO}.
\end{itemize}
\end{thm}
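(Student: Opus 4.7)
The plan is to follow the template of Theorem~\ref{mainth}, adapting both the upper and lower bound strategies to the setting where only the location parameter is unknown.

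For part~(ii), I would adapt the proof of Theorem~\ref{THMBORNESUP} to the rescaling $(\wt{\theta},\sigma(P))$. The robust median $\wt{\theta}$ from \eqref{equ:estimators} satisfies $|\wt{\theta} - \theta(P)|/\sigma(P) \lesssim n_1(P)/n + n^{-1/2}$ with exponentially high probability, exactly as in the two-parameter case. What changes is how this error propagates to the rescaled $p$-values $p_i(\wt{\theta},\sigma(P)) = 2\ol{\Phi}(|Y_i - \wt{\theta}|/\sigma(P))$: because $\sigma(P)$ is plugged in exactly, the standardized statistic is perturbed only additively, $|Y_i - \wt{\theta}|/\sigma(P) = |Y_i - \theta(P)|/\sigma(P) + O(|\wt{\theta} - \theta(P)|/\sigma(P))$, and not by a multiplicative factor. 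Near the BH threshold where $|Y_i - \theta(P)|/\sigma(P) \asymp \sqrt{2\log(n/\alpha)}$, the Gaussian tail expansion $\log\ol{\Phi}(z+\varepsilon) - \log\ol{\Phi}(z) = -z\varepsilon(1+o(1))$ gives a log $p$-value perturbation of order $\sqrt{\log(n/\alpha)}\cdot(n_1(P)/n + n^{-1/2})$. This is $o(1)$ precisely when $k_n \ll n/\log^{1/2}(n)$, which matches the improved sparsity boundary. Feeding this sharper control into the same FDR/TDP comparison machinery used for Theorem~\ref{THMBORNESUP} then yields \eqref{IRkn} and \eqref{IIRkn} for the rescaling $(\wt{\theta},\sigma(P))$.

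For part~(i), I would adapt the Le~Cam two-point reduction behind Theorem~\ref{THM:LOWER_GENERAL}. Fix the variance $\sigma=1$ (known) and construct two mixture densities on $\R$ of the form $h_j = (1-\pi)\phi(\cdot - \theta_j) + \pi f_j$, $j=1,2$, with $\theta_1 = 0$ and $\theta_2 = \delta_n > 0$, such that $h_1 \equiv h_2$. This forces $\pi(f_2 - f_1) = (1-\pi)(\phi(\cdot - \theta_1) - \phi(\cdot - \theta_2))$, and since $\|\phi(\cdot - \theta_1) - \phi(\cdot - \theta_2)\|_1$ is of order $\delta_n$ for small $\delta_n$, one can pick valid non-negative densities $f_1, f_2$ as soon as $\pi \gtrsim \delta_n$. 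Sampling i.i.d.\ from $h_1=h_2$ yields a random instance of our model that is compatible with either null $\mathcal{N}(\theta_j,1)$, so the distribution of the rejection set of any data-driven procedure is identical under both interpretations. The two corresponding oracle BH procedures, on the other hand, are shifted by $\delta_n$, and a Gaussian tail expansion at $z \asymp \sqrt{\log n}$ shows that their rejection counts differ by at least $\gtrsim n\phi(z)\,\delta_n \gtrsim \sqrt{n/\log n}$ correct rejections with probability close to one. Calibrating $\delta_n \asymp 1/\sqrt{\log n}$ and $\pi \asymp \delta_n$ gives sparsity $n\pi \asymp n/\sqrt{\log n}$, matching the boundary, while forcing a constant relative type~II risk. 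The argument then concludes as in Theorem~\ref{THM:LOWER_GENERAL}: along any $k_n \gg n/\log^{1/2}(n)$, \eqref{IRkn} and \eqref{IIRkn} cannot hold simultaneously.

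The main obstacle lies in the calibration of the lower bound. One must choose $\delta_n$ and $\pi$ so that (a) the two shifted Gaussian mixtures coincide pointwise via non-negative $f_1,f_2$, (b) the gap between the two oracle rejection counts is of macroscopic size, at least $\sqrt{n/\log n}$, with probability close to one, and (c) the total number of alternatives $n\pi$ stays within the sparsity window. Unlike the variance-unknown case, where a multiplicative perturbation of the scale produces a $\sqrt{\log n}$-amplified effect on the standardized statistic, here a pure location shift produces an effect of order $1$, so the boundary is pushed up by a $\sqrt{\log n}$ factor. Tracking the Gaussian tail expansion precisely enough to certify $\sqrt{n/\log n}$ excess rejections with probability close to one is the delicate step. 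On the upper-bound side, the secondary obstacle is the same as in Theorem~\ref{THMBORNESUP}: the FDR is not monotone in the rejection set, so one cannot directly compare $\FDR(\BH_\alpha(\wt{\theta},\sigma(P)))$ and $\FDR(\BH^\star_\alpha)$, and one must rely instead on the leave-one-out structure of the BH threshold together with the symmetry of the Gaussian null.
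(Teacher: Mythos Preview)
Your proposal is correct and takes essentially the same approach as the paper. For part~(ii), the paper indeed reuses the machinery of Theorem~\ref{THMBORNESUP}, replacing Lemma~\ref{lem:forFNR} by a version (Lemma~\ref{lem:forFNRsigmaknown}) in which the perturbation parameter is $\eta = \delta c\sqrt{2\log(1/t_0)}$ rather than $\delta c(\sqrt{2\log(1/t_0)} + 2\log(1/t_0))$, exactly reflecting your observation that only the additive shift survives; for part~(i), the paper builds the two-point mixture $h = \max\{(1-\pi_1)\phi, (1-\pi_2)\phi_\mu\}$ with $\phi_\mu(\cdot)=\phi(\cdot-\mu)$, proves $\mu \gtrsim \pi_2/\sqrt{1+\log(\pi_2/\pi_1)}$ (Lemma~\ref{lem:lower_mu}), and then runs the same dichotomy on $\{|R(Y)|>0\}$ as in Theorem~\ref{THM:LOWER_GENERAL}, which is the argument you defer to in your final sentence.
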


The upper bound (ii) is proved similarly to the upper bound of Theorem~\ref{mainth}, but with the weaker condition $k_n \log^{1/2}(n)/n =o(1)$. For this, one readily checks that Theorem~\ref{THMBORNESUP} extends to the case where $\wh{\sigma}=\sigma(P)$ up to replacing $\eta$ by $\eta=c_2 \log^{1/2}(n/\alpha) \left(\frac{n_1(P)+1}{n}+n^{-1/6}\right)$ (and possibly modifying the constants $c_1$ and $c_2$). The proofs are exactly the same, except that Lemma~\ref{lem:forFNR} has to be replaced by  Lemma~\ref{lem:forFNRsigmaknown}. See Section~\ref{sec:proof_mainth_sigma_known_2} for details. 
Let us additionally provide here a heuristic to explain the value of the boundary. Roughly, the oracle BH procedure is equivalent to the plug-in BH procedure if the corrected observations $Y_i-\wh{\theta}$ can be compared to the Gaussian quantiles $\overline{\Phi}^{-1}(\alpha k/(2n))$ in the same way as the $Y_i-\theta$ do. Hence, the plug-in operation will mimic the oracle if 
$$
| \hat{\theta}-\theta| \ll \min_k\left\{\overline{\Phi}^{-1}(\alpha k/(2n))-\overline{\Phi}^{-1}(\alpha (k-1)/(2n))\right\} \asymp \frac{\alpha/n}{\phi(\overline{\Phi}^{-1}(\alpha /n))},
$$
which leads to $k/n \ll 1/{\log^{1/2} n }$, by using the standard properties on the Gaussian tail distribution (Section~\ref{sec:aux}) and the estimation rate of $\wt{\theta}$ (Section~\ref{sec:estimationrate}).

 {In the remainder of this section, we focus on the impossibility results. We first establish in Theorem~\ref{thm:lower_sigmaknown} the counterpart of Theorem~\ref{THM:LOWER_GENERAL}. This lower bound is valid non asymptotically and for arbitrary testing procedures. 
 Next, we provide a sharper lower bound for plug-in procedures. }

\subsection{Lower bound for a general procedure}\label{sec:lbFDR}

\begin{thm}\label{thm:lower_sigmaknown}
There exist numerical positive constants  $c_1$--$c_5$  such that the following holds for  all $n\geq c_1$ and any $\alpha\in (0,1)$. Consider two positive numbers $ k_1\leq k_2$ satisfying
\beq\label{eq:condition_lower_k_0_bus}
c_2 \frac{n \log\left(2/\alpha\right)}{{\log^{1/2}(n)}}\left\{1+ \log\left(\frac{k_2}{k_1}\right)\right\}^{1/2}\leq k_2  < n/2 \ , 
\eeq
For any multiple testing procedure $R$ 
satisfying  
\[
\FDR(P,R)\leq c_3 \ ,  \mbox{ for any $P \in \mtc{P}$ with $n_1(P)\leq k_2$ },
\]
there exists some $P\in \cP$ with  $n_1(P)\leq k_1$ such that we have  
\begin{align}
\P_{Y\sim P}(|R(Y)\cap \cH_1(P)|= 0)&\geq 2/5\,;\nonumber\\
\P_{Y\sim P}\left[|\BH^{\star}_{\alpha/2}\cap \cH_1(P)|\geq c_4\alpha^{-1}\Big\{\frac{n}{\log n}\Big\}^{1/2}\right]&\geq 1 - e^{-c_5 \alpha^{-1}\{n/\log(n)\}^{1/2}}\geq 4/5\ . 
\end{align}
In particular, we have that $\I(R,k_2)\leq c_3$ implies $\II(R,k_1,\alpha/2)\geq 1/5$.

\end{thm}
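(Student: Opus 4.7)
The plan is to mimic the two-point Le Cam argument that underlies Theorem~\ref{THM:LOWER_GENERAL}, with two adjustments. First, because $\sigma$ is now fixed, the two contrasted mixtures must share the same Gaussian scale and differ only in the location parameter; second, the lower-bound rate is correspondingly weakened from $n/\log(n)$ to $n/\log^{1/2}(n)$. As in the sketch given after Theorem~\ref{THM:LOWER_GENERAL}, we work through a random instance of our model: i.i.d.\ samples from a mixture $h=(1-\pi)\phi_{\theta,\sigma}+\pi f$ realize a random $P\in\mtc{P}$ with $\theta(P)=\theta$ and $n_1(P)$ concentrated around $\pi n$, so it suffices to produce two such mixtures and then transfer to $\mtc{P}$ via a Bernoulli concentration step. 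Once we have two indistinguishable product distributions $h_1^{\otimes n}=h_2^{\otimes n}$ whose associated true nulls differ, no data-driven procedure can mimic both oracles, and the FDR/TDP conclusions follow from the same Le Cam comparison already pushed through in Theorem~\ref{THM:LOWER_GENERAL}.

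The main technical step is to construct, for a suitable choice of $\pi\asymp k_1/n$ and $\tau>0$, two densities $f_1,f_2$ on $\mathbb{R}$ satisfying
$$
(1-\pi)\phi_{\theta,\sigma}+\pi f_1 \;=\; (1-\pi)\phi_{\theta+\tau,\sigma}+\pi f_2,
\qquad f_1,f_2\geq 0,\ \textstyle\int f_j = 1.
$$
Equivalently, one needs $\pi(f_1-f_2)=(1-\pi)(\phi_{\theta+\tau,\sigma}-\phi_{\theta,\sigma})$. Taking, say, $f_1$ proportional to $(\phi_{\theta+\tau,\sigma}-(1-c)\phi_{\theta,\sigma})^+$ and $f_2$ its mirror image, and imposing nonnegativity together with unit mass, forces a budget of the form
$$
\tau \;\lesssim\; \frac{\pi\,\sigma}{\log^{1/2}(1/\pi)},
$$
because the first-order variation $\phi_{\theta+\tau,\sigma}-\phi_{\theta,\sigma}\approx \tau\,\partial_\theta \phi_{\theta,\sigma}$ has mass on a region of $\sigma$-scale width but only of height $\asymp (\tau/\sigma)\phi$, and matching it against a tail-contamination of mass $\pi$ costs an extra $\log^{1/2}(1/\pi)$ factor compared with the scale-contamination construction of Theorem~\ref{THM:LOWER_GENERAL}. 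This is precisely the mechanism that turns the boundary $n/\log(n)$ into $n/\log^{1/2}(n)$.

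With these indistinguishable mixtures in hand, the remaining steps parallel the proof of Theorem~\ref{THM:LOWER_GENERAL}. First, one checks that the chosen alternatives $f_j$ concentrate mass in regions where $\ol{\Phi}(|Y-\theta_j|/\sigma)\ll \alpha/n$, so that for each of the two candidate nulls the oracle BH procedure at level $\alpha/2$ rejects at least $c_4\alpha^{-1}\{n/\log n\}^{1/2}$ true alternatives with probability at least $1-\exp(-c_5\alpha^{-1}\{n/\log n\}^{1/2})$; this is the same tail computation as in the Gaussian part of Theorem~\ref{THM:LOWER_GENERAL} and uses only the scale $\sigma$. Second, one combines indistinguishability with the assumed FDR control via a contrapositive two-point argument: if $R$ satisfies $\FDR(P_{(j)},R)\leq c_3$ under both candidate distributions $P_{(j)}$ (for $j=1,2$), then because their laws of $Y$ coincide, $R(Y)$ has the same law under both and, since the two sets $\cH_1(P_{(j)})$ are essentially disjoint (up to the small overlap forced by $\pi$), $R$ must have near-empty intersection with at least one of them, yielding $\P(|R\cap\cH_1|=0)\geq 2/5$. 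A final Bernoulli concentration converts the $i.i.d.$ mixture statement into the conclusion for an element of $\mtc{P}$ with $n_1(P)\leq k_1$. The hard part of the argument is the careful bookkeeping of nonnegativity and normalization for $f_1,f_2$ under the location-only constraint, since this is what pins down the precise $\log^{1/2}$ loss and hence the sharp boundary; everything else is an adaptation of machinery already developed for Theorem~\ref{THM:LOWER_GENERAL}.
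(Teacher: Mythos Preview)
Your overall plan---a Le Cam two-point argument in which the two indistinguishable mixtures share the same Gaussian scale but differ by a location shift $\tau$---is exactly the route the paper takes. However, two points deserve correction.

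First, and most importantly, your construction uses a \emph{single} mixing proportion $\pi$ on both sides:
\[
(1-\pi)\phi_{\theta,\sigma}+\pi f_1=(1-\pi)\phi_{\theta+\tau,\sigma}+\pi f_2 .
\]
This only yields the theorem in the special case $k_1=k_2$. The full statement with $k_1<k_2$ (which is what feeds into Corollary~\ref{cor:adaptation_known}) requires two different proportions $\pi_1=k_1/(2n)$ and $\pi_2=k_2/(2n)$, and the paper builds
\[
h=(1-\pi_1)\phi+\pi_1 f_1=(1-\pi_2)\phi_{\mu}+\pi_2 f_2=\max\{(1-\pi_1)\phi,(1-\pi_2)\phi_{\mu}\}.
\]
The asymmetry is essential: the FDR-violating distribution lives at sparsity $\asymp k_2$, while the power-loss distribution lives at sparsity $\asymp k_1$. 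With a single $\pi$ you would only obtain the condition $k_1\gtrsim n\log(2/\alpha)/\log^{1/2}(n)$, not condition~\eqref{eq:condition_lower_k_0_bus} with its $\{1+\log(k_2/k_1)\}^{1/2}$ factor.

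Second, your attribution of the $\log^{1/2}$ loss is off. You locate it in the density construction (``matching against a tail contamination of mass $\pi$ costs an extra $\log^{1/2}(1/\pi)$''), but in fact, when $\pi_1=\pi_2=\pi$, the normalization forces $\mu\asymp\pi$ with \emph{no} logarithmic correction; more generally the paper shows $\mu\gtrsim \pi_2/\sqrt{1+\log(\pi_2/\pi_1)}$ (Lemma~\ref{lem:lower_mu}). The $\log^{1/2}(n)$ enters only in Step~3, where one must show the oracle $\BH^\star_{\alpha/2}$ makes many rejections: this requires the cutoff $u_1=\mu/2+\mu^{-1}\log((1-\pi_1+16/\alpha)/(1-\pi_2))$ to satisfy $u_1\le\sqrt{\log n}$, i.e.\ $\mu\gtrsim \log(2/\alpha)/\sqrt{\log n}$. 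Combining this with the lower bound on $\mu$ is precisely what produces~\eqref{eq:condition_lower_k_0_bus}. So the boundary rate comes from the interaction between the achievable shift $\mu$ and the BH threshold analysis, not from a budget in building $f_1,f_2$.
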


This result is qualitatively similar to Theorem~\ref{THM:LOWER_GENERAL}, up to the change the boundary condition \eqref{eq:condition_lower_k_0_general} into \eqref{eq:condition_lower_k_0_bus}. Taking $k_1=k_2=k_n \gg n/{\log^{1/2}(n)}$, we deduce part (i) of Theorem~\ref{mainthsigmaknown}.

As in Section~\ref{sec:complementlb}, we also deduce from Theorem~\ref{thm:lower_sigmaknown} that no procedure $R$ can simultaneously control the $\FDR$ at the nominal level  up to some $k_n\gg n/{\log^{1/2}(n)}$ while being also  {AMO} for all sequences $k_n\ll n/{\log^{1/2}(n)}$. 

\begin{cor}\label{cor:adaptation_known}
Consider the same numerical constants $c_1$--$c_5$  as in Theorem~\ref{thm:lower_sigmaknown} above. Take  any  $\alpha\in (0,c_3)$, any $n\geq c_1$ 
and fix any 
$
 \epsilon\in (n^{-1/4} ;(\alpha/2)^{c_2} e^{-(c_2 \log(2/\alpha))^2}).
$
Then for any procedure $R$ with $\I(R,k_2)\leq c_3$ for a sparsity $k_2= { \log^{1/2}(1/\epsilon)} \frac{n}{{\log^{1/2} n}}$, we have $\II(R,k_1,\alpha/2)\geq 1/5$ for a sparsity $k_1=\epsilon^{(c_2\log(2/\alpha))^{-2}}{\log^{1/2}(1/\epsilon)} e \frac{n}{{ \log^{1/2} n}}$.
In particular, if $n^{-1/16}<(\alpha/2)^{c_2} e^{-(c_2 \log(2/\alpha))^2}$,  we have  for any procedure $R$,
\begin{itemize}
\item if $\I(R,n/4)\leq \alpha$, then $\II(R,n^{1-\delta} e/4,\alpha/2)\geq 1/5$;
\item if  $\II(R,n^{1-\delta} e/4,\alpha/2)< 1/5$ then $\I(R,n/4)>c_3$,
\end{itemize}
where we let $\delta=1/(16 c^2_2\log^2(2/\alpha))>0$.
\end{cor}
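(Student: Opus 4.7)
\textbf{Proof plan for Corollary~\ref{cor:adaptation_known}.} The strategy mirrors that of Corollary~\ref{cor:adaptation}: the statement is a direct algebraic consequence of Theorem~\ref{thm:lower_sigmaknown}, once the pair $(k_1,k_2)$ in the corollary is plugged into the boundary condition~\eqref{eq:condition_lower_k_0_bus}. The only work is to tune $k_1$ to $k_2$ so that the inequality \eqref{eq:condition_lower_k_0_bus} is saturated.

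First, I would verify that $k_1\leq k_2$ under the given range of $\epsilon$. Writing the ratio explicitly,
\[
k_2/k_1 \;=\; \epsilon^{-(c_2\log(2/\alpha))^{-2}} e^{-1},
\]
so $k_1\leq k_2$ is equivalent to $\log(1/\epsilon)\geq (c_2\log(2/\alpha))^{2}$, which follows from the assumed upper bound $\epsilon<(\alpha/2)^{c_2}e^{-(c_2\log(2/\alpha))^{2}}$. I would likewise check $k_2<n/2$: this amounts to $\log^{1/2}(1/\epsilon)<\log^{1/2}(n)/2$, i.e.\ $\epsilon>n^{-1/4}$, which is exactly the assumed lower bound on $\epsilon$.

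Next, I would check the main inequality \eqref{eq:condition_lower_k_0_bus}. From the ratio above,
\[
1+\log(k_2/k_1) \;=\; (c_2\log(2/\alpha))^{-2}\,\log(1/\epsilon),
\]
so that $\{1+\log(k_2/k_1)\}^{1/2}=(c_2\log(2/\alpha))^{-1}\log^{1/2}(1/\epsilon)$. Substituting into the left-hand side of~\eqref{eq:condition_lower_k_0_bus} gives
\[
c_2\,\frac{n\log(2/\alpha)}{\log^{1/2}(n)}\cdot (c_2\log(2/\alpha))^{-1}\log^{1/2}(1/\epsilon) \;=\; \log^{1/2}(1/\epsilon)\,\frac{n}{\log^{1/2}(n)} \;=\; k_2,
\]
so that \eqref{eq:condition_lower_k_0_bus} holds with equality. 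By Theorem~\ref{thm:lower_sigmaknown}, any $R$ with $\I(R,k_2)\leq c_3$ must satisfy $\II(R,k_1,\alpha/2)\geq 1/5$, which is the first assertion.

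For the ``in particular'' clause, I would specialise to $\epsilon=n^{-1/16}$; the hypothesis $n^{-1/16}<(\alpha/2)^{c_2}e^{-(c_2\log(2/\alpha))^{2}}$ together with $n^{-1/16}>n^{-1/4}$ places $\epsilon$ in the admissible range. Then $\log^{1/2}(1/\epsilon)=\log^{1/2}(n)/4$, so $k_2=n/4$, while
\[
k_1 \;=\; n^{-1/(16c_2^{2}\log^{2}(2/\alpha))}\cdot \frac{\log^{1/2}(n)}{4}\cdot e\cdot \frac{n}{\log^{1/2}(n)} \;=\; n^{1-\delta}e/4,
\]
with $\delta=1/(16c_2^{2}\log^{2}(2/\alpha))$. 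The two bullet points then follow directly, the second being the contrapositive of the first. The only delicate point is the bookkeeping of the constants, chosen precisely so that the inequality~\eqref{eq:condition_lower_k_0_bus} is saturated; no new probabilistic argument is required beyond Theorem~\ref{thm:lower_sigmaknown}.
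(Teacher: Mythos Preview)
Your proof is correct and follows exactly the approach implicit in the paper, which simply instantiates Theorem~\ref{thm:lower_sigmaknown} with the specific pair $(k_1,k_2)$ so that condition~\eqref{eq:condition_lower_k_0_bus} is met with equality. One small remark: the second bullet is not literally the contrapositive of the first (which would yield $\I>\alpha$ rather than $\I>c_3$); it is the contrapositive of the general statement ``$\I(R,k_2)\le c_3\Rightarrow \II(R,k_1,\alpha/2)\ge 1/5$'' specialised to $\epsilon=n^{-1/16}$, while the first bullet additionally uses $\alpha<c_3$.
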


\subsection{Lower bound for plug-in procedures}

In the previous section, we established an impossibility result for all multiple testing procedures $R$. In this section, we turn our attention to the special case of plug-in procedures $\BH_\alpha(\wh{\theta},\sigma(P))$ where $\widehat{\theta}$ is any estimator of $\theta(P)$.

\begin{thm}\label{THM:THETACHAPFORFDR}
There exist positive numerical constants $c_1$--$c_3$ such that the following holds for all $\alpha\in (0,1)$, all $n\geq N(\alpha)$, any estimator $\widehat{\theta}$,  and all $k$ satisfying 
\beq\label{eq:condition_lower_k_0}
 c_1\frac{n\log(2/\alpha)}{{\log^{1/2}(n)}}\leq   k  < \frac{n}{2}\ .
\eeq
There exists $P\in \cP$ with $n_1(P)\leq k$ and an event $\Omega$ of probability higher than $1/2-c_2/n$ such that, on $\Omega$, the plug-in procedure $\BH_\alpha(Y;\wh{\theta},\sigma(P))$ satisfies both
\begin{eqnarray}
\big|\BH_\alpha(Y;\wh{\theta},\sigma(P))\cap \cH_0(P)\big|&\geq& 0.5 n^{3/4}\ ; \nonumber \\
\FDP(P,\BH_\alpha(Y;\wh{\theta},\sigma(P))) &\geq& \frac{1}{2+ c_3n^{-1/5}}\ . \label{eq:lower_fdp_sigma_known}
\end{eqnarray}
\end{thm}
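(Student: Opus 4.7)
The plan is a two-point Le Cam reduction tailored to the restricted class of plug-in BH procedures when the scale parameter $\widehat{\sigma}=\sigma(P)$ is fixed at its true value. The driving intuition is that since only $\theta$ can be misestimated, a small but systematic error $\widehat{\theta}-\theta(P)$ cannot be absorbed by a rescaling of $\widehat{\sigma}$ (as it could in Theorem~\ref{mainth}); the resulting translation in the argument of $\overline{\Phi}$ converts a non-negligible number of true-null observations into small $p$-values that BH is forced to reject. We calibrate $\epsilon$ so that this shift is of the same order as the Gaussian spacing $\log(2/\alpha)/\sqrt{\log n}$ at the $p$-value level $\alpha n^{-1/4}$, which is exactly what the boundary condition \eqref{eq:condition_lower_k_0} permits.

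\textbf{Construction.} Set $\pi=k/n$ and $\epsilon=c_\epsilon\log(2/\alpha)/\sqrt{\log n}$ with $c_\epsilon$ a large enough absolute constant; \eqref{eq:condition_lower_k_0} ensures $\epsilon\le\pi/2$. Build a common marginal density
\[
h(y)=(1-\pi)\max\bigl(\phi(y),\phi(y-\epsilon)\bigr)+\pi g(y),
\]
where $g$ is a nonnegative density with bounded support near $\epsilon/2$, chosen so that $h$ integrates to one (possible precisely when $\epsilon\le\pi/2$ up to constants). Then $h$ admits two valid mixture representations $h=(1-\pi)\phi+\pi f_0=(1-\pi)\phi(\cdot-\epsilon)+\pi f_1$, in which $f_0$ and $f_1$ are concentrated near $\epsilon/2$ and have tails controlled by $(1-\pi)/\pi \cdot\phi(\cdot-\epsilon)$ and $(1-\pi)/\pi \cdot\phi(\cdot)$ respectively. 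Let $P_0,P_1$ be the corresponding product mixture distributions on $\R^n$, so that $\theta(P_a)=a\epsilon$ and $\sigma(P_a)=\sigma(P)$. By Binomial concentration, $\{n_1(P_a)\le k\}$ holds with probability $1-e^{-cn}$, and conditioning on this event produces distributions in $\cP$ with the required sparsity at a total-variation cost of $O(1/n)$.

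\textbf{False rejections from indistinguishability.} Because $Y\sim h^{\otimes n}$ under both $P_0$ and $P_1$, for any estimator $\widehat{\theta}$ one has $\P_{P_0}(\widehat{\theta}\ge\epsilon/2)+\P_{P_1}(\widehat{\theta}<\epsilon/2)=1$, so one of these is at least $1/2$; assume the first WLOG, set $P=P_0$ and $\Omega=\{\widehat{\theta}\ge\epsilon/2\}$. Fix $t^\star=\alpha n^{-1/4}/2$ and $u^\star=\overline{\Phi}^{-1}(t^\star/2)\asymp\sqrt{(\log n)/2}$. On $\Omega$, the coordinatewise sufficient condition $Y_i\le\epsilon/2-u^\star$ already forces $p_i(\widehat{\theta},\sigma(P))\le t^\star$, regardless of the value of $\widehat{\theta}$. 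Let $N$ count such $i\in\cH_0(P)$; under $P_0$, $N$ is a Binomial variable with $\E N\ge n_0\overline{\Phi}(u^\star-\epsilon/2)$. Mills' ratio gives $\overline{\Phi}(u^\star-\epsilon/2)\ge(t^\star/2)\,e^{u^\star\epsilon/2}$, and with $c_\epsilon$ chosen so that $u^\star\epsilon/2\ge\log(8/\alpha)$, we get $\E N\ge 2n^{3/4}$; a Bernstein bound then gives $N\ge n^{3/4}/2$ with probability $1-e^{-cn^{3/4}}$. Since $N\ge nt^\star/\alpha$, the BH threshold satisfies $t_{\BH}\ge t^\star$, and $|\BH_\alpha(Y;\widehat{\theta},\sigma(P))\cap\cH_0(P)|\ge N\ge 0.5\,n^{3/4}$ on the event $\Omega\cap\{N\ge n^{3/4}/2\}$, whose probability is at least $1/2-c_2/n$.

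\textbf{FDP bound and main obstacle.} For the FDP lower bound I would upper bound the true-rejection count $T$ at threshold $t_{\BH}$ by exploiting the structure of $f_0$: its bulk component $g$ has support inside $[\widehat{\theta}-u^\star,\widehat{\theta}+u^\star]$ on $\Omega$ (hence contributes nothing), while its tail component $(1-\pi)/\pi\cdot(\phi(\cdot-\epsilon)-\phi)_+$ yields $\P_{f_0}(|Y-\widehat{\theta}|\ge u(t_{\BH}))\le(1-\pi)/\pi\cdot\overline{\Phi}(u(t_{\BH})-\epsilon/2)(1+o(1))$. A Bernstein argument then gives $T\le n_1(1-\pi)/\pi\cdot\overline{\Phi}(u(t_{\BH})-\epsilon/2)(1+c_3n^{-1/5})$, which matches the lower bound on $F$ up to the ratio $n_1(1-\pi)/(n_0\pi)=1+O(1/n)$, hence $T/F\le 1+c_3n^{-1/5}$ and $\FDP\ge 1/(2+c_3n^{-1/5})$. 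The main obstacle is the statistical coupling between $\widehat{\theta}$ (a function of all $Y_i$) and the individual indicators used to lower bound $N$ and upper bound $T$; this is handled by isolating a \emph{global} event ($\Omega$ or $\{\widehat{\theta}\in[\epsilon/2,3\epsilon/2]\}$) and combining it via the union bound with \emph{per-coordinate} events whose probabilities only involve a single $Y_i$. A secondary delicate point is preserving the constant $1/2$: this relies crucially on the exact mass-matching in the mixture $h$, which makes the null- and alternative-rejection probabilities agree at leading order at $t_{\BH}$.
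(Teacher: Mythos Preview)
Your two-point reduction and the argument for the first conclusion (false rejections $\geq 0.5\,n^{3/4}$) are essentially the paper's. The gap is in the FDP lower bound.

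On $\Omega=\{\widehat{\theta}\geq \epsilon/2\}$ you have \emph{no upper bound} on $\widehat{\theta}$, and both the random BH threshold $t_{\BH}$ and the true-rejection count $T$ depend on $\widehat{\theta}$ in ways your sketch does not control. Two concrete failures:
\begin{itemize}
\item Your construction introduces an extra bulk component $g$ in $f_0$. You assert its support lies in $[\widehat{\theta}-u^\star,\widehat{\theta}+u^\star]$ and hence contributes nothing to $T$; but this breaks whenever $\widehat{\theta}$ drifts away from $\epsilon/2$, and also whenever $t_{\BH}>t^\star$ (so $u(t_{\BH})<u^\star$). Shrinking to $\{\widehat{\theta}\in[\epsilon/2,3\epsilon/2]\}$, as you suggest, forfeits the $\geq 1/2$ probability from the Le~Cam step.
\item Your ``Bernstein argument'' for $T$ is applied at the random threshold $t_{\BH}$ with random center $\widehat{\theta}$; the summands are not i.i.d., so Bernstein is not available. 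You also compare your lower bound on $F$ (left tail only, at the fixed level $t^\star$) with an upper bound on $T$ at the different level $t_{\BH}$.
\end{itemize}

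The paper removes both obstacles at once. First, it takes the shift $\mu$ to be the \emph{unique} value making $h=(1-\pi)\max(\phi,\phi_\mu)$ already a density, so $g\equiv 0$ and the alternative is exactly $f_1=\tfrac{1-\pi}{\pi}(\phi_\mu-\phi)_+\leq \tfrac{1-\pi}{\pi}\phi_\mu$. Second, it controls both the null and alternative empirical processes uniformly in $t$ via DKW, expressing each count at threshold $t$ through $\Psi(t)+\Psi^+(t)$ with $\Psi(t)=\ol{\Phi}(\ol{\Phi}^{-1}(t/2)-\widehat{\theta})$ and $\Psi^+(t)=\ol{\Phi}(\ol{\Phi}^{-1}(t/2)+\widehat{\theta})$. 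The crucial step you are missing is the elementary monotonicity
\[
v\longmapsto \ol{\Phi}(x-v)+\ol{\Phi}(x+v)\quad\text{is nondecreasing in }|v|,
\]
combined with the algebraic fact $|\widehat{\theta}-\mu|\leq \widehat{\theta}$ on $\{\widehat{\theta}\geq \mu/2\}$. This shows that, for \emph{every} threshold and \emph{every} $\widehat{\theta}\geq \mu/2$, the two-sided rejection probability under $\phi_\mu$ is dominated by that under $\phi$; multiplying by $n_1\tfrac{1-\pi}{\pi}\approx n_0$ forces $T\leq F(1+o(1))$ uniformly, which is exactly what yields the constant $1/2$ without any restriction on the magnitude of $\widehat{\theta}$.
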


This theorem enforces that  no plug-in procedure $\BH_\alpha(Y;\wh{\theta},\sigma(P))$ is able  to control the $\FDR$ at the nominal level in dense settings ($k_n\gg n/{\log^{1/2}(n)}$). In fact, the FDP of plug-in procedures $\BH_\alpha(Y;\wh{\theta},\sigma(P))$  is even shown to be at least of the order of  $1/2$ with probability close to $1/2$. On the same event, the plug-in procedure $\BH_\alpha(Y;\wh{\theta},\sigma(P))$ makes many false rejections. This statement is much stronger than the one of Theorem~\ref{thm:lower_sigmaknown} (in the case $k_1=k_2$).

In contrast to the previous lower bounds, the proof of Theorem~\ref{THM:THETACHAPFORFDR} relies on a tighter control of the shifted $p$-value process and quantifies its impact on the BH threshold.

\section{Extension to general location models} \label{sec:generallocation}

In this section, we generalize our approach to the case where the null distribution is not necessarily Gaussian.
For simplicity, we focus here on the location model. Let $\cG$ denote the collection of densities on $\R$ that are  symmetric, continuous and non-increasing on $\R_+$. Given any $g\in \cG$, we extend the setting  of Section~\ref{sec:scalingnull}, by now assuming that $P=\otimes_{i=1}^n P_i$ belongs to the collection $\mathcal{P}_g$ of all distributions on $\R^n$ satisfying
\begin{equation}\label{equ-assumpfondgeneral}
\mbox{there exists } \theta\in \R \mbox{ such that } |\{i\in\{1,\dots,n\} \::\: P_i \mbox{ has density } g(\cdot-\theta)\}|>n/2.
\end{equation}
 In other words, we assume that there exists $\theta$ such that at least half of the $P_i$'s have for density $g(\cdot-\theta)$. Such $\theta$ is therefore uniquely defined from $P$, and we denote it again by $\theta(P)$.
The testing problem becomes
\begin{center}
$H_{0,i}: ``P_i \sim g(\cdot-\theta(P))"$ against $H_{1,i}: ``P_i \nsim g(\cdot-\theta(P))"$, for all $1\leq i \leq n$.
\end{center}

The rescaled $p$-values are now defined by 
\begin{equation}\label{equ-pvaluesgeneral}
p_i(u) =2\ol{G}\left(|Y_i-u|\right)  , \:\: u\in\R, \:\:1\leq i \leq n ,
\end{equation}
where $\ol{G}(y)=\int_y^{+\infty} g(x)dx$, $y\in\R$. The oracle $p$-values are given by $p_i^{\star}=2\ol{G}\left(|Y_i-\theta(P)|\right)$, $1\leq i \leq n$. The BH procedure at level $\alpha$ using $p$-values $p_i(u)$, $1\leq i\leq m$, is denoted $\BH_\alpha(u)$, whereas  the oracle version is still  denoted $\BH^\star_\alpha$.

\medskip 

For a given sparsity sequence $k_n\in[1,n/2)$, the  sequence of procedure $R=(R_\alpha)_{\alpha\in (0,1)}$ %
 is said to be {AMO} if there exists a positive sequence 
$\eta_n\to 0$
  such that
 \begin{align}
 \limsup_n \sup_{\alpha\in(1/n,1/2)}\{\I_g(R_\alpha,k_n)- \alpha\} &\leq 0;\label{IRkng}\\
 \lim_n \sup_{\alpha\in(1/n,1/2)}\{ \II_g(R_\alpha,k_n,\alpha(1-\eta_n))\} &=0\label{IIRkng}, 
 \end{align}
 where $\I_g(\cdot)$ and $\II_g(\cdot)$ are respectively defined as \eqref{defI} and \eqref{defII}, except that $\mathcal{P}$  is replaced by $\mathcal{P}_g$ therein.  
Similarly, for any sequence of estimators $\wh{\theta}$ of $\theta(P)$, the rescaling $\wh{\theta}$ is said to be AMO if $(R_\alpha)_{\alpha\in (0,1)}=(\BH_\alpha(\wh{\theta}))_{\alpha\in (0,1)}$ is AMO.

\subsection{Lower bounds}

We first state two conditions under which \eqref{IRkng} and \eqref{IIRkng} cannot hold together.

\begin{thm}\label{thm:lower_location}
Consider any $g\in \cG$.  There exist numerical positive constants  $c_1$ and $c_2$ and a constant $c_{g}$ (only depending on $g$)  such that the following holds for  all $n> 2k \geq  c_1$ and any $\alpha\in (0,1/2)$. Assume that
\beq\label{eq:condition_scaling}
\frac{k}{n c_{g}} \geq  \min_{t\in [\frac{\alpha}{2n};\frac{\alpha}{12}]} \left[\overline{G}^{-1}\left(\frac{t}{2}\right)- \ol{G}^{-1}\left(\frac{12 t}{\alpha}\right)\right]\ ,
\eeq
and consider 
\[
 t_0=\max\left\{t\in \bigg[\frac{\alpha}{2n};\frac{\alpha}{12}\bigg]\ \text{s.t.} \quad  \overline{G}^{-1}\left(\frac{t}{2}\right)- \ol{G}^{-1}\left(\frac{12 t}{\alpha}\right)\leq \frac{k}{n c_g}  \right\}\ . 
\]
For any multiple testing procedure $R$ 
satisfying  
\[
 \FDR(P,R)\leq \frac{1}{5} \ , \mbox{ for all $P \in \mtc{P}_g$ with $ n_1(P)\leq k$ },
%\sup_{\substack{P \in \mtc{P}\\ n_1(P)\leq k}}  \FDR(P,R)\leq \frac{1}{5} \ , 
\]
there exists some $P\in \cP_g$ with  $n_1(P)\leq k$ such that we have 
\begin{align}
\P_{Y\sim P}(|R(Y)\cap \cH_1(P)|= 0)&\geq 2/5\,;\nonumber\\
\P_{Y\sim P}\left[|\BH^{\star}_{\alpha/2}\cap \cH_1(P)|\geq \frac{2nt_0}{\alpha}\right]&\geq 1 - e^{-c_2 \alpha^{-1} nt_0}\ . 
\end{align}
In particular,  $\I_g(R,k)\leq 1/5$ implies $\II_g(R,k,\alpha/2)\geq 2/5-e^{-c_2 \alpha^{-1} nt_0}$.
\end{thm}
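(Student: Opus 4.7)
The plan is to mirror the Le~Cam two-point strategy used for Theorem~\ref{THM:LOWER_GENERAL}, now adapted to the location model. By the random-instance interpretation recalled in Section~\ref{sec:scalingnull}, a two-group model with alternative proportion $\pi$ and common alternative density $f$ is a specific instance of $\cP_g$, so it suffices to exhibit two mixture densities on $\R$,
$$h^{(j)}(x) \;=\; (1-\pi)\,g(x-\theta_j)\,+\,\pi\,f_j(x),\qquad j=1,2,$$
with identical marginal $h^{(1)}=h^{(2)}=h$ but distinct null locations $\theta_1\neq \theta_2$. Any multiple testing procedure $R$ then has the same distribution under either scenario, while the oracles and the hypotheses $\cH_0(P^{(j)}),\cH_1(P^{(j)})$ differ.

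\emph{Construction of the pair.} I would take $\theta_1=0$, $\theta_2=\delta$ with $\delta=k/(nc_g)$, and $\pi$ slightly below $k/n$ so that a Chernoff bound on the binomial ensures $n_1(P^{(j)})\leq k$ with exponentially high probability. The matching condition reduces to $\pi(f_1-f_2)=(1-\pi)(g(\cdot-\delta)-g(\cdot))$; writing $r=g(\cdot-\delta)-g=r^+-r^-$ with $\int r^+=\int r^-$, one can set $\pi f_1=(1-\pi)\,r^-+\pi q$ and $\pi f_2=(1-\pi)\,r^++\pi q$ for any auxiliary density $q$, producing two valid probability densities provided $(1-\pi)\|r\|_1\leq 2\pi$. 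Since $g$ is symmetric and continuous, $\|r\|_1\leq 2c_g\delta$ for a suitable constant $c_g$ depending only on $g$, which is compatible with the choices $\delta=k/(nc_g)$ and $\pi\asymp k/n$. By construction $f_1$ concentrates mass on the side $\{x<\delta/2\}$ and $f_2$ on the side $\{x>\delta/2\}$, breaking the symmetry between the two scenarios.

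\emph{Bounding the two oracles.} Under scenario~$1$ the oracle $p$-values are $p_i^\star=2\ol{G}(|Y_i|)$, and $\P_{Y\sim h}(p_i^\star\leq t)=(1-\pi)\,t+\pi\,\P_{Y\sim f_1}(p_i^\star\leq t)$. The defining inequality of $t_0$ translates, after computing $\P_{f_1}(p_i^\star\leq t_0)$ via the pointwise lower bound $f_1\geq (1-\pi)r^-/\pi$ and exploiting symmetry of $g$, into $\P_{Y\sim h}(p_i^\star\leq t_0)\geq 2t_0/\alpha$ with a multiplicative margin. A Chernoff bound on the empirical $p$-value process then yields the second displayed inequality of the theorem, with exponential probability $1-e^{-c_2\alpha^{-1}nt_0}$. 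For the other side, the data $Y$ has the same distribution under scenario~$2$, but the posterior $\P(i\in\cH_0(P^{(2)})\mid Y_i=y)=(1-\pi)g(y-\delta)/h(y)$ is close to $1$ precisely on the region where the oracle under scenario~$1$ would reject. The FDR assumption $\FDR(P^{(2)},R)\leq 1/5$ therefore controls the expected number of rejections of $R$ contributing to $\cH_0(P^{(2)})$; using the identity
$$\P(i\in\cH_1(P^{(1)})\mid Y_i=y)+\P(i\in\cH_0(P^{(2)})\mid Y_i=y)\;=\;1-\frac{(1-\pi)\bigl(g(y)-g(y-\delta)\bigr)}{h(y)},$$
which is close to $1$ throughout the relevant region, this transfers to an upper bound on $\E_{Y\sim P^{(1)}}[|R(Y)\cap\cH_1(P^{(1)})|]$ of order a small constant. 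Markov's inequality then delivers $\P_{Y\sim P^{(1)}}(|R(Y)\cap\cH_1(P^{(1)})|=0)\geq 2/5$, and combined with the oracle lower bound yields $\II_g(R,k,\alpha/2)\geq 2/5-e^{-c_2\alpha^{-1}nt_0}$.

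The main obstacle will be step three: rigorously converting the FDR control of $R$ under the auxiliary scenario~$2$ into a quantitative upper bound on $|R(Y)\cap\cH_1(P^{(1)})|$ under scenario~$1$, rather than merely on $\E|R(Y)|$. The subtle point is that $\FDR\leq 1/5$ is a ratio-based constraint, not a rejection-count bound, so one must argue pointwise on the event $\{|R(Y)|>0\}$ using the posterior identity above, and handle the contribution of items at $|y|$ moderate where the posterior need not be close to $1$. The calibration of the constants $c_g$, $\pi$ and of the scale $t_0$ must be tight enough that three constraints hold simultaneously: the density positivity $(1-\pi)\|r\|_1\leq 2\pi$ of the construction, the oracle lower bound of order $2nt_0/\alpha$ from the definition of $t_0$, and the posterior-based conversion of FDR control into near-emptiness of $R(Y)\cap\cH_1(P^{(1)})$.
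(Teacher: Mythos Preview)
Your overall strategy---a Le~Cam two-point argument with the same mixture $h$ realized as $(1-\pi)g+\pi f_1=(1-\pi)g(\cdot-\delta)+\pi f_2$---matches the paper's, and your calibration of the oracle bound via the defining inequality of $t_0$ is essentially correct. The paper's construction is the special case $q\equiv 0$ of yours (it fixes $\pi=k/(2n)$ and solves for $\mu$ so that $f_1,f_2$ are densities), which has the pleasant consequence that $f_1$ and $f_2$ have \emph{disjoint supports} $\{x>\mu/2\}$ and $\{x<\mu/2\}$. Incidentally, your assignment of $r^-,r^+$ to $f_1,f_2$ has a sign swap; with the correct labeling $f_1$ lives on $\{x>\delta/2\}$.

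The genuine gap is your third step, which you yourself flag as the ``main obstacle''. Two claims there do not hold: (i) ``$\FDR(P^{(2)},R)\le 1/5$ controls the expected number of rejections of $R$ in $\cH_0(P^{(2)})$'' is false, since $\FDR$ is the expectation of a ratio, not a bound on $\E|R\cap\cH_0|$; and (ii) even granting an expectation bound $\E|R\cap\cH_1(P^{(1)})|\le c$, Markov yields $\P(|R\cap\cH_1|=0)\ge 1-c$, which requires $c\le 3/5$, not merely ``small''. The paper avoids this obstacle entirely by a much simpler dichotomy on the event $\{|R(Y)|>0\}$: since the marginal of $Y$ is the same under both scenarios, either $\P_{Y\sim Q}(|R|>0)\ge 1/2$ or $\P_{Y\sim Q}(|R|=0)\ge 1/2$. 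In the first case, the disjoint-support property gives pointwise $\P(i\in\cH_0^{(1)}\mid Y)+\P(i\in\cH_0^{(2)}\mid Y)\ge 1$, whence $\E_{Z}\FDR(Q_{1,Z},R)+\E_{Z}\FDR(Q_{2,Z},R)\ge \P_{Y\sim Q}(|R|>0)\ge 1/2$; de-randomizing over $Z$ via Chebychev produces some $P$ with $n_1(P)\le k$ and $\FDR(P,R)>1/5$, contradicting the hypothesis. Hence we are in the second case, and de-randomization gives a $P$ with $\P_{Y\sim P}(|R|=0)\ge 2/5$, which \emph{trivially} implies $\P_{Y\sim P}(|R\cap\cH_1(P)|=0)\ge 2/5$. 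There is no need to compare posteriors across scenarios or to invoke Markov on correct-rejection counts.
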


A consequence of Theorem~\ref{thm:lower_location} is that, for some sparsity sequence $k_n$, if for  all $n> 2k_n \geq  c_1$, Condition~\eqref{eq:condition_scaling} holds with $e^{-c_2  nt_0}\leq 1/5$, it is not possible to achieve any AMO procedure in the sense defined above. Interestingly, Condition~\eqref{eq:condition_scaling}  depends on the variations of $\ol{G}^{-1}(t)$ for small $t>0$. Taking $g= \phi$ and $t=1/\{n\log (n)\}^{1/2}$ and using the relations stated in Lemma~\ref{lem:quantile}, we recover Theorem~\ref{thm:lower_sigmaknown} (case $k_1=k_2$) obtained in the Gaussian location model and the corresponding {sharp} condition $k\gg n/\{\log(n)\}^{1/2}$.

Now consider the Laplace function $g(x)= e^{-|x|}/2$, so that $\ol{G}^{-1}(t)=\log(1/(2t))$. Then Condition~\eqref{eq:condition_scaling} cannot be guaranteed even when $k/n$ is of the order of a constant. More generally, Theorem~\ref{thm:lower_location} is silent for any $g$ such that 
$\min_{t\in [\frac{\alpha}{2n};\frac{\alpha}{12}]} [\overline{G}^{-1}(\frac{t}{2})- \ol{G}^{-1}(\frac{12 t}{\alpha})]$ is of the order of a constant.  

The next result is dedicated to this case. 
Remember that, when $k/n\geq 1/2$, $\theta(P)$ is not identifiable. We show that there exists a threshold $\pi_{\alpha}< 1/2$, such that deriving a {AMO} scaling is impossible when $k/n$ belongs to the region $(\pi_{\alpha},1/2)$. Markedly, $\pi_{\alpha}$ does not depend on $g$. For $\alpha\in(0,1)$, it is defined by
\beq\label{eq}
  \pi_{\alpha} = \frac{\sqrt{(1-\alpha)} -(1-\alpha)}{\alpha}\in (0,1/2)\ . 
\eeq
\begin{thm}\label{thm:lower_location2}
Consider any $\alpha\in (0,1)$ and $  \pi_{\alpha}$ given by \eqref{eq}. There exist a positive constant $c_{\alpha}$ (only depending on $\alpha$)  such that following holds for any $\ol{\pi}\in (\pi_{\alpha}, 1/2)$, any $g\in \cG$ and $n$ larger than a constant depending on $\alpha$ and $\ol{\pi}$. 
For any multiple testing procedure $R$ satisfying
\[
\FDR(P,R)\leq 1/4 \,,\mbox{ for all $P \in \mtc{P}_g$ and $ n_1(P)\leq \ol{\pi}n$ } ,
\]
there exists $P\in \cP_g$ with  $n_1(P)\leq \ol{\pi} n$ 
such that we have 
\begin{align}
\P_{Y\sim P}(|R(Y)|= 0)&\geq 1/3\,;\nonumber\\
\P_{Y\sim P}\left[|\BH^{\star}_{\alpha}\cap \cH_1(P)|\geq n \frac{\overline{\pi}}{4} \right]&\geq 1 - 10 e^{-c_{\alpha} n(\ol{\pi}-\pi_{\alpha})^2}\geq 3/4\ .
\end{align}
In particular, $\I_g(R,\ol{\pi}n)\leq 1/4$ implies $\II_g(R,\ol{\pi}n,\alpha)\geq 1/12$.
\end{thm}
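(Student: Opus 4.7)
The plan is a Bayesian two-point reduction. Even though only the location parameter $\theta$ is unknown, a single mixture density $h$ on $\R$ can admit two valid two-group decompositions at distinct null centers $\theta_1 \neq \theta_2$, provided the separation is calibrated to $\overline{\pi}$. Concretely, fix $\overline{\pi} \in (\pi_\alpha, 1/2)$, choose $a > 0$ by $\ol{G}(a) = (1-2\overline{\pi})/(2(1-\overline{\pi}))$, and set $\theta_1 = -a$, $\theta_2 = a$, and $h(y) := \max(g(y+a), g(y-a))/(2(1-\ol{G}(a)))$. The densities $f_j := (h - (1-\overline{\pi}) g(\cdot - \theta_j))/\overline{\pi}$ for $j = 1, 2$ are non-negative and supported on the half-line opposite to $\theta_j$, so $h$ decomposes as a two-group mixture at either center. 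Introduce the Bayesian prior $Q^{(j)}$ on $\mathcal{P}_g$: sample $Z_i \sim \mathrm{Bernoulli}(\overline{\pi})$ i.i.d., then draw $Y_i \sim g(\cdot - \theta_j)$ if $Z_i = 0$ and $Y_i \sim f_j$ otherwise. Conditional on $E := \{\sum_i Z_i \leq \overline{\pi} n\}$---which has $Q^{(j)}$-probability $\geq 1 - e^{-c n (\overline{\pi} - \pi_\alpha)^2}$ by Hoeffding and implies the majority-null requirement---the realized $P \in \mathcal{P}_g$ has location $\theta_j$ and $n_1(P) \leq \overline{\pi} n$. Crucially, the $Y$-marginal is $h^{\otimes n}$ under both priors.

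The first task is to show $R(Y)$ is almost always empty. Applying the frequentist FDR bound $\leq 1/4$ inside each $Q^{(j)}$ (using that $R(Y)$ is $\sigma(Y)$-measurable, so $\mbe[\ind\{Z_i = 0\} \mid Y] = \eta_j(Y_i)$) and summing over $j$ yields
\[
\mbe_{Y \sim h^{\otimes n}}\!\Big[ \sum_{i \in R(Y)} \frac{\eta_1(Y_i) + \eta_2(Y_i)}{|R(Y)|}\Big] \leq \tfrac{1}{2} + o(1), \qquad \eta_j(y) := \frac{(1 - \overline{\pi})\, g(y - \theta_j)}{h(y)} .
\]
The calibration of $a$ gives $2(1 - \overline{\pi})(1 - \ol{G}(a)) = 1$, hence the pointwise identity $\eta_1(y) + \eta_2(y) = [g(y + a) + g(y - a)]/\max(g(y + a), g(y - a)) \geq 1$. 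Since $\sum_{i \in R(Y)} 1/|R(Y)| = \ind\{|R(Y)| \geq 1\}$, this forces $\mbp_{Y \sim h^{\otimes n}}(R(Y) \neq \emptyset) \leq 1/2 + o(1)$, and therefore $\mbp(R(Y) = \emptyset) \geq 1/3$ for $n$ large enough.

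The second task is to lower-bound the oracle rejection count, where the threshold $\pi_\alpha$ emerges. Under $Q^{(1)}$, because $f_1$ is supported on $\{y > 0\}$, every alternative $i$ satisfies $p^\star_i = 2\ol{G}(|Y_i + a|) \leq 2\ol{G}(a) = (1 - 2\overline{\pi})/(1 - \overline{\pi}) =: t_0$. On the other hand, the BH threshold on the mixture $h$ equals $T^\star = \overline{\pi} \alpha/(1 - \alpha(1 - \overline{\pi}))$ whenever $T^\star \geq t_0$; letting $\beta := 1 - \overline{\pi}$, the condition $T^\star \geq t_0$ rearranges to the quadratic inequality $\alpha \beta^2 - 2 \beta + 1 \geq 0$, whose admissible root $\beta \leq (1 - \sqrt{1 - \alpha})/\alpha$ is precisely $\overline{\pi} \geq \pi_\alpha$. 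So for $\overline{\pi} > \pi_\alpha$, the oracle rejects every alternative, and Bernstein's inequality applied to $\sum_i Z_i$ yields $|\BH^\star_\alpha \cap \cH_1(P)| \geq \overline{\pi} n/4$ with $Q^{(1)}$-probability $\geq 1 - 10\, e^{-c_\alpha n (\overline{\pi} - \pi_\alpha)^2}$. Intersecting the two high-probability events exhibits a realization providing the announced $P$.

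The main obstacle is the sharp emergence of $\pi_\alpha$: it arises as the unique value where the BH threshold on $h$ coincides with the maximal alternative $p$-value gap $t_0$. Lifting this asymptotic mixture-CDF analysis $F(t) = (1 - \overline{\pi}) t + \overline{\pi} G_{\mathrm{alt}}(t)$ to a finite-sample statement on the empirical $p$-value process---uniformly over $g \in \mathcal{G}$, whose only structural inputs are symmetry and monotonicity on $\R_+$---is where the main technical work lies.
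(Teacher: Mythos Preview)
Your two-point construction is exactly the paper's (up to a cosmetic translation by $-\mu/2$), and the algebra identifying $\pi_\alpha$ as the threshold where the asymptotic BH level $T^\star$ meets $t_0$ is correct. However, there is a genuine gap in the first task.

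You sample $Z_i\sim\mathrm{Bernoulli}(\overline{\pi})$ and claim that $E=\{\sum_i Z_i\le \overline{\pi}n\}$ has probability $\ge 1-e^{-cn(\overline{\pi}-\pi_\alpha)^2}$. This is false: since $\mbe[\sum_i Z_i]=\overline{\pi}n$ exactly, $\P(E)$ is only $\approx 1/2$ (the gap $\overline{\pi}-\pi_\alpha$ is irrelevant to this probability). The FDR hypothesis $\FDR(P,R)\le 1/4$ is only available on $E$, so your displayed bound becomes
\[
\mbe_{Y\sim h^{\otimes n}}\!\Big[\sum_{i\in R(Y)}\frac{\eta_1(Y_i)+\eta_2(Y_i)}{|R(Y)|}\Big]\le \tfrac12 + 2\,\P(E^c)\approx \tfrac32\ ,
\]
which, together with $\eta_1+\eta_2\ge 1$, yields only the vacuous $\P(|R|>0)\le 3/2$. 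The paper avoids this by taking the Bernoulli parameter to be $\pi=\overline{\pi}-n^{-1/3}$ (not $\overline{\pi}$ itself), so that $\P(n_1(Z)>\overline{\pi}n)\le e^{-c\,n^{1/3}}$ by Bernstein; the calibration of $a$ and the BH-threshold analysis then use $\pi$ in place of $\overline{\pi}$, and the strict inequality $\overline{\pi}>\pi_\alpha$ ensures $\pi>\pi_\alpha$ for large $n$.

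A secondary point: your final sentence ``intersecting the two high-probability events exhibits a realization providing the announced $P$'' is too quick. The event $\{|R(Y)|=0\}$ does \emph{not} have high probability---you only get $\P_{Y\sim Q}(|R|=0)\ge 1/2$ (after the fix above). To extract a single deterministic $P=Q_{1,z}$ with $n_1(z)\le\overline{\pi}n$ satisfying both $\P_{Y\sim P}(|R|=0)\ge 1/3$ and the $\BH^\star_\alpha$ bound, you need a Markov-type de-randomization: show $\P_Z[\P_{Y\sim Q_{1,Z}}(|R|=0)>1/3]\ge 1/4$ and $\P_Z[\P_{Y\sim Q_{1,Z}}(|\BH^\star_\alpha\cap\cH_1|\ge n\overline{\pi}/4)\ge 1-10e^{-c_\alpha n(\overline{\pi}-\pi_\alpha)^2}]\ge 4/5$, then intersect with $\{n_1(Z)\le\overline{\pi}n\}$. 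The paper spells this out explicitly.
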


To illustrate the above result, take $\alpha\in (0, 1/4]$ and $\ol{\pi}\in (\pi_{\alpha}, 1/2)$. Applying the above result for $\alpha'<\alpha$ with  $\pi_{\alpha'}<\ol{\pi}$, we obtain that, for any procedure $R$ with $\I_g(R,\ol{\pi}n)\leq \alpha$, we have $\II_g(R,\ol{\pi}n,\alpha')\geq 1/12$. In particular, this shows that there exists no {AMO} scaling  in the regime $k_n= n \ol{\pi}$, for $\ol{\pi}\in  (\pi_{\alpha}, 1/2)$. In addition, this holds uniformly over all $g$ in the class $\mathcal{G}$.

\subsection{Upper bound}
Since any $g\in \cG$ is symmetric {and puts a mass around ``$0$"}, $\theta(P)$ also corresponds to  the median of the null distribution. We consider, again, $\wt{\theta}=Y_{\lceil n/2\rceil}$ as the estimator of $\theta(P)$  and plug it into BH to build $\BH_\alpha(\wt{\theta})$.  
The following result holds. % for any $g\in \cG$. 

\begin{thm}\label{THMBORNESUPgeneral}
Consider any $g\in \cG$. 
There exist  constants $c_1(g), c_2(g) >0$ only depending on $g$ such that the following holds for all $n\geq c_1(g)$ and  $\alpha\in (0,0.5)$. Consider an integer $k \leq 0.1 n$ such that
\begin{equation}\label{relationGbar}
 \eta=c_2(g) \left((k/n)\vee n^{-1/6}\right) \max_{t\in [0.95 \alpha/n,\alpha]} \left\{ \frac{1}{\ol{G}^{-1}(t/2)- \ol{G}^{-1}(t )}   \:\frac{g(\ol{G}^{-1}(t))}{g(\ol{G}^{-1}(t/2))}\right\} \leq 0.05.
\end{equation}
Then, we have
\begin{align}
 &\I_g(\BH_\alpha(\wt{\theta}),k) \leq   \alpha (1+\eta) + e^{-n^{1/2}}\ ;\label{eq:upper_FDR_thmgeneral} \\
 &\II_g(\BH_\alpha(\wt{\theta}),k, \alpha(1-\eta))\leq  e^{-n^{1/2}} \ .\label{eq:lower_TDP_thmgeneral}
\end{align}
\end{thm}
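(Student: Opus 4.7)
The strategy is to mimic the three-ingredient structure of the proof of Theorem~\ref{THMBORNESUP}, replacing Gaussian-specific tools by properties of the general density $g\in \cG$. Only $\theta(P)$ needs to be estimated here, which simplifies matters: we play the role of both the estimator and the MAD with the single sample median $\wt{\theta}=Y_{(\lceil n/2\rceil)}$, so we do not need an analogue of the $\wt{\sigma}$ analysis.

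\emph{Step 1: concentration of $\wt{\theta}$.} Since $g$ is continuous, symmetric and non-increasing on $\R_+$ with $\int g=1$, one has $g(0)>0$, and more generally $g>0$ in a neighborhood of $0$. Conditionally on the contamination set, $\wt{\theta}$ sits between the $(\lceil n/2\rceil -k)$-th and $(\lceil n/2\rceil + k)$-th order statistics of the $n_0(P)\geq n/2$ true null observations with density $g(\cdot-\theta(P))$, so a standard quantile concentration argument (Dvoretzky-Kiefer-Wolfowitz, or direct binomial tail bounds) yields, for $n\geq c_1(g)$,
\[
 \P_{Y\sim P}\bigl(|\wt{\theta}-\theta(P)|\leq C(g)\,((k/n)\vee n^{-1/2})\bigr)\geq 1 - e^{-n^{1/2}}\ ,
\]
with a constant $C(g)$ depending only on the behavior of $g$ at $0$. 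This is the analogue of Lemma~\ref{lem:esti}, with the $n^{-1/6}$ leeway in \eqref{relationGbar} absorbing logarithmic overheads from finer concentration steps.

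\emph{Step 2: deformation of the $p$-value process.} Let $\epsilon=|\wt{\theta}-\theta(P)|$. The key analogue of Lemma~\ref{lem:forFNR} is the following: on the concentration event above, for every $i$ and for $p_i^{\star}$ in the range $[\alpha/n,\alpha]$, one has
\[
(1-\eta)\,p_i^{\star}\;\leq\; p_i(\wt{\theta})\;\leq\; (1+\eta)\,p_i^{\star}\ .
\]
To see this, write $p_i^{\star}/2 = \ol{G}(|Y_i-\theta(P)|)$ so that $|Y_i-\theta(P)|=\ol{G}^{-1}(p_i^{\star}/2)$; since $||Y_i-\wt{\theta}| - |Y_i-\theta(P)||\leq \epsilon$, the mean value theorem applied to $\ol{G}$ gives
\[
 \bigl|p_i(\wt{\theta}) - p_i^{\star}\bigr|\;\leq\; 2\epsilon\,g\bigl(\ol{G}^{-1}(p_i^{\star}/2)-\epsilon\bigr)\ .
\]
Using that $g$ is non-increasing on $\R_+$ and $\epsilon\leq \ol{G}^{-1}(t/2)-\ol{G}^{-1}(t)$ for $t$ in the target range (a consequence of \eqref{relationGbar} and Step~1), the shifted argument stays above $\ol{G}^{-1}(p_i^{\star})$, and dividing by $p_i^{\star} = 2\ol{G}(\ol{G}^{-1}(p_i^{\star}/2))$ produces precisely the factor appearing in \eqref{relationGbar}, thereby bounding the relative perturbation by $\eta$.

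\emph{Step 3: oracle-type inequalities for $\BH_\alpha(\wt{\theta})$.} From the Step~2 sandwich we deduce, for the BH thresholds,
\[
 (1-\eta)\,T_{\alpha}(Y;\theta(P))\;\leq\; T_{\alpha}(Y;\wt{\theta})\;\leq\; (1+\eta)\,T_{\alpha}(Y;\theta(P))\ ,
\]
on the concentration event. The lower inequality gives $\BH_{\alpha(1-\eta)}^{\star}\subset \BH_\alpha(\wt{\theta})$, which immediately yields the power statement \eqref{eq:lower_TDP_thmgeneral}. For the FDR side, the upper inequality yields $\BH_\alpha(\wt{\theta})\subset \BH_{\alpha(1+\eta)}^{\star}$, and the strong FDR control of $\BH^{\star}$ recalled in \eqref{FDRcontrolBH} gives \eqref{eq:upper_FDR_thmgeneral} up to the $e^{-n^{1/2}}$ probability of failure of the concentration event. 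To make this rigorous despite the dependence between $\wt{\theta}$ and each individual $p_i(\wt{\theta})$, one conditions on $\wt{\theta}$ and uses the leave-one-out principle for BH (analogues of Lemmas~\ref{cor:pvaluei}--\ref{lem:indep}): removing one observation perturbs the sample median by at most one rank, so the conditional argument applied to $Y_{(i)}$ given $\wt{\theta}$ computed on the remaining sample still provides independence of the crucial comparisons.

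\emph{Main obstacle.} The delicate point is the uniform control in Step~2: one needs the multiplicative bound to hold simultaneously for every index $i$ and every relevant threshold level $t$, while the estimate $\wt{\theta}$ depends on the whole sample. The uniformity in $t$ over $[0.95\alpha/n,\alpha]$ is precisely what the ``$\max_t$'' quantity in \eqref{relationGbar} guarantees, and the monotonicity of $g$ on $\R_+$ will be used repeatedly to turn one-sided density bounds into two-sided multiplicative controls. Combined with the sample-median's specific leave-one-out properties, this delivers both bounds of the theorem.
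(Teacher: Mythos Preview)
Your overall plan mirrors the paper's: adapt the proof of Theorem~\ref{THMBORNESUP} by replacing Lemma~\ref{lem:esti}, Lemma~\ref{cor:pvaluei}, Lemma~\ref{lem:forFNR} and Lemma~\ref{lem:indep} with location-model analogues. Steps~1 and~2 are essentially correct and match the paper's modifications of Lemma~\ref{lem:esti} and Lemma~\ref{lem:forFNR} (the paper states the analogue as Lemma~\ref{lem:forFNRlocation}, working at the indicator level $\ind\{p_i(\wh\theta)\leq t\}\leq \ind\{p_i(\theta)\leq (1+\eta)t\}$ rather than pointwise on $p_i$, but your derivation via the mean value theorem and monotonicity of $g$ is the same computation).

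The genuine gap is in Step~3 for the FDR side. The sentence ``the upper inequality yields $\BH_\alpha(\wt\theta)\subset \BH_{\alpha(1+\eta)}^{\star}$, and the strong FDR control of $\BH^{\star}$ \ldots\ gives \eqref{eq:upper_FDR_thmgeneral}'' is incorrect: $\FDR$ is \emph{not} monotone in the rejection set, so $R\subset R'$ and $\FDR(R')\leq \alpha'$ do not imply $\FDR(R)\leq \alpha'$. The paper explicitly flags this (see the discussion after Theorem~\ref{THMBORNESUP}). What is actually used is the \emph{first} inequality in the key lemma, namely $\ind\{p_i(\wt\theta)\leq T_\alpha(\wt\theta)\}\leq \ind\{p_i^\star\leq (1+\eta)T_\alpha(\wt\theta)\}$, inserted into the representation
\[
\FDP=\frac{\alpha}{n}\sum_{i\in\cH_0}\frac{\ind\{p_i(\wt\theta)\leq T_\alpha(\wt\theta)\}}{T_\alpha(\wt\theta)\vee(\alpha/n)}\ ,
\]
and then the expectation is taken using independence of $p_i^\star$ from the (modified) threshold. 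The inclusion $\BH_\alpha(\wt\theta)\subset \BH_{\alpha(1+\eta)}^{\star}$ is a by-product, not the engine of the FDR bound.

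Your leave-one-out sketch also departs from the paper in a way that matters. ``Removing one observation'' does make $\wt\theta^{(-i)}$ independent of $Y_i$, but then $T_\alpha(Y;\wt\theta)$ is not a function of the reduced sample, and the standard BH identity $|\BH_\alpha|=nT_\alpha/\alpha$ is broken. The paper instead defines $Y^{(i)}$ by replacing $Y_i$ with $\mathrm{sign}(Y_i-\theta)\cdot\infty$: this keeps $n$ $p$-values (with $p_i^{(i)}=0$), so the BH leave-one-out identity $T_\alpha(Y;\wt\theta)=T_\alpha(Y^{(i)};\wt\theta^{(i)})$ holds on the rejection event (the analogue of Lemma~\ref{cor:pvaluei}), \emph{and} crucially $|Y_i-\theta|$ is independent of $Y^{(i)}$ because $g$ is symmetric (this is exactly Lemma~\ref{lem:indep}, unchanged). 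That independence is what lets you compute $\E[\ind\{p_i^\star\leq (1+\eta)T_\alpha(Y^{(i)})\}/T_\alpha(Y^{(i)})]\leq 1+\eta$. So the fix is not merely ``conditioning on $\wt\theta$'' but using this specific sign-based modification together with the symmetry of $g$.
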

If we consider any asymptotic setting where $\eta$ in~\eqref{relationGbar} converges to 0, then it follows from the above theorem that $\wt{\theta}$ is a {AMO} scaling.

Comparing \eqref{eq:condition_scaling} of the lower bound in the previous section with~\eqref{relationGbar}, we observe that those are matching up to the term
$$
\max_{t\in [0.95 \alpha/n,\alpha]} \left\{\frac{g(\ol{G}^{-1}(t))}{g(\ol{G}^{-1}(t/2))}\right\} .
$$
The latter is of the order of a constant for the Subbotin-Laplace cases as illustrated below. 

\subsection{Application to Subbotin distributions}

We now apply our general results to the class of Subbotin distributions. 

\begin{cor}\label{cor:subottin}
Consider the location Subbotin null model for which $g(x)=L^{-1}_\zeta \:e^{-|x|^\zeta/\zeta}$, for some fixed $\zeta>1$ and the normalization constant $L_\zeta=2 \Gamma(1/\zeta)\zeta^{1/\zeta-1}$. Then 
\begin{itemize}
\item[(i)] for a sparsity $k_n\gg n/ (\log(n))^{1-1/\zeta}$,   
there exists no (sequence of) procedure that is AMO. 
\item[(ii)] for a sparsity $k_n\ll n/ (\log(n))^{1-1/\zeta}$,   
 the scaling $\wt{\theta}=Y_{\lceil n/2\rceil}$ is AMO.
 \end{itemize}
\end{cor}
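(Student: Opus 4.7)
\medskip

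\noindent\textbf{Proof plan for Corollary~\ref{cor:subottin}.}

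The strategy is simply to plug the tail behavior of the Subbotin family into the general Theorems~\ref{thm:lower_location} and \ref{THMBORNESUPgeneral}. First I would establish asymptotics for the quantile function $\ol{G}^{-1}$ as $t \to 0^+$. Integrating by parts, the Subbotin tail satisfies $\ol{G}(y) \sim L_\zeta^{-1}\, y^{1-\zeta}\, e^{-y^\zeta/\zeta}$ as $y \to \infty$, which upon inversion gives
\[
\ol{G}^{-1}(t) = (\zeta \log(1/t))^{1/\zeta}\,(1+o(1)), \qquad g(\ol{G}^{-1}(t)) \sim t \,(\zeta \log(1/t))^{1-1/\zeta}\ .
\]
Since $(\ol{G}^{-1})'(t)=-1/g(\ol{G}^{-1}(t))$, a change of variables yields, uniformly for $t$ small and for any fixed constants $0<a<b$,
\[
\ol{G}^{-1}(at)-\ol{G}^{-1}(bt) = \int_{at}^{bt}\frac{ds}{g(\ol{G}^{-1}(s))} \asymp \frac{\log(b/a)}{(\log(1/t))^{1-1/\zeta}}\ .
\]

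For part (i), I apply this identity with $a=1/2$ and $b=12/\alpha$ to the minimum appearing in Theorem~\ref{thm:lower_location}. Since $\zeta>1$, the map $t\mapsto (\log(1/t))^{1/\zeta-1}$ is increasing, so the minimum over $t\in[\alpha/(2n),\alpha/12]$ is attained at $t=\alpha/(2n)$ and is of order $(\log n)^{1/\zeta-1}$. Given any $k_n \gg n/(\log n)^{1-1/\zeta}$, condition \eqref{eq:condition_scaling} is therefore met for $n$ large enough. To conclude the impossibility, I still need $e^{-c_2 n t_0/\alpha}\to 0$: writing $k_n = u_n\, n/(\log n)^{1-1/\zeta}$ with $u_n\to\infty$, solving $(\log(1/t_0))^{1-1/\zeta}\asymp (\log n)^{1-1/\zeta}/u_n$ gives $\log(1/t_0) \asymp (\log n) u_n^{-\zeta/(\zeta-1)} = o(\log n)$, so $n t_0 \to \infty$ and the exponential factor vanishes. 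Theorem~\ref{thm:lower_location} then rules out any AMO procedure.

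For part (ii), I apply the identity with $a=1/2, b=1$ in the denominator of \eqref{relationGbar}, obtaining $1/(\ol{G}^{-1}(t/2)-\ol{G}^{-1}(t))\asymp (\log(1/t))^{1-1/\zeta}$. The remaining ratio $g(\ol{G}^{-1}(t))/g(\ol{G}^{-1}(t/2))$ is asymptotically $2\cdot(\log(1/t)/\log(2/t))^{1-1/\zeta}\to 2$, hence bounded. Since both factors are (essentially) increasing in $1/t$, the maximum over $t\in[0.95\alpha/n,\alpha]$ is attained near $t\asymp \alpha/n$ and is of order $(\log(n/\alpha))^{1-1/\zeta}$. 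Thus
\[
\eta \;\asymp\; \Big(\tfrac{k_n}{n}\vee n^{-1/6}\Big)\,(\log(n/\alpha))^{1-1/\zeta}\ ,
\]
which tends to $0$ uniformly in $\alpha\in(1/n,1/2)$ as soon as $k_n \ll n/(\log n)^{1-1/\zeta}$ (the $n^{-1/6}$ term is always negligible). Theorem~\ref{THMBORNESUPgeneral} then yields \eqref{IRkng}--\eqref{IIRkng} along the same lines used to derive Theorem~\ref{mainth}(ii) from Theorem~\ref{THMBORNESUP}.

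The routine but delicate point is to make the $o(1)$ in the asymptotics for $\ol{G}^{-1}$ and $g\circ\ol{G}^{-1}$ uniform on the relevant ranges of $t$ (down to $t\asymp \alpha/n$), which is handled by a standard integration-by-parts bound on the Subbotin tail together with elementary monotonicity. No other substantive obstacle arises: everything else is arithmetic on the exponents, and the matching of the two boundaries $n/(\log n)^{1-1/\zeta}$ drops out from the fact that $\log(1/t)$ varies by a factor of order $\log n$ between the scales probed by the lower and upper bounds.
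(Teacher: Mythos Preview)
Your proposal is correct and follows essentially the same route as the paper: both arguments reduce to the asymptotic $\ol{G}^{-1}(t)\sim(\zeta\log(1/t))^{1/\zeta}$ together with a Mill's-ratio-type bound on the Subbotin tail, then estimate the quantile gap $\ol{G}^{-1}(at)-\ol{G}^{-1}(bt)\asymp(\log(1/t))^{1/\zeta-1}$ (you via the integral $(\ol{G}^{-1})'=-1/g\circ\ol{G}^{-1}$, the paper via the mean value theorem applied to $x\mapsto x^\zeta$) and plug into Theorems~\ref{thm:lower_location} and~\ref{THMBORNESUPgeneral}. Your explicit verification that $nt_0\to\infty$ (needed for the $e^{-c_2 nt_0/\alpha}$ term to vanish) is a welcome addition that the paper's proof leaves implicit.
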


\begin{cor}\label{cor:laplace}
Let us consider the Laplace density $g(x)=0.5 \:e^{-|x|}$. Then 
 for a sparsity $k_n\ll n$,   
 the scaling $\wt{\theta}=Y_{\lceil n/2\rceil}$ is AMO.
\end{cor}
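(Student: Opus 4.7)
The plan is to derive Corollary~\ref{cor:laplace} as a direct application of Theorem~\ref{THMBORNESUPgeneral} with $g$ the Laplace density. First I would verify that $g(x)=\tfrac{1}{2}e^{-|x|}$ lies in the class $\cG$: it is manifestly symmetric, continuous, and strictly decreasing on $\R_+$. So Theorem~\ref{THMBORNESUPgeneral} applies, and the only task is to check that Condition~\eqref{relationGbar} holds with a quantity $\eta$ tending to $0$ along the sparsity sequence $k_n\ll n$.

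The key computation is explicit. For the Laplace distribution, $\ol{G}(y)=\tfrac{1}{2}e^{-y}$ for $y\geq 0$, hence $\ol{G}^{-1}(t)=\log\!\bigl(1/(2t)\bigr)$ for $t\in(0,1/2]$. This gives two crucial properties, uniform in $t$:
\begin{align*}
\ol{G}^{-1}(t/2)-\ol{G}^{-1}(t) &= \log 2,\\
\frac{g(\ol{G}^{-1}(t))}{g(\ol{G}^{-1}(t/2))} &= \frac{t}{t/2} = 2.
\end{align*}
Therefore the bracketed quantity in~\eqref{relationGbar} is the constant $2/\log 2$, and
\[
\eta \;=\; c_2(g)\,\bigl((k/n)\vee n^{-1/6}\bigr)\cdot \frac{2}{\log 2},
\]
which depends neither on $t\in[0.95\alpha/n,\alpha]$ nor on $\alpha$.

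Under the assumption $k_n\ll n$, both $k_n/n$ and $n^{-1/6}$ tend to $0$, so $\eta=\eta_n\to 0$. In particular, $\eta_n\leq 0.05$ for $n$ large enough, so Theorem~\ref{THMBORNESUPgeneral} applies and yields, for all $\alpha\in(1/n,1/2)$ and all large $n$,
\[
\I_g(\BH_\alpha(\wt\theta),k_n)\leq \alpha(1+\eta_n)+e^{-n^{1/2}},\qquad \II_g(\BH_\alpha(\wt\theta),k_n,\alpha(1-\eta_n))\leq e^{-n^{1/2}}.
\]
Taking the supremum over $\alpha\in(1/n,1/2)$ and letting $n\to\infty$, we obtain \eqref{IRkng} and \eqref{IIRkng} with this same sequence $\eta_n$, so $\wt\theta$ is AMO.

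There is essentially no obstacle here: the whole point is that the Laplace tails decay exponentially in such a way that the ratio $\ol{G}^{-1}(t/2)/\ol{G}^{-1}(t)$ and the density ratio at these quantiles are bounded constants, killing the $\log n$-type factor that limits the Gaussian (and more generally Subbotin with $\zeta>1$) sparsity boundary to $n/(\log n)^{1-1/\zeta}$. The only mildly delicate point to be careful about is that the $\max$ in~\eqref{relationGbar} ranges only over $t\in[0.95\alpha/n,\alpha]\subset(0,1/2]$, which is in the domain of $\ol{G}^{-1}$, so the identities above are legitimate throughout.
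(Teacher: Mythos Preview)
Your proposal is correct and follows essentially the same approach as the paper's own proof: you apply Theorem~\ref{THMBORNESUPgeneral} directly, using the explicit formulas $\ol{G}^{-1}(t)=\log(1/(2t))$ and $g(\ol{G}^{-1}(t))=t$ to show that the bracketed quantity in~\eqref{relationGbar} equals the constant $2/\log 2$, whence $\eta\to 0$ whenever $k_n\ll n$. The paper obtains the identical expression $\eta=c_2(g)\bigl((k_n/n)\vee n^{-1/6}\bigr)\cdot 2/\log 2$ and concludes in the same way.
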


\subsection{An additional result for the Laplace location model}\label{sec:laplace}

Our general theory implies that, in the Laplace location model, an AMO scaling is possible when $k_n\ll n$ (Corollary~\ref{cor:laplace}) and is impossible if $\lim \inf k_n / n > \pi_{\alpha}$ (Theorem~\ref{thm:lower_location2}). However, it is silent when $k_n/n$ converges to a small constant $\pi\in (0,1)$. In this section, {we investigate  the scaling problem in this regime. We}
establish that AMO scaling is impossible and that one needs to incur a small but yet non negligible loss.
Define, for any $\alpha\in (0,1)$, 
\begin{equation}\label{eq2}
\pi^{*}_{\alpha}=\frac{1-\sqrt{\alpha}}{2-\sqrt{\alpha}}\:\: {\in (\pi_\alpha,1/2)}\enspace .
\end{equation}

\begin{prp}[Lower Bound for the Laplace distribution]\label{prp:lower_laplace}
There exists a positive and increasing function $\zeta: (0,1/2)\mapsto \mathbb{R}_+$ with $\lim_{1/2} \zeta = +\infty$ such that the following holds for any $\alpha\in (0,1)$, any $\ol{\pi}  < \pi^*_{\alpha}$ \and for any $n$ larger than a constant depending only on $\alpha$ and $\ol{\pi}$. 
For any procedure $R$ satisfying
 \[
 \FDR[P,R]\leq \alpha n_0(P)/n\,,\mbox{ for all $P\in\mtc{P}_g$ with $n_1(P)\leq \ol{\pi} n$ },
 \]
there exists a distribution  $P\in \mtc{P}_g$ with $n_1(P)\leq \ol{\pi} n$ such that
\[
  \P_{Y\sim P}[|\BH^*_{\alpha}|>0] - \P_{Y\sim P}[|R(Y)|>0] \geq \alpha \zeta(\overline{\pi})  - c_{\overline{\pi}} n^{-1/3}\ ,
\] 
where $ c_{\overline{\pi}} $ only depends on $\ol{\pi}$. 
\end{prp}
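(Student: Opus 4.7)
The plan is a Le Cam two-point reduction tailored to the Laplace location model: I will exhibit a single mixture density $h$ on $\R$ that admits two valid interpretations in $\cP_g$ with different null locations, both satisfying $n_1(P) \le \ol{\pi} n$. Concretely, for $\ol{\pi} < \pi^{*}_{\alpha}$, I would pick a shift $\theta = \theta(\ol{\pi}, \alpha) > 0$, mixing proportions $\pi_0, \pi_1 \le \ol{\pi}$, and alternative densities $f_0, f_1$ such that
\[
h(x) \;:=\; (1-\pi_0) g(x) + \pi_0 f_0(x) \;=\; (1-\pi_1) g(x-\theta) + \pi_1 f_1(x)
\]
for every $x \in \R$. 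A natural candidate is $h = (1-\ol{\pi}) \max(g,\, g(\cdot-\theta))$ with $\theta$ tuned so that $h$ integrates to one (which forces $e^{-\theta/2} = (1-2\ol{\pi})/(1-\ol{\pi})$); the threshold $\pi^{*}_{\alpha} = (1-\sqrt{\alpha})/(2-\sqrt{\alpha})$ should emerge as the precise boundary above which the oracle rejection probability under the ``null at $0$'' interpretation $P_0$ and the FDR-constrained rejection probability under the ``null at $\theta$'' interpretation $P_1$ can no longer be separated by an $\alpha\zeta$-sized gap.

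To couple $P_0$ and $P_1$, I would consider the random Huber model where the data $Y_i$ are i.i.d.\ from $h$ and latent labels $Z_i \in \{0,1\}$ indicate null/alternative status under either interpretation. Both $P_0$ and $P_1$ then induce the same marginal law $h^{\otimes n}$ on $Y$, so $R(Y)$ has identical distribution under the two, while the two oracles $\BH^\star_\alpha$ differ because they use different null locations. The random count $n_1(P) = \sum_i Z_i$ concentrates around $\ol{\pi} n$ with fluctuations of order $\sqrt{n}$, and a truncation restricting to $\{n_1(P) \le \ol{\pi} n\}$ followed by a de-randomisation over $Z$ produces a deterministic $P \in \cP_g$ with $n_1(P) \le \ol{\pi} n$ at the cost of an $O(n^{-1/3})$ error, which accounts for the $c_{\ol{\pi}} n^{-1/3}$ correction in the conclusion. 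For the oracle side, under $P_0$ the alternatives are supported in $[\theta/2, +\infty)$, so their oracle $p$-values $e^{-|Y_i|}$ are at most $e^{-\theta/2}$; a standard analysis of the BH step-up threshold then yields a quantitative lower bound $\P_{P_0}[|\BH^\star_\alpha|>0] \ge q_0(\ol{\pi}, \alpha)$, with $q_0$ increasing to $1$ as $\ol{\pi}$ rises toward $\pi^{*}_{\alpha}$.

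The main obstacle is to upper-bound $\P_{P_1}[|R|>0]$ from the hypothesis $\FDR(P_1, R) \le \alpha n_0(P_1)/n$. The key observation is that, for the constructed $h$, a direct Laplace calculation gives a uniform lower bound on the Bayes posterior
\[
\P(Z_i = 0 \mid Y_i) \;=\; \frac{(1-\pi_1)\, g(Y_i-\theta)}{h(Y_i)} \;\ge\; \rho(\ol{\pi})
\]
valid for every $Y_i \in \R$, with $\rho(\ol{\pi})>0$ explicit. Hence $\E[|R \cap \cH_0(P_1)| \mid Y] \ge \rho(\ol{\pi})\, |R(Y)|$, which after integration yields $\FDR(P_1, R) \ge \rho(\ol{\pi})\, \P_{P_1}[|R|>0]$ and therefore $\P_{P_1}[|R|>0] \le \alpha(1-\pi_1)/\rho(\ol{\pi})$. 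The delicate part is obtaining the posterior bound pointwise and tight enough: a clever $R$ could try to reject only observations in a region where $g(\cdot-\theta)/h$ is unusually small, so the construction of $h$ must ensure that the infimum $\rho(\ol{\pi})$ is realised on a set of positive $h$-mass and that its quantitative dependence on $\ol{\pi}$ and $\alpha$ is sharp enough that the resulting gap $q_0 - \alpha(1-\pi_1)/\rho$ is of order $\alpha \zeta(\ol{\pi})$.

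Combining the two bounds via the coupling gives
\[
\P_{P}[|\BH^\star_\alpha|>0] - \P_{P}[|R|>0] \;\ge\; q_0(\ol{\pi}, \alpha) \;-\; \frac{\alpha(1-\pi_1)}{\rho(\ol{\pi})} \;-\; c_{\ol{\pi}} n^{-1/3}.
\]
Verifying that the first two terms can be written as $\alpha\,\zeta(\ol{\pi})$ for $\zeta$ positive, increasing, and diverging as $\ol{\pi} \to \pi^{*}_{\alpha}$ reduces to an algebraic manipulation exploiting the defining relation $e^{-\theta/2} = (1-2\ol{\pi})/(1-\ol{\pi})$ and the identity $\pi^{*}_{\alpha} = (1-\sqrt{\alpha})/(2-\sqrt{\alpha})$.
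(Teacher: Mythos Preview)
Your overall architecture (two-point mixture, same marginal $h$ on $Y$, de-randomisation over latent labels with an $n^{-1/3}$ slack) matches the paper's. But the step you flag as ``the main obstacle'' fails quantitatively, and this is fatal.

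You propose to bound, under the \emph{single} interpretation $P_1$ (null at~$\theta$), the posterior $\P(Z_i=0\mid Y_i)\ge \rho(\ol\pi)$ and deduce $\P[|R|>0]\le \alpha(1-\pi_1)/\rho(\ol\pi)$. For the construction $h=(1-\pi)\max(g,g(\cdot-\mu))$ with $e^{-\mu/2}=(1-2\pi)/(1-\pi)$, the infimum is exactly $\rho=e^{-\mu}$ (attained for $Y_i\le 0$). So your bound reads $\P[|R|>0]\le \alpha(1-\pi)e^{\mu}$. On the other hand, using the linearity $\P_{Y\sim Q}[p_i^\star\le t]=t\,\eta(\pi)$ for $t\le e^{-\mu}$ and the $Y^{(i)}$-trick for BH (not merely ``a standard analysis''), the paper computes \emph{exactly}
\[
\P_{Y\sim Q}[|\BH^\star_\alpha|>0]=\alpha\,\eta(\pi)=\alpha\,\tfrac{1-\pi}{2}(1+e^{\mu}).
\]
Subtracting your upper bound gives $\alpha(1-\pi)\bigl[(1+e^{\mu})/2-e^{\mu}\bigr]=\alpha(1-\pi)(1-e^{\mu})/2<0$ for every $\mu>0$: your gap is negative and proves nothing.

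What the paper does instead is to exploit \emph{both} interpretations simultaneously. A direct Laplace computation gives, for every $y$,
\[
\P_{Q_{1,Z}}(Z_i=0\mid Y_i=y)+\P_{Q_{2,Z}}(Z_i=0\mid Y_i=y)\ \ge\ 1+e^{-\mu},
\]
whence $\E_Z\FDR(Q_{1,Z},R)+\E_Z\FDR(Q_{2,Z},R)\ge (1+e^{-\mu})\,\P_{Y\sim Q}[|R|>0]$. By symmetry one of the two averaged FDRs, say the first, is at least half this quantity; combined with the FDR hypothesis this yields the much tighter bound $\P[|R|>0]\le \tfrac{2}{1+e^{-\mu}}\alpha(1-\pi)$. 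Now the gap becomes
\[
\alpha(1-\pi)\Bigl[\tfrac{1+e^{\mu}}{2}-\tfrac{2e^{\mu}}{1+e^{\mu}}\Bigr]=\alpha(1-\pi)\,\frac{(e^{\mu}-1)^2}{2(1+e^{\mu})}>0,
\]
which is the $\alpha\zeta(\ol\pi)$ of the statement. Two minor corrections: the condition $\ol\pi<\pi^*_\alpha$ is there only to guarantee $\alpha<e^{-\mu}$ (needed for the exact BH identity), not as the blow-up point of~$\zeta$; and $\zeta$ diverges at $1/2$, not at $\pi^*_\alpha$.
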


Recall that, for any distribution $P$, the FDR of $\BH^\star_{\alpha}$ is equal to $\alpha n_0(P)/n$, see \eqref{FDRcontrolBH}. Hence, the above proposition states that any procedure achieving the same FDR bound as the oracle procedure is strictly more conservative than the oracle, in the sense that $ \P_{Y\sim P}[|\BH^*_{\alpha}|>0,|R(Y)|=0] \geq \alpha \zeta(\overline{\pi})+o(1)>0$. In addition, the amplitude of $\alpha \zeta(\overline{\pi})$ is 
increasing with $\ol{\pi}$, which is expected. 
Also, the assumption $\ol{\pi}  < \pi^*_{\alpha}$ is technical. In particular, we can easily prove that, for larger $\ol{\pi}$, the result remains true by replacing $\zeta(\ol{\pi})$ by $\zeta(\ol{\pi} \wedge \pi^\star_{\alpha})$. 

\begin{remark}
On the feasibility side,  
we can show that in the regime where $n_1(P)/n$ converges to a small constant, {the plug-in BH procedure at level $\alpha$ is yet not AMO, but is comparable to oracle BH procedures with modified nominal levels $\alpha'\neq \alpha$.} Recall that 
$ p_i(u)= 2\ol{G}(|Y_i-u|)= e^{-|Y_i-u|}$ and $p_i^{\star}= e^{-|Y_i-\theta(P)|}$. As a consequence, given an estimator $\widehat{\theta}$, the ratio 
$p_i(\widehat{\theta})/p_i^{\star}$ belongs to 
$[e^{-|\widehat{\theta}-\theta(P)|}; e^{|\widehat{\theta}-\theta(P)|}]$. Assuming that $\alpha e^{|\widehat{\theta}-\theta(P)|}< 1$, it follows from the definition of $\BH_{\alpha}(u)$ that 
\[
 \BH^{\star}_{\alpha e^{-|\widehat{\theta}-\theta(P)|}} \subset \BH_{\alpha}(\widehat{\theta})\subset \BH^{\star}_{\alpha e^{|\widehat{\theta}-\theta(P)|}}\ . 
\]
As a consequence, as long as $|\widehat{\theta}-\theta(P)|\leq \log(1/\alpha)$, $\BH_{\alpha}(\widehat{\theta})$ is sandwiched between two oracle BH procedures with modified type I errors. As an example, the median estimator $\widehat{\theta}=Y_{\lceil n/2 \rceil }$ satisfies $|\widehat{\theta}-\theta|\leq c n_1(P)/n$ with high probability when $n_1(P)/n$ is small enough (see the proof of  Theorem~\ref{THMBORNESUPgeneral}). As a consequence, with high probability, we have 
\[
 \BH^{\star}_{\alpha  e^{-cn_1(P)/n}} \subset \BH_{\alpha}(\widehat{\theta})\subset \BH^{\star}_{\alpha e^{cn_1(P)/n}}\ .
\] 
{Conversely, Proposition~\ref{prp:lower_laplace} entails that no multiple testing procedure can be sandwiched by oracle procedures with level  $\alpha (1-o(1))$ and $\alpha(1+o(1))$. }
\end{remark}

\section{Confidence region for the null and applications}\label{sec:adaptive}
In this section, we tackle the issue of building a confidence superset on the possible null distribution(s) for $P$, which has not been considered yet in the literature to the best of our knowledge. %This inference is not based  "worse" configuration, but rather takes into account the ability of the underlying distribution $P$ to have many components equal to the null.

\subsection{A confidence region for the null}

We come back to the general Huber model described in Section~\ref{sec:scalingnull}, although we do not assume that the null distribution is necessarily Gaussian. That is, the observations $Y_i$, $1\leq i \leq n$ are only assumed to be independent. Their respective c.d.f.'s are denoted by $F_i$, $1\leq i \leq n$,  and we let 
\begin{equation}\label{equ-F0kP}
\mathcal{F}_{0,k}(P) = \left\{F_0 \mbox{ c.d.f. } \::\: |\{i\in \{1,\dots,n\}\::\: F_i = F_0 \}|\geq n-k  \right\}
\end{equation}
the set of all plausible null c.d.f.'s for $P$, for some prescribed, known, maximum amount of contaminated marginals $k\in [0,n-1]$. As before, if $k<n/2$, the set $\mathcal{F}_{0,k}(P)$ is of cardinal at most $1$. Otherwise, several null  c.d.f.'s are possible for $P$.

Our inference is based on the empirical c.d.f. $\wh{F}_n$ of the sample $Y$: the idea is that for any $F_0\in \mathcal{F}_{0,k}(P)$, the function $(\wh{F}_n- (1-k/n)F_0)/(k/n)$ should be close to be a c.d.f., which induces some constraints. 
This idea bears similarities with the existing literature, in particular with \cite{GW2004}, that derived confidence interval for the proportion of signal when the true null is known and uniform. 
%\et{Cite references ?} \cite{blanchard2005} 

For some $\alpha\in(0,1)$, let us denote 
\begin{align}
\mathcal{F}_{1-\alpha}(Y)&=\left\{F_0\mbox{ c.d.f. }\::\: \forall j \in \{0,\dots,n\}, \wh{a}_n(j;F_0) \leq \wh{b}_n(j;F_0)\right\}\label{equ-conf-F0};
\\
\wh{a}_n(j;F_0)&=0\vee \frac{\max_{0\leq \l\leq j}\left\{\l/n-(1-k/n)F_0(Y_{(\l)})\right\}-c_{n,\alpha}}{k/n} \label{achapnF0};
\\
\wh{b}_n(j;F_0)&=1\wedge \frac{\min_{j\leq \l\leq n}\left\{\l/n-(1-k/n)F_0(Y_{(\l+1)}^{-})\right\}+c_{n,\alpha}}{k/n} \label{bchapnF0},
\end{align}
where $c_{n,\alpha}=\{-(1-k/n)\log(\alpha/2)/(2n)\}^{1/2}$, $F_0(y^{-})=\lim_{x\to y^-}F_0(x)$ and $Y_{(1)}\leq \dots\leq Y_{(n)}$ denote the order statistics ($Y_{(0)}=-\infty$, $Y_{(n+1)}=+\infty$) of the observed sample $(Y_i, 1\leq i \leq n)$.
Note that all these quantities depend on $k$, but we have omitted it in the notation for short. The following result holds.

\begin{thm}\label{thmconfidence}
For a given sparsity parameter $k\in [0,n-1]$, the region $\mathcal{F}_{1-\alpha}(Y)$ defined by \eqref{equ-conf-F0} is a $(1-\alpha)$-confidence superset of the set  $\mathcal{F}_{0,k}(P)$ \eqref{equ-F0kP} of possible nulls for $P$ with at most $k$ contaminations, in the following sense:
$$
\P(\mathcal{F}_{0,k}(P)\subset \mathcal{F}_{1-\alpha}(Y))\geq 1-\alpha.
$$
\end{thm}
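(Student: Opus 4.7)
The plan is to decompose the mean c.d.f.\ $\bar{F} := n^{-1}\sum_{i=1}^n F_i$ into a ``guaranteed null'' part and a residual part that is itself a c.d.f., apply a DKW-type concentration to $\wh{F}_n - \bar{F}$, and then use monotonicity of the residual at the order statistics to turn the resulting uniform sandwich into the coordinatewise inequalities defining $\mathcal{F}_{1-\alpha}(Y)$.

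Fix $F_0 \in \mathcal{F}_{0,k}(P)$, set $J := \{i: F_i = F_0\}$, $n_0 := |J| \geq n-k$, $n_1 := n-n_0 \leq k$, and $\bar{F}_1 := n_1^{-1}\sum_{i \notin J} F_i$ (the case $n_1 = 0$ is immediate and I ignore it). The decomposition reads
\begin{equation*}
\bar{F}(y) \;=\; \left(1-\tfrac{k}{n}\right) F_0(y) + \tfrac{k}{n}\, \bar{Q}(y),\qquad \bar{Q}(y) := \tfrac{k-n_1}{k}\, F_0(y) + \tfrac{n_1}{k}\, \bar{F}_1(y),
\end{equation*}
and the key structural remark is that $\bar{Q}$, as a convex combination of c.d.f.'s, is itself a c.d.f.\ (non-decreasing, $[0,1]$-valued). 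I then invoke a DKW-type concentration producing an event $\mathcal{E}$ with $\P(\mathcal{E}) \geq 1-\alpha$ on which $\sup_y |\wh{F}_n(y) - \bar{F}(y)| \leq c_{n,\alpha}$.

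On $\mathcal{E}$, rearranging gives
\begin{equation*}
\frac{\wh{F}_n(y) - (1-k/n)F_0(y) - c_{n,\alpha}}{k/n} \;\leq\; \bar{Q}(y) \;\leq\; \frac{\wh{F}_n(y) - (1-k/n)F_0(y) + c_{n,\alpha}}{k/n}.
\end{equation*}
Using the elementary identities $\wh{F}_n(Y_{(\ell)}) \geq \ell/n$ and $\wh{F}_n(Y_{(\ell+1)}^-) \leq \ell/n$ (with equality a.s.\ under continuous $F_0$), this yields for every $\ell \in \{0,\dots,n\}$
\begin{equation*}
\frac{\ell/n - (1-k/n)F_0(Y_{(\ell)}) - c_{n,\alpha}}{k/n} \leq \bar{Q}(Y_{(\ell)}), \qquad \bar{Q}(Y_{(\ell+1)}^-) \leq \frac{\ell/n - (1-k/n)F_0(Y_{(\ell+1)}^-) + c_{n,\alpha}}{k/n}.
\end{equation*}
Fix now $j \in \{0,\dots,n\}$. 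For $\ell \leq j$, monotonicity of $\bar{Q}$ gives $\bar{Q}(Y_{(\ell)}) \leq \bar{Q}(Y_{(j)})$; taking the max in $\ell$ and clamping by $0$ (valid since $\bar{Q}(Y_{(j)}) \geq 0$) yields $\wh{a}_n(j;F_0) \leq \bar{Q}(Y_{(j)})$. For $\ell \geq j$, $Y_{(\ell+1)}^- \geq Y_{(j+1)}^- \geq Y_{(j)}$ almost surely, hence $\bar{Q}(Y_{(\ell+1)}^-) \geq \bar{Q}(Y_{(j)})$; taking the min in $\ell$ and clamping by $1$ yields $\bar{Q}(Y_{(j)}) \leq \wh{b}_n(j;F_0)$. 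Hence on $\mathcal{E}$ every $F_0 \in \mathcal{F}_{0,k}(P)$ satisfies $\wh{a}_n(j;F_0) \leq \wh{b}_n(j;F_0)$ for all $j$, proving $\mathcal{F}_{0,k}(P) \subset \mathcal{F}_{1-\alpha}(Y)$ on $\mathcal{E}$. Note that $\mathcal{E}$ depends on $P$ only through $\bar{F}$ (not on which candidate $F_0$ is considered), so a single event of probability $\geq 1-\alpha$ catches every candidate simultaneously.

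The hard part will be the concentration step with its precise constant $c_{n,\alpha} = \sqrt{(1-k/n)\log(2/\alpha)/(2n)}$: a plain DKW for independent but non-i.i.d.\ observations only delivers the coarser $\sqrt{\log(2/\alpha)/(2n)}$, so the $\sqrt{1-k/n}$ refinement requires a variance-sensitive argument. The natural route is to split $\wh{F}_n - \bar{F} = (n_0/n)(\wh{G}_0 - F_0) + (n_1/n)(\wh{H}_1 - \bar{F}_1)$ and control each empirical process via a Bennett/Bernstein-type concentration---possibly after conditioning on the contaminated sub-sample---combined with a chaining or DKW-style union bound to upgrade the pointwise control to a uniform one in~$y$.
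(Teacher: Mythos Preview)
Your monotonicity argument at the end is fine and parallels the paper's, but the concentration step is a genuine gap, and not a fixable one along the lines you sketch. The bound
\[
\P\Big(\sup_{y}|\wh{F}_n(y)-\bar F(y)|\le c_{n,\alpha}\Big)\ge 1-\alpha
\]
is simply false in general. Take the fully null case $F_i=F_0$ for all $i$ (continuous $F_0$), with a user-specified $k>0$. Then $\bar F=F_0$, and $\sqrt{n}\sup_y|\wh{F}_n-F_0|$ has (asymptotically) the Kolmogorov distribution, so
\[
\P\Big(\sup_y|\wh{F}_n-F_0|>c_{n,\alpha}\Big)\;\longrightarrow\; 2\sum_{j\ge 1}(-1)^{j-1}e^{-2j^2 t^2}\Big|_{t^2=(1-k/n)\log(2/\alpha)/2}\;\approx\; 2(\alpha/2)^{1-k/n},
\]
which for any fixed $k/n\in(0,1)$ is $\gg\alpha$ as $\alpha\to 0$. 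No variance-sensitive refinement (Bennett, Bernstein, chaining) can beat this, because the variance of $\ind\{Y_i\le y\}$ at the median is already $1/4$; the factor $\sqrt{1-k/n}$ has nothing to latch onto here.

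The paper obtains the correct constant by decomposing the \emph{empirical} c.d.f.\ rather than the mean c.d.f.: with $\wh F_0$ the empirical c.d.f.\ of the $n_0\ge n-k$ null observations, one has
\[
\wh F_n=(1-k/n)\,\wh F_0+(k/n)\,\wh F_1,\qquad \wh F_1=\frac{k-n_1}{k}\wh F_0+\frac{n_1}{k}\wh F_{1,\mathrm{emp}},
\]
so $\wh F_1$ is itself a (random) c.d.f. Now apply the ordinary i.i.d.\ DKW to $\wh F_0$ alone: with probability $\ge 1-\alpha$, $\sup_y|\wh F_0-F_0|\le d:=\{ \log(2/\alpha)/(2(n-k))\}^{1/2}$, and multiplying by $(1-k/n)$ gives exactly $c_{n,\alpha}=(1-k/n)d$. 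The contaminated part requires no concentration whatsoever; monotonicity of $\wh F_1$ then yields $\wh a_n(j;F_0)\le \wh F_1(Y_{(j)})\le \wh b_n(j;F_0)$ just as in your final paragraph. In short, replace your deterministic $\bar Q$ by the random $\wh F_1$ and the whole difficulty evaporates: the $\sqrt{1-k/n}$ factor comes from the coefficient in front of $\wh F_0$, not from any sharpened concentration.
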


Compared to our previous results, this result is less demanding on the sparsity parameter: it  only assumes $n_1(P)\leq k$ and not that $n_1(P)/n$ tends to zero at some rate. In particular, it goes beyond the worst-case boundary effect delineates in our main results. Also, it is non parametric, and usable in combination with any possible modeling for the null.

\subsection{Application 1: a goodness of fit test for a given null distribution}

Considering any known c.d.f. $F_0$,  the confidence region derived in Theorem~\ref{thmconfidence} provides a way to test the null hypothesis $H_0': $ ``$F_0\in \mathcal{F}_{0,k}(P)$", that is, ``$F_0$ is a plausible null c.d.f. for  $P$ with at most $k$ contaminations".

\begin{cor}\label{cor:testnull}
Consider $k\in [0,n-1]$ and the sets  $\mathcal{F}_{1-\alpha}(Y)$ \eqref{equ-conf-F0} and  $\mathcal{F}_{0,k}(P)$ \eqref{equ-F0kP}. Consider any c.d.f. $F_0$. 
Then the test rejecting the null hypothesis $H_0': $ ``$F_0\in \mathcal{F}_{0,k}(P)$" whenever $F_0\notin \mathcal{F}_{1-\alpha}(Y)$, that is, if there exists $ j \in \{0,\dots,n\}$, such that $\wh{a}_n(j;F_0) > \wh{b}_n(j;F_0)$, is of level $\alpha$.
\end{cor}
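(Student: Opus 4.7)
The plan is to derive Corollary~\ref{cor:testnull} directly from Theorem~\ref{thmconfidence} by invoking the standard duality between confidence sets and hypothesis tests. Concretely, I would let $\mathcal{E} = \{\mathcal{F}_{0,k}(P) \subset \mathcal{F}_{1-\alpha}(Y)\}$ and apply Theorem~\ref{thmconfidence} to conclude that $\P(\mathcal{E}) \geq 1-\alpha$.

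Then I would argue as follows: on the event $\mathcal{E}$, suppose $H_0'$ holds, so that $F_0 \in \mathcal{F}_{0,k}(P)$ by definition. Combining the two set inclusions yields $F_0 \in \mathcal{F}_{1-\alpha}(Y)$, so by construction the proposed test does not reject. Contrapositively, rejection implies $\mathcal{E}^c$, and hence the probability of a Type I error under $H_0'$ is bounded by $\P(\mathcal{E}^c) \leq \alpha$, which is precisely the claimed level-$\alpha$ control.

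For the statement to make sense as an explicit procedure, I would also verify that the rejection region is actually computable from $(Y, F_0)$: by the definition~\eqref{equ-conf-F0}, $F_0 \in \mathcal{F}_{1-\alpha}(Y)$ is literally the event that $\widehat{a}_n(j;F_0) \leq \widehat{b}_n(j;F_0)$ holds for every $j \in \{0,\dots,n\}$, which matches the criterion given in the corollary. In fact there is no real obstacle here — all of the substantive probabilistic content has been absorbed into Theorem~\ref{thmconfidence}, and what remains is the standard repackaging of a non\-parametric confidence superset as a goodness-of-fit test.
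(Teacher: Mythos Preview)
Your proposal is correct and matches the paper's own argument almost verbatim: the paper simply notes that for any $F_0\in \mathcal{F}_{0,k}(P)$ one has $\P(F_0\notin \mathcal{F}_{1-\alpha}(Y))\leq 1-\P(\mathcal{F}_{0,k}(P)\subset \mathcal{F}_{1-\alpha}(Y))\leq \alpha$ by Theorem~\ref{thmconfidence}, which is exactly the duality you spell out.
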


Since for any $F_0\in \mathcal{F}_{0,k}(P)$, $\P(F_0\notin \mathcal{F}_{1-\alpha}(Y))\leq 1-\P(\mathcal{F}_{0,k}(P) \subset \mathcal{F}_{1-\alpha}(Y))$, the proof is straightforward from Theorem~\ref{thmconfidence}.
In particular, this test can be used to test whether the {\it theoretical} null distribution $\mathcal{N}(0,1)$  is suitable for some data set, given some maximum proportion of contaminations, say $k/n=10\%$. As shown in the vignette \cite{RVvignette2020}, this test rejects $H_0' $ for many data sets.

Next, we can also build a goodness of fit test of level $\alpha$ for a family $(F_{0,\vartheta})_{\vartheta\in\Theta}$ of null c.d.f.'s. This corresponds to consider the null hypothesis $H_0'': $ ``$\exists \vartheta\in\Theta \::\: F_{0,\vartheta} \in \mathcal{F}_{0,k}(P) $", that is, ``in the family $(F_{0,\vartheta})_{\vartheta\in\Theta}$ there is at least a plausible null c.d.f. for  $P$ with at most $k$ contaminations". 

\begin{cor}\label{cor:testfamilynull}
Consider $k\in [0,n-1]$ and the sets  $\mathcal{F}_{1-\alpha}(Y)$ \eqref{equ-conf-F0} and  $\mathcal{F}_{0,k}(P)$ \eqref{equ-F0kP}. Consider any family of c.d.f.'s  $(F_{0,\vartheta})_{\vartheta\in\Theta}$. 
Then the test rejecting the null hypothesis $H_0'': $ ``$\exists \vartheta\in\Theta \::\: F_{0,\vartheta} \in \mathcal{F}_{0,k}(P) $" whenever $\forall \vartheta\in\Theta,$ $F_{0,\vartheta} \notin\mathcal{F}_{1-\alpha}(Y)$, that is, if the confidence region does not contains any null distribution of the family, is of level $\alpha$.
\end{cor}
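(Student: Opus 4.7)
The plan is to derive this as an immediate consequence of Theorem~\ref{thmconfidence}, entirely parallel to the proof of Corollary~\ref{cor:testnull}. The key observation is that whether the test rejects is determined entirely by the confidence region $\mathcal{F}_{1-\alpha}(Y)$, so the only probabilistic statement needed is the coverage guarantee from Theorem~\ref{thmconfidence}.

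First, I would unpack what $H_0''$ means: there exists $\vartheta_0 \in \Theta$ (possibly depending on $P$) such that $F_{0,\vartheta_0} \in \mathcal{F}_{0,k}(P)$. Then I would introduce the coverage event $\Omega = \{\mathcal{F}_{0,k}(P) \subset \mathcal{F}_{1-\alpha}(Y)\}$, which satisfies $\P(\Omega) \geq 1-\alpha$ by Theorem~\ref{thmconfidence}.

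Next, on $\Omega$, the particular $\vartheta_0$ witnessing $H_0''$ satisfies $F_{0,\vartheta_0} \in \mathcal{F}_{0,k}(P) \subset \mathcal{F}_{1-\alpha}(Y)$, so there exists at least one $\vartheta \in \Theta$ (namely $\vartheta_0$) with $F_{0,\vartheta} \in \mathcal{F}_{1-\alpha}(Y)$. The rejection condition, which requires $F_{0,\vartheta} \notin \mathcal{F}_{1-\alpha}(Y)$ for \emph{every} $\vartheta \in \Theta$, is therefore violated on $\Omega$. Consequently, the probability of (wrongly) rejecting $H_0''$ is at most $\P(\Omega^c) \leq \alpha$, which is the desired level bound.

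There is no real obstacle here: the argument is a one-line logical chain from Theorem~\ref{thmconfidence}, using only the trivial fact that a union of events (over $\vartheta$) contains any single one of them. The only care needed is that $\vartheta_0$ may depend on $P$, but this is harmless since the coverage guarantee of Theorem~\ref{thmconfidence} holds for $\mathcal{F}_{0,k}(P)$ as a whole, uniformly in its elements.
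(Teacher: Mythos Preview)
Your proposal is correct and follows essentially the same approach as the paper: assume $H_0''$ holds with witness $\vartheta_0$, observe that on the coverage event $\{\mathcal{F}_{0,k}(P)\subset \mathcal{F}_{1-\alpha}(Y)\}$ one has $F_{0,\vartheta_0}\in \mathcal{F}_{1-\alpha}(Y)$ so the test does not reject, and bound the rejection probability by $\P(\mathcal{F}_{0,k}(P)\not\subset \mathcal{F}_{1-\alpha}(Y))\leq \alpha$ via Theorem~\ref{thmconfidence}. The paper writes this as the one-line chain $\P(\forall \vartheta,\, F_{0,\vartheta}\notin\mathcal{F}_{1-\alpha}(Y))\leq \P(F_{0,\vartheta_0}\notin\mathcal{F}_{1-\alpha}(Y))\leq 1-\P(\mathcal{F}_{0,k}(P)\subset\mathcal{F}_{1-\alpha}(Y))$, which is exactly your argument.
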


%Building a $\alpha$-level test can be made by rejecting $H_0''$  provided that $\forall \vartheta\in\Theta,$ $F_{0,\vartheta} \notin\mathcal{F}_{1-\alpha}(Y)$, that is, if the confidence region does not contains any null distribution of the family. 
Since for any $\vartheta_0\in \Theta$, with $ F_{0,\vartheta_0} \in \mathcal{F}_{0,k}(P)$, we have $\P(\forall \vartheta\in\Theta,F_{0,\vartheta} \notin\mathcal{F}_{1-\alpha}(Y))\leq \P(F_{0,\vartheta_0} \notin\mathcal{F}_{1-\alpha}(Y))\leq 1-\P(\mathcal{F}_{0,k}(P) \subset \mathcal{F}_{1-\alpha}(Y))$,  the proof is straightforward from Theorem~\ref{thmconfidence}.
 As typical instance, this can be used to build a goodness of fit test to the family of Gaussian null distribution with arbitrary scaling.
 Interestingly, this test never rejects this null hypothesis for the data sets used in the vignette, which shows that considering Gaussian null can be suitable for these data. 

The two aforementioned tests provide a way to validate Efron's paradigm % (which motivates the modeling used in Section~\ref{sec:scalingnull}) 
who discarded the theoretical null $\mathcal{N}(0,1)$, while still using empirical Gaussian nulls.

%As a simple application, this confidence region can be used to build a new $\alpha$-level statistical test of the null hypothesis $H_0': $ ``$(\theta(P),\sigma(P))=(0,1)$", that is, that the true null is the $\mathcal{N}(0,1)$ distribution. It is sufficient to reject $H_0'$ when $(0,1)\notin \mathcal{S}_{k,\alpha}$. For instance, for the data sets given in Figure~\ref{fig:nullwrong}, $H_0'$ is rejected at any level $\alpha<10^{-10}$ \et{vignette pour les pvalues}.  This means that the ``theoretical null" is wrong, which justifies the use of an ``empirical null", as Efron suggested in the first place. 

\subsection{Application 2: a reliability indicator for empirical null procedures}

For simplicity, let us focus on the case of Gaussian null distributions as in the setting of Section~\ref{sec:scalingnull}, for which $k<n/2$. The confidence region derived in Theorem~\ref{thmconfidence} induces a confidence region for the true scaling $(\theta(P),\sigma(P))$ given by
{%\color{red}
%If we restrict our attention to Gaussian null distributions as in the setting of Section~\ref{sec:scalingnull}, this leads us to the scaling set
\begin{equation}\label{confregion}
\mathcal{S}_{k,\alpha} = \left\{ (\theta,\sigma)\in \R\times (0,\infty) \::\: \forall j\in\{0,\dots,n\}, \wh{a}_n(j;\Phi((\cdot-\theta)/\sigma))\leq \wh{b}_n(j;\Phi((\cdot-\theta)/\sigma)) \right\}.
\end{equation}
%where $F_i$ is the c.d.f. of $P_i$. 
%for  some explicit statistics $\wh{a}_n(\cdot)$ and $\wh{b}_n(\cdot)$  that depends on a sparsity parameter $k\in\{0,\dots,n\}$ with $k<n/2$ and (see  \eqref{achapnF0} and \eqref{bchapnF0} below, respectively). Then the following holds.
%Assuming that the null is Gaussian and that $k<n/2$, Theorem~\ref{thmconfidence} straightforwarly implies  the following corollary.
\begin{cor}\label{cor:region}
Consider the setting of Section~\ref{sec:scalingnull}.  Provided that $n_1(P)\leq k$, the set 
$\mathcal{S}_{k,\alpha}$ is a $(1-\alpha)$-confidence region for the true scaling $(\theta(P),\sigma(P))$.
In particular, with probability at least $1-\alpha$, the oracle BH procedure is one of the procedures $\BH_\alpha(u,s)$, $(u,s)\in \mathcal{S}_{k,\alpha}$.
\end{cor}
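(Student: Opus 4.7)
The plan is to deduce Corollary~\ref{cor:region} as a direct application of Theorem~\ref{thmconfidence} by translating the statement about c.d.f.'s into a statement about scaling parameters. First I would observe that under the setting of Section~\ref{sec:scalingnull}, assumption \eqref{equ-assumpfond} together with $n_1(P)\leq k$ ensures that the true null c.d.f. $F^\star(\cdot)=\Phi((\cdot-\theta(P))/\sigma(P))$ is a plausible null for $P$ with at most $k$ contaminations, in the sense of \eqref{equ-F0kP}. Indeed, the number of $P_i$'s equal to $\mathcal{N}(\theta(P),\sigma^2(P))$ is exactly $n_0(P)=n-n_1(P)\geq n-k$, hence $F^\star\in \mathcal{F}_{0,k}(P)$.

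Next, Theorem~\ref{thmconfidence} gives that on an event $\Omega$ of probability at least $1-\alpha$, we have $\mathcal{F}_{0,k}(P)\subset \mathcal{F}_{1-\alpha}(Y)$; in particular $F^\star\in \mathcal{F}_{1-\alpha}(Y)$ on $\Omega$. Unfolding the definition \eqref{equ-conf-F0} of $\mathcal{F}_{1-\alpha}(Y)$, membership of $F^\star$ is equivalent to the inequalities $\wh{a}_n(j;F^\star)\leq \wh{b}_n(j;F^\star)$ for all $j\in\{0,\dots,n\}$. Since $F^\star=\Phi((\cdot-\theta(P))/\sigma(P))$, this is precisely the defining condition \eqref{confregion} of $(\theta(P),\sigma(P))\in \mathcal{S}_{k,\alpha}$. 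Hence on $\Omega$ we have $(\theta(P),\sigma(P))\in \mathcal{S}_{k,\alpha}$, which yields the first part of the corollary.

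For the second part, recall from Definition~\ref{def:rescaledBH} that $\BH^\star_\alpha = \BH_\alpha(Y;\theta(P),\sigma(P))$. Therefore, still on the event $\Omega$, the oracle BH procedure is equal to $\BH_\alpha(Y;u,s)$ evaluated at the particular point $(u,s)=(\theta(P),\sigma(P))\in \mathcal{S}_{k,\alpha}$, and thus belongs to the collection $\{\BH_\alpha(Y;u,s):(u,s)\in \mathcal{S}_{k,\alpha}\}$. There is no genuine obstacle in the proof: the only step requiring care is the straightforward verification that the parametric Gaussian family is correctly identified with its c.d.f. image inside $\mathcal{F}_{1-\alpha}(Y)$, so that the abstract confidence superset of Theorem~\ref{thmconfidence} descends to a confidence region in the scaling space $\mathbb{R}\times(0,\infty)$.
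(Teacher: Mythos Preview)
Your proof is correct and follows exactly the approach intended by the paper: Corollary~\ref{cor:region} is stated without explicit proof precisely because it is an immediate specialization of Theorem~\ref{thmconfidence} to the Gaussian null family, which is what you have spelled out. The only ingredients are that $n_1(P)\leq k$ ensures $F^\star\in\mathcal{F}_{0,k}(P)$, that the definition of $\mathcal{S}_{k,\alpha}$ is the pullback of $\mathcal{F}_{1-\alpha}(Y)$ under $(\theta,\sigma)\mapsto\Phi((\cdot-\theta)/\sigma)$, and that $\BH^\star_\alpha=\BH_\alpha(\theta(P),\sigma(P))$ by definition.
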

}
Corollary~\ref{cor:region} indicates a way to guess the rejection set of the oracle BH procedure, by inspecting how the rejection set of the procedure $\BH_\alpha(u,s)$ varies across the values $(u,s)$ in the region $\mathcal{S}_{k,\alpha}$. This suggests practical recommandations to assess the reliability of the BH plug-in procedure. Typically, for ``stable" rejection sets, the plug-in BH procedure $\BH_\alpha(\wt{\theta},\wt{\sigma})$ can be used, while for ``variable" rejection sets, only null hypotheses belonging to all rejection sets of the procedures $\BH_\alpha(u,s)$, $(u,s)\in \mathcal{S}_{k,\alpha}$ can be safely rejected.

\begin{figure}[h!]
\begin{tabular}{cc}
  Lower bound % (null $\approx \mathcal{N}(0,1.26^2)$ see text)
  & Gaussian alternative %, null  $\mathcal{N}(0,1)$
  \\
\hspace{-6mm}
\includegraphics[scale=0.33]{Joliplot_borneinf}
&
\hspace{-6mm}
\includegraphics[scale=0.33]{Joliplot_bimodale}
\\
Leukemia dataset \cite{Golub99}&  HIV data set \cite{vanWout2003}  \\
\hspace{-6mm}
\includegraphics[scale=0.33]{Joliplot_golub}&
\hspace{-6mm}
\includegraphics[scale=0.33]{Joliplot_HIV}

\end{tabular}
\caption{Plot of the confidence region  $\mathcal{S}_{k,\alpha}$ \eqref{confregion} for $\alpha=0.1$, $k/n=0.1$, in the Gaussian Huber model. Top: two simulated data sets with $n=10\,000$. Bottom: the two real data sets \cite{Golub99}  (bottom-left) and \cite{van2003} (bottom right) for which $n=3051$ and $n=7680$, respectively. In each pixel of the region, the depicted number is the rejection number of the plug-in BH procedure at level $\alpha=0.1$ using the corresponding scaling. More details are given in the text. The bottom panels are reproducible from the vignette \cite{RVvignette2020}\label{fig:heatmap}.}
\end{figure}

As an illustration, we have displayed the obtained region $\mathcal{S}_{k,\alpha}$ \eqref{confregion} (colored area) in Figure~\ref{fig:heatmap} for different data sets. 
Also, in each point of this area, we have added the number of rejections of the plug-in BH procedure using the corresponding null (here, only the rejection number is reported for short).
The two top ones corresponds to simulated observations (only 1 run each time). In the ``lower bound" setting, the null is $\mathcal{N}(0,\sigma^2)$ and the alternative density is $\frac{0.9}{0.1}\left[\phi-\phi(\cdot/\sigma)/\sigma\right]_+$,  
which gives $\sigma\approx 1.26$. 
The region is wide in this setting and contains the true null $\mathcal{N}(0,\sigma^2)$ (no rejection for the plug-in BH procedure) but also the erroneous null $\mathcal{N}(0,1)$ (some rejections for the plug-in BH procedure). This is in accordance with the special shape of the lower bound, as discussed in Section~\ref{sec:lb}. 
By contrast, in the ``Gaussian alternative" setting, the null is  $\mathcal{N}(0,1)$ and the alternative is $\mathcal{N}(3,1)$. The region looks much more narrowed and 
contains only scalings for which the corresponding plug-in BH procedures make many findings.  Since the user knows that the oracle BH procedure is one of these procedures (with high probability, see Corollary~\ref{cor:region}), but could be any of these procedures, the plot indicates that the user should better make no rejection in the top-left situation, while make some rejections (at least $532$) in the top-right situation.

The bottom panels in Figure~\ref{fig:heatmap} display the region for two classical real data sets. 
For the leukemia data set,  the region contains scalings for which the plug-in BH procedure makes no rejection (this turns out to be true for all of them). Hence, declaring any variable as significant can be foolhardy for this data set. By contrast, for the HIV data set, there is evidence that the oracle BH procedure is able to reject some variables there, and using the plug-in BH procedure can be legitimately used 
 in that case. 

Finally, note that the indicator displayed in the region concerns plug-in BH procedures. However, depending on the aim of the practitioner, it could be any other classical procedures using a prescribed null.
We also underline that the above analysis can be done with any null family $(F_{0,\vartheta})_{\vartheta\in\Theta}$, not necessarily Gaussian. The $(1-\alpha)$-confidence region for the true null parameter(s) $\vartheta(P)$ is then replaced by
$$
{\Theta}_{k,\alpha} = \left\{ \vartheta\in \Theta \::\: \forall j\in\{0,\dots,n\}, \wh{a}_n(j;F_{0,\vartheta})\leq \wh{b}_n(j;F_{0,\vartheta}) \right\}.
$$

\section{Discussion}\label{sec:discussion}

Elaborating upon Efron's problem, we have presented a general theory to assess whether one can estimate the null and use it into a plug-in BH procedure, while keeping properties similar to the oracle BH procedure in terms of FDP and TDP. 
As expected, the sparsity parameter $k$ played a central role, and   matching lower bounds and upper bounds were established. The obtained sparsity boundaries were shown to depend i) on the fact that the null variance is known or not and ii) on the variations of the quantile function of the null distribution.
We eventually went beyond the worst case analysis by designing a confidence region for the null distribution, that is valid for any possible null, and that only relies on an independence assumption, together with some known upper bound on the signal proportion. This led to goodness of fit tests for null distributions and a visualization method for assessing the reliability of empirical null procedures  that are both useful for  a practical use. This is illustrated in detail in the vignette \cite{RVvignette2020}. 

This work paves the way for several extensions. 
First, one direction is to investigate the sparsity boundary when the model is reduced, e.g., by considering more constrained alternatives. 
A first hint has been given for one-sided alternatives in \cite{CDRV2018}, where both a uniform FDR control and power results can be achieved in dense settings, e.g., $k=n/2$ (say), which is markedly different from what we obtained here. 
In future work, many more structured setting can be considered, e.g., decreasing alternative  densities,  temporal/spatial structure on the signal, and so on.
Second, the problem could also be made more difficult 
by considering more complex model for the null,  for instance, dropping the assumption that $g$ is known, but assuming instead that it belongs to some parametric or non-parametric class. 
Each of this setting should come with a new boundary that is worth to investigate.
Third, the independence between the tests is a strong assumption that is essential in all our results. Our numerical experiments suggest that our results could be extended to weak dependencies. Supporting this fact with a theoretical statement is probably challenging, but deserves to be explored. 
Finally, the new proposed confidence region is based on the DKW inequality, which is not always an accurate tool. Reducing the size of the confidence region is an interesting avenue for future investigations.
%would certainly lead to 
%While there is a room for improvements to this respect, %confidence regions : study the size of the confidence region + improve it by making DKW great again
%plug-in BH pas universel mais premiere pierre a l'edifice.

\section*{Acknowledgements} 
This work has been supported by ANR-16-CE40-0019 (SansSouci), ANR-17-CE40-0001 (BASICS) and by the GDR ISIS through the "projets exploratoires" program (project TASTY). {We are grateful to Ery Arias-Castro, David Mary and to anonymous referees for insightful comments that helped us to improve the presentation of the manuscript. }

\bibliographystyle{apalike}
\bibliography{biblio}

\appendix

\section{Numerical experiments}\label{sec:numexp}

%\et{Note that independence is conditionally on $\theta,\sigma$. In practice, $\theta,\sigma$ are typically different from on experiment to another, so can be considered as random. Hence, the measurement $Y_i$ becomes dependent unconditionally.}
%\et{Report only a summary?}

In this section, we illustrate our results with a numerical experiments. We consider a classical setting that allows to evaluate the performances of multiple testing procedures under dependence, see, e.g., \cite{fithian2020conditional}.

\subsection{Description}

In all our experiments, we consider $Y\sim \mathcal{N}(\mu,\Sigma)$, for some unknown mean $\mu_i=0$, $1\leq i \leq n-k$, with three different alternative structures, combined with three dependence structures (correlation matrices) $\Sigma$ and $k/n\in\{0.03,0.1,0.3,0.5\}$. This gives $3\times3\times4=36$ settings, each of them being summarized with a $(\FDR,\TDR)$ ROC-type plot. The error rates are all estimated with $100$ replications.
Note that for simplicity, we use as power criterion the $\TDR$, which is the average of the $\TDP$ here. While this  is slightly different than the power notion used in our theory, this is fair enough for these numerical experiments.
 %that we display in Figures~\ref{fig:indep},\ref{fig:block}~and~\ref{fig:equi}, each Figure being devoted to a correlation structure 
%graphs. 
Also, without loss of generality, the true values of the scaling parameters are $\theta=0$ and $\sigma=1$.

We considered correlation structures ranging from independence to strong dependence:
\begin{enumerate}
\item Independence: $\Sigma=I_n$;
\item Block-correlation: $\Sigma_{i,i}=1$ and $\Sigma_{i,j}=0.5$ if $\lceil i/20\rceil = \lceil j/20\rceil$  and $0$ otherwise;
\item Equi-correlation: $\Sigma_{i,i}=1$ and $\Sigma_{i,j}=\rho=0.2$ for $i\neq j$;
\end{enumerate}

The following  structures are explored for the alternatives:
\begin{enumerate}
\item Standard: the $\mu_i\neq 0$ are generated as i.i.d. variable uniform on $[-5,-2]\cup[2,5]$;
\item Cauchy: in addition to standard alternatives, the values $X_i$, $n-k+1\leq i\leq n-k/2$, are replaced by i.i.d. Cauchy variables; 
\item Zero-located: in addition to standard alternatives, the values $X_i$, $n-k+1\leq i\leq n-k/2$, are replaced by i.i.d. variables generated according to the density $f_2(x)=40\max(0,\phi(x)-\phi(x/\sigma)/\sigma)$  with the  value of $\sigma>1$ that makes $f_2$ being a density ($\sigma\approx 1.053$).
\end{enumerate}
Note that all the alternative structures keep a portion of standard alternatives. Then, (Half-)Cauchy allows very high alternatives and is thus devoted to challenge the robustness of the methods, while (Half-)zero-located is inspired by the lower bound, with alternatives very close to $0$ (see right panel of Figure~\ref{lbsigma}), and is thus devoted to challenge the methods (especially in terms of FDR control) in a case close to the worse configuration. %devoted to explore the behavior of the methods in a case the lower bound
\\

On each plot, we consider the five following methods:
\begin{enumerate}
\item~[Oracle]: plug-in BH at level $\alpha/\pi_0$, that uses the true value of the scaling parameters, here $\theta=0$, $\sigma=1$;
\item~[MedianMAD]: plug-in BH at level $\alpha/\pi_0$ with $\wt{\theta}$ being the median and $\wt{\sigma}$ being the median absolute deviation, see \eqref{equ:estimators};
\item~[TrimMAD]: same but with $\wt{\theta}$ being the trimmed mean (50\% of trimmed data);
\item~[MLE]: plug-in BH at level $\alpha/\pi_0$ with estimators given by a maximum likelihood method in the truncated Gaussian model, as done in package \texttt{locfdr}; 
\item~[LocfdrSC]: procedure that first computes the local fdr values computed as done in package \texttt{locfdr} and that combines them to provide a FDR control as in \cite{SC2007}.
\end{enumerate}
Note that we performed the plug-in BH procedure at level $\alpha/\pi_0$ instead of $\alpha$ to obtain an Oracle with an FDR that can be well compared to $\alpha$, rather than $\pi_0\alpha$, which makes plots more readable across different values of $k$. (In the Gaussian case, the theory is unchanged with this oracle).
[MedianMAD] is the default procedure studied in the paper, while [TrimMAD] and [MLE] are variations thereof, using different scaling estimators. [LocfdrSC] is a more sophisticated procedure, not of a plug-in BH type, that learns the null distribution, the null proportion and the alternative distribution.

Each of these methods are computed for $\alpha\in \{0.05,0.2\}$. So there are $2\times 5=15$ $(\FDR,\TDR)$-points per plot. 
\subsection{Results}

The results are displayed in Figures~\ref{fig:indep},\ref{fig:block}~and~\ref{fig:equi}, for the independent, block-dependent and equi-correlated case, respectively. We summarize below the findings in each case.

First, in the independent case (Figure~\ref{fig:indep}):
\begin{itemize}
\item [Oracle] has FDR close to $\alpha$, as the theory provides.
\item First line of Figure~\ref{fig:indep}: in the case of standard alternative all methods mimic the oracle for $k$ small but all fails for $k$ too large. Methods based on the \texttt{locfdr} package ([MLE] and LocfdrSC) are slightly better than [MedianMAD] and [TrimMAD], because they include a more sophisticated estimation of the null or/and alternative distributions.
%\item  line of Figure~\ref{fig:indep}:
%Provided by theory for [MedianMAD], but no theory for the other, seems to indicate that it is working too.
%\item For $k$ getting larger, FDR becomes smaller than $\alpha$ for and above $\alpha$ for zero-located alternatives.
\item Methods based on the \texttt{locfdr} package ([MLE] and [LocfdrSC]) lack of robustness. First, consider the second line of Figure~\ref{fig:indep}: both method lead to an error when running the code, which seems due to the too extreme values taken by the Cauchy alternatives, so both methods displayed as $(0,0)$ points for Cauchy alternative. Second, consider the third line of Figure~\ref{fig:indep}: their FDR are not controlled anymore for zero-located alternatives. 
\item The two procedures [TrimMAD] and [MedianMAD] behaves similarly on all experiments.
\item Globally, the different observed effects are more visible for $\alpha=0.2$ than for $\alpha=0.05$.
\end{itemize}

The results in the block-correlation case are qualitatively similar to the independence case, see Figure~\ref{fig:block}. This means that the different processes involved in the problem seem to have a similar behavior under (this kind of) weak dependence. 

In the equi-correlation case, however, the situation is markedly different; the dependence is strong and distributed simultaneously among all the measurements. It is worth to note that the measurements can be written as 
$$
Y_i   =  \mu_i + \rho^{1/2}  U  + (1-\rho)^{1/2} \xi_i, \:\:1\leq i\leq n, 
$$
where $U,\xi_1,\dots,\xi_n$ are all i.i.d. $\mathcal{N}(0,1)$. Hence, conditionally on the factor $U$, we have that $Y$ follows the distribution $\mathcal{N}(\mu_i+ \rho^{1/2}  U, (1-\rho)I_n)$. This means that, while the {\it true unconditional null} is $\mathcal{N}(0,1)$, the {\it true conditional null} is $\mathcal{N}(\rho^{1/2}  U, 1-\rho)$. Based on this,  [Oracle], which is oracle under independence,  is not the oracle anymore in the equi-correlated setting; it is improvable by learning  the conditional null. 
In the equi-correlation case, the results are displayed in Figure~\ref{fig:equi} and can be summarized as follows:
 \begin{itemize}
\item Overall, [MedMAD], [TrimMAD] are more robust than [MLE], [LocfdrSC] but less powerful for standard alternatives, again.
\item When $k$ is too large, all methods fail to mimic the oracle, again.
\item The empirical procedures are able to remove the equi-correlation for $k\in\{30,100\}$; the TDR of all methods [MedMAD], [TrimMAD], [MLE] and [LocfdrSC] are larger than the one of [Oracle]. This is due to what is mentioned above: they all learn the conditional null distribution, rather than the unconditional null.
 \end{itemize}
 
Overall, this numerical experiments reinforces that, in a minimax sense, we should focus on the robust procedures   [MedMAD], [TrimMAD] while for more narrowed and favorable alternatives, they can be improved by the more sophisticated procedures [MLE] and [LocfdrSC].

%[Oracle] is not really optimal because of strong dependence. Using plug-in BH adjust further the dependence and leads to a higher TDR in fact (!), at least when the contamination is not too important.

%All in all, indicate that we could prove minimax theory for [TrimMAD], but maybe not for [MLE] and [LocfdrSC].

%If we know a priori particular alternatives, the [MedMAD]/[TrimMAD] methods may be suboptimal, and one can try to do better. 

\begin{figure}[h!]
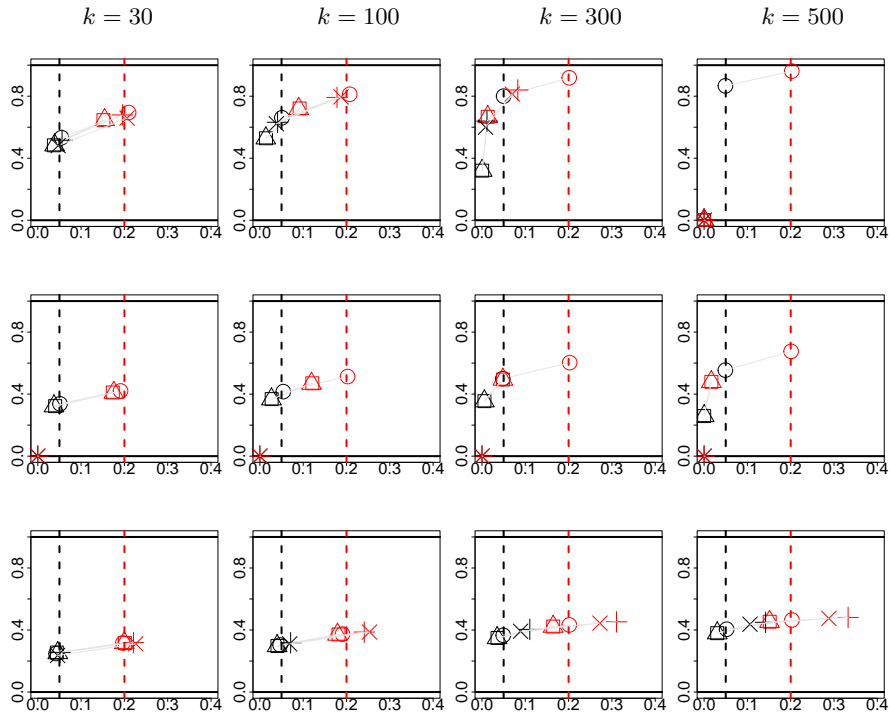

\begin{tabular}{cccc}
%k/n\in\{0.01,0.05,0.1,0.3,0.5\}
%$k=10$ &
 $k=30$ & $k=100$ & $k=300$ & $k=500$\\
%\includegraphics[scale=0.17]{ROC_depindep_altstandard_n1000_ksurn0_01_nbsimu100}&
%\hspace{-6mm}
\includegraphics[scale=0.17]{ROC_depindep_altstandard_n1000_ksurn0_03_nbsimu100}&
\hspace{-6mm}
\includegraphics[scale=0.17]{ROC_depindep_altstandard_n1000_ksurn0_1_nbsimu100}&
\hspace{-6mm}
\includegraphics[scale=0.17]{ROC_depindep_altstandard_n1000_ksurn0_3_nbsimu100}&
\hspace{-6mm}
\includegraphics[scale=0.17]{ROC_depindep_altstandard_n1000_ksurn0_5_nbsimu100}\\
%\includegraphics[scale=0.17]{ROC_depindep_altcauchy_n1000_ksurn0_01_nbsimu100}&
%\hspace{-6mm}
\includegraphics[scale=0.17]{ROC_depindep_altcauchy_n1000_ksurn0_03_nbsimu100}&
\hspace{-6mm}
\includegraphics[scale=0.17]{ROC_depindep_altcauchy_n1000_ksurn0_1_nbsimu100}&
\hspace{-6mm}
\includegraphics[scale=0.17]{ROC_depindep_altcauchy_n1000_ksurn0_3_nbsimu100}&
\hspace{-6mm}
\includegraphics[scale=0.17]{ROC_depindep_altcauchy_n1000_ksurn0_5_nbsimu100}\\
%\includegraphics[scale=0.17]{ROC_depindep_altf2_n1000_ksurn0_01_nbsimu100}&
%\hspace{-6mm}
\includegraphics[scale=0.17]{ROC_depindep_altf2_n1000_ksurn0_03_nbsimu100}&
\hspace{-6mm}
\includegraphics[scale=0.17]{ROC_depindep_altf2_n1000_ksurn0_1_nbsimu100}&
\hspace{-6mm}
\includegraphics[scale=0.17]{ROC_depindep_altf2_n1000_ksurn0_3_nbsimu100}&
\hspace{-6mm}
\includegraphics[scale=0.17]{ROC_depindep_altf2_n1000_ksurn0_5_nbsimu100}\\
\end{tabular}
\caption{$(\FDR,\TDR)$-plot in the independent case, for the $5$ different methods and $\alpha=0.05$ (black) or $\alpha=0.2$ (red).  Alternatives: standard (top), Cauchy (middle) or zero-located (bottom). The procedures are [Oracle] ($\circ$), [MedianMAD] ($\triangle$), [TrimMAD] ($\square$), [MLE] (+), LocfdrSC ($\times$).\label{fig:indep}}
\end{figure}

\begin{figure}[h!]
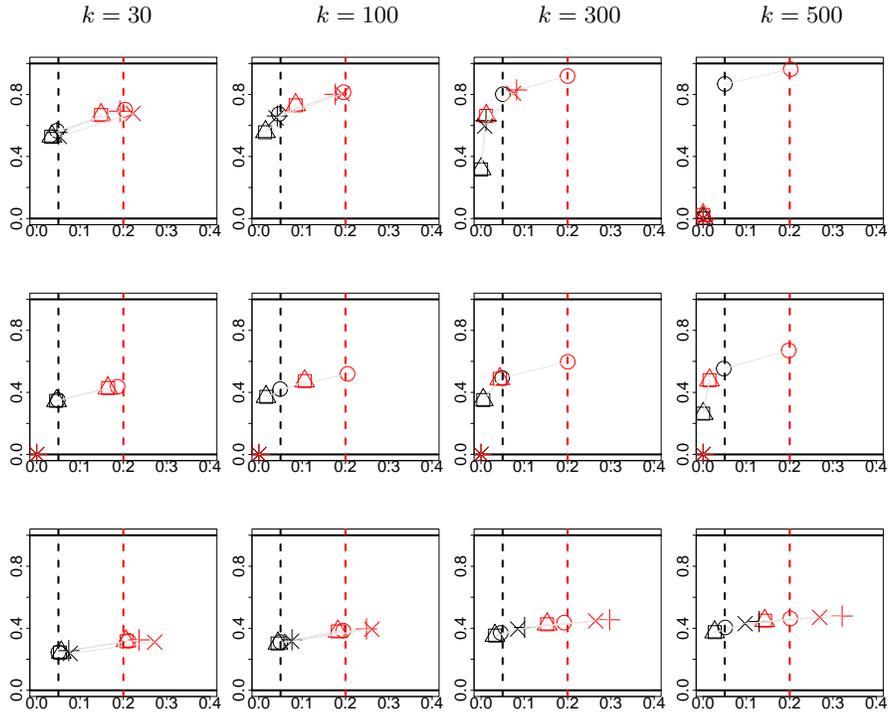

\begin{tabular}{cccc}
%k/n\in\{0.01,0.05,0.1,0.3,0.5\}
%$k=10$ & 
$k=30$ & $k=100$ & $k=300$ & $k=500$\\
%\includegraphics[scale=0.17]{ROC_depbloc_altstandard_n1000_ksurn0_01_nbsimu100}&
%\hspace{-6mm}
\includegraphics[scale=0.17]{ROC_depbloc_altstandard_n1000_ksurn0_03_nbsimu100}&
\hspace{-6mm}
\includegraphics[scale=0.17]{ROC_depbloc_altstandard_n1000_ksurn0_1_nbsimu100}&
\hspace{-6mm}
\includegraphics[scale=0.17]{ROC_depbloc_altstandard_n1000_ksurn0_3_nbsimu100}&
\hspace{-6mm}
\includegraphics[scale=0.17]{ROC_depbloc_altstandard_n1000_ksurn0_5_nbsimu100}\\
%\includegraphics[scale=0.17]{ROC_depbloc_altcauchy_n1000_ksurn0_01_nbsimu100}&
%\hspace{-6mm}
\includegraphics[scale=0.17]{ROC_depbloc_altcauchy_n1000_ksurn0_03_nbsimu100}&
\hspace{-6mm}
\includegraphics[scale=0.17]{ROC_depbloc_altcauchy_n1000_ksurn0_1_nbsimu100}&
\hspace{-6mm}
\includegraphics[scale=0.17]{ROC_depbloc_altcauchy_n1000_ksurn0_3_nbsimu100}&
\hspace{-6mm}
\includegraphics[scale=0.17]{ROC_depbloc_altcauchy_n1000_ksurn0_5_nbsimu100}\\
%\includegraphics[scale=0.17]{ROC_depbloc_altf2_n1000_ksurn0_01_nbsimu100}&
%\hspace{-6mm}
\includegraphics[scale=0.17]{ROC_depbloc_altf2_n1000_ksurn0_03_nbsimu100}&
\hspace{-6mm}
\includegraphics[scale=0.17]{ROC_depbloc_altf2_n1000_ksurn0_1_nbsimu100}&
\hspace{-6mm}
\includegraphics[scale=0.17]{ROC_depbloc_altf2_n1000_ksurn0_3_nbsimu100}&
\hspace{-6mm}
\includegraphics[scale=0.17]{ROC_depbloc_altf2_n1000_ksurn0_5_nbsimu100}\\
\end{tabular}
\caption{Same as Figure~\ref{fig:indep} in the block-correlation case.\label{fig:block}}
\end{figure}

\begin{figure}[h!]
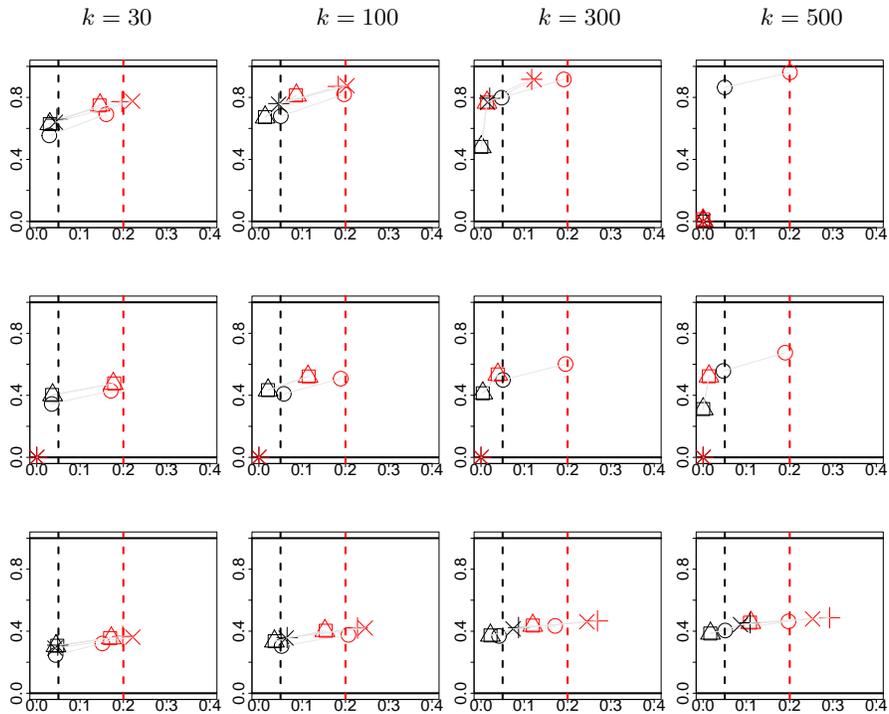

\begin{tabular}{cccc}
%k/n\in\{0.01,0.05,0.1,0.3,0.5\}
%$k=10$ &
 $k=30$ & $k=100$ & $k=300$ & $k=500$\\
%\includegraphics[scale=0.17]{ROC_depequi_altstandard_n1000_ksurn0_01_nbsimu100}&
%\hspace{-6mm}
\includegraphics[scale=0.17]{ROC_depequi_altstandard_n1000_ksurn0_03_nbsimu100}&
\hspace{-6mm}
\includegraphics[scale=0.17]{ROC_depequi_altstandard_n1000_ksurn0_1_nbsimu100}&
\hspace{-6mm}
\includegraphics[scale=0.17]{ROC_depequi_altstandard_n1000_ksurn0_3_nbsimu100}&
\hspace{-6mm}
\includegraphics[scale=0.17]{ROC_depequi_altstandard_n1000_ksurn0_5_nbsimu100}\\
%\includegraphics[scale=0.17]{ROC_depequi_altcauchy_n1000_ksurn0_01_nbsimu100}&
%\hspace{-6mm}
\includegraphics[scale=0.17]{ROC_depequi_altcauchy_n1000_ksurn0_03_nbsimu100}&
\hspace{-6mm}
\includegraphics[scale=0.17]{ROC_depequi_altcauchy_n1000_ksurn0_1_nbsimu100}&
\hspace{-6mm}
\includegraphics[scale=0.17]{ROC_depequi_altcauchy_n1000_ksurn0_3_nbsimu100}&
\hspace{-6mm}
\includegraphics[scale=0.17]{ROC_depequi_altcauchy_n1000_ksurn0_5_nbsimu100}\\
%\includegraphics[scale=0.17]{ROC_depequi_altf2_n1000_ksurn0_01_nbsimu100}&
%\hspace{-6mm}
\includegraphics[scale=0.17]{ROC_depequi_altf2_n1000_ksurn0_03_nbsimu100}&
\hspace{-6mm}
\includegraphics[scale=0.17]{ROC_depequi_altf2_n1000_ksurn0_1_nbsimu100}&
\hspace{-6mm}
\includegraphics[scale=0.17]{ROC_depequi_altf2_n1000_ksurn0_3_nbsimu100}&
\hspace{-6mm}
\includegraphics[scale=0.17]{ROC_depequi_altf2_n1000_ksurn0_5_nbsimu100}\\
\end{tabular}
\caption{Same as Figure~\ref{fig:indep} in the equi-correlation case.\label{fig:equi}}
\end{figure}

%\subsection{Comparison with a permutation-based approach}
%
%With semi-synthetic data sets robust approach vs permutation + BH plot of (FDP,TDP) to show that robust approach accommodate better the dependence 

%\section{Proof of the lower bound}\label{sec:prooflowerbound}
\section{Proof of Theorem \ref{THM:LOWER_GENERAL}} \label{sec:proofTHM:LOWER_GENERAL}

The proof  is based on the following argument. 
  We build two collections $\mathcal{P}_1$ and $\mathcal{P}_2$ of distributions. For any $P\in \cP_1$, the null distribution is $\mathcal{N}(0,1)$ and the distribution of the alternative is fairly separated from the null (see the red and blue curves in the left panel of Figure~\ref{lbsigma}). For any $P\in \cP_2$, the null distribution is $\mathcal{N}(0,\sigma_2^2)$ (with $\sigma_2>1$) and the alternative distribution is more concentrated around zero (right panel of Figure~\ref{lbsigma}). We first establish that any multiple testing procedure $R$ behaves similarly on $\cP_1$ and $\cP_2$. 
Then, we prove that, under $P\in \cP_2$,  if $|R(Y)|>0$, then its $\FDP$ is bounded away from zero. In contrast, under $P\in \cP_1$, if $|R(Y)|=0$, then its TDP is much smaller than that of oracle $\BH^{\star}_{\alpha}$. This will allow us to conclude that $R$ either does not control the FDR under some $P\in \cP_2$ or has a {too small} TDP under some $P\in \cP_1$.

\begin{figure}[h!]
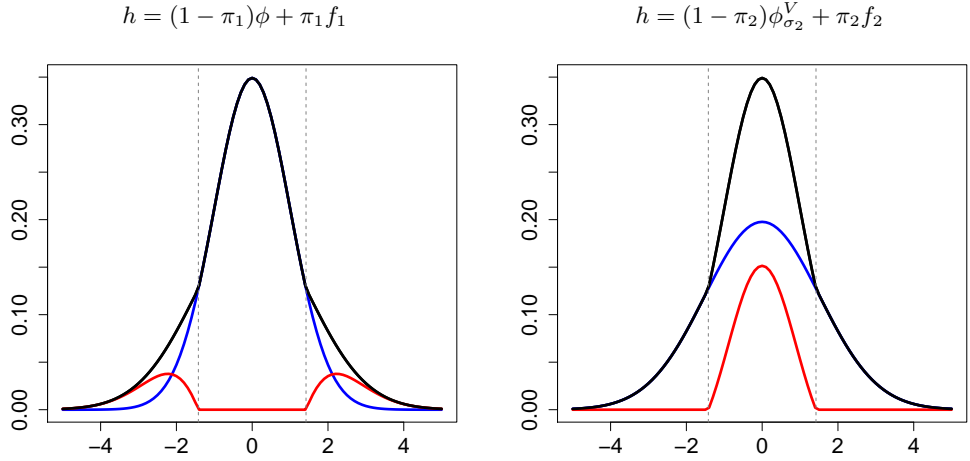

\begin{tabular}{ccc}
$h=(1-\pi_1)\phi + \pi_1 f_1$  & $h=(1-\pi_2)\phi^{V}_{\sigma_2}+\pi_2 f_2$ \vspace{-0.5cm}\\
 \includegraphics[scale=0.45]{lbsigma1}&\hspace{-0.5cm}\includegraphics[scale=0.45]{lbsigma2}
 \end{tabular}
\vspace{-0.5cm}
\caption{Left: the density $h$ given by \eqref{eq:definition_g} (black), interpreted as a mixture between the null $\mathcal{N}(0,1)$ ($(1-\pi_1)\phi$ in blue) and the alternative $f_1$ ($\pi_1 f_1$ in red). Right: the same $h$ interpreted as a mixture between the null $\mathcal{N}(0,\sigma_2^2)$ ($(1-\pi_2)\phi^{V}_{\sigma} $ in blue) and the alternative $f_2$ ($\pi_2 f_2$ in red). $\pi_1=1/8$, $\pi_2=1/4$, $\sigma_2\approx 1.51$. The distance between the vertical dashed gray lines and $0$ is $t_0\approx 1.47$ given by \eqref{eq:definition_t0}. See the text for the definitions.
} \label{lbsigma}
\end{figure}

\subsection*{Step 1: Building a least favorable mixture distribution}

Let us denote $\phi^{V}_{\sigma}(x)=\phi(x/\sigma)/\sigma$ for all $x\in\R$ and $\sigma>0$. Fix some $\pi_1= k_1/(2n)$ and $\pi_2= k_2/(2n)$. For any $\sigma_2\geq 1$, define $\mu$, the real measure  with density 
\beq\label{eq:definition_g}
 h=(1-\pi_1)\phi + \pi_1 f_1= (1-\pi_2)\phi^{V}_{\sigma_2}+\pi_2 f_2 = \max((1-\pi_1)\phi, (1-\pi_2)\phi^{V}_{\sigma_2})\ ,
\eeq
where 
\beqn
f_1 =\frac{1}{\pi_1}\left[(1-\pi_2)\phi^{V}_{\sigma_2}- (1-\pi_1)\phi\right]_+\ ;\quad 
f_2 =\frac{1}{\pi_2}\left[(1-\pi_1)\phi-(1-\pi_2)\phi^{V}_{\sigma_2}\right]_+\ . 
\eeqn
Since $\int (1-\pi_1)\phi-(1-\pi_2)\phi^{V}_{\sigma_2}= \pi_2-\pi_1$, we deduce that, if $\sigma_2$ is chosen in such a way that $\int f_2(u)du=1$, we have $\int f_1(u)du=1$. Let us prove that $\int f_2(u)du=1$ for a suitable $\sigma_2>1$. For $\sigma_2=1$, we have $\int f_2(u)du= 1-\pi_1/\pi_2\in [0,1)$ (because $0<\pi_1\leq \pi_2$), whereas  
\[
\int  f_2(u)du\geq \frac{(1-\pi_2)}{\pi_2}\int [\phi(u) - \phi^{V}_{\sigma_2}(u)]_+du\geq 3  \int [\phi(u) - \phi^{V}_{\sigma_2}(u)]_+du\  , 
\] 
since $\pi_2\leq 1/4$. The above expression is larger than $1$ for $\sigma_2$ large enough (compared to some universal constant). 
 Since $\int f_2(u)du$ is a continuous function with respect to the variable $\sigma_2$, there exists at least one value of $\sigma_2>1$, depending only on $\pi_1$ and $\pi_2$, such that both $f_1$ and $f_2$ are densities. In the sequel, we fix $\sigma_2$ to one of these values. The above arguments also imply that $\sigma_2\leq c'_1$ for some positive universal constant $c'_1$.

\medskip

Recall that $\mu$ denotes the probability measure on $\R$ with density $h$ given by \eqref{eq:definition_g}.
Let $Q=\mu^{\otimes n}$ be the corresponding product distribution on $\R^n$. 
Let $Z_{1,i}$, $1\leq i\leq n$, be  i.i.d. and all following a Bernoulli distribution with parameter $\pi_1$. 
Let $Q_{1,z}$ the distribution on $\R^n$ of density $\prod_{i=1}^n  ((1-z_i) \phi+ z_i f_1)$  for $z\in\{0,1\}^n$, so that $Y\sim Q_{1,Z_1}$ is distributed as $Q$ unconditionally on $Z_1$.  
If $\sum_{i=1}^n z_i< n/2$, we have $\theta(Q_{1,z})= 0$, $\sigma_{1}(Q_{1,z})=1$ and $\cH_1(Q_{1,z})= \{1\leq i\leq n: z_i=1\}$. If $\sum_{i=1}^n z_i\geq n/2$, $P_{1,z}\notin \cP$. Nevertheless, we still let $\theta(Q_{1,z})=0$,  $\sigma_{1}(Q_{1,z})=1$ and $\cH_1(Q_{1,z})= \{1\leq i\leq n: z_i=1\}$ by convention. 
Define similarly $Z_2$ and $Q_{2,z}$, so that 
$Y\sim Q_{2,Z_2}$ has the same distribution as $Y\sim Q_{1,Z_1}$ (that is, $Q$) unconditionally on $Z_1$ and $Z_2$. 
In the sequel,  we denote $n_1(Z_1)=\sum_{i=1}^n \mathds{1}\{Z_{1,i}=1\} $ and $n_1(Z_2)=\sum_{i=1}^n \mathds{1}\{Z_{2,i}=1\}$, so that $n_1(Q_{1,Z_1})= n_1(Z_1)$ and $n_1(Q_{2,Z_2})= n_1(Z_2)$. 

As a consequence, $Q$ can be both interpreted as a mixture of $Q_{1,z}$ (with $\theta(Q_{1,z})=0$ and $\sigma(Q_{1,z})=1$) and as a mixture of $Q_{2,z}$ (with $\theta(Q_{2,z})=0$ and $\sigma(Q_{2,z})=\sigma_2$).

\medskip 

Consider any multiple testing procedure $R$ and define the event $A=\{|R(Y)|> 0\}$. 
Since
$
1=Q(A)+Q(A^c) = \E_{Z_1} Q_{1,Z_1}(A) + \E_{Z_2} Q_{2,Z_2}(A^c),
$
this entails that either $\E_{Z_1} Q_{1,Z_1}(A^c) \geq 1/2$ or $\E_{Z_2} Q_{2,Z_2}(A)\geq 1/2$. 
We show in Step 2 that, if $\E_{Z_2} Q_{2,Z_2}(A)\geq 1/2$, $R$ does not control the FDR under some $Q_{2,z}$ with $n_1(Q_{2,z})\leq k_2$, whereas we establish in Step 3 that, if $\E_{Z_1} Q_{1,Z_1}(A^c) \geq 1/2$, $R$ is over-conservative under some $Q_{1,z}$  with $n_1(Q_{1,z})\leq k_1$.

\subsection*{Step 2: if $\P_{Y\sim Q}(|R(Y)|>0)\geq  \frac{1}{2}$ then $\FDR(P_2,R)\geq c'_3$ for some $P_2$ with $n_1(P_2)\leq k_2$}
We consider the mixture distribution where $Z_2$ is sampled according to a Bernoulli distribution with parameters $\pi_2$ and $Y\sim Q_{2,Z_2}$. 
We have by the Fubini theorem,
\begin{align}
&\E_{Z_2}\left[\E_{Y\sim Q_{2,Z_2}}\left[ \FDP(Q_{2,Z_2}, R(Y)) \right] \right]\nonumber\\
&= \E_{Z_2}\left[\E_{Y\sim Q_{2,Z_2}}\left[ \frac{\sum_{i\in R(Y)} \mathds{1}\{Z_{2,i}=0\}}{|R(Y)|}\mathds{1}\{R(Y)>0\}\right] \right] \nonumber\\
 &= \E_{Y\sim Q}\left[\E_{Z_2}\left[ \frac{\sum_{i\in R(Y)} \mathds{1}\{Z_{2,i}=0\}}{|R(Y)|}\mathds{1}\{R(Y)>0\}\:\bigg|\: Y \right] \right] \nonumber\\
&= \E_{Y\sim Q}\left[ \frac{\sum_{i\in R(Y)}\P(Z_{2,i}=0\:|\:Y)}{|R(Y)|}\mathds{1}\{R(Y)>0\}\right] \label{equ:FDPswitch}.
\end{align}
Next, we have 
\begin{align*}
\P(Z_{2,i}=0\:|\:Y)&= \frac{(1-\pi_2)\phi^{V}_{\sigma_2}(Y_i)}{h(Y_i)}
=\frac{(1-\pi_2)\phi^{V}_{\sigma_2}(Y_i)}{\max((1-\pi_1)\phi(Y_i), (1-\pi_2)\phi^{V}_{\sigma_2}(Y_i))}\\
&\geq 1\wedge \frac{(1-\pi_2)\phi^{V}_{\sigma_2}(Y_i)}{(1-\pi_1)\phi(Y_i)}
\geq 1\wedge \frac{(1-\pi_2)}{(1-\pi_1)\sigma_2}\exp\left[\frac{Y_i^2}{2}\left(1-\frac{1}{\sigma_2^2}\right)\right]\ .
\end{align*}
Since $\sigma_2\geq 1$, $\pi_2\leq 1/4$ and since we proved in step 1 that $\sigma_2\leq c'_1$ (for some numerical constant $c'_1$), we obtain
\begin{align*}
\P(Z_{2,i}=0\:|\:Y)&\geq 1\wedge \frac{(1-\pi_2)}{(1-\pi_1)\sigma_2} \geq 1\wedge\frac{3}{4 c'_1}\ .
\end{align*}
Combining this with \eqref{equ:FDPswitch}, we obtain
\begin{align*}
&\E_{Z_2}\left[\E_{Y\sim Q_{2,Z_2}}\left[ \FDP(Q_{2,Z_2}, R(Y)) \right] \right]
\geq \left(1\wedge\frac{3}{4 c'_1}\right) \P_{Y\sim Q}( R(Y)>0)\geq \left(1\wedge\frac{3}{4 c'_1}\right)/2\ .
\end{align*}
Recall that $n_1(Z_2)$ follows a Binomial distribution with parameter $n$ and $\pi_2=k_2/(2n)$. 
By Chebychev inequality, we have 
\[
\P\left(\left|n_1(Z_2) - k_2/2\right|>  k_2/4 \right) \leq \frac{n \pi_2(1-\pi_2)}{( k_2/4)^2}\leq \frac{8}{k_2}. 
\]
This implies that there exists $z_2$ with $n_1(z_2)\in [k_2/4; k_2]$ such that 
\begin{align*}
\FDR(Q_{2,z_2},R)=\E_{Y\sim Q_{2,z_2}}\left[ \FDP(Q_{2,z_2}, R(Y)) \right] \geq \left(1\wedge\frac{3}{4 c'_1}\right)/2 -  \frac{8}{k_2},
\end{align*}
which is bounded away from zero for $k_2$ large enough, this last condition being ensured by~\eqref{eq:condition_lower_k_0_general} and the fact that $n$ is large enough. To summarize, we have proved that, for $P_2=Q_{2,z_2}$, we have   $\FDR(P_2,R(Y))\geq c'_3$ for some universal constant $c'_3\in (0,1)$  and $n_1(P_2)\leq k_2$.

\subsection*{Step 3: If $\P_{Y\sim Q}(|R(Y)|=0)\geq 1/2$, then $R$ is over-conservative, for some $P_1$ with $n_1(P_1)\leq k_1$} 

Applying Chebychev inequality as in Step 2, we deduce that $\P(n_1(Z_1)\in [k_1/4; k_1])\geq 1- 8/k_1$. Since $\E_{Z_1}\P_{Y\sim \P_{1,Z_1}}(|R(Y)|=0)\geq 1/2$, this implies that there exists $z_1$ with $n_1(z_1)\leq k_1$ such that $\P_{Y\sim \P_{1,z_1}}(|R(Y)|=0)\geq 1/2- 8/k_1$.
Since~\eqref{eq:condition_lower_k_0_general}  can be satisfied only if $\log(k_2/k_1)\leq (2 c_2\log 2)^{-1}\log(n)$, that is, $k_1 \geq k_2 n^{-(2 c_2\log 2)^{-1}}$, by choosing $c_1$ and $c_2$ large enough, we may assume that $1/2- 8/k_1\geq 2/5$.  In the sequel, we fix $P_1=P_{1,z_1}$. 
 For such $P_1$ with $n_1(P_1)\leq k_1$, we have therefore $\P_{Y\sim P_1}(|R(Y)|=0)\geq 2/5$. In contrast, we claim that $\BH_{\alpha/2}^*$ rejects many false null hypotheses with positive probability.

Before this, let us provide further properties of $\sigma_2$ and $h$. 
 The positive number $u_0$  satisfying $(1-\pi_1)\phi(u_0)= (1-\pi_2)\phi^{V}_{\sigma_2}(u_0)$ is defined as 
\beq\label{eq:definition_t0}
u_0^2= \frac{2\sigma_2^2}{\sigma_2^2-1}\log\left(\sigma_2\frac{1-\pi_1}{1-\pi_2}\right).
\eeq
We easily check that we have
$(1-\pi_1)\phi(u)> (1-\pi_2)\phi^{V}_{\sigma_2}(u)$ if and only if $|u|<u_0$, and 
$(1-\pi_1)\phi(u)< (1-\pi_2)\phi^{V}_{\sigma_2}(u)$ if and only if $|u|> u_0$, so that
$f_1(u)>0$  if and only if   $|u|>u_0$ and $f_2(u)>0$  if and only if  $|u|<u_0$.

\begin{lem}\label{lem:sigma_2}
There exists a numerical constant $c'_0\in (0,1)$ such that $\sigma_2$ satisfies
$$
\sigma_2 -1 \geq c'_0\pi_2 [1+\log( \pi_2/\pi_1)]^{-1}\ .
$$ 
Also, there exists another numerical constant $c^{''}_0\in (0,1)$ such that,  for $n$ large enough, 
we have $\overline{\Phi}(u_0/\sigma_2)\geq 10\sqrt{2\log(2n)/n}$ provided that 
$\log( \pi_2/\pi_1)\leq c_0^{''}\log(n)$.
\end{lem}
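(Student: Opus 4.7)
The two parts of this lemma rest on the integrated identity
\[
\pi_1 \;=\; 2\bigl[(1-\pi_2)\overline{\Phi}(u_0/\sigma_2) - (1-\pi_1)\overline{\Phi}(u_0)\bigr],
\]
which I would establish first. It follows by integrating $f_2 = \pi_2^{-1}[(1-\pi_1)\phi - (1-\pi_2)\phi^V_{\sigma_2}]_+$ over $|u|<u_0$ (where $f_2 > 0$) and using $\int_{-a}^a \phi = 1-2\overline{\Phi}(a)$ together with the normalization $\int f_2 = 1$. Part 2 then falls out almost immediately: the identity gives $\overline{\Phi}(u_0/\sigma_2) \geq \pi_1/(2(1-\pi_2)) \geq \pi_1/2$ since $\pi_2 \leq 1/4$, and under the hypothesis $\log(\pi_2/\pi_1) \leq c''_0 \log n$ combined with $\pi_2 \geq c_2 \log(2/\alpha)/(2\log n)$ from~\eqref{eq:condition_lower_k_0_general}, one obtains $\pi_1/2 \gtrsim n^{-c''_0}/\log n$, which for $c''_0 < 1/2$ and $n$ large exceeds $10\sqrt{2\log(2n)/n}$.

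For Part 1, write $\delta = \sigma_2 - 1$ and apply a mean-value bound to $\overline{\Phi}(u_0/\sigma_2) - \overline{\Phi}(u_0) = \int_{u_0/\sigma_2}^{u_0}\phi$, using that $\phi$ is non-increasing on $[0,\infty)$, to obtain
\[
\delta \;\geq\; \frac{\pi_1 \sigma_2}{2(1-\pi_2)\, u_0 \phi(u_0/\sigma_2)}.
\]
I would then bound the denominator in two complementary ways. First, the universal estimate $t e^{-t^2/2} \leq e^{-1/2}$ applied to $t = u_0/\sigma_2$ yields $u_0\phi(u_0/\sigma_2) \leq \sigma_2 e^{-1/2}/\sqrt{2\pi} \leq C_1$ (using $\sigma_2 \leq c'_1$, established just before the lemma), giving the ``linear'' bound $\delta \geq c_A \pi_1$.

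Second, from the defining relation $u_0^2(\sigma_2^2-1)/\sigma_2^2 = 2\log(\sigma_2(1-\pi_1)/(1-\pi_2))$ and the inequality $\log((1-\pi_1)/(1-\pi_2)) \geq (\pi_2-\pi_1)/(1-\pi_1) \geq \pi_2 - \pi_1$, together with $\sigma_2^2 - 1 \leq (c'_1+1)\delta$, I would derive $u_0^2/(2\sigma_2^2) \geq (\pi_2-\pi_1)/(A\delta)$ for a numerical constant $A$, and hence $\phi(u_0/\sigma_2) \leq \exp(-(\pi_2-\pi_1)/(A\delta))/\sqrt{2\pi}$. Combined with $u_0 \leq C_2/\sqrt{\delta}$ (from $u_0^2 \leq C\sigma_2^2/(\sigma_2^2-1) \cdot L(\sigma_2) \lesssim 1/\delta$ since $L(\sigma_2)$ is bounded), the MVT bound produces an implicit inequality of the form $\sqrt{\delta} \geq c\, \pi_1 \, e^{(\pi_2-\pi_1)/(A\delta)}$. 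Taking logarithms, letting $M = (\pi_2-\pi_1)/(A\delta)$, one arrives at $M + \tfrac{1}{2}\log M \leq \log(\pi_2/\pi_1) + O(1)$, which forces $M \leq C_3(1+\log(\pi_2/\pi_1))$ and hence $\delta \geq c_B (\pi_2-\pi_1)/(1+\log(\pi_2/\pi_1))$.

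Combining the two bounds finishes Part 1: when $\log(\pi_2/\pi_1) \leq 1$ one has $\pi_1 \geq \pi_2/e$ and the first bound gives $\delta \geq c_A \pi_2/e \gtrsim \pi_2/(1+\log(\pi_2/\pi_1))$; when $\log(\pi_2/\pi_1) > 1$ one has $\pi_2-\pi_1 \geq \pi_2/2$ and the second bound gives the conclusion directly. The main obstacle will be the second bound: getting the exponential damping factor $\exp(-(\pi_2-\pi_1)/(A\delta))$ on $\phi(u_0/\sigma_2)$ is what produces the crucial logarithmic (rather than polynomial) denominator, and requires exploiting the explicit formula for $u_0(\sigma_2)$ rather than using only the identity $(*)$ and crude Mills-ratio estimates.
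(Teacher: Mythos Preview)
Your Part~2 shortcut---reading $\overline{\Phi}(u_0/\sigma_2)\geq \pi_1/2$ directly off the identity and then invoking~\eqref{eq:condition_lower_k_0_general} to lower-bound $\pi_1$---works in the ambient context of Theorem~\ref{THM:LOWER_GENERAL}, but does not prove the lemma as stated: the lemma's only hypothesis is $\log(\pi_2/\pi_1)\leq c''_0\log n$, and for, say, $\pi_1=\pi_2=n^{-0.9}$ your bound gives only $\overline{\Phi}(u_0/\sigma_2)\geq n^{-0.9}/2$. The paper instead uses Part~1 to show $u_0/\sigma_2\leq C\sqrt{1+\log(\pi_2/\pi_1)}$, a bound independent of the size of $\pi_2$, and then applies the Gaussian tail estimate.

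Your ``linear'' bound $\delta\geq c_A\pi_1$ is fine and matches the paper's Case~1. The genuine gap is in your second bound. From $\sqrt{\delta}\geq c\,\pi_1 e^{M}$ with $M=(\pi_2-\pi_1)/(A\delta)$, substituting $\delta=(\pi_2-\pi_1)/(AM)$ and taking logarithms actually gives
\[
M+\tfrac{1}{2}\log M \;\leq\; \tfrac{1}{2}\log(\pi_2-\pi_1)-\log\pi_1+O(1)\;=\;\log(\pi_2/\pi_1)+\tfrac{1}{2}\log(1/\pi_2)+O(1),
\]
not $\log(\pi_2/\pi_1)+O(1)$ as you claim. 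Since nothing in the lemma bounds $\pi_2$ from below, the extra $\tfrac{1}{2}\log(1/\pi_2)$ can be arbitrarily large, so $M\leq C(1+\log(\pi_2/\pi_1))$ does not follow and the desired $\delta\geq c_B(\pi_2-\pi_1)/(1+\log(\pi_2/\pi_1))$ is not established. The culprit is the decoupling: the crude upper bound $u_0\leq C_2/\sqrt{\delta}$ (coming from $L(\sigma_2)$ bounded) and the sharp exponential bound on $\phi(u_0/\sigma_2)$ sit at opposite ends of the range of $u_0$ and cannot both be close to tight simultaneously.

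The paper avoids this by arguing by contradiction and bounding the \emph{product} $u_0\phi(u_0)$ in one step: assuming $\sigma_2-1\leq c'_0\pi_2/L$, the defining formula~\eqref{eq:definition_t0} yields a \emph{lower} bound $u_0\geq\sqrt{L/c'_0+1}$, and the monotonicity of $t\mapsto t\phi(t)$ for $t\geq 1$ converts this into an upper bound on $u_0\phi(u_0)$ that, for small $c'_0$, contradicts the identity-derived lower bound $u_0\phi(u_0)\geq \pi_1 L/(2c'_0\pi_2(1-\pi_1))$. Your route is repairable the same way: write $u_0\phi(u_0/\sigma_2)=\sigma_2\cdot(u_0/\sigma_2)\phi(u_0/\sigma_2)$, feed your lower bound on $u_0/\sigma_2$ into the monotonicity of $t\phi(t)$, and drop the separate upper bound on $u_0$.
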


Let $u_1$ be the smallest number such that for all $u\geq u_1$,  one has 
\begin{equation}\label{equf1big}
 \pi_1 f_1(u) \geq 8\alpha^{-1}\phi(u)\ .
\end{equation}
This implies that for all $u\geq u_1$, $f_2(u)=0$. From the definition of $f_1$, we derive that 
$
 (1-\pi_2)\phi^{V}_{\sigma_2}(u_1) = \phi(u_1)[8\alpha^{-1}+ (1-\pi_1)]\ , 
$
which is again equivalent to 
\beq\label{eq:t1_obj}
u_1^2= \frac{2\sigma^2_2}{\sigma^2_2-1}\log\left[\sigma_2\frac{8\alpha^{-1}+ (1-\pi_1)}{1-\pi_2}\right]\ . 
\eeq

\begin{lem}\label{lem:t1}
There exists a positive numerical constant $c'_3$ such that the following holds for all $\alpha\in (0,1)$. If
\[
\frac{1+ \log(\pi_2/\pi_1)}{\pi_2}\log\left(\frac{2}{\alpha}\right)\leq c'_3\log(n) \ , 
\]
then, we have $u_1\leq \sqrt{\log(n)}$.
\end{lem}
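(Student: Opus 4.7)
The plan is to bound each factor in the explicit expression \eqref{eq:t1_obj} for $u_1^2$ in turn, then combine the bounds and choose $c'_3$ small enough so that the hypothesis forces $u_1^2\leq \log(n)$.

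First I would bound the logarithmic factor
$\log\bigl[\sigma_2(8\alpha^{-1}+(1-\pi_1))/(1-\pi_2)\bigr]$. Recall from Step~1 that $\sigma_2\leq c'_1$ for some universal constant and that $\pi_2\leq 1/4$, so $(1-\pi_2)^{-1}\leq 4/3$ and $(1-\pi_1)\leq 1$. Therefore this term is upper bounded by $\log\bigl[\tfrac{4c'_1}{3}(8\alpha^{-1}+1)\bigr]\leq C_1\log(2/\alpha)$ for some universal constant $C_1$.

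Next I would bound $\sigma_2^2/(\sigma_2^2-1)$ from above. Since $\sigma_2\in[1,c'_1]$, we have $\sigma_2^2-1=(\sigma_2-1)(\sigma_2+1)\geq 2(\sigma_2-1)$, and Lemma~\ref{lem:sigma_2} gives $\sigma_2-1\geq c'_0\pi_2/[1+\log(\pi_2/\pi_1)]$. Combined with $\sigma_2^2\leq (c'_1)^2$, this yields
\[
\frac{\sigma_2^2}{\sigma_2^2-1}\leq \frac{(c'_1)^2\,[1+\log(\pi_2/\pi_1)]}{2c'_0\,\pi_2}.
\]

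Plugging the two bounds into \eqref{eq:t1_obj} gives
\[
u_1^2\leq \frac{(c'_1)^2C_1}{c'_0}\cdot\frac{[1+\log(\pi_2/\pi_1)]\log(2/\alpha)}{\pi_2}=:C_2\cdot\frac{[1+\log(\pi_2/\pi_1)]\log(2/\alpha)}{\pi_2}.
\]
Now the hypothesis of the lemma is precisely that this last expression (up to the constant $C_2$) is at most $c'_3\log(n)$. Choosing $c'_3 := 1/C_2$ therefore gives $u_1^2\leq \log(n)$, as required.

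The argument is essentially bookkeeping, so the only delicate point is ensuring that all the constants used (the upper bound $c'_1$ on $\sigma_2$, the lower bound $c'_0$ from Lemma~\ref{lem:sigma_2}, and the range $\pi_2\leq 1/4$, $\pi_1\leq \pi_2$) are indeed universal and do not hide any dependence on $n$, $\alpha$, $\pi_1$ or $\pi_2$. This has been verified in Step~1 and Lemma~\ref{lem:sigma_2}, so no real obstacle remains.
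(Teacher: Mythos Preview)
Your proof is correct and follows essentially the same approach as the paper: bound the logarithmic factor in \eqref{eq:t1_obj} by a constant times $\log(2/\alpha)$, bound $\sigma_2^2/(\sigma_2^2-1)$ via Lemma~\ref{lem:sigma_2}, and combine. The only cosmetic difference is that the paper splits off the $\log(\sigma_2)$ contribution and bounds $\tfrac{2\sigma_2^2}{\sigma_2^2-1}\log\sigma_2$ separately by a constant (yielding an additive term $c'_1$ in the final bound for $u_1^2$), whereas you absorb $\sigma_2\leq c'_1$ directly into the constant $C_1$; both routes are valid and lead to the same conclusion.
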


Now, recall that $\BH_{\alpha/2}^*$ procedure does use some knowledge of the true underlying distribution $P_1$, namely, $\theta(P_1)=0$ and $\sigma(P_1)=1$. Hence, it can be written as $\BH_{\alpha/2}^*=\{i\in\{1,\dots,n\}\::\:|Y_i|\geq \hat{u}\}$ for 
\begin{equation}\label{equ:BHthresholdproof}
\hat{u}=\min\Big\{u\in \R_+\::\: \sum_{i=1}^n\mathds{1}\{|Y_i|\geq u\} \geq 4 n \alpha^{-1} \ol{\Phi}(u) \Big\}. 
\end{equation}

Hence, we shall prove  that $ \sum_{i\in \cH_1(P_1)}\mathds{1}\{|Y_i|\geq \hat{u}\}>0$ is large with high probability. For this, let us consider
$
N= \sum_{i\in \cH_1(P_1)}\mathds{1}\{|Y_i|\geq u_1\}
$ with $u_1$ as in~\eqref{eq:t1_obj}.
By \eqref{equf1big}, we have 
$
2\int_{u_1}^\infty f_1(u) du \geq 16 \pi_1^{-1}\alpha^{-1} \ol{\Phi}(u_1).
$
Since $n_1(P_1)\geq k_1/4=\pi_1 n/2$, $N$ stochastically dominates the binomial distribution with parameters $\lceil \pi_1 n/2\rceil$ and $16 \pi_1^{-1} \alpha^{-1} \ol{\Phi}(u_1)$. 
Applying Bernstein inequality yields 
$
\P_{Y\sim P_1}(N\leq q/2) \leq e^{-3q/28}
$
for $q=\pi_1 n/2 \times 16 \pi_1^{-1} \alpha^{-1} \ol{\Phi}(u_1) = 8 n  \alpha^{-1} \ol{\Phi}(u_1).$ By \eqref{equ:BHthresholdproof}, $N\geq q/2$ implies $\hat{u}\leq u_1$. This leads us to
\beq\label{eq:conclusion_step_3}
\P_{Y\sim P_1}(\hat{u}\leq u_1, N\geq 4 n  \alpha^{-1} \ol{\Phi}(u_1)) \geq 1-e^{-3q/28} = 1-e^{-(6/7)n  \alpha^{-1} \ol{\Phi}(u_1)} .
\eeq
In view of condition \eqref{eq:condition_lower_k_0_general},
we can apply Lemma~\ref{lem:t1} which gives $u_1\leq \sqrt{\log(n)}$. 
Next, by Lemma~\ref{lem:quantile}, for $n$ larger than a numerical constant, we have $n \ol{\Phi}(\sqrt{\log(n)})\geq c'\sqrt{n/(\log n)}$, for some other numerical constant $c'>0$.
 Hence, for $n$ larger than a numerical constant, with probability at least  $ 1-1/n$, we have 
 $$|\BH_{\alpha/2}^*\cap \cH_1(P_1)|= \sum_{i\in \cH_1(P_1)}\mathds{1}\{|Y_i|\geq \hat{u}\}\geq N \geq 4 n  \alpha^{-1} \ol{\Phi}(u_1)\geq c'\alpha^{-1}\sqrt{n/(\log n)}.$$

\subsection*{Conclusion} Step 1 entails that either $\E_{Z_2} Q_{2,Z_2}(A)\geq 1/2$  or $\E_{Z_1} Q_{1,Z_1}(A^c) \geq 1/2$.
In the former case, Step 2 implies that $ \sup_{P \in \mtc{P}, n_1(P)\leq k_2}   \FDR(P,R)\geq c_3$. In the latter case, we deduce from Step 3  that, for some $P\in \cP$ with $n_1(P)\leq k_1$, we have $\P_{Y\sim P_1}(|R(Y)|= 0)\geq 2/5$, whereas 
\[
\P_{Y\sim P_1}\left[|\BH^{\star}_{\alpha/2}\cap \cH_1(P_1)|\geq c'\alpha^{-1}\sqrt{n/(\log n)}\right]\geq 1 - e^{-c_5\alpha^{-1}\sqrt{n/\log(n)}}\ .
\]
 This concludes the proof by choosing appropriately the numerical positive constants  $c_1$--$c_4$. 

\section{Proof of Theorem~\ref{THMBORNESUP}}\label{sec:proofupperbound}

\subsection{Proof of  \eqref{eq:upper_FDR_thm} in Theorem~\ref{THMBORNESUP}}\label{sec:prooftha}

Fix $P\in\mathcal{P}$ with $n_0(P)/n\geq 0.9.$ First denote 
\begin{align*}
\delta&=c\left( \frac{n_1(P)+1}{n} + n^{-1/6}
\right)\\
\Omega&=\left\{\frac{|\wt{\theta}-\theta|}{\sigma} \leq \delta , \:
\frac{|\wt{\sigma}-\sigma|}{\sigma }\leq \delta,\:
 \wt{\sigma} \in [\sigma/2; 2\sigma],\:|\wt{\theta}-\theta|<0.3 \:\wt{\sigma} \right\},
\end{align*}
with $c>0$ being a universal constant chosen small enough so that 
$\P(\Omega^c)\leq 6 e^{-n^{2/3}}$.
 This is possible according to Lemma~\ref{lem:esti} (used with 
$x=n^{2/3}$).

For any $i\in\{1,\dots,n\}$, we introduce $Y^{(i)}$ as the vector of $\R^n$ such that $Y^{(i)}_j = Y_j$ for $j\neq i$ and   $Y^{(i)}_i=\mathrm{sign}(Y_i-\theta)\times \infty$. Hence $Y^{(i)}$ is such that the $i$-th observation has been set $-\infty$ or $+\infty$ depending on the sign of $Y_i-\theta$. 
The estimators based on the modified sample $Y^{(i)}$ are then defined by
\begin{equation}\label{equ:estimators(i)}
\wt{\theta}^{(i)} = Y^{(i)}_{(\lceil n/2\rceil)};\:\:
\wt{\sigma}^{(i)} =  U^{(i)}_{(\lceil n/2\rceil)}/\ol{\Phi}^{-1}(1/4),
\end{equation}
for $U^{(i)}_j=|Y^{(i)}_j-\wt{\theta}^{(i)}|$, $1\leq j\leq n$. 
As justified at the end of the proof, the purpose of these modified samples is to introduce some independence between the oracles $p$-values $p_i^{\star}$ and the estimators $(\wt{\theta}^{(i)},\wt{\sigma}^{(i)})$. 

It turns out that, for small rescaled $p$-values $p_i(\wt{\theta},\wt{\sigma})$, the estimators $\wt{\theta}^{(i)}$ and $\wt{\sigma}^{(i)}$ are not modified. Furthermore, the BH threshold does not change when replacing $Y$ by $Y^{(i)}$. These two facts lead to the following lemma.

\begin{lem}\label{cor:pvaluei}
Consider any $i\in\{1,\dots,n\}$ and any $\alpha \in (0,0.5)$. Provided that $|\wt{\theta}-\theta|<0.3 \:\wt{\sigma}$, we have
\begin{align*}
\mathds{1}\{p_i(\wt{\theta},\wt{\sigma}) \leq T_\alpha(Y;\wt{\theta},\wt{\sigma})\} &= \mathds{1}\{p_i(\wt{\theta}^{(i)},\wt{\sigma}^{(i)}) \leq T_\alpha(Y^{(i)};\wt{\theta}^{(i)},\wt{\sigma}^{(i)})\}.
\end{align*}
Moreover, if $p_i(\wt{\theta},\wt{\sigma}) \leq T_\alpha(Y;\wt{\theta},\wt{\sigma})$, we have $\wt{\theta}^{(i)}=\wt{\theta}$, $\wt{\sigma}^{(i)}=\wt{\sigma}$ and $T_\alpha(Y^{(i)};\wt{\theta}^{(i)},\wt{\sigma}^{(i)})=T_\alpha(Y;\wt{\theta},\wt{\sigma})\geq \alpha/n$.
\end{lem}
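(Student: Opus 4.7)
The proof will proceed in two steps: first establish the ``moreover'' part via a quantitative bound on $|Y_i - \wt{\theta}|/\wt{\sigma}$ together with an inspection of the relevant order statistics, and then deduce the equality of the two indicators by applying the same kind of argument to the modified sample.

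\textit{Moreover part.} Suppose $p_i(\wt{\theta},\wt{\sigma}) \leq T_\alpha(Y;\wt{\theta},\wt{\sigma})$. Since any BH threshold satisfies $T_\alpha(Y;\wt{\theta},\wt{\sigma}) \leq \alpha < 1/2$, one gets $|Y_i - \wt{\theta}|/\wt{\sigma} > \ol{\Phi}^{-1}(1/4) \approx 0.6745$. I would then extract two consequences in parallel. On the one hand $|Y_i - \wt{\theta}| > 0.3\,\wt{\sigma} > |\wt{\theta} - \theta|$, so $\mathrm{sign}(Y_i-\theta) = \mathrm{sign}(Y_i-\wt{\theta})$ and hence $Y_i^{(i)} = \mathrm{sign}(Y_i-\theta)\,\infty$ lies on the same side of $\wt{\theta}$ as $Y_i$. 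On the other hand $U_i = |Y_i - \wt{\theta}| > \ol{\Phi}^{-1}(1/4)\,\wt{\sigma} = U_{(\lceil n/2\rceil)}$, so $Y_i$ sits strictly beyond the sample median on its own side. A direct inspection of the order statistics then shows that replacing $Y_i$ by a same-sign $\pm\infty$ does not alter $Y_{(\lceil n/2\rceil)}$, so that $\wt{\theta}^{(i)} = \wt{\theta}$; the same reasoning applied to $(U_j)_j$ gives $\wt{\sigma}^{(i)} = \wt{\sigma}$. With both estimators unchanged, $p_j(\wt{\theta}^{(i)},\wt{\sigma}^{(i)}) = p_j(\wt{\theta},\wt{\sigma})$ for $j \neq i$, while the $i$-th $p$-value entering the computation of $T_\alpha(Y^{(i)};\wt{\theta}^{(i)},\wt{\sigma}^{(i)})$ is $0$ (since $Y_i^{(i)}=\pm\infty$). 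The leave-one-out property of BH (Lemma~\ref{lem:indep} in the paper) then guarantees that reducing an already-rejected $p$-value down to $0$ does not move the BH threshold, so $T_\alpha(Y^{(i)};\wt{\theta}^{(i)},\wt{\sigma}^{(i)}) = T_\alpha(Y;\wt{\theta},\wt{\sigma})$, and this common value is at least $\alpha/n$ because at least one rejection is made.

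\textit{Equality of indicators.} The forward implication is then immediate. For the converse, assume $p_i(\wt{\theta}^{(i)},\wt{\sigma}^{(i)}) \leq T_\alpha(Y^{(i)};\wt{\theta}^{(i)},\wt{\sigma}^{(i)}) \leq \alpha < 1/2$, so that $|Y_i - \wt{\theta}^{(i)}|/\wt{\sigma}^{(i)} > \ol{\Phi}^{-1}(1/4)$, and denote $L := |Y_i - \wt{\theta}^{(i)}|$. The plan is to rule out every configuration except the ``good'' one where the rank $r$ of $Y_i$ in $Y$, on the side of $Y_i^{(i)}$, satisfies $r > \lceil n/2\rceil$. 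In the configuration $r = \lceil n/2\rceil$, counting the $U^{(i)}_j$'s strictly greater than $L$ already produces $\lceil n/2\rceil$ majorants ($\lceil n/2\rceil -1$ from observations with $Y_j < Y_i$ together with $U^{(i)}_i = \infty$), forcing the median of $U^{(i)}$ strictly above $L$ and contradicting $L/\wt{\sigma}^{(i)} > \ol{\Phi}^{-1}(1/4)$. In the configuration $r < \lceil n/2\rceil$, setting $s := \#\{j : Y_j \geq 2\wt{\theta}^{(i)} - Y_i\}$, the same counting works whenever $r + s \geq \lceil n/2\rceil$. In the remaining sub-case $r + s < \lceil n/2\rceil$, I would bound $T_\alpha(Y^{(i)};\wt{\theta}^{(i)},\wt{\sigma}^{(i)})$ by noting that only the $r + s < \lceil n/2\rceil$ modified $p$-values with $|Y_j - \wt{\theta}^{(i)}| \geq L$ can lie at or below $p_i(\wt{\theta}^{(i)},\wt{\sigma}^{(i)})$, hence $T_\alpha(Y^{(i)};\wt{\theta}^{(i)},\wt{\sigma}^{(i)}) < \alpha/2$; and conversely the quantitative constraint $|\wt{\theta} - \theta| < 0.3\,\wt{\sigma}$ combined with $\wt{\sigma} \geq (\wt{\theta}^{(i)}-\wt{\theta})/\ol{\Phi}^{-1}(1/4)$ controls the ratio $L/\wt{\sigma}^{(i)}$ strictly below some universal constant less than $1$, so that $p_i(\wt{\theta}^{(i)},\wt{\sigma}^{(i)})$ stays bounded below $\alpha/2$'s upper bound, a contradiction. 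Once only the good configuration survives, the moreover part yields $\wt{\theta}^{(i)} = \wt{\theta}$ and $\wt{\sigma}^{(i)} = \wt{\sigma}$, and the leave-one-out property applied in reverse gives $p_i(\wt{\theta},\wt{\sigma}) \leq T_\alpha(Y;\wt{\theta},\wt{\sigma})$.

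\textit{Main obstacle.} The delicate point is the last sub-case of the converse direction: when the sample median shifts because $Y_i$ lies between $\theta$ and $\wt{\theta}$, neither the order-statistic count on $U^{(i)}$ nor the count of modified $p$-values alone gives a contradiction, and one has to combine both together with the quantitative relation $|\wt{\theta}-\theta| < 0.3\,\wt{\sigma}$ to close the argument.
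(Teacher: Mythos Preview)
Your ``moreover'' part is essentially the paper's argument (their Lemma~\ref{lem:dropi} for estimator stability, plus the BH leave-one-out property). One slip: the leave-one-out property is \emph{not} Lemma~\ref{lem:indep} (which is about independence of $|Y_i-\theta|$ and $Y^{(i)}$); in the paper it is a separate lemma (Lemma~\ref{lem:BHYi}) stating that for \emph{fixed} $(u,s)$ one has $\mathds{1}\{p_i(u,s)\le T_\alpha(Y;u,s)\}=\mathds{1}\{p_i(u,s)\le T_\alpha(Y^{(i)};u,s)\}$.

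For the converse, the paper takes a much shorter route than your case analysis. It inserts the \emph{intermediate} quantity $T_\alpha(Y^{(i)};\wt{\theta},\wt{\sigma})$, i.e.\ the threshold computed on the modified sample but at the \emph{original} scaling. By Lemma~\ref{lem:BHYi} (applied with $u=\wt{\theta},\,s=\wt{\sigma}$), one immediately gets
\[
\mathds{1}\{p_i(\wt{\theta},\wt{\sigma})\le T_\alpha(Y;\wt{\theta},\wt{\sigma})\}
=\mathds{1}\{p_i(\wt{\theta},\wt{\sigma})\le T_\alpha(Y^{(i)};\wt{\theta},\wt{\sigma})\}.
\]
From there both directions reduce to the purely estimator-stability statement ``$p_i(\cdot,\cdot)<0.5\Rightarrow(\wt{\theta}^{(i)},\wt{\sigma}^{(i)})=(\wt{\theta},\wt{\sigma})$'' (Lemma~\ref{lem:dropi} and its analogue with $(\wt{\theta}^{(i)},\wt{\sigma}^{(i)})$ on the left), with no need to track the BH threshold through a configuration-by-configuration analysis.

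Your case analysis, by contrast, has a real gap in the last sub-case ($r+s<\lceil n/2\rceil$). The inequality $\wt{\sigma}\ge(\wt{\theta}^{(i)}-\wt{\theta})/\ol{\Phi}^{-1}(1/4)$ you invoke is not justified: $\wt{\theta}^{(i)}-\wt{\theta}=Y_{(\lceil n/2\rceil+1)}-Y_{(\lceil n/2\rceil)}$ need not be below the MAD $U_{(\lceil n/2\rceil)}$ of the original sample. And even granting some bound on $L/\wt{\sigma}$, you never relate $\wt{\sigma}^{(i)}$ to $\wt{\sigma}$, so the claimed control of $L/\wt{\sigma}^{(i)}$ does not follow. (There is also a parity issue in your $r=\lceil n/2\rceil$ count: for even $n$, exhibiting $\lceil n/2\rceil$ values $>L$ only forces $U^{(i)}_{(\lceil n/2\rceil+1)}>L$, not $U^{(i)}_{(\lceil n/2\rceil)}>L$.) The paper's route via the fixed-scaling intermediate threshold sidesteps all of this.
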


Combining this lemma with the definition of the FDP, we get
\begin{align*}
&\FDP(P,\BH_\alpha(Y;\wt{\theta},\wt{\sigma})) \mathds{1}_{\Omega} \\
&= \frac{\alpha}{n} \sum_{i\in \cH_0(P)}\frac{\mathds{1}{\{p_i(\wt{\theta},\wt{\sigma})\leq  T_\alpha(Y;\wt{\theta},\wt{\sigma})\}}}{T_\alpha(Y;\wt{\theta},\wt{\sigma})\vee (\alpha/n)}  \mathds{1}_{\Omega}  \\
&= \frac{\alpha}{n} \sum_{i\in \cH_0(P)}\frac{\mathds{1}{\{p_i(\wt{\theta}^{(i)},\wt{\sigma}^{(i)})\leq  T_\alpha(Y^{(i)};\wt{\theta}^{(i)},\wt{\sigma}^{(i)})\}}}{T_\alpha(Y^{(i)};\wt{\theta}^{(i)},\wt{\sigma}^{(i)})}  \mathds{1}_{\Omega}\mathds{1}_{\wt{\theta}^{(i)}=\wt{\theta},\wt{\sigma}^{(i)}=\wt{\sigma}} 
\end{align*}
Now, note that, on $\Omega$, when $\wt{\theta}^{(i)}=\wt{\theta}$, $\wt{\sigma}^{(i)}=\wt{\sigma}$, we have
$|\wh{\theta}^{(i)}-\theta|\leq \sigma \delta$, $|\wh{\sigma}^{(i)}-\sigma|\leq \sigma \delta$, $\wh{\sigma}^{(i)}\geq \sigma/2$.
The following key lemma compares the hypotheses rejected by the oracle BH procedure and the rescaled procedure. 

\begin{lem}\label{lem:forFNR} 
For arbitrary estimators $\wh{\theta},\wh{\sigma} $, any $\theta\in \mathbb{R}$, $\sigma>0$,  $\delta>0$,  $\alpha\in (0,0.8)$,  $t_0\in(0,\alpha)$, define	
$$
\eta = \delta  c \left( (2\log (1/t_0))^{1/2} +  2\log(1/t_0) \right)\ ,
$$
with the constant $c>0$ of Corollary~\ref{cor:pvaluemaj}.
 Assume that $\wh{\sigma}\in (\sigma/2; 2\sigma)$, $|\wh{\theta}-\theta|\leq (\sigma\wedge  \widehat{\sigma}) \delta$, $|\wh{\sigma}-\sigma|\leq (\sigma \wedge \widehat{\sigma})\delta$,  and  $\eta\leq 0.05$. Then, for all $i\in\{1,\dots,n\}$,
 \begin{itemize}
\item if $T_\alpha(\wh{\theta},\wh{\sigma})\vee (\alpha/n)\geq t_0$, we have
 \begin{align}
\mathds{1}\{p_i(\wh{\theta},\wh{\sigma})\leq T_{\alpha}(\wh{\theta},\wh{\sigma})\} &\leq \mathds{1}\{p_i(\theta,\sigma)\leq (1+\eta)T_{\alpha}(\wh{\theta},\wh{\sigma}) \}\nonumber\\
&\leq \mathds{1}\{p_i(\theta,\sigma)\leq T_{\alpha(1+\eta)}(\theta,\sigma)\}\ ;
\label{equ:forFDR}
\end{align}
\item if $T_{0.95\alpha}(\theta,\sigma)\vee (0.95\alpha/n)\geq t_0$, we have
 \begin{align}
\mathds{1}\{p_i(\theta,\sigma)\leq T_{\alpha(1-\eta)}(\theta,\sigma)\}  \leq \mathds{1}\{p_i(\wh{\theta},\wh{\sigma})\leq T_{\alpha}(\wh{\theta},\wh{\sigma})\}\ .
\label{equ:forFNR}
\end{align}
\end{itemize}
\end{lem}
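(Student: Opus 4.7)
\emph{Approach.} My plan is to deduce both indicator inequalities from a pointwise multiplicative comparison between $p_i(\wh{\theta},\wh{\sigma})$ and $p_i(\theta,\sigma)$ provided by Corollary~\ref{cor:pvaluemaj}, and then to lift such pointwise comparisons to the BH threshold level via the defining identity
$$T_\alpha(\theta,\sigma) = \max\Big\{t\in[0,1] \::\: \sum_{j=1}^n \mathds{1}\{p_j(\theta,\sigma)\leq t\}\geq nt/\alpha\Big\}.$$
Under the hypotheses $\wh{\sigma}\in(\sigma/2,2\sigma)$ and $|\wh{\theta}-\theta|\vee|\wh{\sigma}-\sigma|\leq(\sigma\wedge\wh{\sigma})\delta$, Corollary~\ref{cor:pvaluemaj} should deliver a two-sided bound $(1-\eta)\,p_j(\wh{\theta},\wh{\sigma})\leq p_j(\theta,\sigma)\leq(1+\eta)\,p_j(\wh{\theta},\wh{\sigma})$ for every $1\leq j\leq n$ such that $p_j(\wh{\theta},\wh{\sigma})\vee p_j(\theta,\sigma)\geq t_0$. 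The mixed shape $\eta=\delta c((2\log(1/t_0))^{1/2}+2\log(1/t_0))$ is the natural one: writing $z=(Y_j-\theta)/\sigma$ and $w=(Y_j-\wh{\theta})/\wh{\sigma}$ one has $|z-w|\leq\delta(|z|+1)$, and the Gaussian tail expansion $\log(2\ol{\Phi}(z))\asymp -z^2/2$ converts this into a logarithmic multiplicative gap of order $\delta(1+z^2)\asymp\delta\log(1/t_0)$ at the cut-off $t_0$.

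\emph{FDR direction~\eqref{equ:forFDR}.} Setting $T=T_\alpha(\wh{\theta},\wh{\sigma})$, the assumption $T\vee(\alpha/n)\geq t_0$ places us in the Corollary's regime; applying it to every index $j$ with $p_j(\wh{\theta},\wh{\sigma})\leq T$ yields $p_j(\theta,\sigma)\leq(1+\eta)p_j(\wh{\theta},\wh{\sigma})\leq(1+\eta)T$, which is the first inequality. For the second, this pointwise implication produces the containment $\{j\::\: p_j(\wh{\theta},\wh{\sigma})\leq T\}\subseteq\{j\::\: p_j(\theta,\sigma)\leq(1+\eta)T\}$; combining with the BH identity $\sum_j\mathds{1}\{p_j(\wh{\theta},\wh{\sigma})\leq T\}\geq nT/\alpha=n(1+\eta)T/[\alpha(1+\eta)]$ certifies that $(1+\eta)T$ is admissible in the definition of $T_{\alpha(1+\eta)}(\theta,\sigma)$, hence $(1+\eta)T\leq T_{\alpha(1+\eta)}(\theta,\sigma)$.

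\emph{Power direction~\eqref{equ:forFNR}.} I proceed symmetrically. Let $T'=T_{\alpha(1-\eta)}(\theta,\sigma)$; since $\eta\leq 0.05$, $\alpha(1-\eta)\geq 0.95\alpha$, and monotonicity of $\alpha\mapsto T_\alpha$ forces $T'\geq T_{0.95\alpha}(\theta,\sigma)\vee(0.95\alpha/n)\geq t_0$ under the hypothesis of the lemma, placing us again in the Corollary's regime. The inclusion $\{j\::\: p_j(\theta,\sigma)\leq T'\}\subseteq\{j\::\: p_j(\wh{\theta},\wh{\sigma})\leq T'/(1-\eta)\}$ combined with $\sum_j\mathds{1}\{p_j(\theta,\sigma)\leq T'\}\geq nT'/[\alpha(1-\eta)]$ gives $T'/(1-\eta)\leq T_\alpha(\wh{\theta},\wh{\sigma})$; therefore $p_i(\theta,\sigma)\leq T'$ implies $p_i(\wh{\theta},\wh{\sigma})\leq p_i(\theta,\sigma)/(1-\eta)\leq T'/(1-\eta)\leq T_\alpha(\wh{\theta},\wh{\sigma})$.

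\emph{Main obstacle.} The real work hides in the pointwise bound: securing the mixed shape $\eta\asymp\delta(\sqrt{\log(1/t_0)}+\log(1/t_0))$, where the square-root term is driven by the translation error $|\wh{\theta}-\theta|$ and the logarithmic term by the scale perturbation $|\wh{\sigma}-\sigma|$; and handling indices with $p_j(\wh{\theta},\wh{\sigma})<t_0$, where the Corollary need not yield a two-sided comparison directly. For such $j$ the condition $p_j(\wh{\theta},\wh{\sigma})\leq T$ is automatic since $T\geq t_0$, yet one still needs to certify $p_j(\theta,\sigma)\leq(1+\eta)T$; this is obtained from $\wh{\sigma}\leq 2\sigma$ and $|\wh{\theta}-\theta|\leq\sigma\delta$ by showing $|Y_j-\theta|/\sigma\geq\ol{\Phi}^{-1}((1+\eta)T/2)$ through the same Lipschitz estimate $|z-w|\leq\delta(|z|+1)$. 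Keeping track of the signs of $Y_j-\theta$ and $Y_j-\wh{\theta}$ through the two-sided $p$-values $p_j(u,s)=2\ol{\Phi}(|Y_j-u|/s)$ is tedious but brings no essentially new difficulty.
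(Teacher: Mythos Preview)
Your approach is essentially the same as the paper's: apply Corollary~\ref{cor:pvaluemaj} at the BH threshold, then lift the resulting set inclusion to a threshold comparison via the defining max in $T_\alpha$. The paper handles \eqref{equ:forFNR} by a pure symmetry argument---swap the roles of $(\theta,\sigma)$ and $(\wh\theta,\wh\sigma)$ and replace $\alpha$ by $\alpha(1-\eta)$ in \eqref{equ:forFDR}, then use $T_{\alpha(1-\eta)(1+\eta)}\leq T_\alpha$---which is slightly cleaner than your direct computation with $T'/(1-\eta)$, but both are correct.

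There is, however, one conceptual slip worth flagging. Corollary~\ref{cor:pvaluemaj} is \emph{not} a two-sided multiplicative bound on the $p$-values; it is the indicator inequality
\[
\mathds{1}\{p_i(u',s')\leq t\}\leq\mathds{1}\{p_i(u,s)\leq(1+\eta)t\}\qquad\text{for each fixed }t\in[t_0,\alpha],
\]
with $\eta$ depending on $|u-u'|/s$ and $|s-s'|/s$. The constraint $t\in[t_0,\alpha]$ is on the \emph{threshold}, not on the individual $p$-values. Since you only ever need the inequality at $t=T_\alpha(\wh\theta,\wh\sigma)$ (for \eqref{equ:forFDR}) or $t=T_{\alpha(1-\eta)}(\theta,\sigma)$ (for \eqref{equ:forFNR}), and both lie in $[t_0,\alpha]$ under the stated hypotheses, the corollary applies directly to every index $j$---including those with $p_j(\wh\theta,\wh\sigma)<t_0$. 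Your ``main obstacle'' (handling indices with small $p_j$) is therefore a non-issue, and the intermediate claims of the form $p_j(\theta,\sigma)\leq(1+\eta)p_j(\wh\theta,\wh\sigma)$ or $p_i(\wh\theta,\wh\sigma)\leq p_i(\theta,\sigma)/(1-\eta)$ are neither available nor needed: only the weaker conclusions $p_j(\theta,\sigma)\leq(1+\eta)T$ and $p_i(\wh\theta,\wh\sigma)\leq(1+\eta)T'$ are used, and those are exactly what the corollary delivers. One small omission: for \eqref{equ:forFNR} you should handle the case $T'=0$ separately (then the left indicator vanishes), since otherwise the chain $T'\geq T_{0.95\alpha}(\theta,\sigma)\vee(0.95\alpha/n)$ need not hold.
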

Intuitively, \eqref{equ:forFDR} above implies that any rescaled procedure is more conservative than the oracle procedure $BH^\star_{\alpha(1+\eta)}$ with an enlarged parameter $\alpha(1+\eta)$. 

By definition of $\eta$ in the statement of Theorem~\ref{THMBORNESUP} and taking $\wh{\theta}=\wt{\theta}^{(i)}$, $\wh{\sigma}=\wt{\sigma}^{(i)}$,  $\theta=\theta(P)$, $\sigma=\sigma(P)$, $t_0=\alpha/n$, we are in position to apply \eqref{equ:forFDR}. For all $i\in\{1,\dots,n\}$, we have 
\begin{align*}
\mathds{1}\{ p_i(\wt{\theta}^{(i)},\wt{\sigma}^{(i)})\leq T_\alpha(Y^{(i)};\wt{\theta}^{(i)},\wt{\sigma}^{(i)}) \} &\leq \mathds{1}\{ p_i^\star\leq (1+\eta)T_\alpha(Y^{(i)};\wt{\theta}^{(i)},\wt{\sigma}^{(i)}) \},
\end{align*}
where we recall that $p_i^\star=p_i(\theta(P),\sigma(P))$ is the $i$-th oracle $p$-value.
This gives
\begin{align*}
\FDP(P,\BH_\alpha(Y;\wt{\theta},\wt{\sigma})) \mathds{1}_{\Omega} 
&\leq \frac{\alpha}{n} \sum_{i\in \cH_0(P)}\frac{\mathds{1}{\{p_i^\star\leq  (1+\eta)T_\alpha(Y^{(i)};\wt{\theta}^{(i)},\wt{\sigma}^{(i)})\}}}{T_\alpha(Y^{(i)};\wt{\theta}^{(i)},\wt{\sigma}^{(i)})} .
\end{align*}

The following lemma  stems from the symmetry of the normal distribution. 
\begin{lem}\label{lem:indep}
For any $P\in \mathcal{P}$, any $i\in \cH_0(P)$, $|Y_i-\theta(P)|$ is independent of  $Y^{(i)}$, and thus also of the estimators $(\wt{\theta}^{(i)},\wt{\sigma}^{(i)})$.
\end{lem}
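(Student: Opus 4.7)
\textbf{Proof plan for Lemma~\ref{lem:indep}.} The plan is to decompose the randomness in $Y^{(i)}$ into two parts and verify that each part is independent of $|Y_i-\theta(P)|$. By construction, $Y^{(i)}$ is a deterministic function of the pair $\bigl((Y_j)_{j\neq i},\,\mathrm{sign}(Y_i-\theta(P))\bigr)$, so it suffices to show that $|Y_i-\theta(P)|$ is independent of this pair.

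First I would use the fact that, for $i\in\cH_0(P)$, $Y_i\sim \mathcal{N}(\theta(P),\sigma^2(P))$, so $Y_i-\theta(P)$ is a centered Gaussian, hence has a distribution symmetric about $0$. For any random variable $X$ with symmetric distribution, the well-known decomposition $X = \mathrm{sign}(X)\cdot|X|$ yields that $\mathrm{sign}(X)$ and $|X|$ are independent (this follows from a one-line calculation: $\P(|X|\leq t,\,\mathrm{sign}(X)=\pm 1)=\frac{1}{2}\P(|X|\leq t)$ by symmetry). Applied to $X=Y_i-\theta(P)$, this gives independence of $\mathrm{sign}(Y_i-\theta(P))$ and $|Y_i-\theta(P)|$.

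Second, by the standing independence assumption on the coordinates of $Y$ (Section~\ref{sec:scalingnull}), $Y_i$ is independent of $(Y_j)_{j\neq i}$, which implies in particular that the $\sigma$-algebra generated by $(Y_j)_{j\neq i}$ is independent of the $\sigma$-algebra generated by $Y_i$ — and hence of any measurable functional of $Y_i$, including both $|Y_i-\theta(P)|$ and $\mathrm{sign}(Y_i-\theta(P))$. Combining this with the symmetry step, the triple $\bigl(|Y_i-\theta(P)|,\,\mathrm{sign}(Y_i-\theta(P)),\,(Y_j)_{j\neq i}\bigr)$ is such that its first component is independent of the pair formed by the other two. Therefore $|Y_i-\theta(P)|$ is independent of $Y^{(i)}$.

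Finally, since $\wt{\theta}^{(i)}$ and $\wt{\sigma}^{(i)}$ defined in \eqref{equ:estimators(i)} are measurable functions of $Y^{(i)}$, the independence of $|Y_i-\theta(P)|$ from $Y^{(i)}$ transfers to $(\wt{\theta}^{(i)},\wt{\sigma}^{(i)})$, which concludes the proof. There is no real obstacle here: the argument is entirely structural and relies only on Gaussian symmetry under the null together with the coordinate-wise independence of $Y$.
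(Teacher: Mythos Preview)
Your proof is correct and follows essentially the same approach as the paper: both exploit the symmetry of the Gaussian null to get independence of $\mathrm{sign}(Y_i-\theta(P))$ and $|Y_i-\theta(P)|$, combine this with the coordinate-wise independence of $Y$ to obtain mutual independence of $\bigl(|Y_i-\theta(P)|,\,\mathrm{sign}(Y_i-\theta(P)),\,(Y_j)_{j\neq i}\bigr)$, and then conclude since $Y^{(i)}$ is a measurable function of the last two components. There is no substantive difference between your argument and the paper's.
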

By Lemma~\ref{lem:indep}, for $i\in\cH_0(P)$, the oracle $p$-value  $p_i^\star=2\ol{\Phi}\left(\frac{|Y_i-\theta(P)|}{\sigma(P)}\right)$ is independent of $(Y^{(i)},\wt{\theta}^{(i)},\wt{\sigma}^{(i)})$ and thus of $T_\alpha(Y^{(i)};\wt{\theta}^{(i)},\wt{\sigma}^{(i)})$. As a result, we obtain by integration
\begin{align*}
&\E_P\left[ \FDP(P,\BH_\alpha(Y;\wt{\theta},\wt{\sigma})) \mathds{1}_{\Omega} \right]\\ 
&\leq\frac{\alpha}{n}\sum_{i\in \cH_0(P)}\E_P\left[ \frac{ \mathds{1}{\{p_i^\star\leq  (1+\eta)T_\alpha(Y^{(i)};\wt{\theta}^{(i)},\wt{\sigma}^{(i)})\}}}{T_\alpha(Y^{(i)};\wt{\theta}^{(i)},\wt{\sigma}^{(i)})}  \right]
\\
&\leq \frac{\alpha}{n}\sum_{i\in \cH_0(P)}\E_P\left[ \frac{ \P\left[p_i^\star\leq  (1+\eta)T_\alpha(Y^{(i)};\wt{\theta}^{(i)},\wt{\sigma}^{(i)})\} \:|\:  Y^{(i)},\wt{\theta}^{(i)},\wt{\sigma}^{(i)}\right]}{T_\alpha(Y^{(i)};\wt{\theta}^{(i)},\wt{\sigma}^{(i)})}   \right]
\\
&\leq\frac{\alpha n_0(P)}{n} (1+\eta),\end{align*}
where we used that $p_i^\star\sim U(0,1)$ for $i\in\cH_0(P)$. This entails \eqref{eq:upper_FDR_thm} of Theorem~\ref{THMBORNESUP}.

\subsection{Proof of  \eqref{eq:lower_TDP_thm} in  Theorem~\ref{THMBORNESUP}}\label{sec:proofthb}

Take $P,\delta,\Omega$ as in the previous section. 
On the event $\Omega$, the conditions of Lemma~\ref{lem:forFNR} are satisfied with $\wh{\theta}=\wt{\theta}$, $\wh{\sigma}=\wt{\sigma}$,  $\theta=\theta(P)$, $\sigma=\sigma(P)$, and $t_0=0.95\alpha/n$. Hence,  \eqref{equ:forFNR} ensures that, for all $i\in\{1,\dots,n\}$, 
$
\mathds{1}\{p_i^\star\leq T_{\alpha(1-\eta)}(\theta(P),\sigma(P))\}  \leq \mathds{1}\{p_i(\wt{\theta},\wt{\sigma})\leq T_{\alpha}(\wt{\theta},\wt{\sigma})\}
$
and thus
$
\TDP(P,\BH_{\alpha(1-\eta)}^\star)\leq \TDP(P,\BH_{\alpha}(\wt{\theta},\wt{\sigma})).$
Hence, we have 
\begin{align*}
\P( \TDP(P,\BH_{\alpha(1-\eta)}^\star) > \TDP(P,\BH_{\alpha}(\wt{\theta},\wt{\sigma})))\leq  \P(\Omega^c) \leq 6 e^{-n^{2/3}}\leq  e^{-\sqrt{n}},
\end{align*}
 for $n$ larger than a universal constant.
%The result follows by noting that $\eta\mapsto \TDP(P,\BH^{\star}_{\alpha(1-\eta)}) $ is non-increasing.

\section{Additional proofs}

\subsection{Proof of Theorem~\ref{thm:lower_sigmaknown}}\label{sec:lower_sigmaknown}

We follow the same general  approach as for proving Theorem~\ref{THM:LOWER_GENERAL} (see Section~\ref{sec:proofTHM:LOWER_GENERAL}).

\subsection*{Step 1: Building a least favorable mixture distribution}

Given $\mu\in \R$, let $\phi_{\mu}$ be defined by $\phi_{\mu}(x)=\phi(x-\mu)$ for all $x\in\R$. Let us consider the real measure with density
\beq\label{eq:definition_gmu}
h=(1-\pi_1) \phi + \pi_1 f_1 = (1-\pi_2) \phi_{\mu} + \pi_2 f_2 = \max\{(1-\pi_1) \phi, (1-\pi_2) \phi_{\mu}\},
\eeq
for $\pi_1= k_1/(2n)$ and $\pi_2= k_2/(2n)$ (with $\pi_1\leq \pi_2$ by \eqref{eq:condition_lower_k_0_bus}) and where 
\beqn
f_1& =&\frac{1}{\pi_1}\left[(1-\pi_2)\phi_{\mu}- (1-\pi_1)\phi\right]_+\ ;\quad \quad 
f_2 =\frac{1}{\pi_2}\left[(1-\pi_1)\phi-(1-\pi_2)\phi_{\mu}\right]_+\ . 
\eeqn
Now, we can choose $\mu \in (0,2)$ (as a function of $\pi_1$ and $\pi_2$) such that  $f_1$, $f_2$ and $h$ are probability densities. To see this, it is sufficient to choose $\mu$ with $\int f_2(u)du=1$. Such a $\mu$ always exists because, as a function of $\mu\geq 0$, $\int f_2(u)du$ is continuous with value $(\pi_2-\pi_1)/\pi_2 <1$ for $\mu=0$ and value larger than $ \pi_2^{-1}(1-\pi_2) \int \left[\phi(u)-\phi_{\mu}(u)\right]_+du  \geq 3 \int_{-\infty}^{\mu/2} (\phi(u)-\phi_{\mu}(u)) du = 3({\Phi}(\mu/2) - {\Phi}(-\mu/2))>1 $ for $\mu\geq 2$. 
Hence, we fix in the sequel such a $\mu\in (0,2)$.
\begin{figure}[h!]
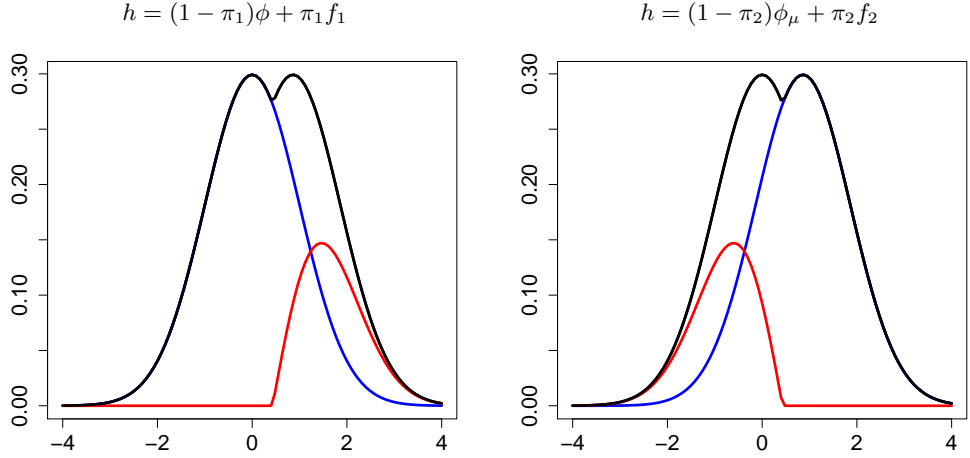

\begin{tabular}{ccc}
$h=(1-\pi_1)\phi + \pi_1 f_1$  & $h=(1-\pi_2)\phi_{\mu}+\pi_2 f_2$ \vspace{-0.5cm}\\
 \includegraphics[scale=0.45]{lbtheta1}&\hspace{-0.5cm}\includegraphics[scale=0.45]{lbtheta2}
 \end{tabular}
\vspace{-0.5cm}
\caption{Left: the density $h$ given by \eqref{eq:definition_gmu} (in black) interpreted as a mixture between the null $\mathcal{N}(0,1)$ ($(1-\pi_1) \phi$ in blue) and the alternative $f_1$ ($\pi_1 f_1$ in red). Right: the same $h$ interpreted as a mixture between the null $\mathcal{N}(\mu,1)$ ($(1-\pi_2) \phi_\mu$ in blue) and the alternative $f_2$ ($\pi_2 f_2$ in red). $\pi_1=\pi_2=1/4$, $\mu\approx 1.51$. See the text for the definitions.} \label{lbtheta}
\end{figure}

\medskip

Define $\kappa_0=  \mu^{-1}\log\left[(1-\pi_1)/(1-\pi_2)\right]\geq 0$ and $u_0= \kappa_0+\mu/2$. We deduce from straightforward computations that $f_1(x)>0$ if and only if $x >u_0$ and $f_2(x)>0$ if and only if $x< u_0$. The following lemma states a lower bound for $\mu$ (to be proved at the end of the section).

\begin{lem}\label{lem:lower_mu}
 There exists a numeric constant $c'>0$ such that 
 \[
  \mu \geq c' \frac{\pi_2}{\sqrt{1+\log\left(\frac{\pi_2}{\pi_1}\right)}}.
 \]
\end{lem}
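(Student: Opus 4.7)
The plan is to rewrite the defining equation $(1-\pi_1)\Phi(u_0)-(1-\pi_2)\Phi(u_0-\mu)=\pi_2$ in a form that couples $\mu$ to $\pi_1,\pi_2$ through $v := u_0-\mu$, and then to exploit Gaussian tail estimates. A short algebraic manipulation, based on writing $\Phi(u_0)=\Phi(u_0-\mu)+[\Phi(u_0)-\Phi(u_0-\mu)]$ and rearranging, yields the identity
\[
 \pi_2\,\overline\Phi(v) + \pi_1\,\Phi(v) = (1-\pi_1)\bigl[\Phi(u_0)-\Phi(u_0-\mu)\bigr].
\]
Since $u_0=\kappa_0+\mu/2\geq 0$, the Gaussian density $\phi$ attains its maximum on $[u_0-\mu,u_0]$ at the point $v_\ast := v\vee 0$, so by the mean value theorem the right-hand side is at most $\mu\,\phi(v_\ast)$. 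This delivers the master inequality
\[
 \pi_2\,\overline\Phi(v) + \pi_1\,\Phi(v) \leq \mu\,\phi(v_\ast).
\]

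I would then split into two regimes. When $v\leq 1$, the bound $\overline\Phi(v)\geq \overline\Phi(1)$ (a positive constant) combined with $\phi(v_\ast)\leq 1/\sqrt{2\pi}$ already yields $\mu\geq c_0\pi_2$ for some universal $c_0>0$, which is strictly stronger than the target since $\sqrt{1+\log(\pi_2/\pi_1)}\geq 1$. The interesting regime is $v>1$, in which case I would extract from the master inequality the two nonnegative-term bounds $\pi_2\overline\Phi(v)\leq \mu\phi(v)$ and $\pi_1\Phi(v)\leq \mu\phi(v)$. The Mills-type lower bound $\overline\Phi(v)\geq \phi(v)\,v/(v^2+1)$, together with $v\geq 1$, turns the first into $\mu \geq \pi_2/(2v)$. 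The second, using $\Phi(v)\geq 1/2$, gives $\phi(v)\geq \pi_1/(2\mu)$ and hence an upper bound
\[
v^2 \leq 2\log\!\bigl(2\mu/(\pi_1\sqrt{2\pi})\bigr)
\]
on how large $v$ can be.

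To close the argument, if $\mu>\pi_2$ the claim is immediate; otherwise the upper bound on $v^2$ reduces to $v^2 \leq C(1+\log(\pi_2/\pi_1))$ for a universal constant $C$, and substituting into $\mu\geq \pi_2/(2v)$ produces the desired $\mu\geq c'\pi_2/\sqrt{1+\log(\pi_2/\pi_1)}$. The main obstacle is that the upper bound on $v$ depends on $\mu$ itself, creating an apparent circularity; this is resolved by the dichotomy $\mu\leq\pi_2$ versus $\mu>\pi_2$, since in the nontrivial case the dependence on $\mu$ is dominated by $\pi_2$ and only a logarithm survives. A secondary subtlety is handling the degeneracy of Mills' inequality near $v=0$, which is why the regime $v\leq 1$ must be treated separately via the uniform lower bound $\overline\Phi(1)$.
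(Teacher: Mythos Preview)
Your proof is correct. Both your argument and the paper's start from the same normalization identity and ultimately bound $\int_{u_0-\mu}^{u_0}\phi$, but the execution differs. The paper centers the analysis on $\kappa_0=\mu^{-1}\log\bigl((1-\pi_1)/(1-\pi_2)\bigr)$, the midpoint of the interval: it uses the exponential tilting bound $\phi(\kappa_0+y)\leq\phi(\kappa_0)e^{-\kappa_0 y}$ so as to exploit the exact relation $\kappa_0\mu=\log\bigl((1-\pi_1)/(1-\pi_2)\bigr)$, arrives at $\phi(\kappa_0)/\kappa_0\geq \pi_1/(\pi_2-\pi_1)$, and then splits into cases according to whether $\pi_2/\pi_1$ is bounded or large. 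Your route is more elementary: you work with the left endpoint $v=u_0-\mu$, replace the tilting bound by the cruder sup bound $\phi\leq\phi(v\vee 0)$, and obtain a single master inequality $\pi_2\overline\Phi(v)+\pi_1\Phi(v)\leq \mu\,\phi(v\vee 0)$ in which the two terms play transparently separate roles---the $\pi_2$-term (via Mills' ratio) gives $\mu\gtrsim\pi_2/v$, and the $\pi_1$-term bounds $v$ in terms of $\log(\mu/\pi_1)$. The paper's tilting trick yields a sharper intermediate inequality, but since only an existential constant $c'$ is needed, your simpler argument is equally sufficient and arguably more transparent; the dichotomy $\mu\lessgtr\pi_2$ you introduce to resolve the apparent circularity is a clean substitute for the paper's case split on $\pi_1/(\pi_2-\pi_1)$.
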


Let $Q$ be the distribution on $\R^n$ associated to the  product density $\prod_{i=1}^nh(x_i)$.  
Let $Q_{1,z}$ (resp. $Q_{2,z}$) be the distribution on $\R^n$ of density $\prod_{i=1}^n  ((1-z_i) \phi+ z_i f_1)$ (resp. $\prod_{i=1}^n  ((1-z_i) \phi_\mu+ z_i f_2)$),  for $z\in\{0,1\}^n$. 
Let $Z_{1,i}$ (resp. $Z_{2,i}$), $1\leq i\leq n$, be  i.i.d. variables with common distribution being a Bernoulli distribution with parameter $\pi_1$ (resp. $\pi_2$). Hence, $Y\sim Q_{1,Z_1}$ (resp. $Y\sim Q_{2,Z_2}$) is distributed as $Q$ unconditionally on $Z_1$ (resp. $Z_2$).  Note that, for any $z$, $\theta(Q_{1,z})=0$ whereas $\theta(Q_{2,z})=\mu$. Besides, we have $\cH_1(Q_{j,z}=\{i: z_i=1\}$. {As in the proof of Theorem~\ref{THM:LOWER_GENERAL}, for $z$ such that $n_1(z)\geq n/2$, we have $Q_{j,z}\notin \cP$, but  we readily extend the definition of $\theta(Q_{1,z})$ and $\sigma(Q_{1,z})$ to that setting.}

\subsection*{Step 2: if $\P_{Y\sim Q}(|R(Y)|>0)\geq  \frac{1}{2}$ then $\FDR(P,R)\geq \frac{1}{5}$ for $n$ larger than a numeric constant, for some $P$ with $n_1(P)\leq k_2$}

Recall that, for any $j\in\{1,2\}$, we have $\FDP(Q_{j,z},R(Y))= \frac{\sum_{i\in R(Y)}\mathds{1}\{z_i=0\}}{|R(Y)|}\mathds{1}\{|R(Y)|>0\}$. We derive from the Fubini theorem that 
\begin{align}
&\E_{Z_1}\left[ \FDR(Q_{1,Z_1}, R) \right]+ \E_{Z_2}\left[ \FDR(Q_{2,Z_2}, R) \right]\\
&=\E_{Z_1}\left[\E_{Y\sim Q_{1,Z_1}}\left[ \FDP(Q_{1,Z_1}, R(Y)) \right] \right]+ \E_{Z_2}\left[\E_{Y\sim Q_{2,Z_2}}\left[ \FDP(Q_{2,Z_2}, R(Y)) \right] \right]\nonumber\\
&= \E_{Y\sim Q}\left[ \frac{\sum_{i\in R(Y)}(\P(Z_{1,i}=0\:|\:Y)  + \P(Z_{2,i}=0\:|\:Y) )}{|R(Y)|}\mathds{1}\{R(Y)>0\}  \right] \nonumber.
\end{align}
From Step 1, we deduce that  $\P(Z_{1,i}=0\:|\:Y)= 1- \pi_1 f_1(Y_i)/h(Y_i)$ and $f_1(y)=0$ for $y\leq u_0$. Similarly, we have $\P(Z_{2,i}=0\:|\:Y)= 1$ for $Y_i\geq u_0$. This entails that, for all $Y$, we have 
 $\P(Z_{1,i}=0\:|\:Y)  + \P(Z_{2,i}=0\:|\:Y) \geq 1$  for all $i$. Hence, we obtain
\begin{equation}\label{equ:borneinfstep1}
 \E_{Z_1}\left[ \FDR(Q_{1,Z_1}, R) \right]+ \E_{Z_2}\left[ \FDR(Q_{2,Z_2}, R) \right]\geq \: \P_{Y\sim Q}(R(Y)>0)\geq 1/2\ .
\end{equation}
Hence, we may assume that $\E_{Z_{j_0}}\left[ \FDR(Q_{j_0,Z_{j_0}}, R) \right]\geq 1/4$ for some $j_0\in\{1,2\}$. 
Then, we apply Chebychev's inequality to obtain
\begin{align*}
\E_{Z_{j_0}}\left[\E_{Y\sim Q_{j_0,Z_{j_0}}}\left[ \FDP(Q_{j_0,Z_{j_0}}, R(Y)) \right] \mathds{1}\{n_1(Z_{j_0})\in [k_{j_0}/4 ; k_{j_0}]\} \right]\geq  1/4 - 8/k_{j_0}.
\end{align*}
and thus 
\begin{align*}
\E_{Z_{j_0}}\left[\E_{Y\sim Q_{j_0,Z_{j_0}}}\left[ \FDP(Q_{j_0,Z_{j_0}}, R(Y)) \right] \mathds{1}\{n_1(Z_{j_0})\leq k_2\} \right]\geq  1/4 - 8/k_{1}\ .
\end{align*}
As a result, for $k_1$ large enough (by Condition \eqref{eq:condition_lower_k_0_bus} for $c_1,c_2$ large enough), there exists $z\in\{0,1\}^n$ such that 
$n_1(z)\leq k_2$
 and 
$\FDR(Q_{1,z}, R)  > 1/5$.

\subsection*{Step 3: If $\P_{Y\sim Q}(|R(Y)|=0)\geq 1/2$, then $R$ is over-conservative, for some $P$ with $n_1(P)\leq k_1$} 

Since $\E_{Z_1}[\P_{Y\sim Q_{1,Z_1}}(|R(Y)|=0)]\geq 1/2$, it follows again from Chebychev's inequality, that for some $z\in\{0,1\}^n$ such that $n_1(z)\in [k_1/4;k_1]$, we have 
$\P_{Y\sim Q_{1,z}}(|R(Y)|=0)\geq 1/2- 8/k_1\geq 2/5$ ($k_1$ being large enough).   In the sequel, we fix $P=Q_{1,z}$ for such a $z$, so that $\theta(P)=0$.

Let $u_1$ be the smallest number such that for all $u\geq u_1$,  one has 
\begin{equation}\label{equf1moy}
 \pi_1 f_1(u) \geq 16\alpha^{-1}\phi(u)\ .
\end{equation}
From the definition of $f_1$, we derive that 
$
 (1-\pi_2)\phi_{\mu}(u_1) = \phi(u_1)[16\alpha^{-1}+ (1-\pi_1)]\ , 
$
which is again equivalent to 
\[
u_1 = \frac{\mu}{2}+ \frac{1}{\mu}\log\left(\frac{1-\pi_1+16\alpha^{-1}}{1-\pi_2}\right) \leq \frac{\mu}{2}+ \frac{1}{\mu}\log\left(2+\frac{16}{\alpha}\right)\ , 
\]
Since $\mu\leq 2$ and by Lemma~\ref{lem:lower_mu}, we  have 
\beq\label{eq:maju1}
u_1 \leq 1+   \frac{\sqrt{1+\log\left(\frac{\pi_2}{\pi_1}\right)}} {c'\pi_2}\log\left(2+\frac{32}{\alpha}\right)= 1+  2n\frac{\sqrt{1+\log\left(\frac{k_2}{k_1}\right)}} {c'k_2}\log\left(2+\frac{32}{\alpha}\right).
\eeq
For a suitable constant $c_2$ in Condition~\eqref{eq:condition_lower_k_0_bus} and for $n$ large enough, we therefore have $u_1\leq \sqrt{\log(n)}$.

Then, it remains to prove that $\BH_{\alpha/2}^*$ rejects many false null hypotheses with probability close to one. Recall that 
 $\BH_{\alpha/2}^*=\{i\in\{1,\dots,n\}\::\:|Y_i|\geq \hat{u}\}$ for 
\[
\hat{u}=\min\Big\{u\in \R_+\::\: \sum_{i=1}^n\mathds{1}\{|Y_i|\geq u\} \geq 4 n \alpha^{-1} \ol{\Phi}(u) \Big\}. 
\]
Define $N= \sum_{i\in \cH_1(P_1)}\mathds{1}\{|Y_i|\geq u_1\}$.  Arguing exactly as in Step 3 of the proof of Theorem~\ref{THM:LOWER_GENERAL} (see Section~\ref{sec:proofTHM:LOWER_GENERAL}), we conclude as in 
\eqref{eq:conclusion_step_3} that 
\[
\P_{Y\sim P}(\hat{u}\leq u_1, N\geq 4 n  \alpha^{-1} \ol{\Phi}(u_1)) \geq 1-e^{-3q/28} = 1-e^{-c'n  \alpha^{-1} \ol{\Phi}(u_1)} \ ,
\]
where $n\ol{\Phi}(u_1)\geq c''\sqrt{n/\log(n)}$. We have proved that 
\[
\P_{Y\sim P}\left[|\BH_{\alpha/2}^*\cap \cH_1(P)|\geq c_4\alpha^{-1}\sqrt{\frac{n}{\log(n)}}\right]\geq 1 - e^{-c'  \alpha^{-1} c''\sqrt{n/\log(n)}}\ ,
\]
whereas $|R(Y)|=0$ with probability higher than $2/5$.

\begin{proof}[Proof of Lemma~\ref{lem:lower_mu}]

Since $\int f_1(x)dx=1$, we deduce from the definition of $\kappa_0$ that 
\begin{eqnarray}
 \pi_1&= &\int \left[(1-\pi_2)\phi_{\mu}(x)- (1-\pi_1)\phi(x)\right]_+dx \nonumber\\ &=&  [1- \pi_2]\overline{\Phi}[\kappa_0- \mu/2] - [1-\pi_1] \overline{\Phi}[\kappa_0+ \mu/2] \nonumber \\
 & = & -[\pi_2-\pi_1]\overline{\Phi}[\kappa_0+ \mu/2]+ [1- \pi_2]\int_{\kappa_0- \mu/2}^{\kappa_0+ \mu/2}\phi(x)dx\ . \label{eq:upper_1_pi_1}
\end{eqnarray}

Recall that $\pi_2> \pi_1$. By integration, we derive that 
\beqn 
\frac{\pi_1}{1-\pi_2}&\leq& \int_{\kappa_0- \mu/2}^{\kappa_0+ \mu/2}\phi(x)dx
\leq \phi(\kappa_0)\int_{-\mu/2}^{\mu/2} e^{\kappa_0 x}dx\\ &\leq & \frac{\phi(\kappa_0)}{\kappa_0}\frac{\pi_2-\pi_1}{\sqrt{(1-\pi_1)(1-\pi_2)}}
\leq \frac{\phi(\kappa_0)}{\kappa_0}\frac{\pi_2-\pi_1}{1-\pi_2}\ . 
\eeqn 
Hence, we conclude that 
\beq
\frac{\phi(\kappa_0)}{\kappa_0} \geq \frac{\pi_1}{\pi_2-\pi_1}\ .  \label{eq:upper_1_pi_1_2}
\eeq 

\medskip 

\noindent 
  {\bf Case 1}: $\frac{\pi_1}{\pi_2-\pi_1}\geq  \phi(0)e^{-1/2}$. Since $\phi(\kappa_0)\leq \phi(0)$, we deduce from the definition of $\kappa_0$ 
\beq\label{eq:conclusition_minoration_pi}
 \mu \geq   \frac{\pi_1\log\left(1+\frac{\pi_2-\pi_1}{1-\pi_2}\right)}{\phi(0)(\pi_2-\pi_1)}\geq \pi_1\geq c'\frac{\pi_2}{\sqrt{1+\log(\pi_2/\pi_1)}} \ , 
\eeq
for a suitable constant $c'$ since $\pi_2\leq 1/2$, $\log(1+x)\geq  x/2$ for $x\in[0,1]$ and we assume that $\pi_2/\pi_1\leq 1+ e^{1/2}/\phi(0)$. 
Note that, for $\pi_1= \pi_2$, we also easily derive from~\eqref{eq:upper_1_pi_1} that \eqref{eq:conclusition_minoration_pi} also holds. 

\medskip

\noindent 
{\bf Case 2}: $\frac{\pi_1}{\pi_2-\pi_1}< \phi(0)e^{-1/2}$. We deduce from~\eqref{eq:upper_1_pi_1_2}  that either $\kappa_0\leq 1$ or 
 $\phi(\kappa_0)\geq \frac{\pi_1}{\pi_2-\pi_1}$, which in turn implies that
\[
\kappa_0 \leq  \sqrt{2\log\left(\frac{\phi(0)[\pi_2-\pi_1]}{ \pi_1}\right)}\ . 
\]
From the definition of $\kappa_0$, we derive that 
\[
\mu \geq  \frac{\log\left(1+ \frac{\pi_2-\pi_1}{1-\pi_2}\right)}{\sqrt{2\log\left( \frac{\phi(0)[\pi_2-\pi_1]}{\pi_1}\right)}}\geq \frac{\pi_2-\pi_1}{2\sqrt{2[1+ \log(\pi_2/\pi_1)]}}\geq c' \frac{\pi_2}{2\sqrt{2[1+ \log(\pi_2/\pi_1)]}}\ ,
\]
for a suitable constant $c'$ since $\pi_2/\pi_1$ is bounded away from one. 
 
\end{proof}

\subsection{Proof of Theorem~\ref{THM:THETACHAPFORFDR}}

Without loss of generality, we restrict ourselves to distributions $P$ such  that $\sigma(P)=1$. 
Let $\wh{\theta}$ be any estimator of $\theta(P)$ and assume that Condition \eqref{eq:condition_lower_k_0} holds.

\subsection*{Step 1: building a least favorable mixture distribution}

We use the same mixture distribution as in the proof of Theorem~\ref{thm:lower_sigmaknown}. Consider the density $h$~\eqref{eq:definition_gmu} with $\pi_1=\pi_2=\pi=k/2n$, and
\[
h=(1-\pi) \phi + \pi f_1 = (1-\pi) \phi_{\mu} + \pi f_2 = \max\{(1-\pi) \phi, (1-\pi) \phi_{\mu}\}, 
\]
 where 
\beqn
f_1& =&\frac{1}{\pi}\left[(1-\pi)\phi_{\mu}- (1-\pi)\phi\right]_+\ ;\quad \quad 
f_2 =\frac{1}{\pi}\left[(1-\pi)\phi-(1-\pi)\phi_{\mu}\right]_+\ . 
\eeqn
Recall that $\mu\in (0,2)$ is chosen in such a way that $f_1$ and $f_2$ are densities. 
Also recall the probability measures $Q$, $Q_{1,z}$, and $Q_{2,z}$ introduced in the previous proof (see Section~\ref{sec:lower_sigmaknown}). Also let $Z_{i}$, $1\leq i\leq n$, be  i.i.d. variables with common distribution being a Bernoulli distribution with parameter $\pi$.
 For any event $A$, we have
$
 \P_{Y\sim Q}[A]= \E_{Z}[\P_{Y\sim Q_{1,Z}}(A)]= \E_{Z}[\P_{Y\sim Q_{2,Z}}(A)]
$. 
Consider the  events  
$$\Omega^{-} = \{\widehat{\theta}(Y)\geq \mu/2\}\,;\:\:\:
 \Omega^{+}=\{\widehat{\theta}(Y)\leq \mu/2\}
.$$  Either $\E_{Z}[\P_{Y\sim Q_{1,Z}}(\Omega^{-})]\geq 1/2$ or $\E_{Z}[\P_{Y\sim Q_{2,Z}}(\Omega^{+})]\geq 1/2$. We assume without loss generality that $\E_{Z}[\P_{Y\sim Q_{1,Z}}(\Omega^{-})]\geq 1/2$ the other case being handled similarly. 

Since $n_1(Z)$ follows a Binomial distribution with parameters $n$ and $\pi$, it follows from Bernstein's inequality that 
\[
|n_1(Z)-\pi n|\leq   \sqrt{2n\pi \log(n) } + \frac{\log(n)}{3}\leq n/4\ , 
\]
with probability higher than $1-2/n$, for $n$ large enough. Hence, there exists $z\in\{0,1\}^n$ such that for $P=Q_{1,z}$ we have 
\beq \label{eq:lower_bound_n1(P)}
n_1(P)\in \Big[\pi n -  \sqrt{2n\pi \log(n) } - \frac{\log(n)}{3}; n/2\Big]\quad \text{ and }\quad \P_{Y\sim P}[\Omega^{-}]\geq \frac{1}{2}- \frac{2}{n}.
\eeq
Note that $\theta(P)= 0$ whereas, on $\Omega^{-}$, $\widehat{\theta}$ is larger or equal to $\mu/2$. In the remainder of the proof, we quantify how this estimation error shifts the distribution of the rescaled $p$-values.

\subsection*{Step 2: translated $p$-values} Since, under $\Omega^-$, we have $\theta(P)= 0$ and $\widehat{\theta} \geq \mu/2$, the rescaled $p$-values are shifted and do not follow an uniform distribution. Let us characterize this shift.
We have for all $t\in[0,1]$, and all $i\in\{1,\dots,n\}$,
\begin{align*}
 \mathds{1}\{p_i(Y;\wh{\theta},1) \leq t\}
=&\: \mathds{1}\{|Y_i-\wh{\theta}| \geq \ol{\Phi}^{-1}(t/2)\}\\
\geq & \mathds{1}\{Y_i-\wh{\theta} \leq - \ol{\Phi}^{-1}(t/2)\}\\
\geq&\:   \mathds{1}\{Y_i  \leq \wh{\theta}- \ol{\Phi}^{-1}(t/2)\}\\
\geq &\:   \mathds{1}\{p_i^- \leq \ol{\Phi}\big[\ol{\Phi}^{-1}(t/2) -\wh{\theta}\big]\}\ ,
\end{align*}
where we have denoted $p_i^-=\ol{\Phi}(-Y_i)$.
Let $\Psi(t)=\ol{\Phi}(\ol{\Phi}^{-1}(t/2)- \widehat{\theta})$ and $\Psi_1(t)=\ol{\Phi}(\ol{\Phi}^{-1}(t/2)-\mu/2)$, $t\in[0,1]$. 
On the event $\Omega^-$, we have $\Psi(t)\geq \Psi_1(t)$ for any $t\in [0,1]$.
 This entails that, for all $i\in\{1,\dots,n\}$ and $t\in [0,1]$,
\begin{align}\label{equprooflowerboundfalsereject}
& \mathds{1}\{p_i(Y;\wh{\theta},1) \leq t\}\geq  \mathds{1}\{p_i^- \leq \Psi(t)\}\geq  \mathds{1}\{p_i^- \leq \Psi_1(t)\}  \mbox{ for $Y\in\Omega^-$}\ .
\end{align}
{Interestingly, for $i\in \cH_0$, the $p_i^{-}$'s  are all i.i.d. $U(0,1)$. In contrast to $\Psi$, the function $\Psi_1$ does not depend on the $Y_i$'s. }

\subsection*{Step 3:  with high probability, on $\Omega^-$,  the threshold of $\BH_\alpha(Y;\wh{\theta},1)$ is large}
{Since the rescaled $p$-values are shifted, one should expect that a large number of them are small enough so that  $\BH_\alpha(Y;\wh{\theta},1)$ rejects many hypotheses. }
Let us denote by $\wh{T}_\alpha=T_\alpha(Y;\wh{\theta},\sigma(P)) \vee (\alpha/n)$ the $p$-value  threshold of $\BH_\alpha(Y;\wh{\theta},1)$. {In view of~\eqref{equprooflowerboundfalsereject}, we  consider the empirical distribution function of $p_i^{-}$, $i\in \cH_0$, given by }  
\[
\wh{\mathbb{G}}_0^-(t)=(n_0(P))^{-1} \sum_{i\in \cH_0(P)}\mathds{1}\{p_i^- \leq t\}, \:\:\:t\in[0,1]\ . 
\]
Relying on the  DKW inequality (Lemma~\ref{lem:DKW}), we derive that this process is uniformly bounded. Precisely, we have $\P(\Omega_0^-)\geq 1-1/n$, where
\[
\Omega_0^- = \left\{\sup_{t\in[0,1]}|\wh{\mathbb{G}}_0^-(t)-t| \leq \sqrt{\log (2n)/(2n_0(P))}\right\}\ . 
\]
 Now, a consequence of \eqref{equprooflowerboundfalsereject} is that 
\begin{align}
\alpha \geq \wh{T}_\alpha &= 
\max\left\{ t \in[0,1]\::\: \sum_{i=1}^n \mathds{1}\{p_i(Y;\wh{\theta},\sigma(P)) \leq t\} \geq  n t/\alpha\right\}
\nonumber\\
&\geq  \max\left\{ t \in[0,1]\::\: \sum_{i=1}^n \mathds{1}\{p_i^- \leq \Psi_1(t)\} \geq   nt/\alpha\right\}
\nonumber\\
&\geq  \max\left\{ t \in[0,1]\::\: (n_0(P)/n) \wh{\mathbb{G}}_0^-(\Psi_1(t)) \geq   t/\alpha\right\}
\geq T_0^-
,\label{equTchapalpha}
\end{align}
by letting $T_0^{-}= \max\{ t \in[0,1]\::\:  \wh{\mathbb{G}}_0^-(\Psi_1(t)) \geq   2 t/\alpha\}$. On $\Omega_-$, $\wh{\mathbb{G}}_0^-(\Psi_1(t))$ is uniformly close to $\Psi_1(t)$, which will allow us to get a lower bound of $\Psi_1(T_0^-)$. This argument is formalized in 
 Lemma~\ref{lem:convunifprocess} below. 
 
 \begin{lem}\label{lem:convunifprocess}
 There exists an integer $N=N(\alpha)$ such that if $n_0(P) \geq N$ and
\begin{equation}
\mu\geq 
\frac{4\log(32/\alpha)}{\sqrt{0.25\log (n_0(P)) + \log(8/\alpha)}}\ ,\label{equ-prooflowerboundeta}
\end{equation}
 we have  on the event  $\Omega_0^-$, 
\begin{equation}\label{equ:lowerboundT0m}
\Psi_1(T_0^-)\geq n_0(P)^{-1/4}\ . 
\end{equation}

\end{lem}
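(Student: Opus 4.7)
The plan is to exhibit an explicit candidate $t^{\star}$ with $\Psi_1(t^{\star}) = n_0(P)^{-1/4}$, show that $T_0^- \geq t^{\star}$ on $\Omega_0^-$, and conclude by monotonicity of $\Psi_1$. Since $\overline{\Phi}$ and $\overline{\Phi}^{-1}$ are both decreasing, the map $t \mapsto \Psi_1(t) = \overline{\Phi}(\overline{\Phi}^{-1}(t/2) - \mu/2)$ is non-decreasing on $[0,1]$, so once $T_0^- \geq t^{\star}$ is established we automatically get $\Psi_1(T_0^-) \geq \Psi_1(t^{\star}) = n_0(P)^{-1/4}$, which is exactly~\eqref{equ:lowerboundT0m}. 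Inverting the equation $\Psi_1(t^{\star}) = n_0(P)^{-1/4}$ forces the explicit choice
\[
 t^{\star} \;:=\; 2\,\overline{\Phi}\bigl(\overline{\Phi}^{-1}(n_0(P)^{-1/4}) + \mu/2\bigr),
\]
and one readily checks $t^{\star} \in [0,1]$ as soon as $n_0(P) \geq 16$ (so that $\overline{\Phi}^{-1}(n_0(P)^{-1/4}) \geq 0$).

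To prove $T_0^- \geq t^{\star}$ on $\Omega_0^-$, by the definition of $T_0^-$ as a maximum it is enough to check that $t^{\star}$ belongs to the feasibility set $\{t \in [0,1] : \widehat{\mathbb{G}}_0^-(\Psi_1(t)) \geq 2t/\alpha\}$. On $\Omega_0^-$ the DKW-type lower bound yields
\[
 \widehat{\mathbb{G}}_0^-(\Psi_1(t^{\star})) \;\geq\; \Psi_1(t^{\star}) - \sqrt{\log(2n)/(2n_0(P))} \;=\; n_0(P)^{-1/4} - \sqrt{\log(2n)/(2n_0(P))}.
\]
Since the fluctuation term is $o(n_0(P)^{-1/4})$ as $n_0(P) \to \infty$, for $n_0(P) \geq N_0(\alpha)$ large enough the right-hand side exceeds, say, $(1-\alpha/16)\,n_0(P)^{-1/4}$. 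Feasibility then reduces to the deterministic inequality
\[
 t^{\star} \;\leq\; \tfrac{\alpha}{16}\, n_0(P)^{-1/4}.
\]

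This last bound is a pure Gaussian-tail calculation. Applying the standard shift inequality
\[
 \overline{\Phi}(z+h) \;\leq\; e^{-zh - h^2/2}\,\overline{\Phi}(z), \qquad z,h \geq 0,
\]
(immediate from $\overline{\Phi}(z+h) = \int_z^{\infty}\phi(v)e^{-vh-h^2/2}\,dv$ and $e^{-vh} \leq e^{-zh}$ on $[z,\infty)$) with the choice $z = \overline{\Phi}^{-1}(n_0(P)^{-1/4})$ and $h = \mu/2$, the target inequality $t^{\star} \leq (\alpha/16)\,n_0(P)^{-1/4}$ further reduces to
\[
 \tfrac{z\mu}{2} + \tfrac{\mu^{2}}{8} \;\geq\; \log(32/\alpha).
\]
A Gaussian quantile lower bound of the form $z \geq \sqrt{0.5\log n_0(P) - 2\log 2}$ (obtained from $\overline{\Phi}(z) \leq (1/2)e^{-z^{2}/2}$) combined with hypothesis~\eqref{equ-prooflowerboundeta} delivers this inequality after elementary algebra, which finishes the proof.

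The main obstacle is not conceptual but bookkeeping: the two contributions $z\mu/2$ and $\mu^2/8$ must be balanced carefully, the former absorbing the $\sqrt{\log n_0(P)}$ piece of the denominator in~\eqref{equ-prooflowerboundeta} and the latter absorbing the $\log(8/\alpha)$ piece that appears under that same square root when $\alpha$ is small. Simultaneously, one must verify that the slack left after subtracting the DKW fluctuation $\sqrt{\log(2n)/(2n_0(P))}$ from $n_0(P)^{-1/4}$ is large enough to accommodate the constant $\log(32/\alpha)$ generated above. The threshold $N(\alpha)$ in the statement is precisely the quantitative bookkeeping of these two requirements.
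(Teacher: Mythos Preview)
Your strategy is essentially the paper's: exhibit a candidate $t$ with $\Psi_1(t)$ at the target level $n_0^{-1/4}$, check that it is feasible for $T_0^-$ on $\Omega_0^-$ via the DKW bound, and conclude by monotonicity of $\Psi_1$. The paper parametrizes slightly differently, fixing $t_0=(\alpha/4)n_0^{-1/4}$ and proving $\Psi_1(t_0)\geq 4t_0/\alpha=n_0^{-1/4}$; since its $x_0=\overline{\Phi}^{-1}(t_0/2)$ already contains the factor $\alpha$, the quantile lower bound $x_0\geq\sqrt{0.25\log n_0+\log(8/\alpha)}$ matches the denominator in~\eqref{equ-prooflowerboundeta} exactly and the $\mu^2/8$ term is never needed.

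There is, however, a genuine error in your quantile step. You claim $z\geq\sqrt{0.5\log n_0-2\log 2}$ from the tail bound $\overline{\Phi}(z)\leq(1/2)e^{-z^2/2}$, but an \emph{upper} bound on $\overline{\Phi}$ produces an \emph{upper} bound on its inverse: from $n_0^{-1/4}=\overline{\Phi}(z)\leq(1/2)e^{-z^2/2}$ one gets $z^2\leq 0.5\log n_0-2\log 2$, the reverse inequality. A correct lower bound is $z=\overline{\Phi}^{-1}(n_0^{-1/4})\geq\sqrt{0.25\log n_0}$, which follows from~\eqref{eq:encadrement_quantile_1plus} in Lemma~\ref{lem:quantile} once $n_0^{-1/4}\leq 0.004$. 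With this (weaker) bound your target $z\mu/2+\mu^2/8\geq\log(32/\alpha)$ still follows from~\eqref{equ-prooflowerboundeta} by a short case split on whether $0.25\log n_0$ or $\log(8/\alpha)$ dominates under the square root, so the argument is salvageable.

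A minor point: asserting that $\sqrt{\log(2n)/(2n_0)}=o(n_0^{-1/4})$ tacitly uses $n\leq 2n_0(P)$; this holds in the surrounding proof of Theorem~\ref{THM:THETACHAPFORFDR} but should be stated.
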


By Lemma~\ref{lem:lower_mu}, we have $\mu\geq c'\pi= c'k/(2n)$. Hence,  Condition \eqref{equ-prooflowerboundeta} is satisfied by 
Condition \eqref{eq:condition_lower_k_0} together with $n_0(P)\geq 0.5n$.  Combining \eqref{equTchapalpha} and \eqref{equ:lowerboundT0m} and since $\Psi_1$ is increasing, 
 we finally have on the event $\Omega^-\cap \Omega^-_0$,
\beq \label{eq:condition_lower_p_value_empirique}
 \Psi(\widehat{T}_{\alpha}) \geq \Psi_1(\widehat{T}_{\alpha})\geq \Psi_1(T_0^-) \geq n_0(P)^{-1/4}\ .
\eeq

\subsection*{Step 4: $\BH_\alpha(Y;\wh{\theta},1)$  makes  many false rejections}
Since the threshold $\widehat{T}_{\alpha}$ is large enough, one can then prove that $\BH_\alpha(Y;\wh{\theta},1)$  makes  many false rejections.
By \eqref{equprooflowerboundfalsereject}, we have, on the event $\Omega^-\cap \Omega^-_0$, that for all $t\in[0,1]$,
\beqn 
\sum_{i\in \cH_0(P)}\mathds{1}{\{p_i(Y;\wh{\theta},1)\leq t\}}  &=& \sum_{i\in \cH_0(P)}\mathds{1}{\{Y_i \leq - \overline{\Phi}^{-1}(t/2)+  \widehat{\theta}\}}+ \mathds{1}{\{Y_i \geq \overline{\Phi}^{-1}(t/2)+ \widehat{\theta}\}} \\
&=  & \sum_{i\in \cH_0(P)} \mathds{1}{\{p_i^{-}\leq \Psi(t)\}}+ \mathds{1}{\{p_i^{+}\leq \Psi^+(t)\}}\ , 
\eeqn 
where $\Psi^{+}(t)= \overline{\Phi}[\overline{\Phi}^{-1}(t/2)+ \widehat{\theta}]$ and $p_i^+= \overline{\Phi}(Y_i)$. Define the process 
\[
\wh{\mathbb{G}}_0^+(t)=(n_0(P))^{-1} \sum_{i\in \cH_0(P)}\mathds{1}\{p_i^+ \leq t\}, \:\:\:t\in[0,1]\ . 
\]
Relying again on the DKW inequality (Lemma~\ref{lem:DKW})), we derive that this process is uniformly bounded in the sense that $\P(\Omega_0^-)\geq 1-1/n$, where
\[
\Omega_0^+ = \left\{\sup_{t\in[0,1]}|\wh{\mathbb{G}}_0^+(t)-t| \leq \sqrt{\log (2n)/(2n_0(P))}\right\}\ . 
\]
Hence, on $\Omega^{-}\cap \Omega_0^{-}\cap\Omega_0^{+}$, we have, uniformly over all $t\in [0,1]$,
\[
\sum_{i\in \cH_0(P)}\mathds{1}{\{p_i(Y;\wh{\theta},1)\leq t\}}  \geq  n_0(P) [\Psi(t)+\Psi^{+}(t)] - \sqrt{2n\log (2n) }\ . 
\]
By~\eqref{eq:lower_bound_n1(P)}, $n_0(P)\geq n(1-\pi) - \sqrt{2n\pi\log(n)}- \log(n)/3\geq n(1-\pi)- 2\sqrt{2n\log(n)}$. Hence, we conclude that, uniformly over all $t\in [0,1]$,  
\beq
\sum_{i\in \cH_0(P)}\mathds{1}{\{p_i(Y;\wh{\theta},1)\leq t\}} 
\geq  n(1-\pi) [\Psi(t)+\Psi^{+}(t)]- 5\sqrt{2n\log(2n)}\ .  \label{prooflowerboundlbH0}   
\eeq

In the previous step (see \eqref{eq:condition_lower_p_value_empirique}), we have proved that 
$\Psi(\widehat{T}_{\alpha}) \geq  n_0(P)^{-1/4}$. This implies that 
\[
\big|\BH_\alpha(Y;\wh{\theta},1)\cap \cH_0(P)\big|\geq \frac{3}{4}n^{3/4}-  5\sqrt{2n\log(2n)}\geq \frac{1}{2}n^{3/4}\ ,
\]
for $n$ large enough. This proves the first statement of the theorem.

\subsection*{Step 5: with high probability, on $\Omega^-$, $\BH_\alpha(Y;\wh{\theta},1)$ cannot make too many true rejections}
In this step, we  bound the number of true rejections uniformly with  respect to the threshold $t$ of the testing procedure. 
We have for all $t\in[0,1]$, and all $i\in\{1,\dots,n\}$,
\beqn 
\mathds{1}{\{p_i(Y;\wh{\theta},1)\leq t\}}&=& \mathds{1}{\{Y_i \leq - \overline{\Phi}^{-1}(t/2)+  \widehat{\theta}\}}+ \mathds{1}{\{Y_i \geq \overline{\Phi}^{-1}(t/2)+ \widehat{\theta}\}} \\
&=  & \mathds{1}{\{Y_i \leq - \overline{\Phi}^{-1}(t/2)+  \widehat{\theta}\}}+ \mathds{1}{\{Y_i \geq \overline{\Phi}^{-1}(t/2)+ \widehat{\theta}\}}.
\eeqn 
Now, recall that the variables $Y_i, i\in \cH_1(P),$ are  i.i.d. with common density $\frac{1-\pi}{\pi}(\phi_{\mu}-\phi)_+$. For $t\in[0,1]$,  define %the processes and their population version 
\begin{align*}
\wh{\mathbb{G}}_1^-(t)&=(n_1(P))^{-1} \sum_{i\in \cH_1(P)}\mathds{1}\{\ol{\Phi}(-Y_i) \leq t\}, \:\:\:{G}_1^-(t)=\E_{P} [ \wh{\mathbb{G}}_1^-(t)]\ ; \\
\Omega_1^- &= \left\{\sup_{t\in[0,1]}(\wh{\mathbb{G}}_1^-(t)-{G}_1^-(t)) \leq \sqrt{\log (n)/(2n_1(P))}\right\}\ ;\\
\wh{\mathbb{G}}_1^+(t)&=(n_1(P))^{-1} \sum_{i\in \cH_1(P)}\mathds{1}\{\ol{\Phi}(Y_i) \leq t\}, \:\:
{G}_1^+(t)=\E_{P} [ \wh{\mathbb{G}}_1^+(t)]\ ; \\
\Omega_1^+ &= \left\{\sup_{t\in[0,1]}(\wh{\mathbb{G}}_1^+(t)-{G}_1^+(t)) \leq \sqrt{\log (n)/(2n_1(P))} \right\}\ .
\end{align*}
Applying twice the DKW inequality (Lemma~\ref{lem:DKW}),   we have $\P(\Omega_1^-)\geq 1 - 1/n$ and $\P(\Omega_1^+)\geq 1 - 1/n$, which gives that the event $\Omega_1= \Omega_1^+\cap \Omega_1^-$ is such that $\P(\Omega_1)\geq 1- 2/n$.
Furthermore, we have for all $t\in[0,1]$, and $i\in\cH_1(P)$, 
\begin{align*}
{G}_1^-(t)&=\P(\ol{\Phi}(-Y_i) \leq t )=\P(Y_i \leq -\ol{\Phi}^{-1}(t) )= \frac{1-\pi}{\pi}\int_{-\infty}^{-\ol{\Phi}^{-1}(t)} (\phi(x-\mu)-\phi(x))_+ dx\\ & \leq  \frac{1-\pi}{\pi}\overline{\Phi}[\overline{\Phi}^{-1}(t)+\mu]\ ;
\\
{G}_1^+(t)&=\P(\ol{\Phi}(Y_i) \leq t )=\P(Y_i \geq  \ol{\Phi}^{-1}(t) )= \frac{1-\pi}{\pi}\int_{\ol{\Phi}^{-1}(t)}^{\infty} (\phi(x-\mu)-\phi(x))_+ dx \\&  \leq  \frac{1-\pi}{\pi}\overline{\Phi}[\overline{\Phi}^{-1}(t)-\mu]\ . 
\end{align*}
 As a result, on the event $\Omega_1$, we have for all $t\in[0,1]$,
\beqn
&&\hspace{-1cm}\sum_{i\in \cH_1(P)}\mathds{1}{\{p_i(Y;\wh{\theta},1)\leq t\}}\\
&=  & n_1(P)\: \wh{\mathbb{G}}_1^-\left(\Psi(t)\right) + n_1(P) \:\wh{\mathbb{G}}_1^+\left(\Psi^{+}(t)\right)\\
&\leq & n_1(P)\left[ G_1^{-}\left(\Psi(t)   \right)+ G_1^{+}\left(\Psi^{+}(t) \right)\right]  +\sqrt{2n_1(P)\log(n)}\\
&\leq & n_1(P)\frac{1-\pi}{\pi}\left[ \overline{\Phi}[\ol{\Phi}^{-1}(t/2)- \widehat{\theta}+\mu]  +\overline{\Phi}[\overline{\Phi}^{-1}(t/2)+ \widehat{\theta}-\mu] \right] \\ &&  +\sqrt{2n_1(P)\log(n)}\ . 
\eeqn

Now, since for any fixed $t\in[0,1]$, the map $$u\in [0,\infty] \mapsto \overline{\Phi}\left[\overline{\Phi}^{-1}(t/2)+u\right]+ \overline{\Phi}\left[ \overline{\Phi}^{-1}(t/2) -  u \right]$$ is nondecreasing, we have on  $\Omega^-\cap \Omega^-_0\cap\Omega_1$ that 
\beqn 
\lefteqn{\overline{\Phi}\left[\overline{\Phi}^{-1}(t/2)+ |\widehat{\theta} - \mu|\right]+ \overline{\Phi}\left[ \overline{\Phi}^{-1}(t/2) -   |\widehat{\theta} -  \mu|  \right]}&&\\ &\leq& \overline{\Phi}\left[\overline{\Phi}^{-1}(t/2)+ \widehat{\theta} \right]+ \overline{\Phi}\left[ \overline{\Phi}^{-1}(t/2) -   \widehat{\theta}   \right] = \Psi(t)+ \Psi^{+}(t)\ ,
\eeqn 
by using $|\widehat{\theta} - \mu|\leq \widehat{\theta}$ (since $\wh{\theta}\geq \mu/2$) and the definition of $\Psi(t)$.
Finally, on $\Omega^-\cap  \Omega_0^{-}\cap \Omega_1$, we obtain that, for all $t\in[0,1]$,
\[
\sum_{i\in \cH_1(P)}\mathds{1}{\{p_i(Y;\wh{\theta},1)\leq t\}} 
\leq  \frac{n_1(P)(1-\pi)}{\pi} [\Psi(t)+\Psi^{+}(t)]+ \sqrt{2\log(n)}\ . 
\]
From~\eqref{eq:lower_bound_n1(P)} and since $\pi\geq \log^{-1/2}(n)$, we deduce that 
\[
\frac{n_1(P)(1-\pi)}{\pi} \leq n(1-\pi)+  \sqrt{\frac{2n\log(n)}{\pi}}+ \frac{\log(n)}{3\pi}\leq n(1-\pi)+2\sqrt{2n}\log(n)\ . 
\]
Hence, we conclude that, for all $t\in[0,1]$,
\beq
\sum_{i\in \cH_1(P)}\mathds{1}{\{p_i(Y;\wh{\theta},1)\leq t\}} 
\leq  n(1-\pi)  [\Psi(t)+\Psi^{+}(t)]+ 5\sqrt{2n}\log(n)\ .
\label{prooflowerboundmajH1}
\eeq

\subsection*{Step 6: On $\Omega^-\cap \Omega_0^-\cap \Omega_1$, the FDP of $\BH_\alpha(Y;\wh{\theta},1)$ is  close to $1/2$}
We deduce from the previous steps a lower bound for the FDP. First, by definition of the FDP and of the threshold $\widehat{T}_{\alpha}$  of $\BH_\alpha(Y;\wh{\theta},1)$ (note that the procedure rejects at least one true null hypotheses by \eqref{eq:condition_lower_p_value_empirique} and \eqref{prooflowerboundlbH0}), we deduce from \eqref{prooflowerboundlbH0} and   \eqref{prooflowerboundmajH1} that, for $Y\in\Omega^-\cap \Omega_0^-\cap \Omega_1$,
\beqn
&&\hspace{-1cm}\FDP(P,\BH_\alpha(Y;\wh{\theta},1)) \\
&\geq& \inf_{t\in [\widehat{T}_{\alpha}, \alpha]}
\frac{\sum_{i\in \cH_0(P)}\mathds{1}{\{p_i(Y;\wh{\theta},1))\leq t\}}}{\sum_{i=1}^n\mathds{1}{\{p_i(Y;\wh{\theta},1)\leq t\}}} \\
&=   & \inf_{t\in [\widehat{T}_{\alpha}, \alpha]} \left[1+ \frac{\sum_{i\in \cH_1(P)}\mathds{1}{\{p_i(Y,\widehat{\theta},1)\leq t\}} }{\sum_{i\in \cH_0(P)}\mathds{1}{\{p_i(Y,\widehat{\theta},1)\leq t\}}} \right]^{-1} \\
&\geq & \inf_{t\in [\widehat{T}_{\alpha}, \alpha]}\left[1+ \frac{n(1-\pi)[\Psi(t)+\Psi^{+}(t)]+ 5\sqrt{2n}\log(n) }{(n(1-\pi)[\Psi(t)+\Psi^{+}(t)]- 5\sqrt{2n}\log(n)))_+ }\right]^{-1}\\
&\geq & \inf_{t\in [\widehat{T}_{\alpha}, \alpha]}\left[2 + \frac{10\sqrt{2n}\log(n)}{(n(1-\pi)[\Psi(t)+\Psi^{+}(t)]- 5\sqrt{2n}\log(n)))_+}\right]^{-1}\ .
\eeqn 
We have proved in \eqref{equ:lowerboundT0m} that $\Psi(t)\geq \Psi(\widehat{T}_{\alpha})\geq n_0(P)^{-1/4}\geq (n/2)^{-1/4}$. Hence, for $n$ large enough, we obtain
\[
 \FDP(P,\BH_\alpha(Y;\wh{\theta},1)) \geq \frac{1}{2+ c'n^{-1/5}}\ .
\]
Since the event $\Omega^{-}\cap \Omega_0^-\cap \Omega_1$ occurs with probability higher than $1/2-c''/n$, we have proved the second statement of the theorem.

\subsection{Proof of Theorem~\ref{mainthsigmaknown} (ii)}\label{sec:proof_mainth_sigma_known_2}

As explained above, Theorem~\ref{mainthsigmaknown} (ii) is shown by adapting Theorem~\ref{THMBORNESUP} to the case $\wh{\sigma}=\sigma(P)$, the only difference in the proof being
that we now quantify the impact of the mean $\theta(P)$  estimation error on the $p$-values and the corresponding threshold $T_{\alpha}(\wh{\theta},\sigma(P))$ of plug-in BH. In other words, Lemma~\ref{lem:forFNR} has to be replaced by  Lemma~\ref{lem:forFNRsigmaknown} below (to be proved in Appendix~\ref{sec:remaining}).

\begin{lem}\label{lem:forFNRsigmaknown}
For any estimator $\wh{\theta} $, let $\delta>0$, $\theta\in \R$, $\sigma>0$, $\alpha\in (0,0.8)$, $t_0\in(0,\alpha)$ and 
\[
\eta =  \delta c \sqrt{ 2\log (1/t_0)} \ ,
\]
with the constant $c>0$ of Corollary~\ref{cor:pvaluemaj}.
 Assume $|\wh{\theta}-\theta|\leq \sigma \delta$ and  $\eta\leq 0.05$. Then, for all $i\in\{1,\dots,n\}$,
 \begin{itemize}
\item if $T_\alpha(\wh{\theta},\wh{\sigma})\vee (\alpha/n)\geq t_0$, we have
 \begin{align}
\mathds{1}\{p_i(\wh{\theta},\sigma)\leq T_{\alpha}(\wh{\theta},\sigma)\} 
&\leq \mathds{1}\{p_i(\theta,\sigma)\leq (1+\eta)T_{\alpha}(\wh{\theta},\sigma) \}\nonumber\\
&\leq \mathds{1}\{p_i(\theta,\sigma)\leq T_{\alpha(1+\eta)}(\theta,\sigma)\}\ ;
\label{equ:forFDR2}
\end{align}
\item if $T_{0.95\alpha}(\theta,\sigma)\vee (0.95\alpha/n)\geq t_0$, we have
 \begin{align}
\mathds{1}\{p_i(\theta,\sigma)\leq T_{\alpha(1-\eta)}(\theta,\sigma)\}  \leq \mathds{1}\{p_i(\wh{\theta},\sigma)\leq T_{\alpha}(\wh{\theta},\sigma)\}\ .
\label{equ:forFNR2}
\end{align}
\end{itemize}\end{lem}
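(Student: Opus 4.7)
The plan is to follow the structure of the proof of Lemma~\ref{lem:forFNR}, relying on two successive ingredients: a pointwise bound on the ratio $p_i(\theta,\sigma)/p_i(\wh{\theta},\sigma)$, and a transfer of this bound to the BH thresholds via the max-characterization in~\eqref{Tdef}. The simpler form of $\eta$ here (only the $\sqrt{2\log(1/t_0)}$ term, with no $2\log(1/t_0)$ term) reflects the fact that only the location parameter is estimated, so there is no multiplicative rescaling of the Gaussian arguments to deal with.

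For the pointwise bound, I would set $x_i=|Y_i-\wh{\theta}|/\sigma$ and $x_i^{\star}=|Y_i-\theta|/\sigma$. The hypothesis $|\wh{\theta}-\theta|\leq \sigma\delta$ together with the triangle inequality gives $|x_i-x_i^{\star}|\leq \delta$, hence $p_i(\theta,\sigma)\leq 2\ol{\Phi}(x_i-\delta)$. Applying Corollary~\ref{cor:pvaluemaj} then yields an inequality of the form $2\ol{\Phi}(\ol{\Phi}^{-1}(t/2)-\delta)\leq t\bigl(1+c\delta\sqrt{2\log(1/t)}\bigr)$ for every $t$ in the relevant range. Crucially, the map $t\mapsto 2\ol{\Phi}(\ol{\Phi}^{-1}(t/2)-\delta)$ is nondecreasing in $t$, so for any $i$ with $p_i(\wh{\theta},\sigma)\leq T$ one has $p_i(\theta,\sigma)\leq 2\ol{\Phi}(\ol{\Phi}^{-1}(T/2)-\delta)\leq T\bigl(1+c\delta\sqrt{2\log(2/T)}\bigr)$. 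The assumption $T_\alpha(\wh{\theta},\sigma)\vee(\alpha/n)\geq t_0$ guarantees $T\geq t_0$ (modulo the floor $\alpha/n$), so the correction factor is at most $1+\eta$, yielding the first inequality of~\eqref{equ:forFDR2}.

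The second inequality in~\eqref{equ:forFDR2}, as well as part~(ii), would follow from a counting argument based on~\eqref{Tdef}. For~(i), the pointwise bound gives $\#\{i\,:\,p_i(\theta,\sigma)\leq (1+\eta)T_\alpha(\wh{\theta},\sigma)\}\geq \#\{i\,:\,p_i(\wh{\theta},\sigma)\leq T_\alpha(\wh{\theta},\sigma)\}\geq nT_\alpha(\wh{\theta},\sigma)/\alpha$, which equals $n(1+\eta)T_\alpha(\wh{\theta},\sigma)/[\alpha(1+\eta)]$, so $(1+\eta)T_\alpha(\wh{\theta},\sigma)$ is a valid candidate in the max defining $T_{\alpha(1+\eta)}(\theta,\sigma)$. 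For~(ii) I would swap the roles of $\theta$ and $\wh{\theta}$ in the previous steps to obtain $p_i(\wh{\theta},\sigma)\leq (1+\eta)p_i(\theta,\sigma)$, and then use $(1-\eta)(1+\eta)\leq 1$ in the same counting argument to conclude $T_\alpha(\wh{\theta},\sigma)\geq (1+\eta)T_{\alpha(1-\eta)}(\theta,\sigma)$. The $0.95$ slack in the hypothesis $T_{0.95\alpha}(\theta,\sigma)\vee(0.95\alpha/n)\geq t_0$ is calibrated exactly so that $\alpha(1-\eta)\geq 0.95\alpha$ whenever $\eta\leq 0.05$, guaranteeing that the range condition for Corollary~\ref{cor:pvaluemaj} is still in force at the lowered level, by monotonicity of $\alpha\mapsto T_\alpha$.

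The main technical obstacle is that the Gaussian tail ratio $\ol{\Phi}(x-\delta)/\ol{\Phi}(x)$ is increasing in $x$, so the multiplicative control is not uniform over arbitrarily small $p$-values: a pointwise estimate alone cannot bound the inflated $p$-values by a factor depending only on $t_0$. The monotonicity of $t\mapsto 2\ol{\Phi}(\ol{\Phi}^{-1}(t/2)-\delta)$ is the key device that bypasses this issue, since it reduces the uniform control to a single evaluation at $t=T\geq t_0$, producing the announced $\sqrt{2\log(1/t_0)}$ scaling. Beyond this observation, the proof is a faithful simplification of that of Lemma~\ref{lem:forFNR}.
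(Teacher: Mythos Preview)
Your proposal is correct and follows essentially the same route as the paper, whose proof is a one-line reduction to Lemma~\ref{lem:forFNR}: apply Corollary~\ref{cor:pvaluemaj} with $s=s'=\sigma$, so that the $|s'-s|s^{-1}\cdot 2\log(1/t_0)$ term vanishes and $\eta$ reduces to $c\delta\sqrt{2\log(1/t_0)}$, then repeat the counting/threshold argument verbatim. Your explicit detour through the monotonicity of $t\mapsto 2\ol{\Phi}(\ol{\Phi}^{-1}(t/2)-\delta)$ is not needed as a separate step, since Corollary~\ref{cor:pvaluemaj} already delivers the indicator inequality $\mathds{1}\{p_i(\wh{\theta},\sigma)\leq t\}\leq \mathds{1}\{p_i(\theta,\sigma)\leq t(1+\eta)\}$ uniformly for $t\in[t_0,\alpha]$ with a \emph{fixed} $\eta$ depending only on $t_0$; you can simply instantiate it at $t=T_\alpha(\wh{\theta},\sigma)$.
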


\subsection{Proofs for location model}\label{proof:locationsection}

\begin{proof}[Proof of Theorem~\ref{thm:lower_location}]
Consider any procedure $R$. 
We extend the proof of Theorem~\ref{thm:lower_sigmaknown} (see Section~\ref{sec:lower_sigmaknown}) to general function $g$  but in the specific case $k_1=k_2=k$. Take $\pi=k/(2n)$. Let $\mu>0$ be such that 
\[
\int_{0}^{\mu/2}g(x)dx= \frac{\pi}{2(1-\pi)}< 1/2\ .
\]
Since $g$ is non-increasing on $\R_+$, we have $\mu= 2 \ol{G}^{-1}(\frac{1-2\pi}{2(1-\pi)})\geq c'_{g}\pi$.  
Then, one can check that 
\[
h=(1-\pi) g + \pi f_1 = (1-\pi) g_{\mu} + \pi f_2 = \max\{(1-\pi) g, (1-\pi) g_{\mu}\}\ , 
\]
is a density where 
\beqn
f_1& =&\frac{1}{\pi}\left[(1-\pi)g_{\mu}- (1-\pi)g\right]_+\ ;\quad \quad 
f_2 =\frac{1}{\pi}\left[(1-\pi)g-(1-\pi)g_{\mu}\right]_+\ . 
\eeqn
Since $g$ is non-increasing on $\R_+$, $f_1(x)> 0$ for $x> \mu/2$ and $f_2(x)>0$ for $x < \mu/2$.

Defining the distributions $Q$, $Q_{1,z}$, $Q_{2,z}$ and $Z$ as in the proof of Theorem~\ref{thm:lower_sigmaknown} (with $k_1=k_2$, $Z=Z_1=Z_2$), we derive that 
either $$\E_{Z}[\E_{Y\sim Q_{1,Z}}[|R(Y)|>0]=\E_{Z}[\E_{Y\sim Q_{2,Z}}[|R(Y)|>0]\geq 1/2$$ or $$\E_{Z}[\E_{Y\sim Q_{1,Z}}[|R(Y)|=0]=\E_{Z}[\E_{Y\sim Q_{2,Z}}[|R(Y)|=0]> 1/2.$$ In the former case, we obtain by arguing exactly as in step $2$ of the previous proof that there exists $P$ with $n_1(P)\leq k$ and $\FDR(P,R)\geq 2/5$. 

\medskip 

Let us now turn  to the case $\E_{Z}[\E_{Y\sim Q_{1,Z}}[|R(Y)|=0]]\geq  1/2$. Hence, there exists $P$ with $n_1(P)\in [k/4,k]$ and $\theta(P)=0$ such that $\P_{Y\sim P}(|R(Y)|=0]\geq 2/5$  (for $k$ large enough). It remains to prove that the oracle Benjamini-Hochberg $\BH^{\star}_{\alpha/2}$ rejects many null hypotheses with probability close to one. It suffices to prove that many oracle $p$-values $p_i^{\star}=2\ol{G}(|Y_i|)$ are small enough. Consider any $i\in \cH_1(P)$. The corresponding density of $Y_i$ is given by $f_1(x)= \frac{1-\pi}{\pi}[g(x-\mu)- g(x)]_+$, which is positive for $x\geq \mu/2$.
Consider  some $t\in [\alpha/(2n);\alpha/2]$ whose value will be fixed later.
The event $\{p_i^{\star}\leq t\}\subset \{Y_i\geq \ol{G}^{-1}\left(\frac{t}{2}\right)\}$ occurs with probability higher  or equal to 
\[
r_{\mu}(t)=  \frac{1-\pi}{\pi}\left[ \ol{G}\left[\ol{G}^{-1}\left(\frac{t}{2}\right)-\mu\right]- \frac{t}{2}\right]\ .
\]
Applying  Bernstein inequality, we deduce that  
\beq\label{eq:Bernstein_t_fixe}
 \P_{Y\sim P}\left[\sum_{i\in \cH_1(p)}\ind\{p_i^{\star}\leq t\}\geq \frac{n_1(P)}{2}r_{\mu}(t)\right]\geq 1 - e^{-3n_1(P)r_{\mu}(t)/28}\ . 
\eeq
Observe that 
\[
 \frac{n_1(P)r_{\mu}(t)}{2} \geq \frac{1-\pi}{4} n \left[ \ol{G}\left[\ol{G}^{-1}\left(\frac{t}{2}\right)-\mu\right]- \frac{t}{2}\right]\geq \frac{3 n}{16} \left[\overline{G}\left(\ol{G}^{-1}\left(\frac{t}{2}\right)-c'_{g}\pi\right)- \frac{t}{2}\right]
\]
By definition of BH procedure, on the event within \eqref{eq:Bernstein_t_fixe}, $\BH^{\star}_{\alpha/2}$  rejects each null hypothesis corresponding to a $p$-value $p_i^{\star}\leq t$ as long as this last expression is higher than $2nt/\alpha$. Putting everything together we have proved that 
\[
\P_{Y\sim P}\left[|\BH^{\star}_{\alpha/2}\cap \cH_1(P)|\geq \frac{2nt}{\alpha}\right]\geq 1-e^{-c' nt/\alpha}\ ,
\]
if some $t\in [\frac{\alpha}{2n}; \alpha/2]$ satisfies
\beq\label{eq:condition_t_small}
\overline{G}\left(\ol{G}^{-1}\left(\frac{t}{2}\right)-c'_{g}\pi\right)\geq \frac{12t}{\alpha}\ . 
\eeq
Such a $t$ exists by Condition~\eqref{eq:condition_scaling}. Fixing $t=t_0$ leads to the desired conclusion.

\end{proof}

\begin{proof}[Proof of Theorem~\ref{thm:lower_location2}]
We consider the exact same density $h$ as above, but we now fix $\pi= \ol{\pi} - n^{-1/3}$.  Arguing as in the proof of Theorem~\ref{thm:lower_sigmaknown} (see \eqref{equ:borneinfstep1} therein), we have
\[
\E_{Z}\FDR[Q_{1,Z},R]+\E_{Z}\FDR[Q_{2,Z},R] \geq \P_{Y\sim Q}[|R(Y)|>0]\ ,
\]
where $Z \sim \mathcal{B}(\pi)^{\otimes n} $ and with $Q$, $Q_{1,z}$, $Q_{2,z}$ defined therein.

If $\P_{Y\sim Q}[|R(Y)|>0]\geq 1/2$, we have either $\E_{Z}\FDR[Q_{1,Z},R]\geq 1/4$ or $\E_{Z}\FDR[Q_{2,Z},R]\geq 1/4$. Since $n_1(Z)$ follows a Binomial distribution with parameter $n$ and  $\pi=\ol{\pi} - n^{-1/3}$, it follows from Bernstein inequality, that 
\begin{equation}\label{bernsteinZ1}
\P_{Z}[n_1(Z)\geq \bar \pi n] \leq \exp(-c_{\alpha}' n^{1/3})\ , 
\end{equation}
for some constant $c'_{\alpha}>0$ that only depends on $\alpha$ (through $\pi_{\alpha}$). As a consequence, there exists $i\in \{0,1\}$ and $z\in\{0,1\}^n$ with $n_1(z)\leq \bar \pi n$ such that 
\[
 \FDR[Q_{i,z},R]\geq 1/4 - \exp(-c_{\alpha}' n^{1/3}).
\]

\medskip

Now assume that 
\beq\label{eq:lower_proba_non_rejet}
\P_{Y\sim Q}[|R(Y)|=0]= \E_{Z}[P_{Y\sim Q_{1,Z}}=0]\geq 1/2\ .
\eeq
We consider the behavior of $|\BH^{\star}_{\alpha}|$, the number of rejections of $\BH^{\star}_{\alpha}$, under $Q_{1,Z}$. Fix $t_0= 2\overline{G}(\mu/2)= \frac{1-2\pi}{1-\pi}\in (0,1)$, so that $\overline{G}^{-1}(t_0/2)=\mu/2$.
From Fubini's Theorem and the definition of $|\BH^{\star}_{\alpha}|$, we derive that
\beqn 
\E_{Z}\P_{Y\sim Q_{1,Z}}\left[|\BH^{\star}_{\alpha}|\geq \frac{nt_0}{\alpha}\right]&= &\P_{Y\sim Q} \left[|\BH^{\star}_{\alpha}|\geq n\frac{t_0}{\alpha}\right]
\\
&\geq & \P_{Y\sim Q} \left[\sum_{i=1}^n \ind\{p_i^{\star} \leq t_0\}\geq \frac{nt_0}{\alpha}\right].
\eeqn 
Under $Q$, the random variables $\ind\{p_i^{\star} \leq t_0\}$, $1\leq i \leq n$, are i.i.d. and follow a Bernoulli distribution with parameter 
\[
 \P[p_i^{\star} \leq t_0]= (1-\pi) \left[\ol{G}(\mu/2)+ 1- \ol{G}(\mu/2)\right]= 1-\pi\ . 
\]

Define the function $\psi: x \mapsto (1-x) - \tfrac{1-2x}{\alpha(1-x)}$. For any $\alpha\in (0,1)$, $\psi'(x)=-1+1/[\alpha(1-x)^2]$. Hence, $\psi$ is convex and strictly increasing in $[0,1/2]$. Recall the definition of $\pi_{\alpha}= \frac{\sqrt{1-\alpha} -(1-\alpha)}{\alpha}\in (0,1/2)$. One then shows that $\psi(\pi_{\alpha})=0$ and $\psi(x)\in (0, 1/2)$ for any $x\in (\pi_{\alpha},1/2)$. 
Since $\lim \pi = \bar \pi > \pi_{\alpha}$, it follows that, for $n$ larger than a constant depending only on $\bar \pi$ and $\alpha$, we have $\pi >\pi_{\alpha}$. Hence, we derive  from Bernstein inequality that 
\begin{eqnarray} \nonumber
  \E_{Z}\P_{Y\sim Q_{1,Z}}\left[|\BH^{\star}_{\alpha}|\geq \frac{nt_0}{\alpha}\right]&\geq& 1- \exp\left[-n\frac{\psi^2(\pi)}{2[1-\pi]+ \psi(\pi)/3} \right]\\
 &\geq& 1 - \exp\left[- c_{\alpha}n (\pi-\pi_{\alpha})^2\right]\nonumber\\ 
 &\geq& 1 - \exp\left[- c_{\alpha}n (\bar \pi-\pi_{\alpha})^2\right]\ , \label{eq:upper_EZ1}
\end{eqnarray}
since  $\psi(\pi)=\psi(\pi)- \psi(\pi_{\alpha})\geq \psi'(\pi_{\alpha})(\pi -\pi_{\alpha})$, $\psi'(\pi_{\alpha})>0$ only depends on $\alpha$.
Moreover, provided that $|\BH^{\star}_{\alpha}|\geq \frac{nt_0}{\alpha}$, we have $|\BH^{\star}_{\alpha}\cap \cH_1(Q_{1,Z})| \geq  \sum_{i=1}^n \ind\{Z_i=1 \} \ind\{p_i^{\star} \leq t_0\}$. When $Z_i=1$, it follows from the definition of $t_0$ that we have $p_i^{\star} \leq t_0$ almost surely. As a consequence, for each $i$, $\ind\{Z_i=1 \} \ind\{p_i^{\star} \leq t_0\}$ is a Bernoulli variable of parameter $\pi$, we derive from Bernstein inequality that 
\begin{eqnarray} \nonumber
  \E_{Z}\P_{Y\sim Q_{1,Z}}\left[\sum_{i=1}^n \ind\{Z_i=1 \} \ind\{p_i^{\star} \leq t_0\} \geq \frac{n\pi}{2}\right]&\geq& 1- \exp\left[-n c \pi^2 \right]\\
 &\geq& 1 - \exp\left[- c_2 n \bar \pi^2\right] , \label{eq:upper_EZ2}
\end{eqnarray}
for $n$ larger than a constant depending on $\bar \pi$. Since $\pi\geq \ol{\pi}/2$ for $n$ large enough, we deduce that by combining the two previous inequalities that 
\[
  \E_{Z}\P_{Y\sim Q_{1,Z}}\left[|\BH^{\star}_{\alpha}\cap \cH_1(P)|\geq\frac{n\overline{\pi}}{4}  \right]\geq 1 -  2\exp\left[- c'_{\alpha}n (\bar \pi-\pi_{\alpha})^2\right]\ ,
\]
and therefore that 
\[
 \P_{Z}\left[\P_{Y\sim Q_{1,Z}}\left[|\BH^{\star}_{\alpha}\cap \cH_1(P)|\geq\frac{n\ol{\pi}}{4}  \right]\geq 1 -  10 \exp\left[- c'_{\alpha}n (\bar \pi-\pi_{\alpha})^2 \right]\right] \geq \frac{4}{5}\ . 
\]
Also, it follows from \eqref{eq:lower_proba_non_rejet} that 
\[
 \P_{Z}\left[P_{Y\sim Q_{1,Z}}[|R(Y)|=0]> 1/3\right]\geq \frac{1}{4}\ . 
\]
Combining the two previous inequalities with \eqref{bernsteinZ1}, we conclude that there exists $P\in\mathcal{P}_g$ with $n_1(P)\leq \ol{\pi}n$ such that $ P_{Y\sim P}[|R(Y)|=0]\geq 1/3$ and 
\[
 \P_{Y\sim P}\left[|\BH^{\star}_{\alpha}\cap \cH_1(P)|\geq\frac{n\ol{\pi}}{4}  \right]\geq 1 -  10 \exp\left[- c'_{\alpha}n (\bar \pi-\pi_{\alpha})^2 \right]\ . 
\]
The result follows. 
\end{proof}

\begin{proof}[Proof of Proposition~\ref{prp:lower_laplace}]
We follow the same approach as in the previous proof, but we can sharpen the bounds using the explicit form of $g$. As in the previous proofs, we consider the same  density $h$, which now takes the form $h(y)=(1-\pi) \max\{ e^{-|y|}/2, e^{-|y-\mu|}/2\}$ with $\pi=\ol{\pi}-n^{-1/3}$. We also consider the same $Q$, $Q_{1,z}$, $Q_{2,z}$, $z\in\{0,1\}^n$, and $Z$ with i.i.d. $\mathcal{B}(\pi)$ coordinates. 
Recall that $g(x)=e^{-|x|}/2$, $\ol{G}(x)= e^{-x}/2$ for $x\geq 0$ and  $\ol{G}^{-1}(t)= \log(1/(2t))$ for $t\leq 1/2$.
As a consequence, $\mu= 2\log(\frac{1-\pi}{1-2\pi})$. 
Note that for any $x >\mu$, we have 
$
g(x-\mu)/g(x)= e^{-\mu}. 
$
Now consider any multiple testing procedure $R$.

\medskip 

\noindent 
{\bf Step 1}: Controlling $\E_{Z}\FDR(Q_{1,Z},R)+ \E_{Z}\FDR(Q_{2,Z},R)$. 
For $Y_i\geq \mu$,  then
\[
\P_{Z, Y\sim Q_{1,Z}}[Z_i=0 \:|\:Y_i] = \frac{(1-\pi) e^{-Y_i}/2}{h(Y_i)} =  \frac{e^{-Y_i}}{\max\{ e^{-Y_i}, e^{-Y_i+\mu}\}}  =  e^{-\mu}= \frac{(1-2\pi)^2}{(1-\pi)^2}\ . 
\]
If $Y_i\in [\mu/2; \mu]$, then 
\[
\P_{Z, Y\sim Q_{1,Z}}[Z_i=0 \:|\:Y_i]=
\frac{e^{-Y_i}}{\max\{ e^{-Y_i},e^{-\mu+Y_i}\}}=  e^{\mu -2Y_i}\geq e^{-\mu}=\frac{(1-2\pi)^2}{(1-\pi)^2}\ . 
\]

Finally, if $Y_i\leq \mu/2$, then $\P_{Z, Y\sim Q_{1,Z}}[Z_i=0 \:|\:Y_i]=1$.
Arguing similarly for $Q_{2,Z}$, we derive that 
\[
 \P_{Z, Y\sim Q_{1,Z}}[Z_i=0 \:|\:Y_i]+ \P_{Z, Y\sim Q_{2,Z}}[Z_i=0 \:|\:Y_i] \geq 1+ e^{-\mu}
\]
This allows us to derive that 
\[
\E_{Z}[\FDR(Q_{1,Z},R)]+ \E_{Z}[\FDR(Q_{2,Z},R)]\geq (1+e^{-\mu})\P_{Y\sim Q}[|R(Y)|>0]\ .  
\]
By symmetry, we may assume henceforth that 
\beq \label{eq:lower_FDR_Laplace}
\E_{Z}[\FDR(Q_{1,Z},R)]\geq (1+e^{-\mu})\frac{\P_{Y\sim Q}[|R(Y)|>0]}{2}\ .  
\eeq 

\noindent 
{\bf Step 2}: Controlling the behavior of $|\BH^{\star}_{\alpha}|$. 
Under $Q_{1,z}$, the oracle $p$-value $p_i^{\star}$ is simply $2\ol{G}(|Y_i|)$.
Consider any $t\leq e^{-\mu}$. Under the mixture distribution $Q$, we have 
\begin{align} 
\P_{Y\sim Q}[p_i^{\star}\leq t]&=(1-\pi)\left[\int_{-\infty}^{-\ol{G}^{-1}(t/2)} g(x)dx+\int_{\ol{G}^{-1}(t/2)}^{\infty} g(x-\mu)dx\right]\nonumber\\
& = (1-\pi) 
\left[t/2+ e^{\mu} t/2\right] = t\eta(\pi)\ ,\label{equintermlocation}
\end{align}
where
\beq\label{eq:def_eta}
\eta(\pi)= \frac{(1-\pi)}{2}[1+e^{\mu}]=\frac{1-\pi}{2}\left[1 +\frac{(1-\pi)^2}{(1-2\pi)^2}\right]\ . 
\eeq
The function $\eta$ is increasing and is larger than $1$ for  $\pi\in (0,1/2)$ (to see this, we check that $\eta'(\pi)=(\pi/2)(10\pi^2-15\pi+6)/(1-2\pi)^3)$ ). Besides, $\eta$ goes to $+\infty$ when $\pi$ converges to $1/2$. 
In addition, observe that $\pi^{*}_{\alpha}$ given by \eqref{eq2} satisfy $\frac{(1-2\pi^{*}_{\alpha})^2}{(1-\pi^{*}_{\alpha})^2}=\alpha$, so that $\alpha< e^{-\mu}$. This implies $\alpha\eta(\pi)<(1-\pi)< 1$.

Now denote $T^*_\alpha(Y) = \max\left\{ t \in[0,1]\::\: \sum_{i=1}^n \mathds{1}\{p_i^{\star} \leq t\} \geq  n t/\alpha\right\}$ the threshold of $\BH^{\star}_{\alpha}$. We have
\begin{align*}
\P_{Y\sim Q}[|\BH^{\star}_{\alpha}|>0]&= (\alpha/n) \sum_{i=1}^n E_{Y\sim Q}\left[ \frac{\mathds{1}\{p_i^{*}\leq T^*_\alpha(Y)\}}{T^*_\alpha(Y)\vee (\alpha/n)}\right] \\
&=(\alpha/n) \sum_{i=1}^n E_{Y\sim Q}\left[ \frac{\P(p_i^{*}\leq T^*_\alpha(Y^{(i)}) \:\big|\: Y^{(i)})}{T^*_\alpha(Y^{(i)})}\right] = \alpha\eta(\pi)
\end{align*}
where we used  Lemma~\ref{lem:BHYi} (and the notation therein), the independence between $p_i^*$ and $Y^{(i)}$, combined with the fact that $T^*_\alpha(Y^{(i)}) \leq \alpha< e^{-\mu}$ and \eqref{equintermlocation}.

Next, as assumed in the statement of the theorem, let us suppose that $R$ is such that $$\sup_{\substack {P\in \cP_g :  n_1(P)/n\leq \ol{\pi}}}\{\FDR(P,R) - \alpha n_0(P)/n\}\leq 0.$$ 
Then, it follows from~\eqref{eq:lower_FDR_Laplace} and Bernstein inequality that 
\begin{align*}
& \E_{Z}\P_{Y\sim P_{1,Z}}[|R(Y)|>0]\\
&\leq \frac{2}{1+e^{-\mu}}\E_{Z}[\FDR(Q_{1,Z},R)]\\
&\leq   \frac{2}{1+e^{-\mu}} \left[ \E\left[(\alpha n_0(Z)/n)\mathds{1}\{\ol{\pi}- 2n^{-1/3} \leq n_1(Z)/n\leq \ol{\pi}\}\right]\right.\\
&\left.\quad\quad\quad\quad\quad+ \P( |n_1(Z)/n - \pi|> n^{-1/3})\right]\\
&\leq  \frac{2}{1+e^{-\mu}}\alpha(1-\pi)+ 2n^{-1/3}+ 4e^{-c_{\ol{\pi}} n^{1/3} }\\
&\leq  \frac{2}{1+e^{-\mu}}\alpha(1-\pi)+ c'_{\ol{\pi}}n^{-1/3}.
\end{align*} 
Combining the above bounds yields
\begin{align*}
&\E_{Z}\left[\P_{Y\sim P_{1,Z}}[|\BH^{\star}_{\alpha}|>0] - \P_{Y\sim P_{1,Z}}[|R(Y)|>0]\right]\\
&\geq 
\alpha\eta(\pi) - \frac{2}{1+e^{-\mu}}\alpha(1-\pi)- c'_{\ol{\pi}}n^{-1/3}\\
&=  \alpha \zeta(\pi)- c'_{\ol{\pi}} n^{-1/3} \ , 
\end{align*}
where  $\zeta(u)= \frac{1-u}{2}\big[1+\frac{(1-u)^2}{(1- 2 u)^2} - 4/\left(1+\frac{(1-2 u)^2}{(1-u)^2}\right)\big]$, for $u\in(0,1/2)$. Since $1+x> 4/(1+1/x)$ for any $x>1$, it follows that $\zeta(u)>0$ for any $u\in (0,1/2)$. Besides, one can check that $\zeta$ is increasing on $(0,1/2)$.  Since the functions $\eta$ and $\mu$ are continuously differentiable in $\pi$, we conclude that
\[
\E_{Z}\left[\P_{Y\sim P_{1,Z}}[|\BH^{\star}_{\alpha}|>0] - \P_{Y\sim P_{1,Z}}[|R(Y)|>0]\right]\geq  \alpha \zeta(\ol{\pi})- c''_{\ol{\pi}} n^{-1/3} \ , 
\]
 Applying again Bernstein inequality to $n_1(Z)$, we conclude that there exists $P$ with $n_1(P)/n\leq \ol{\pi}$ such that 
\[
 \P_{Y\sim P}[|\BH^{\star}_{\alpha}|>0] - \P_{Y\sim P}[|R(Y)|>0]\geq \alpha \zeta(\ol{\pi})- c'''_{\ol{\pi}} n^{-1/3}\ . 
\]
The result follows.

 \end{proof}

\begin{proof}[Proof of Theorem~\ref{THMBORNESUPgeneral}]

We adapt the proof of Theorem~\ref{THMBORNESUP} (see Sections~\ref{sec:prooftha}~and~\ref{sec:proofthb}) to the location model. 
Following exactly the same proof,  \eqref{eq:upper_FDR_thmgeneral} and \eqref{eq:lower_TDP_thmgeneral} 
hold provided that we modify the four following ingredients: Lemma~\ref{lem:esti}, Lemma~\ref{cor:pvaluei}, Lemma~\ref{lem:forFNR} and Lemma~\ref{lem:indep}. This is done below.

\noindent {\bf Modification of Lemma~\ref{lem:esti}:} 
Arguing  as for Lemma~\ref{lem:esti}, we can prove that the empirical median is close to $\theta(P)$. 
Precisely, there exist constants $c'_1,c'_2,c'_3>0$, only depending on $g$, such that for $n\geq 1$, all $P\in\mathcal{P}$ such that $n_0(P)/n\geq 0.9$, and all $x\in (0,c'_1 n)$,
\begin{align}
\P\left(|\wt{\theta}-\theta(P)|\geq  c'_2  \frac{n_1(P)+2}{n} + c'_3 \sqrt{\frac{ x}{n}}\right) &\leq 2 e^{-x}\label{equ:thetachapgeneral}\ .
\end{align}
The proof is mainly based on the fact $\ol{G}$ and $\ol{G}^{-1}$ are continuously differentiable and therefore locally lipschitz around $0$ and $1/2$. 
Hence, for $x=n^{2/3}$, we get for $n$ large enough, $\P(\Omega^c)\leq 2e^{-n^{2/3}}$ for
$$
\Omega=\left\{|\wt{\theta}-\theta(P)|\leq \delta\right\}, \:\:\delta = c_2  \left(\frac{n_1(P)+2}{n} +  n^{-1/6}\right),
$$
for some constant $c_2$ only depending on $g$. 

\noindent {\bf Modification of Lemma~\ref{cor:pvaluei}:}
with the additional assumption $\eta\leq 1/2$, we easily check that the same result holds under the condition $|\wt{\theta}-\theta(P)| \leq \ol{G}^{-1}(1/4)/2$, which is ensured on $\Omega$ for $n$ larger than some constant (only depending on $g$).
 
\noindent  {\bf Modification of Lemma~\ref{lem:forFNR}:} the following lemma is a modification of Lemma~\ref{lem:forFNR}.

\begin{lem}\label{lem:forFNRlocation} 
For an arbitrary estimator $\wh{\theta}$, let $\delta>0$, $\theta\in \R$, $\alpha\in (0,0.5)$, $t_0\in(0,\alpha)$ and 
$$
 \eta=\delta\max_{t\in [t_0,\alpha]} \left\{ \frac{1}{\ol{G}^{-1}(t/2)- \ol{G}^{-1}(t )}   \frac{g(\ol{G}^{-1}(t))}{g(\ol{G}^{-1}(t/2))}\right\} 
$$
 Assume that  $|\wh{\theta}-\theta|\leq \delta$ and $\eta\leq 0.05$. Then, for all $i\in\{1,\dots,n\}$,
 \begin{itemize}
\item if $T_\alpha(\wh{\theta})\vee (\alpha/n)\geq t_0$, we have
 \begin{align}
\mathds{1}\{p_i(\wh{\theta})\leq T_{\alpha}(\wh{\theta})\} \leq \mathds{1}\{p_i(\theta)\leq (1+\eta)T_{\alpha}(\wh{\theta}) \}\leq \mathds{1}\{p_i(\theta)\leq T_{\alpha(1+\eta)}(\theta)\}\ ;
\label{equ:forFDRlocation}
\end{align}
\item if $T_{0.95\alpha}(\theta)\vee (0.95\alpha/n)\geq t_0$, we have
 \begin{align}
\mathds{1}\{p_i(\theta)\leq T_{\alpha(1-\eta)}(\theta)\}  \leq \mathds{1}\{p_i(\wh{\theta})\leq T_{\alpha}(\wh{\theta})\}\ .
\label{equ:forFNRlocation}
\end{align}
\end{itemize}
\end{lem}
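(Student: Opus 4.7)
The plan is to adapt the proof of Lemma~\ref{lem:forFNR}, replacing the Gaussian-specific quantile estimates by two applications of the mean value theorem tailored to the general symmetric non-increasing density $g$. The heart of the argument is a pair of quantile inequalities, valid for every $t \in [t_0,\alpha]$:
\[
2\, \ol{G}\bigl(\ol{G}^{-1}(t/2) - \delta\bigr) \leq (1+\eta)\,t \quad \text{and} \quad 2\, \ol{G}\bigl(\ol{G}^{-1}(t/2) + \delta\bigr) \geq (1-\eta)\,t.
\]
To prove the upper bound, I first apply the mean value theorem on $\ol{G}$: using that $g$ is non-increasing on $\R_+$ and that $\ol{G}^{-1}(t/2)-\delta \geq \ol{G}^{-1}(t)$ (an auxiliary condition discussed below), this gives $\ol{G}(\ol{G}^{-1}(t/2)-\delta) - t/2 \leq \delta\, g(\ol{G}^{-1}(t))$. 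I then apply the mean value theorem on $[\ol{G}^{-1}(t), \ol{G}^{-1}(t/2)]$ to obtain $g(\ol{G}^{-1}(t/2)) \leq (t/2)/(\ol{G}^{-1}(t/2) - \ol{G}^{-1}(t))$. Combining these bounds with the very definition of $\eta$ produces $2\delta\, g(\ol{G}^{-1}(t)) \leq \eta\, t$. The lower bound is obtained by a symmetric argument. The auxiliary condition $\delta \leq \ol{G}^{-1}(t/2) - \ol{G}^{-1}(t)$ is automatic from $\eta \leq 0.05$, since the factor $g(\ol{G}^{-1}(t))/g(\ol{G}^{-1}(t/2)) \geq 1$ in the definition of $\eta$ forces $\eta \geq \delta/(\ol{G}^{-1}(t/2)-\ol{G}^{-1}(t))$.

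The second step is a translation into pointwise $p$-value bounds. Since $\bigl| |Y_i - \theta| - |Y_i - \wh{\theta}| \bigr| \leq \delta$ and $\ol{G}$ is decreasing, the quantile inequalities imply, for every $t \in [t_0,\alpha]$ and every $i$,
\[
\{p_i(\wh{\theta}) \leq t\} \subset \{p_i(\theta) \leq (1+\eta)t\} \quad \text{and} \quad \{p_i(\theta) \leq t\} \subset \{p_i(\wh{\theta}) \leq (1+\eta)t\}.
\]

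The final step is the standard BH threshold comparison. For~\eqref{equ:forFDRlocation}, I apply the first inclusion at $t = T_\alpha(\wh{\theta}) \vee (\alpha/n) \geq t_0$: the number of indices $j$ with $p_j(\theta) \leq (1+\eta)t$ is at least the number with $p_j(\wh{\theta}) \leq t$, which is at least $nt/\alpha = n(1+\eta)t/(\alpha(1+\eta))$ by definition of $T_\alpha$, hence $(1+\eta)T_\alpha(\wh{\theta}) \leq T_{\alpha(1+\eta)}(\theta)$. For~\eqref{equ:forFNRlocation}, I set $s = T_{\alpha(1-\eta)}(\theta)$, which lies in $[t_0,\alpha]$ thanks to $T_{0.95\alpha}(\theta) \vee (0.95\alpha/n) \geq t_0$ and $1-\eta \geq 0.95$, use the second inclusion to produce at least $ns/(\alpha(1-\eta)) \geq n(1+\eta)s/\alpha$ indices with $p_j(\wh{\theta}) \leq (1+\eta)s$, and conclude $(1+\eta)s \leq T_\alpha(\wh{\theta})$ via $1/(1-\eta) \geq 1+\eta$.

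The main obstacle I anticipate is the bookkeeping of the quantile geometry: one must verify that a single definition of $\eta$ as a maximum over $t \in [t_0,\alpha]$ of the product of the quantile-gap factor $1/(\ol{G}^{-1}(t/2) - \ol{G}^{-1}(t))$ and the density-ratio factor $g(\ol{G}^{-1}(t))/g(\ol{G}^{-1}(t/2))$ simultaneously controls the pointwise $p$-value comparison and validates the mean value argument, all under the uniform smallness condition $\eta \leq 0.05$.
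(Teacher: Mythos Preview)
Your proposal is correct and follows essentially the same route as the paper. The paper's proof (embedded in the proof of Theorem~\ref{THMBORNESUPgeneral}) introduces $\eta'=\max_{t\in[t_0,\alpha]}\{(\ol{G}(\ol{G}^{-1}(t/2)-\delta)-t/2)/(t/2)\}$ and shows $\eta'\leq\eta$ via the same two mean value theorem applications you describe (bounding $\ol{G}(\ol{G}^{-1}(t/2)-\delta)-t/2\leq\delta\,g(\ol{G}^{-1}(t))$ using the auxiliary condition $\delta\leq\ol{G}^{-1}(t/2)-\ol{G}^{-1}(t)$, and bounding $\ol{G}^{-1}(t/2)-\ol{G}^{-1}(t)\leq(t/2)/g(\ol{G}^{-1}(t/2))$), then defers the threshold comparison to the strategy of Lemma~\ref{lem:forFNR}, exactly as you do. Two minor remarks: your second quantile inequality $2\ol{G}(\ol{G}^{-1}(t/2)+\delta)\geq(1-\eta)t$ is not actually needed, since your second $p$-value inclusion follows from the first by swapping the roles of $\theta$ and $\wh{\theta}$; and in the proof of~\eqref{equ:forFNRlocation} you should handle the trivial case $T_{\alpha(1-\eta)}(\theta)=0$ explicitly before asserting $s\in[t_0,\alpha]$.
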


To prove Lemma~\ref{lem:forFNRlocation}, we follow the same strategy as in Section~\ref{sec:prooflem:forFNR}. 
For all $u,u'\in \R$, $i\in\{1,\dots,n\}$, $\alpha\in (0,0.5)$, $t_0\in(0,\alpha)$, for all $t\in [t_0,\alpha]$, we have
\begin{eqnarray*}
\mathds{1}\{ p_i(u')\leq t\} &= &\mathds{1}\{ 2\ol{G}(|Y_i-u'|)\leq t\}  \\
& \leq &\mathds{1}\{ 2\ol{G}(|Y_i-u| +|u-u'|)\leq t\}  \\
&&= \mathds{1}\{ 2\ol{G}(|Y_i-u|)\leq 2\ol{G}( \ol{G}^{-1}(t/2) - |u-u'|)\}\\
&\leq& \mathds{1}\{ 2\ol{G}(|Y_i-u|)\leq t(1+ \eta')\},
\end{eqnarray*}
for 
$$
\eta'= \max_{t\in [t_0,\alpha]} \left\{\frac{\ol{G}( \ol{G}^{-1}(t/2) - \delta) - t/2}{t/2}\right\},
$$
provided that $|u-u'|\leq \delta$.
Now, if $\eta\leq 0.05$, we can prove that 
\begin{equation}\label{condbornsuplocation2}
\eta'\leq \eta. 
\end{equation}
Indeed,
$\eta\leq 1$ implies that the following inequality holds 
\begin{equation}\label{condbornsuplocation}
 \delta\leq \min_{t\in [t_0,\alpha]} \left\{\ol{G}^{-1}(t/2)- \ol{G}^{-1}(t )\right\},
\end{equation}
Furthermore, since $\delta\leq \ol{G}^{-1}(\alpha/2)$, for all $t\in [t_0,\alpha]$,
\begin{align*}
 \ol{G}(\ol{G}^{-1}(t/2)-\delta) -t/2
&\leq \delta \max_{u\in [\ol{G}^{-1}(t/2)-\delta;\ol{G}^{-1}(t/2)]} \{g(u)\}\\
&= \delta g(\ol{G}^{-1}(t/2)-\delta)\leq \delta g(\ol{G}^{-1}(t)),
\end{align*}
 by the monotonic property of $g$ and \eqref{condbornsuplocation}. Similarly, for all $t\in [t_0,\alpha]$,
 $$
 \ol{G}^{-1}(t/2)- \ol{G}^{-1}(t) \leq \frac{t/2}{ g(\ol{G}^{-1}(t/2))}. 
 $$
Combining these inequalities leads to  \eqref{condbornsuplocation2}.
In turn, \eqref{equ:forFDRlocation} and \eqref{equ:forFNRlocation} hold provided that $|\wh{\theta}-\theta|\leq \delta$.

\noindent {\bf  Modification of  Lemma~\ref{lem:indep}:} The same results holds because $g$ is symmetric.

\end{proof}

\begin{proof}[Proof of Corollaries~\ref{cor:subottin} and \ref{cor:laplace}]

First, we assume that $\zeta>1$. Define $g(x)=L^{-1}_\zeta \:e^{-|x|^\zeta/\zeta}$, $\zeta>1$, with the normalization constant $L_\zeta=2 \Gamma(1/\zeta)\zeta^{1/\zeta-1}$. In that case, we have (see Lemma~S-5.1 in the supplement of \cite{NR2012}),
\begin{align}
\forall q\in (0,\ol{G}(1)),\:\: \ol{G}^{-1}(q)&\leq \left(-\zeta \log q -\zeta \log L_\zeta\right)^{1/\zeta} \label{eq:1};\\
\forall y>0, \:\: \frac{g(y)}{y^{\zeta-1}}\geq \ol{G}(y)&\geq \frac{g(y)}{y^{\zeta-1}}\frac{y^{\zeta}}{y^{\zeta}+\zeta-1} .\nonumber
\end{align}
The last inequality (used with $y=\ol{G}^{-1}(q)$) implies that 
\beq\label{eq:2}
 [\ol{G}^{-1}(q)]^{\zeta}+  \zeta \log(q)  +\zeta \log L_\zeta+   \zeta(\zeta-1) \log(\ol{G}^{-1}(q)) \in \left[\zeta\log\left[\frac{(\ol{G}^{-1}(q))^{\zeta}}{(\ol{G}^{-1}(q))^{\zeta}+ \zeta-1}\right];0\right]\ ,
\eeq
and therefore that $[\ol{G}^{-1}(t)]\sim  [\zeta\log(1/t)]^{1/\zeta}$ for $t$ going to zero. 
As a consequence, for $t$ small enough,  we have 
\beq\label{eq:difference_quantile_t2}
{ \ol{G}^{-1}(t/2)- \ol{G}^{-1}(t)=\zeta^{-1} [\ol{G}^{-1}(t')]^{1-\zeta}\left[\ol{G}^{-1}(t/2)]^{\zeta}- [\ol{G}^{-1}(t)]^{\zeta}\right]\ , }
\eeq
where $t'\in [t/2;t]$. In view of \eqref{eq:2}, this is of the order $\log^{{1/\zeta}-1}(1/t)$. Besides, \eqref{eq:2} implies that $g(\ol{G}^{-1}(t/2))/g(\ol{G}^{-1}(t)$ is bounded away from $0$. Since $[\ol{G}^{-1}(t/2)]- [\ol{G}^{-1}(t)]$ is bounded away from zero for large $t$, we deduce that 
$\eta$ in~\eqref{relationGbar} is of the order 
\[
 \left(\frac{k_n}{n}\vee n^{-1/6}\right) \log^{{1- 1/\zeta}}(n) \ . 
\]
Hence, $\eta$ goes to $0$ when $k_n\ll n / \log^{{1- 1/\zeta}}(n) $ and we deduce from Theorem~\ref{THMBORNESUPgeneral} that the scaling $\widehat{\theta}$ is asymptotically optimal. Conversely, for $k_n\gg n / \log^{{1- 1/\zeta}}(n)$, Condition~\eqref{eq:condition_scaling} is satisfied and we deduce from Theorem~\ref{thm:lower_location} that optimal scaling is impossible 

\medskip

 Let us turn to the case $\zeta=1$ (Laplace distribution). 
 In that case,  $g(x)=0.5 \:e^{-|x|}$; $\ol{G}(y)=0.5 e^{-y}$ for $y\geq 0$; $\ol{G}^{-1}(q)=-\log( 2q)$ for $q\in[0,1/2]$. Hence, 
 $\eta$ in~\eqref{relationGbar} satisfies
 \[
  \eta= c_2(g) \left(\frac{k_n}{n}\vee n^{-1/6}\right)\frac{2}{\log(2)}\ . 
 \]
Hence, we deduce from Theorem~\ref{THMBORNESUPgeneral} that that the scaling $\widehat{\theta}$ is asymptotically optimal as long as $k_n\ll n$.

\end{proof}

\subsection{Proof of Theorem~\ref{thmconfidence}}

%\begin{proof}
Since the result is trivial if $\mathcal{F}_{0,k}(P)$ is empty, one can fix $F_0\in \mathcal{F}_{0,k}(P)$.
Let us denote 
$$
\wh{F}_{0}(y)=n_0^{-1}\sum_{i\in \cH_0} \ind\{ Y_i\leq y \},\:\: %\wh{F}_{1}(y)=(n-n_0)^{-1}\sum_{i\notin \cH_0} \ind\{ Y_i\leq y \},\:\:
y\in\R,
$$
where $\cH_0=\{i\in\{1,\dots,n\}\::\: F_i=F_0\}$, $n_0=|\cH_0|$, which is larger than or equal to $n-k$ by assumption.
By using the DKW inequality \cite{Mass1990}, on an event with probability at least $1-\alpha$, we have for all $y\in\R$, 
$${F}_{0}(y)-d\leq  \wh{F}_0(y) \leq {F}_{0}(y)+d,$$
where $d=\{-\log(\alpha/2)/(2(n-k))\}^{1/2}$.
Now, since %for all $y\in\R$, 
$$
\wh{F}_n = (1-k/n) \: \wh{F}_0 + (k/n) \: \wh{F}_{1},
% (\wh{F}_n(y) - (n_0/n) \wh{F}_1(y))/(n_0/n)\leq \wh{F}_1(y) \leq (\wh{F}_n(y) - (n_0/n) \wh{F}_1(y))/(n_0/n)
$$
for some c.d.f. $\wh{F}_{1}$, we have for all $y\in \R$,
$$
\frac{\wh{F}_n(y)-(1-k/n) F_0(y)-(1-k/n) d}{ k/n } \leq \wh{F}_{1}(y)\leq \frac{\wh{F}_n(y)-(1-k/n) F_0(y)+(1-k/n) d}{ k/n }.
$$
Using the monotonicity of $\wh{F}_1$, this gives for all $y\in\R$,
\begin{align*}
&0\vee \frac{\sup_{x\leq y}\{\wh{F}_n(x)-(1-k/n) F_0(x)\}-c_{n,\alpha}}{ k/n }\\
&\leq \wh{F}_1(y)\leq 1\wedge \frac{\inf_{x\geq y}\{\wh{F}_n(x)-(1-k/n) F_0(x)\}+c_{n,\alpha}}{ k/n },
\end{align*}
where we used $c_{n,\alpha}=(1-k/n) d %= (1-k/n)  \{-\log(\alpha/2)/(2n(1-k/n))\}^{1/2} = \{-(1-k/n)\log(\alpha/2)/n\}^{1/2}
$.
Since, for all $y\in [Y_{(\l)},Y_{(\l+1)})$, we have $\sup_{Y_{(\l)}\leq x\leq y}\{\wh{F}_n(x)-(1-k/n) F_0(x)\}=\l/n-(1-k/n)F_0(Y_{(\l)})$ and $\inf_{y\leq x<Y_{(\l+1)}}\{\wh{F}_n(x)-(1-k/n) F_0(x)\}=\l/n-(1-k/n)F_0(Y_{(\l+1)}^-)$, the result follows.
%Now, we have for any c.d.f. $F_1$,
%$$
%(1-\pi_1) F_0 + \pi_1 F_1=(1-\ol{\pi}) F_0 + \ol{\pi} F'_1 ,
%$$
%for $F_1'=\pi_1/\ol{\pi} F_1 + (1-\pi_1/\ol{\pi}) F_0$, which is a c.d.f. because $\pi_1/\ol{\pi}\leq 1$. As a result, Hence, there exists a c.d.f. $F_1$ such that for all $y\in\R$, $\wh{F}_n(y)-c_{n,\alpha}\leq (1-\ol{\pi}) F_0(y) + \ol{\pi} F_1(y)\leq \wh{F}_n(y)+c_{n,\alpha}$, that is, by using the monotonicity of $F_1$, for all $y\in\R$,
%\begin{align*}
%&0\vee \frac{\sup_{x\leq y}\{\wh{F}_n(x)-(1-\ol{\pi}) F_0(x)\}-c_{n,\alpha}}{ \ol{\pi} }\\
%&\leq F_1(y)\leq 1\wedge \frac{\inf_{x\geq y}\{\wh{F}_n(x)-(1-\ol{\pi}) F_0(x)\}+c_{n,\alpha}}{ \ol{\pi} }
%\end{align*}
%Since, for all $y\in [Y_{(k)},Y_{(k+1)})$, we have $\sup_{x\leq y}\{\wh{F}_n(x)-(1-\ol{\pi}) F_0(x)\}=\max_{\l\leq k}\left\{\l/n-(1-\ol{\pi})F_0(Y_{(\l)})\right\}$ and $\inf_{x\geq y}\{\wh{F}_n(x)-(1-\ol{\pi}) F_0(x)\}=\min_{\l\geq k}\left\{\l/n-(1-\ol{\pi})F_0(Y_{(\l+1)})\right\}$, the result follows.
%\end{proof}

\section{Technical lemmas for Theorem~\ref{THM:LOWER_GENERAL}}\label{sec:proof:lower_general}

\begin{proof}[Proof of Lemma~\ref{lem:sigma_2}]
We deduce from the definition of $f_1$ that 
\beqn 
\frac{\pi_1}{2}= \pi_1 \frac{\int f_1(u)du}{2}&=&(1-\pi_2)\overline{\Phi}(t_0/\sigma_2) - (1-\pi_1)\overline{\Phi}(t_0) \\
&=&(1-\pi_2)\left(\overline{\Phi}(t_0/\sigma_2)-\overline{\Phi}(t_0)\right) - (\pi_2-\pi_1)\overline{\Phi}(t_0) \\
&\leq & (1-\pi_2)\phi(t_0/\sigma_2)t_0\frac{\sigma_2-1}{\sigma_2}
=  (1-\pi_1)\phi(t_0)t_0(\sigma_2-1)
\eeqn 
where we used the definition of $t_0$ in the last line. Let $c'_0\in (0,1)$ be an absolute constant that will be fixed later. 
We prove the first result by contradiction.  Assume that 
\beq\label{eq:lower-sigma_2}
\sigma_2-1\leq c'_0\pi_2 [1+\log( \pi_2/\pi_1)]^{-1}\leq 1/4\ ,
\eeq 
which, in view of the previous inequality, implies
\beq\label{eq:condition_lower_t0}
 t_0\phi(t_0)\geq \frac{\pi_1[1+\log( \pi_2/\pi_1)]}{2c'_0\pi_2(1-\pi_1)}\ . 
\eeq

\noindent 
{\bf Case 1}: $\pi_2\leq 2\pi_1$. Then, \eqref{eq:condition_lower_t0} implies that $t_0\phi(t_0)\geq (4c'_0)^{-1}(1+\log(2))$, because $x\in[0,1]\mapsto x(1+\log(1/x))$ is nondecreasing. Since $x\phi(x)\leq (2\pi)^{-1/2}e^{-1/2}$ for any $x\in \mathbb{R}$, this last inequality cannot hold for $c'_0$ sufficiently small and we have therefore
\[
 \sigma_2 -1 \geq c'_0\pi_2 [1+\log( \pi_2/\pi_1)]^{-1}\ . 
\]

\bigskip

\noindent
{\bf Case 2}: $\pi_2> 2\pi_1$.  We deduce from~\eqref{eq:lower-sigma_2}, the definition \eqref{eq:definition_t0} of $t_0$, $\log(1+x)\geq x/(1+x)$ for $x\geq 0$, and $\sigma_2\geq 1$  that 
\beqn 
t_0 & = & \sqrt{\frac{2\log\left(\frac{1-\pi_1}{1-\pi_2}\right)\sigma_2^2}{\sigma_2^2-1}+ \frac{2\log(\sigma_2)\sigma_2^2}{\sigma_2^2-1}}\\
&\geq& \sqrt{\frac{2[1+\log( \pi_2/\pi_1)]\log(\frac{1-\pi_1}{1-\pi_2})\sigma^2_2 }{c'_0\pi_2(\sigma_2+1)}+  \frac{2\sigma_2}{\sigma_2+1}} \\
&\geq &  \sqrt{\frac{2[1+\log( \pi_2/\pi_1)](\pi_2-\pi_1)}{c'_0\pi_2(1-\pi_1)}+ 1}\\
&\geq & \sqrt{\frac{1+\log( \pi_2/\pi_1)}{c'_0}+ 1}\ .
\eeqn
Recall that $x\phi(x)$ is decreasing for $x\geq 1$. 
Provided that we chose $c'_0\leq 1/2$, we have therefore 
$\phi(t_0)t_0\leq \frac{\pi_1}{\pi_2}\sqrt{\frac{1+\log( \pi_2/\pi_1)}{c'_0}+1}$ which contradicts \eqref{eq:condition_lower_t0} provided that $c'_0$ is small enough (independently of $\pi_1$ and $\pi_2$). As in Case 1, we conclude that 
\[
 \sigma_2-1\geq c'_0\pi_2 [1+\log( \pi_2/\pi_1)]^{-1}\ . 
\] 
\medskip 

Let us turn to the second part of the lemma. By concavity, we have  $\log(1+x)/x\in [1/(1+x),1]$ for any $x>0$. From~\eqref{eq:definition_t0}, $\pi_1\leq 1/4$, and the last bound of $\sigma_2-1$, we deduce that 
\beqn 
  \frac{t_0}{\sigma_2}&\leq& \sqrt{ \frac{2\log(\sigma_2)}{\sigma_2^2-1}+ \frac{2\log\left(\frac{1-\pi_1}{1-\pi_2}\right)}{\sigma_2^2-1}}
  \\
  &\leq & 
  1+ \sqrt{\frac{2(\pi_2-\pi_1)}{(1-\pi_1)(\sigma_2^2-1)}}\leq 1+ \sqrt{\frac{4 (\pi_2-\pi_1)}{3(\sigma_2-1)}}
  \\
&  \leq &1+ \sqrt{\frac{4 [1+\log( \pi_2/\pi_1)](\pi_2-\pi_1)}{3c'_0\pi_2}}
\leq  1+ \sqrt{\frac{4 [1+\log( \pi_2/\pi_1)]}{3c'_0}}\ .
\eeqn 
Provided that $\log( \pi_2/\pi_1)\leq c^{''}_0 \log(n)$, we have $t_0/\sigma_2\leq \sqrt{0.5\log(n)}$ for $n$ large enough and $c^{''}_0$ sufficiently small. Hence, we derive from Lemma~\ref{lem:quantile} that 
\[
 \overline{\Phi}(t_0/\sigma_2)\geq \frac{\sqrt{0.5\log(n)}}{1+0.5\log(n)}\phi(\sqrt{0.5\log(n)})\geq   (4\pi \log n)^{-1/2} n^{-1/4}\geq   10\sqrt{2\log(2n)/n}\ ,
\]
for $n$ large enough.
\end{proof}

\begin{proof}[Proof of Lemma \ref{lem:t1}]
We check that $u_1\leq  \sqrt{\log(n)}$. 
From the definition~\eqref{eq:t1_obj} of $u_1$, $\pi_1\leq \pi_2\leq 1/4$, and Lemma~\ref{lem:sigma_2}, we deduce that 
\beqn 
u_1^2 &\leq &  2\frac{\sigma^2_2}{\sigma_2^2 -1}\log(\sigma_2) + 2\frac{\sigma_2^2}{\sigma_2^2-1}\log\left(\frac{9}{\alpha(1-\pi_2)}\right)\\ 
&\leq& 2\sigma_2 \left[1+ \frac{1+ \log(\pi_2/\pi_1)}{c_0 \pi_2}\log\left(\frac{12}{\alpha}\right)\right]\ , 
\eeqn 
where we used that  $\pi_2\leq 1/4$. Besides, we have shown above \eqref{eq:definition_t0} that $\sigma_2\leq c_1$ for some universal constant $c_1$. All in all, we have proved that 
\[
 u_1^2\leq  c'_1 + c'_2 \frac{1+ \log(\pi_2/\pi_1)}{c_0 \pi_2}\log\left(\frac{68}{3\alpha}\right)\ , 
\]
which, by assumption, is smaller than $\log n$. The result follows. 
\end{proof}

\section{Remaining proofs for Theorems~\ref{THMBORNESUP} and \ref{THM:THETACHAPFORFDR}}\label{sec:remaining}

\subsection{Estimation of $\theta(P)$, $\sigma(P)$}\label{sec:estimationrate}

The following results are close to those of \cite{chen2018robust} in dimension $1$. The setting here is slightly different, because we are not considering a mixture model, so we provide a proof for completeness.

\begin{lem}\label{lem:esti}
Consider the estimators defined by \eqref{equ:estimators}. Then there exists a constant $c>0$ such that for $n\geq 16$, all $P\in\mathcal{P}$ such that $n_0(P)/n\geq 0.9$, and all $x\in (0,c n)$,
\begin{align}
\P\left(\frac{|\wt{\theta}-\theta(P)|}{\sigma}\geq  2 \frac{n_1(P)+2}{n} + 5\sqrt{\frac{ x}{n}}\right) &\leq 2 e^{-x}\label{equ:thetachap};\\
\P\left(\frac{|\wt{\sigma}-\sigma(P)|}{\sigma }\geq   6\frac{n_1(P)+2}{n}+16\sqrt{\frac{ x}{n}} \right) &\leq 4e^{-x} \label{equ:sigmachap}.
\end{align}

\end{lem}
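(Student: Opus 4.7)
By translation and scaling invariance of both the sample median and the (rescaled) MAD, it suffices to treat the case $\theta(P)=0$ and $\sigma(P)=1$; we write $n_1=n_1(P)$ and $n_0=n-n_1$. The overall strategy is classical: express ``$\wt{\theta}\geq t$'' and ``$\wt{\sigma}\geq s$'' as under/overcounting statements about $\sum_i \mathds{1}\{Y_i\leq t\}$ and $\sum_i \mathds{1}\{U_i\leq s\}$, bound these sums by restricting to the null indices $\cH_0(P)$, and use binomial concentration together with the linearization of the Gaussian c.d.f.\ around the targeted quantile to convert a deviation in probability into a deviation in the quantile.

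For the median, note that $\wt{\theta}> t$ is equivalent to $\sum_{i=1}^n\mathds{1}\{Y_i\leq t\}< \lceil n/2\rceil$. Restricting to $\cH_0(P)$, I would use Bernstein's (or Hoeffding's) inequality applied to the i.i.d. Bernoulli$(\Phi(t))$ variables $\{\mathds{1}\{Y_i\leq t\}\}_{i\in\cH_0(P)}$ to obtain, with probability at least $1-e^{-x}$,
\[
\sum_{i\in\cH_0(P)}\mathds{1}\{Y_i\leq t\}\geq n_0\Phi(t)-\sqrt{2n_0\Phi(t)(1-\Phi(t))\,x}-x/3.
\]
The event $\wt{\theta}> t$ then forces $n_0\Phi(t)\leq \lceil n/2\rceil + n_1 + \sqrt{n x/2}+x/3$, i.e.\ $\Phi(t)-1/2\leq (n_1+2)/n + c\sqrt{x/n}$. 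Using the elementary bound $\Phi(t)-1/2\geq t\phi(t_0)\geq t/(2\sqrt{2\pi})$ for $t\in[0,t_0]$ (say $t_0$ a fixed small constant), one inverts this to obtain $t\leq 2(n_1+2)/n + 5\sqrt{x/n}$, provided $x\leq cn$ so that the right-hand side stays in the linearization range. A symmetric argument for $\wt{\theta}<-t$ and a union bound give~\eqref{equ:thetachap} with the factor $2$ on the probability.

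For the MAD bound, work on the event $\cE=\{|\wt{\theta}|\leq \delta\}$ provided by~\eqref{equ:thetachap} with $\delta=2(n_1+2)/n+5\sqrt{x/n}$. By the triangle inequality, for every $i$, $|Y_i|-\delta\leq U_i\leq |Y_i|+\delta$, hence, for any $s>0$,
\[
\sum_{i\in\cH_0(P)}\mathds{1}\{|Y_i|\leq s-\delta\}\leq \sum_{i=1}^n\mathds{1}\{U_i\leq s\}\leq n_1 + \sum_{i\in\cH_0(P)}\mathds{1}\{|Y_i|\leq s+\delta\}.
\]
Since for $i\in\cH_0(P)$ the random variables $\mathds{1}\{|Y_i|\leq u\}$ are i.i.d.\ Bernoulli with parameter $G(u):=2\Phi(u)-1$, applying Bernstein's inequality on both sides (and a union bound with the event used for the median) yields, with probability at least $1-4e^{-x}$, a two-sided comparison of $(n_0/n)G(s\pm\delta)$ to $1/2$. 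Writing $u^\star=\ol{\Phi}^{-1}(1/4)=G^{-1}(1/2)$ and using that $G$ is bi-Lipschitz around $u^\star$ (with derivative $2\phi(u^\star)>0$ of order a constant), this translates into
\[
|\wt{\sigma}-1|=|U_{(\lceil n/2\rceil)}-u^\star|/u^\star\;\leq\; C\bigl(\delta+(n_1+2)/n+\sqrt{x/n}\bigr),
\]
and plugging in the value of $\delta$ produces~\eqref{equ:sigmachap} with the constants claimed, after tracking the absolute constants.

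The main obstacle is bookkeeping rather than a deep difficulty: one must (i) keep the arguments $s\pm\delta$ and $\Phi^{-1}(3/4)\pm \text{error}$ inside the range where the linearizations of $\Phi$ and $G$ are valid, which is why the assumption $x\leq cn$ is needed; (ii) carefully union-bound over the events controlling $\wt{\theta}$ and the two one-sided counts used in the MAD argument, so that the final probability remains $\leq 4e^{-x}$; and (iii) make sure that the $n_1/n$ slack introduced when replacing the full sum by the sum over $\cH_0(P)$ appears only once on each side, which is what produces the constant $6$ in front of $(n_1+2)/n$ in~\eqref{equ:sigmachap}.
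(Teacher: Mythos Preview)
Your proposal is correct and follows essentially the same approach as the paper. The paper packages the ``Bernstein on the binomial count $+$ linearization of $\Phi$ near the target quantile'' step into standalone order-statistic deviation lemmas (Lemmas~\ref{lem:quantile_empirique} and~\ref{lem:quantile_empirique2}) and then applies them to $\xi_{(\lceil n/2\rceil:\cH_0)}$ and $|\xi|_{(\lceil n/2\rceil:\cH_0)}$, whereas you carry out the same binomial tail bound and inversion by hand; for the MAD, the paper sandwiches at the order-statistic level via $\big||\xi_i - \xi_{(\lceil n/2\rceil)}| - |\xi_i|\big|\leq |\xi_{(\lceil n/2\rceil)}|$, while you sandwich pointwise on the event $\{|\wt{\theta}|\leq \delta\}$, but both routes produce the same $\delta + (n_1+2)/n + \sqrt{x/n}$ error and the same final constants after linearizing $u\mapsto 2\Phi(u)-1$ around $u^\star=\ol{\Phi}^{-1}(1/4)$.
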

\begin{proof}[Proof of Lemma~\ref{lem:esti}]

%\nv{A reverifier, j'ai beaucoup change les constantes}
Let $\xi = (Y-\theta)/\sigma$, so that $\xi_i$, $i\in\cH_0$ are i.i.d. $\mathcal{N}(0,1)$. Let $T=\frac{\sqrt{18\lceil n/2\rceil x}}{n_0} + 2 \frac{n_1+2}{n}$. Since $n_0\geq 0.9n$, to prove \eqref{equ:thetachap}, it suffices to show that 
we prove $|\wt{\theta}- \theta|\geq \sigma T$ with probability smaller than $2e^{-x}$.
We have
\begin{align}
\P\left(\wt{\theta}- \theta\geq \sigma T\right)&=
\P\left(\frac{Y_{(\lceil n/2\rceil)}-\theta}{\sigma} \geq  T \right)=  
\P\left(\xi_{(\lceil n/2\rceil)}\geq  T\right)\nonumber\\
& \leq  \P\left(\xi_{( \lceil n/2\rceil:\cH_0)}\geq  T\right),\label{equinterm}
\end{align}
Note that $0.3 < 0.5\leq \lceil n/2\rceil/n_0 \leq  (n/2+1)/(0.9n)< 0.7 $ by assumption.
Hence, from Lemma~\ref{lem:difference_quantile}, we have 
$$
0\leq - \overline{\Phi}^{-1}(\lceil n/2\rceil/n_0) \leq 3.6 (\lceil n/2\rceil/n_0-1/2)\leq 1.8 \frac{n+2-n_0}{n_0}\leq 
2 \frac{n_1+2}{n}.
$$
Using this and applying Lemma~\ref{lem:quantile_empirique}, we get that for $x\leq c n $ (for some constant $c>0$) the rhs  in \eqref{equinterm} is bounded by
\begin{eqnarray} \nonumber
\P\left(\xi_{( \lceil n/2\rceil:\cH_0)}\geq  T\right) &\leq & \P\Big[\xi_{(\lceil n/2\rceil:\cH_0)}+ \overline{\Phi}^{-1}(\lceil n/2\rceil/n_0) \geq   \sqrt{18 x/n} \Big]\\
&\leq& \P\Big[\xi_{(\lceil n/2\rceil:\cH_0)}+ \overline{\Phi}^{-1}(\lceil n/2\rceil/n_0) \geq  3\frac{\sqrt{2 \lceil n/2\rceil x}}{n_0}\Big]
\leq  e^{-x}\ .\label{eq:upper_1_esti}
 \end{eqnarray}
This gives for all $x\in (0,c n)$, $\P(\wt{\theta} - \theta\geq \sigma T) \leq e^{-x}$.
Conversely, we have
\begin{eqnarray*}
    \P\left(\theta- \wt{\theta}\geq \sigma T\right)&=& \P\left((- \xi)_{(\lceil n/2\rceil)} \geq  T\right) \nonumber \\
&\leq&  \P\left((-\xi)_{(\lceil n/2\rceil:\cH_0)} >    T\right)=  \P\left(\xi_{(\lceil n/2\rceil:\cH_0)} >    T\right)\ ,
\end{eqnarray*}
by symmetry of the Gaussian distribution.  Bounding again $\P(\xi_{(\lceil n/2\rceil:\cH_0)} >    T)$, we obtain  \eqref{equ:thetachap}.

\medskip 

Let us now prove \eqref{equ:sigmachap}.  Let $u_0=\ol{\Phi}^{-1}(1/4)\in (0.6,0.7)$ and $ T'=  (1+n_1)/n +   \frac{\sqrt{8(n+1) x}}{n_0}$. Since $n_0\leq 0.9n$, we only have to prove that, with probability higher than $1-4e^{-x}$, we have $|\sigma-\wt{\sigma}|\geq \sigma \left(2T+2T'\right)$. 
By Definition \eqref{equ:estimators} of $\tilde{\sigma}$, we have 
\begin{align}
\P\left(|\sigma-\wt{\sigma}|\geq \sigma \left(2T+2T'\right)\right)&=
\P\left(\frac{ |u_0- U_{(\lceil n/2\rceil)}|/\sigma }{ u_0} \geq  2T+2T'\right)\nonumber.
\end{align}
Since $|\xi_i|- |\xi_{(\lceil n/2\rceil)}| \leq |\xi_i - \xi_{(\lceil n/2\rceil)}|\leq |\xi_i|+ |\xi_{(\lceil n/2\rceil)}|$, we have
$|U_{(\lceil n/2\rceil)}|/\sigma|- |\xi|_{(n/2)}\in [-  |\xi_{(\lceil n/2\rceil)}|;  |\xi_{(\lceil n/2\rceil)}|]$. 
 Thus, we have
\begin{eqnarray} \nonumber
  \P\left(|\sigma-\wt{\sigma}|\geq \sigma \left(2T+2T'\right)\right)&\leq & \P\left( \frac{|\xi_{(\lceil n/2\rceil)}|}{u_0} \geq 2T\right)+ \P\left(\left|\frac{ u_0-|\xi|_{(\lceil n/2\rceil)}}{ u_0}\right|  \geq 2T'\right)\\
 &\leq & 2e^{-x}  
+ \P\left(|u_0-|\xi|_{(\lceil n/2\rceil)}|  \geq T'\right)\ ,\label{eq:upper_deviation_sigma1}
\end{eqnarray}
 where we used~\eqref{eq:upper_1_esti} and $2 u_0 \geq 1$. Since 
 \[
 |\xi|_{(n_0-n + \lceil  n/2\rceil :\cH_0)} = -(-|\xi|)_{(n+1- \lceil n/2\rceil: \cH_0)}\leq |\xi|_{(\lceil n/2\rceil)}\leq |\xi|_{(\lceil n/2\rceil: \cH_0)} 
 \]
 we have 
 \beqn 
 \P\left(|u_0-|\xi|_{(\lceil n/2\rceil)}|  \geq T'\right)&\leq & \P\left(|\xi|_{(\lceil n/2\rceil: \cH_0)}\geq u_0+T'\right)+ \P\left(|\xi|_{n_0-n + \lceil  n/2\rceil :\cH_0}\leq u_0-T'\right)\ . 
 \eeqn 
We now apply Lemma~\ref{lem:quantile_empirique2} to control the deviations of these order statistics. We easily check that $0.4n_0\leq \lfloor n/2\rfloor \leq \lceil n/2 \rceil  \leq  0.6 n_0$. 
Hence, for all  $x\leq cn$(for $c$ small enough), we have
\beqn 
 \P\left[|\xi|_{(\lceil n/2\rceil: \cH_0)}\geq  \overline{\Phi}^{-1}\left(\frac{n_0- \lceil n/2 \rceil}{2n_0} \right) + 4\frac{\sqrt{\lceil n/2\rceil x}}{n_0}\right]\leq e^{-x} \ ; \\
  \P\left[|\xi|_{(n_0-n + \lceil  n/2\rceil :\cH_0)}\leq  \overline{\Phi}^{-1}\left(\frac{n- \lceil n/2 \rceil}{2n_0} \right) - 2\frac{\sqrt{2(n_0-n +\lceil n/2\rceil ) x}}{n_0}\right]\leq e^{-x}\ . 
\eeqn 
It remains to compare these two rhs expression with $T'$. By Lemma~\ref{lem:difference_quantile} and since $n_0\geq 0.9 n$, both  $|\overline{\Phi}^{-1}\left(\frac{n_0- \lceil n/2 \rceil}{2n_0} \right)- \overline{\Phi}^{-1}(1/4)|$ and $|\overline{\Phi}^{-1}\left(\frac{n- \lceil n/2 \rceil}{2n_0} \right)- \overline{\Phi}^{-1}(1/4)|$ are less or equal $(n_1+1)/n$. The deviation terms in the above deviation inequalities are also smaller than $\sqrt{6(n+1)x}/n_0$. This concludes the proof.

\end{proof}

\subsection{Proof of Lemma~\ref{cor:pvaluei}}\label{sec:prooflemmacor:pvaluei}

We start with two lemmas. The first one ensures that $\wt{\theta}$ and $\wt{\sigma}$ are not perturbed when $p_i(\wt{\theta},\wt{\sigma})$ is small. The second one compares the thresholds of plug-in BH procedures based on $Y$ and $Y^{(i)}$. 

\begin{lem}\label{lem:dropi}
For any $i\in\{1,\dots,n\}$, and $t\in (0,0.5)$, if $p_i(\wt{\theta},\wt{\sigma}) \leq t$ and $|\wt{\theta}-\theta|<0.3 \:\wt{\sigma}$, then we have both $\wt{\theta}^{(i)}=\wt{\theta}$ and
$ \wt{\sigma}^{(i)}=\wt{\sigma}$.
\end{lem}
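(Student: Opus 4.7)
The plan is to unfold the three relevant definitions and then to use two order-statistic manipulations, one for $\wt{\theta}^{(i)}$ and one for $\wt{\sigma}^{(i)}$. The key quantitative input will be $\ol{\Phi}^{-1}(1/4) \approx 0.674$, which exceeds $0.3$; combined with $t<1/2$, this will force $|Y_i-\wt\theta|$ to be large enough that $Y_i$ ends up strictly above or below $\wt\theta$ (by more than the discrepancy $|\wt\theta-\theta|$), which is what makes the truncation to $\mathrm{sign}(Y_i-\theta)\cdot\infty$ compatible with $\mathrm{sign}(Y_i-\wt\theta)\cdot\infty$.

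First I would rewrite the hypothesis $p_i(\wt\theta,\wt\sigma)\leq t$ as $|Y_i-\wt\theta|\geq \wt\sigma\,\ol\Phi^{-1}(t/2)$, and exploit $t<1/2$ to conclude $\ol\Phi^{-1}(t/2)>\ol\Phi^{-1}(1/4)$, so that
\[
U_i = |Y_i-\wt\theta| > \wt\sigma\,\ol\Phi^{-1}(1/4)= U_{(\lceil n/2\rceil)}.
\]
In particular, $|Y_i-\wt\theta|>0.3\,\wt\sigma$, and combined with $|\wt\theta-\theta|<0.3\,\wt\sigma$ this yields $\mathrm{sign}(Y_i-\theta)=\mathrm{sign}(Y_i-\wt\theta)$ by the triangle inequality.

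Next I would prove $\wt\theta^{(i)}=\wt\theta$. Assume $Y_i>\wt\theta$ (the other case is symmetric); then $Y_i$ lies strictly above the median, so its rank among $Y_1,\dots,Y_n$ is some $k>\lceil n/2\rceil$, and $Y^{(i)}_i=+\infty$ by the previous paragraph. The sorted sequence of $Y^{(i)}$ agrees with that of $Y$ in positions $1,\dots,\lceil n/2\rceil$ (these correspond to indices $\neq i$, all unchanged), so $Y^{(i)}_{(\lceil n/2\rceil)}=Y_{(\lceil n/2\rceil)}=\wt\theta$. Then for $\wt\sigma^{(i)}=\wt\sigma$, note that since $\wt\theta^{(i)}=\wt\theta$, we have $U^{(i)}_j=|Y^{(i)}_j-\wt\theta|=U_j$ for $j\neq i$ and $U^{(i)}_i=+\infty$. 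Since we have already shown $U_i>U_{(\lceil n/2\rceil)}$, $U_i$ sits strictly above the median of $U$'s, and the same sorting argument gives $U^{(i)}_{(\lceil n/2\rceil)}=U_{(\lceil n/2\rceil)}$, hence $\wt\sigma^{(i)}=\wt\sigma$.

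No step looks genuinely hard: the only thing to be careful about is that the rank of $Y_i$ (and of $U_i$) is strictly above $\lceil n/2\rceil$ rather than only weakly above, which is precisely what the strict inequalities $t<0.5$ and $\ol\Phi^{-1}(t/2)>\ol\Phi^{-1}(1/4)$ buy. The writeup should therefore be just a few lines, handling the two sign cases together via $\mathrm{sign}(Y_i-\theta)=\mathrm{sign}(Y_i-\wt\theta)$.
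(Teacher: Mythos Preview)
Your proof is correct and follows essentially the same approach as the paper's: from $p_i(\wt\theta,\wt\sigma)\le t<1/2$ you get $|Y_i-\wt\theta|>\wt\sigma\,\ol\Phi^{-1}(1/4)$, combine this with $|\wt\theta-\theta|<0.3\,\wt\sigma<\wt\sigma\,\ol\Phi^{-1}(1/4)$ to force $\mathrm{sign}(Y_i-\theta)=\mathrm{sign}(Y_i-\wt\theta)$, and then argue that replacing $Y_i$ by $\pm\infty$ on the appropriate side leaves both the sample median and the MAD unchanged. The paper does the same two-case analysis, phrasing the sign agreement as $Y_i>Y_{(\lceil n/2\rceil)}\vee\theta$ (resp.\ $<Y_{(\lceil n/2\rceil)}\wedge\theta$) rather than via the sign identity, but the content is identical.
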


\begin{lem}\label{lem:BHYi}
For  all $u\in \R$, $s>0$, any $i\in \{1,\dots,n\}$, we have for all $\alpha\in (0,1)$,
\begin{align*}
\mathds{1}\{p_i(u,s)\leq T_\alpha(Y;u,s)\} = \mathds{1}\{T_\alpha(Y^{(i)};u,s)= T_\alpha(Y;u,s)\} = \mathds{1}\{p_i(u,s)\leq T_\alpha(Y^{(i)};u,s)\},
\end{align*}
where $T(\cdot)$ is defined by \eqref{Tdef}.
\end{lem}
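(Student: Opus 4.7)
The plan is to exploit the well-known ``leave-one-out'' property of the BH step-up procedure. The first observation is that replacing $Y_i$ by $\pm\infty$ kills the corresponding rescaled $p$-value: $p_i(Y^{(i)};u,s)=2\overline{\Phi}(+\infty)=0$, while $p_j(Y^{(i)};u,s)=p_j(u,s)$ for every $j\neq i$. Setting $F(t)=\sum_{j=1}^n\ind\{p_j(u,s)\leq t\}$ and letting $F^{(i)}(t)$ denote its analogue computed from $Y^{(i)}$, this gives the key identity
$$
F^{(i)}(t)=F(t)+\ind\{p_i(u,s)>t\}, \qquad t\in[0,1].
$$

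Next I would prove the first equality by a case analysis based on whether $p_i(u,s)\leq T_\alpha(Y;u,s)$ or not. In the affirmative case, $F^{(i)}(t)=F(t)$ for all $t\geq p_i(u,s)$, and since $T_\alpha(Y;u,s)\geq p_i(u,s)$ the two maxima in \eqref{Tdef} coincide, yielding $T_\alpha(Y^{(i)};u,s)=T_\alpha(Y;u,s)$. In the opposite case, $F^{(i)}(T_\alpha(Y;u,s))\geq nT_\alpha(Y;u,s)/\alpha+1$; this slack of $1$ allows us to increase $t$ slightly — for instance $t'=\min\{T_\alpha(Y;u,s)+\alpha/n,\,p_i(u,s),\,1\}$ still satisfies $F^{(i)}(t')\geq nt'/\alpha$ — so that $T_\alpha(Y^{(i)};u,s)>T_\alpha(Y;u,s)$. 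Combining the two sub-cases proves the first equality.

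For the second equality, the forward implication $p_i(u,s)\leq T_\alpha(Y;u,s)\Rightarrow p_i(u,s)\leq T_\alpha(Y^{(i)};u,s)$ is immediate from the first equality. For the converse, assume $p_i(u,s)>T_\alpha(Y;u,s)$: for every $t\geq p_i(u,s)$ one has $F^{(i)}(t)=F(t)$, so any $t\geq p_i(u,s)$ with $F^{(i)}(t)\geq nt/\alpha$ would give $T_\alpha(Y;u,s)\geq t\geq p_i(u,s)$, a contradiction. Hence $T_\alpha(Y^{(i)};u,s)<p_i(u,s)$, so the event $\{p_i(u,s)\leq T_\alpha(Y^{(i)};u,s)\}$ cannot occur.

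I do not anticipate a genuine obstacle: the whole argument is finite bookkeeping around the maximum defining the BH threshold, and this leave-one-out identity is classical (compare with \cite{FZ2006}, cited earlier). The only mild subtlety is checking the boundary cases $T_\alpha(Y;u,s)=1$ and $p_i(u,s)=1$, which is handled by observing that in the ``Case (b)'' above one necessarily has $T_\alpha(Y;u,s)<1$ and $p_i(u,s)\leq 1$, so the perturbation $t'>T_\alpha(Y;u,s)$ remains in $[0,1]$.
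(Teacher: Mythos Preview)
Your proof is correct and follows essentially the same leave-one-out idea as the paper's proof. The only executional difference is in how you handle the case $p_i(u,s)>T_\alpha(Y;u,s)$: you exhibit an explicit perturbation $t'=\min\{T_\alpha(Y;u,s)+\alpha/n,\,p_i(u,s),\,1\}$ to force $T_\alpha(Y^{(i)};u,s)>T_\alpha(Y;u,s)$, whereas the paper argues the contrapositive $T_\alpha(Y^{(i)};u,s)=T_\alpha(Y;u,s)\Rightarrow p_i(u,s)\leq T_\alpha(Y^{(i)};u,s)$ by using that the BH threshold is always attained with \emph{equality}, i.e., $F(T_\alpha)=nT_\alpha/\alpha$ and likewise for $F^{(i)}$. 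The paper's route is a line shorter and avoids the sub-case analysis on $t'$; your route is more constructive. Either way the content is identical.

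One small remark on your edge-case discussion: the potential worry is not whether $t'\in[0,1]$ (that is automatic from the $\min$ with $1$), but whether $F^{(i)}(t')\geq nt'/\alpha$ when $t'=p_i$ or $t'=1$. In fact when $t'=p_i$ one has $F^{(i)}(p_i)=F(p_i)\geq F(T_\alpha)+1\geq np_i/\alpha$ since $p_i$ itself contributes the extra unit; and the sub-case $t'=1$ with $p_i>T_\alpha$ cannot occur, because $T_\alpha+\alpha/n>1$ would force $F(T_\alpha)=nT_\alpha/\alpha>n-1$, hence $F(T_\alpha)=n$, contradicting $p_i>T_\alpha$. So your $t'$ works in all cases.
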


From  Lemma~\ref{lem:BHYi} with $u=\wt{\theta}$ and $s=\wt{\sigma}$ and  Lemma~\ref{lem:dropi} with $t=T_\alpha(Y^{(i)};\wt{\theta},\wt{\sigma}) \leq \alpha < 0.5$, we deduce that 
\begin{align*}
\mathds{1}\{p_i(\wt{\theta},\wt{\sigma}) \leq T_\alpha(Y;\wt{\theta},\wt{\sigma})\} 
&=\mathds{1}\{p_i(\wt{\theta},\wt{\sigma}) \leq T_\alpha(Y^{(i)};\wt{\theta},\wt{\sigma})\} \\
&= 
\mathds{1}\{p_i(\wt{\theta},\wt{\sigma}) \leq T_\alpha(Y^{(i)};\wt{\theta},\wt{\sigma}), \wt{\theta}=\wt{\theta}^{(i)}, \wt{\sigma} = \wt{\sigma}^{(i)}\}\\
&= \mathds{1}\{p_i(\wt{\theta}^{(i)},\wt{\sigma}^{(i)}) \leq T_\alpha(Y^{(i)};\wt{\theta}^{(i)},\wt{\sigma}^{(i)}), \wt{\theta}=\wt{\theta}^{(i)}, \wt{\sigma} = \wt{\sigma}^{(i)}\}\\
&= \mathds{1}\{p_i(\wt{\theta}^{(i)},\wt{\sigma}^{(i)}) \leq T_\alpha(Y^{(i)};\wt{\theta}^{(i)},\wt{\sigma}^{(i)})\}.
\end{align*}
The fourth equality  uses again Lemma~\ref{lem:dropi} with $t=T_\alpha(Y^{(i)};\wt{\theta}^{(i)},\wt{\sigma}^{(i)}) \leq \alpha < 0.5$. Finally, provided that the above event is true, we clearly have $T_\alpha(Y;u,s)=T_\alpha(Y^{(i)};\wt{\theta}^{(i)},\wt{\sigma}^{(i)})$ by Lemma~\ref{lem:BHYi}. We have proved Lemma~\ref{cor:pvaluei}

\medskip

\begin{proof}[Proof of Lemma~\ref{lem:dropi}]

Assume that $p_i(\wt{\theta},\wt{\sigma}) \leq t$. 
This gives $|Y_i -\wt{\theta} | \geq \wt{\sigma} \:\ol{\Phi}^{-1}(t/2) > \wt{\sigma} \: \ol{\Phi}^{-1}(1/4)$ (because $t<1/2$).
If we further assume that  $|\wt{\theta}-\theta|\leq   0.3 \wt{\sigma} < \wt{\sigma} \: \ol{\Phi}^{-1}(1/4)/2$, we have 
\begin{itemize}
\item either 
$Y_i - \wt{\theta}  >  \wt{\sigma} \:\ol{\Phi}^{-1}(1/4)$, which gives $Y_i -\theta  > \wt{\sigma}\:\ol{\Phi}^{-1}(1/4)/2$ and thus $Y_i > Y_{(\lceil n/2\rceil)} \vee \theta$. In this case, $Y^{(i)}_i=\mathrm{sign}(Y_i-\theta)\times \infty = \infty$ and $\wt{\theta}^{(i)}=\wt{\theta}$;
\item 
 or $Y_i - \wt{\theta}  <  -\wt{\sigma} \:\ol{\Phi}^{-1}(1/4)$, which gives $Y_i -\theta  < -\wt{\sigma}\:\ol{\Phi}^{-1}(1/4)/2$ and thus $Y_i < Y_{(\lceil n/2\rceil)} \wedge \theta$. In this case, $Y^{(i)}_i=\mathrm{sign}(Y_i-\theta)\times \infty = -\infty$ and $\wt{\theta}^{(i)}=\wt{\theta}$.
\end{itemize}
Hence, in both cases, we have $\wt{\theta}^{(i)}=\wt{\theta}$. This implies that $ \ol{\Phi}^{-1}(1/4)\: \wt{\sigma}$ is the empirical median of the $|Y_j-\wt{\theta}^{(i)}|$, $1\leq j\leq n$ and that $|Y_i -\wt{\theta}^{(i)} |> \ol{\Phi}^{-1}(1/4)\:\wt{\sigma} $. Hence, $ \ol{\Phi}^{-1}(1/4)\: \wt{\sigma}$  is also the empirical median of the $|Y_j^{(i)}-\wt{\theta}^{(i)}|$, $1\leq j\leq n$ (whose element $j=i$ is infinite).  Hence, $\wt{\sigma}=\wt{\sigma}^{(i)}$ and the result is proved.

\end{proof}

\begin{proof}[Proof of Lemma~\ref{lem:BHYi}]

Remember that 
$$T_{\alpha}(Y;u,s) = \max\left\{ t \in[0,1]\::\: \sum_{j=1}^n \mathds{1}\{p_j(u,s) \leq t\} \geq  n t/\alpha\right\}$$
with $ \sum_{j=1}^n \mathds{1}\{p_j(u,s) \leq T_{\alpha}(Y;u,s)\} =  n T_{\alpha}(Y;u,s)/\alpha$.
Since $Y^{(i)}_i=\mathrm{sign}(Y_i-\theta)\times \infty$, the corresponding $p$-value is equal to $0$ and 
$$T_{\alpha}(Y^{(i)};u,s) = \max\left\{ t \in[0,1]\::\: 1+\sum_{j\neq i} \mathds{1}\{p_j(u,s) \leq t\} \geq  n t/\alpha\right\}.$$
Hence, $T_{\alpha}(Y^{(i)};u,s)\geq T_{\alpha}(Y;u,s) $ always holds.

Assume now that $p_i(u,s) \leq T_{\alpha}(Y^{(i)};u,s)$. This gives $\sum_{j=1}^n \mathds{1}\{p_j(u,s) \leq T_{\alpha}(Y^{(i)};u,s)\}=1+\sum_{j\neq i} \mathds{1}\{p_j(u,s) \leq T_{\alpha}(Y^{(i)};u,s)\} \geq  n T_{\alpha}(Y^{(i)};u,s)/\alpha$ and thus the reverse inequality  $T_{\alpha}(Y^{(i)};u,s)\leq T_{\alpha}(Y;u,s) $ is also true, which gives $T_{\alpha}(Y^{(i)};u,s) = T_{\alpha}(Y;u,s) $. This in turn implies $p_i(u,s) \leq T_{\alpha}(Y;u,s)$. 

To conclude, it remains to check that  $T_{\alpha}(Y^{(i)};u,s)= T_{\alpha}(Y;u,s)$ implies $p_i(u,s) \leq T_{\alpha}(Y^{(i)};u,s)$. If both thresholds are equal, we have
\[
 1+  \sum_{j\neq i } \mathds{1}\{p_j(u,s) \leq T_{\alpha}(Y^{(i)};u,s)\} =  n T_{\alpha}(Y^{(i)};u,s)/\alpha= \sum_{j=1 }^n \mathds{1}\{p_j(u,s) \leq T_{\alpha}(Y^{(i)};u,s)\}\ ,
\]
which implies $p_i(u,s) \leq T_{\alpha}(Y^{(i)};u,s)$. The result follows.

\end{proof}

 \subsection{Proof of Lemma~\ref{lem:forFNR}}\label{sec:prooflem:forFNR}

We start by gathering a few lemmas on the rescaled $p$-values process. Those are proved at the end of the section. 
The first lemma to quantifies how the rescaling affects the $p$-value process. 
For $x,y\geq 0$ and $t\in [0,1)$, define
\begin{equation}\label{equ-It}
I_t(x,y)=2\ol{\Phi}\left( \ol{\Phi}^{-1}(t/2) - x  -  y\:\ol{\Phi}^{-1}(t/2)\right)\ . 
\end{equation}
The following lemma quantifies how the process $\mathds{1}\{p_i(u,s)\leq t\}$ fluctuates in $u$ and $s$, according to the functional $I_t(\cdot,\cdot)$.
\begin{lem}\label{lem:pvalueprocess}
For all $u,u'\in \R$, $s,s'>0$, $i\in\{1,\dots,n\}$ and $t\in [0,1)$, we have
\begin{align}
\mathds{1}\{p_i(u',s')\leq t\}\leq \mathds{1}\{p_i(u,s)\leq I_t( |u'-u| s^{-1},|s'-s|s^{-1})\} \label{equ-pvalueprocess}.
\end{align}
\end{lem}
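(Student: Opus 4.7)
The plan is to proceed directly from the definitions of the rescaled $p$-values and the functional $I_t(\cdot,\cdot)$, relying only on the triangle inequality, the monotonicity of $\ol{\Phi}$, and the sign of $\ol{\Phi}^{-1}(t/2)$. The whole argument is a one-shot chain of inequalities, so the "hard part" is essentially just being careful with signs and with the rescaling factor $s$ versus $s'$; no deeper idea is needed.

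First, I would rewrite the event $\{p_i(u',s')\leq t\}$ in its geometric form: from the definition \eqref{equ-pvalues},
\[
  p_i(u',s')\leq t \quad \Longleftrightarrow \quad |Y_i-u'|\,\geq\, s'\,\ol{\Phi}^{-1}(t/2).
\]
Since $t\in[0,1)$ we have $\ol{\Phi}^{-1}(t/2)\geq 0$, a sign fact that will be used twice in what follows. Next, from $s'\geq s-|s'-s|$ and $\ol{\Phi}^{-1}(t/2)\geq 0$, I would deduce
\[
  |Y_i-u'|\,\geq\, s\,\ol{\Phi}^{-1}(t/2)\,-\,|s'-s|\,\ol{\Phi}^{-1}(t/2).
\]

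Then the triangle inequality $|Y_i-u|\geq |Y_i-u'|-|u-u'|$ yields
\[
  |Y_i-u|\,\geq\, s\,\ol{\Phi}^{-1}(t/2)\,-\,|s'-s|\,\ol{\Phi}^{-1}(t/2)\,-\,|u-u'|,
\]
and dividing by $s>0$ gives
\[
  \frac{|Y_i-u|}{s}\,\geq\,\ol{\Phi}^{-1}(t/2)\,-\,\frac{|u-u'|}{s}\,-\,\frac{|s'-s|}{s}\,\ol{\Phi}^{-1}(t/2).
\]

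Finally, I would apply the decreasing function $2\ol{\Phi}(\cdot)$ to both sides. The left-hand side becomes $p_i(u,s)$, and the right-hand side is exactly $I_t\!\bigl(|u-u'|s^{-1},|s'-s|s^{-1}\bigr)$ by \eqref{equ-It}. This yields the desired implication
\[
  \mathds{1}\{p_i(u',s')\leq t\}\,\leq\,\mathds{1}\!\left\{p_i(u,s)\leq I_t\!\bigl(|u-u'|s^{-1},|s'-s|s^{-1}\bigr)\right\},
\]
completing the proof. The only subtlety is ensuring the sign of $\ol{\Phi}^{-1}(t/2)$ so that the bound $s'\geq s-|s'-s|$ can be multiplied through; this is guaranteed by $t<1$ assumed in the statement.
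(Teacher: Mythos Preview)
Your proof is correct and follows essentially the same approach as the paper's own proof: both start from $|Y_i-u'|\geq s'\,\ol{\Phi}^{-1}(t/2)$, use the triangle inequality together with the bound $s'\geq s-|s'-s|$ (equivalently $s'/s\geq 1-|s'-s|/s$) and the nonnegativity of $\ol{\Phi}^{-1}(t/2)$, and then apply the decreasing map $2\ol{\Phi}$. The only difference is the order in which these two steps are applied, which is immaterial.
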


Interestingly, $t\mapsto I_t(x,y)$ is close to the identity function when $x$ and $y$ are small, as the following lemma shows.
\begin{lem}\label{lem:majorationnull}
There exists a universal constant $c>1$ 
 such that the following holds. For all $\alpha\in (0,0.8)$, for all $x,y\geq 0 $ and $t_0\in(0,\alpha)$, we have
\begin{align}\label{equ:majorationnull}
\max_{t_0 \leq t \leq \alpha}\left\{ \frac{I_t(x,y)-t}{t}  \right\}&\leq c \left(x(2\log (1/t_0))^{1/2} +2 y \log(1/t_0) \right)\ .
\end{align}
provided that this upper bound is smaller than $0.05$.
\end{lem}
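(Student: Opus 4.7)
The plan is to reduce the bound on $I_t(x,y)/t - 1$ to a ratio of Gaussian tails and then apply Mill's ratio estimates. Setting $z = \overline{\Phi}^{-1}(t/2) > 0$ and $s = x + yz \geq 0$, the definition of $I_t$ gives
\[
\frac{I_t(x,y)}{t} \;=\; \frac{2\overline{\Phi}(z - s)}{2\overline{\Phi}(z)} \;=\; \frac{\overline{\Phi}(z-s)}{\overline{\Phi}(z)}.
\]
So the whole task is to show that this ratio is at most $1 + c\bigl(x\sqrt{2\log(1/t_0)} + 2y\log(1/t_0)\bigr)$ under the stated smallness assumption.

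First, I would invoke the standard Mill's ratio inequalities $z\phi(z)/(1+z^2) \leq \overline{\Phi}(z) \leq \phi(z)/z$ (which appear in the paper's auxiliary Section~\ref{sec:aux} as Lemma~\ref{lem:quantile}) to write, whenever $z - s > 0$,
\[
\frac{\overline{\Phi}(z-s)}{\overline{\Phi}(z)} \;\leq\; \frac{1+z^2}{z(z-s)}\,\exp\!\Bigl(zs - \tfrac{s^2}{2}\Bigr).
\]
Next, I would use the companion bound $\overline{\Phi}^{-1}(u/2) \leq \sqrt{2\log(1/u)} + O(1)$ to get $z \leq \sqrt{2\log(1/t_0)} + O(1)$ uniformly over $t \in [t_0,\alpha]$, which turns $zs$ into the term $x\sqrt{2\log(1/t_0)} + 2y\log(1/t_0)$ (plus lower order corrections).

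The remaining work is a linearization argument. Under the hypothesis that the desired right-hand side is at most $0.05$, both $x\sqrt{2\log(1/t_0)}$ and $y\log(1/t_0)$ are small, hence $s \leq z/2$, so the Mill's prefactor $(1+z^2)/(z(z-s))$ is bounded by a universal constant (for $z$ bounded away from $0$, which holds because $t/2 \leq \alpha/2 < 1/2$). Similarly $zs - s^2/2$ is bounded by a small absolute constant, so one can use $\exp(u) \leq 1 + 2u$ on $[0,0.1]$ to replace the exponential by a linear term. Collecting everything yields the bound with an explicit universal constant $c$. In the subregime where $z = O(1)$ (which occurs when $t$ is close to $\alpha$), the same conclusion follows even more simply from the Lipschitz estimate $|\overline{\Phi}(z-s) - \overline{\Phi}(z)| \leq s/\sqrt{2\pi}$ combined with $\overline{\Phi}(z) \geq \alpha/2$.

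The main technical obstacle, and the reason for the smallness hypothesis in the statement, is making the prefactor $(1+z^2)/(z(z-s))$ harmless: one needs $s$ genuinely smaller than $z$ to keep $z - s$ away from zero, and this is precisely what the $0.05$ assumption enforces. Once this is handled, the linearization of the exponential is routine, and the $\sqrt{\log}$ and $\log$ scaling come directly from the Gaussian quantile asymptotics.
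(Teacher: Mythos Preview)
Your strategy is on the right track, but the linearization step has a genuine gap. When you apply Mill's inequalities to both the numerator and the denominator of $\ol{\Phi}(z-s)/\ol{\Phi}(z)$, the resulting upper bound $\frac{1+z^2}{z(z-s)}\,e^{zs-s^2/2}$ equals $1+z^{-2}>1$ at $s=0$. Consequently, bounding the prefactor by a universal constant $C$ and linearizing the exponential only gives $(I_t-t)/t \le (C-1)+2C\,zs$. The residual $C-1\sim z^{-2}$ is strictly positive and independent of $x,y$, whereas the right-hand side of~\eqref{equ:majorationnull} tends to zero with $x,y$; so for small enough $x,y$ (which are always allowed by the $0.05$ hypothesis) your inequality fails. ``Bounded prefactor'' is not enough here---you need the ratio bound to be of the form $1+O(s)$.

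The paper avoids this by swapping the order of operations: it first bounds the \emph{difference} via the mean value theorem,
\[
I_t-t=2\bigl(\ol{\Phi}(z-s)-\ol{\Phi}(z)\bigr)\le 2s\,\phi(z-s),
\]
which extracts the factor $s$ exactly, and only then invokes a single Mill's inequality $\phi(u)\le \ol{\Phi}(u)\,(u+u^{-1})$ at $u=\ol{\Phi}^{-1}(t/2)$ to convert back. The density ratio $\phi(z-s)/\phi(z)=e^{zs-s^2/2}$ is controlled by $e^{0.05}\le 2$ because the smallness hypothesis forces $zs\le 0.05$. This yields a bound that is genuinely linear in $s$, after which the quantile estimates $z\le\sqrt{2\log(1/t_0)}$ and $z\ge \ol{\Phi}^{-1}(0.4)$ finish the job. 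This is essentially the same circle of ideas you sketched, just with the mean value step taken on the difference rather than Mill's applied to the numerator.

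A smaller slip: in your $z=O(1)$ subregime you claim $\ol{\Phi}(z)\ge \alpha/2$, but $\ol{\Phi}(z)=t/2\le \alpha/2$; the only lower bound available is $t_0/2$. This side case is in any event unnecessary once the main argument is repaired.
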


Combining Lemma~\ref{lem:pvalueprocess} and Lemma~\ref{lem:majorationnull}, we obtain the following corollary.

\begin{cor}\label{cor:pvaluemaj}
There exists a universal constant $c>1$ 
 such that the following holds. For all $u,u'\in \R$, $s,s'>0$, $i\in\{1,\dots,n\}$, $\alpha\in (0,0.8)$, $t_0\in(0,\alpha)$, let
\begin{equation}\label{equeta}
\eta=c \left(|u'-u| s^{-1} (2\log (1/t_0))^{1/2} + |s'-s|s^{-1} 2\log(1/t_0) \right).
\end{equation}
Provided $\eta\leq 0.05$, we have for all $t\in [t_0,\alpha]$,
\begin{align}\label{majmargpvalue}
\mathds{1}\{ p_i(u',s')\leq t\} \leq \mathds{1}\{ p_i(u,s)\leq t (1+\eta)\}.  
\end{align}
\end{cor}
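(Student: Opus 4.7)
The plan is to obtain Corollary~\ref{cor:pvaluemaj} as a direct composition of Lemma~\ref{lem:pvalueprocess} and Lemma~\ref{lem:majorationnull}, with essentially no new work beyond checking that the hypotheses line up. Set $x=|u'-u|s^{-1}$ and $y=|s'-s|s^{-1}$, and let $c>1$ be the universal constant supplied by Lemma~\ref{lem:majorationnull}. With this choice, the quantity $\eta$ in \eqref{equeta} is exactly the upper bound appearing on the right-hand side of \eqref{equ:majorationnull}. The hypothesis $\eta \leq 0.05$ therefore validates the applicability condition of Lemma~\ref{lem:majorationnull}.

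First I would apply Lemma~\ref{lem:pvalueprocess}, which for any $t \in [0,1)$ yields
\[
\mathds{1}\{p_i(u',s')\leq t\}\leq \mathds{1}\{p_i(u,s)\leq I_t(x,y)\}.
\]
Next, for $t \in [t_0,\alpha]$, Lemma~\ref{lem:majorationnull} provides the pointwise bound
\[
I_t(x,y) \leq t\left(1 + c\bigl(x(2\log(1/t_0))^{1/2} + 2y\log(1/t_0)\bigr)\right) = t(1+\eta).
\]
Since the map $s\mapsto \mathds{1}\{p_i(u,s)\leq s\}$ is nondecreasing in $s$, chaining these two inequalities immediately delivers \eqref{majmargpvalue}.

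There is no genuine obstacle here: the only thing to verify is that the constants $c$ appearing in \eqref{equeta} and in the conclusion of Lemma~\ref{lem:majorationnull} can be taken to be the same universal constant, and that the condition $\eta \leq 0.05$ indeed implies the smallness hypothesis required by Lemma~\ref{lem:majorationnull}. Both are immediate from the definitions. The substantive content of the corollary is therefore entirely contained in the two preceding lemmas; the corollary merely repackages their conjunction into a form directly usable in the proof of Lemma~\ref{lem:forFNR}, where one needs a multiplicative (rather than additive or functional) comparison of the rescaled and oracle $p$-values uniformly over the BH threshold range $[t_0,\alpha]$.
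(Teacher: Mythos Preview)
Your proposal is correct and matches the paper's approach exactly: the paper simply states that the corollary follows by combining Lemma~\ref{lem:pvalueprocess} and Lemma~\ref{lem:majorationnull}, which is precisely what you do. One cosmetic slip: in the sentence ``the map $s\mapsto \mathds{1}\{p_i(u,s)\leq s\}$ is nondecreasing in $s$'' you have reused the symbol $s$; write instead $\tau\mapsto \mathds{1}\{p_i(u,s)\leq \tau\}$ to avoid overloading the notation.
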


Let us first prove \eqref{equ:forFDR}. 
First, if $T_\alpha(\wh{\theta},\wh{\sigma}) < \alpha/n$, then $\mathds{1}\{ p_i(\wh{\theta},\wh{\sigma})\leq T_\alpha(\wh{\theta},\wh{\sigma})\}=0$ for all $i$ and the result is trivial. Now assume that $T_\alpha(\wh{\theta},\wh{\sigma})\geq t_0$. 
By \eqref{majmargpvalue} ($u'=\wh{\theta}$, $u=\theta$, $s'=\wh{\sigma}$, $s=\sigma$, $t=T_\alpha(\wh{\theta},\wh{\sigma})$), we have for all $i$,
\begin{equation}\label{equ:forprooflemforFNR}
\mathds{1}\{ p_i(\wh{\theta},\wh{\sigma})\leq T_\alpha(\wh{\theta},\wh{\sigma})\} \leq \mathds{1}\{ p_i(\theta,\sigma)\leq (1+\eta) T_\alpha(\wh{\theta},\wh{\sigma}) \},
\end{equation}
so we only have to prove 
$(1+\eta) T_\alpha(\wh{\theta},\wh{\sigma})\leq T_{\alpha(1+\eta)}(\theta,\sigma)$.
Since by definition
$$T_{\alpha(1+\eta)}(\theta,\sigma) = \max\left\{ t \in[0,1]\::\: \sum_{i=1}^n \mathds{1}\{p_i(\theta,\sigma) \leq t\} \geq  n t/(\alpha(1+\eta))\right\},
$$
we only have to prove 
$
\sum_{i=1}^n \mathds{1}\{p_i(\theta,\sigma) \leq (1+\eta)T_\alpha(\wh{\theta},\wh{\sigma})\} \geq  n T_\alpha(\wh{\theta},\wh{\sigma})/\alpha.
$
For this, apply again \eqref{equ:forprooflemforFNR}, to get
\begin{align*}
\sum_{i=1}^n \mathds{1}\{p_i(\theta,\sigma) \leq (1+\eta)T_\alpha(\wh{\theta},\wh{\sigma})\} &\geq 
\sum_{i=1}^n \mathds{1}\{ p_i(\wh{\theta},\wh{\sigma})\leq T_\alpha(\wh{\theta},\wh{\sigma})\}\\
& = n T_\alpha(\wh{\theta},\wh{\sigma})/\alpha ,
\end{align*}
by using the definition of $T_\alpha(\wh{\theta},\wh{\sigma})$. Hence, the first result is proved.

Exchanging $\wh{\theta},\wh{\sigma}$ by $\theta,\sigma$ and replacing $\alpha$ by $\alpha(1-\eta)$, \eqref{equ:forFDR} implies that if $T_{\alpha(1-\eta)}(\theta,\sigma)\vee (\alpha(1-\eta)/n)\geq t_0$,
$\mathds{1}\{p_i(\theta,\sigma)\leq T_{\alpha(1-\eta)}(\theta,\sigma)\}  \leq \mathds{1}\{p_i(\wh{\theta},\wh{\sigma})\leq T_{\alpha(1-\eta)(1+\eta)}(\wh{\theta},\wh{\sigma})\} $. Since $T_{\alpha(1-\eta)(1+\eta)}(\wh{\theta},\wh{\sigma})\leq T_{\alpha}(\wh{\theta},\wh{\sigma})$, this gives in turn \eqref{equ:forFNR}, which concludes the proof. 

\begin{proof}[Proof of Lemma~\ref{lem:pvalueprocess}]

First, we can  assume that $p_i(u',s')\leq t$ otherwise the inequality is trivial. By definition \eqref{equ-pvalues}, this implies 
$
|Y_i-u'|/s'\geq  \ol{\Phi}^{-1}(t/2).
$
By triangular inequality, we have 
\begin{align*}
\frac{|Y_i-u|}{s} &\geq  \frac{s'}{s} \:\ol{\Phi}^{-1}(t/2)- \frac{1}{s}|u'-u| \ ;  \\
&=\ol{\Phi}^{-1}(t/2) - \frac{1}{s}|u'-u| - \frac{|s'-s|}{s} \:\ol{\Phi}^{-1}(t/2)\ ,
\end{align*}
which entails the upper bound in \eqref{equ-pvalueprocess}. 
\end{proof}

\begin{proof}[Proof of Lemma~\ref{lem:majorationnull}]
 We have
\begin{align*}
I_t(x,y)= 2\ol{\Phi}\left(\ol{\Phi}^{-1}\left(t/2\right)- z(t)\right)\ , \:\:z(t)=x+y\ol{\Phi}^{-1}\left(t/2\right) .\end{align*}
By Lemma \ref{lem:quantile}, we have for all $t\in[t_0,\alpha]$
$$
z(t) \leq x + y (2\log(1/t))^{1/2}  \leq (0.05/c) (2\log (1/t))^{-1/2} \leq 0.05/ \ol{\Phi}^{-1}\left(t/2\right)\ ,
$$
 since the rhs of \eqref{equ:majorationnull} is smaller than or equal to $0.05$.  Now using that $\ol{\Phi}(\sqrt{0.05})\geq 0.4 \geq t/2$, we deduce
$
z(t)\leq \ol{\Phi}^{-1}\left(t/2\right)
$
for all $t\in[t_0,\alpha]$. Also deduce that for such a value of $t$,
$$
\frac{\phi\left(\ol{\Phi}^{-1}(t/2)-z(t)\right) }{\phi\left(\ol{\Phi}^{-1}(t/2)\right) } = e^{-z^2(t)/2} e^{z(t) \ol{\Phi}^{-1}(t/2)} \leq e^{z(t) \ol{\Phi}^{-1}(t/2)}  \leq e^{0.05} \leq 2\ .
$$
Since $\ol{\Phi}$ is decreasing and its derivative is  $-\phi$, we have for all $t\in[t_0,\alpha]$ 
\begin{align*}
I_t(x,y) -t & = 2\ol{\Phi}(\ol{\Phi}^{-1}(t/2)-z(t)) - 2\ol{\Phi}(\ol{\Phi}^{-1}(t/2))\\
&\leq z(t)\: 2\phi\left(\ol{\Phi}^{-1}(t/2)-z(t)\right) \\
&\leq z(t)\: 2\frac{\phi\left(\ol{\Phi}^{-1}(t/2)-z(t)\right) }{\phi\left(\ol{\Phi}^{-1}(t/2)\right) }  \phi\left(\ol{\Phi}^{-1}(t/2)\right)\\
&\leq 4z(t)  \phi\left(\ol{\Phi}^{-1}(t/2)\right)\\
& \leq 2tz(t)\left(1 + \left(\ol{\Phi}^{-1}(t/2)\right)^{-2}\right)\ol{\Phi}^{-1}(t/2)\\
&\leq tz(t_0)\ol{\Phi}^{-1}(t_0/2) \left(1 + \left( \ol{\Phi}^{-1}(0.4)\right)^{-2}\right)\ ,
\end{align*}
where we used inequality~\eqref{eq:maj-fonctionrepgauss} of Lemma \ref{lem:quantile}  and $t\in [t_0,\alpha]$ in the last line. Finally, we invoke Lemma~\ref{lem:quantile} again to obtain that 
\[
z(t_0)\ol{\Phi}^{-1}(t_0/2)\leq x  (2\log(1/t_0))^{1/2}+ 2y\log(1/t_0)\ ,
\]
which concludes the proof. 
\end{proof}

\subsection{Proof of Lemma~\ref{lem:indep}}
By assumption, the $Y_i$'s are all mutually independent. In addition, when $i\in\cH_0$, $Y_i-\theta$ is a Gaussian distribution, which is symmetric. This implies that the variables of $\{(\mathrm{sign}(Y_i-\theta),  i \in \cH_0), (|Y_i-\theta|,  i \in \cH_0), (Y_i, i \in \cH_1)\} $ are all mutually independent. In particular, for all fixed $i\in\cH_0$, the variables of  
$\{\mathrm{sign}(Y_i-\theta), |Y_i-\theta|, (Y_{j}, j\neq i)\} $ are mutually independent.
Since $Y^{(i)}$ is a measurable function of $\{\mathrm{sign}(Y_i-\theta),  (Y_{j}, j\neq i)\} $, it is in particular independent of $|Y_i-\theta|$.

 \subsection{Proof of Lemma~\ref{lem:forFNRsigmaknown}}

It is analogous to the proof of Lemma~\ref{lem:forFNR}. The only change is that \eqref{majmargpvalue} is now used with $s=\sigma$ (so $s=s'$) instead of $s=\wh{\sigma}$.

\subsection{Proof of Lemma~\ref{lem:convunifprocess}}

To simplify the notation, we write $\delta=\mu/2$ and $n_0$ for $n_0(P)$ 
\begin{lem}\label{lem:psi}
The function  $t\mapsto \Psi_1(t)/t$ is continuous and strictly decreasing on $(0,1]$ and $\Psi_1(t)/t$ goes to infinity when $t$ converges to zero.
\end{lem}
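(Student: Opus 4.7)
The plan is to reduce the study of $\Psi_1(t)/t$ to the study of a ratio of Gaussian tails via a change of variable, and then exploit the classical monotonicity of the Mills ratio. Let $q = \overline{\Phi}^{-1}(t/2)$ and $\delta = \mu/2 > 0$. Then $t = 2\overline{\Phi}(q)$ and $\Psi_1(t) = \overline{\Phi}(q - \delta)$, so that
\[
  \frac{\Psi_1(t)}{t} \;=\; \frac{\overline{\Phi}(q-\delta)}{2\,\overline{\Phi}(q)}\,,
\]
where, as $t$ ranges over $(0,1]$, $q = \overline{\Phi}^{-1}(t/2)$ ranges over $[0,+\infty)$ and the map $t \mapsto q$ is continuous and strictly decreasing. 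Hence it suffices to prove that the function $F(q) := \overline{\Phi}(q-\delta)/\overline{\Phi}(q)$ is continuous and strictly increasing on $[0,+\infty)$ and diverges to $+\infty$ as $q \to +\infty$.

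Continuity is immediate since $\overline{\Phi}$ is continuous and $\overline{\Phi}(q) > 0$ for every finite $q$. For strict monotonicity, I would differentiate: $F$ is smooth on $[0,+\infty)$ with
\[
  F'(q) \;=\; \frac{\phi(q)\,\overline{\Phi}(q-\delta) - \phi(q-\delta)\,\overline{\Phi}(q)}{\overline{\Phi}(q)^2}\,,
\]
so that $F'(q) > 0$ is equivalent to
\[
  \frac{\overline{\Phi}(q-\delta)}{\phi(q-\delta)} \;>\; \frac{\overline{\Phi}(q)}{\phi(q)}\,.
\]
This is a direct consequence of the classical fact that the Mills ratio $m(x) := \overline{\Phi}(x)/\phi(x)$ is strictly decreasing on $\mathbb{R}$ (which follows from $m'(x) = xm(x) - 1 < 0$, using the standard bound $m(x) < 1/x$ for $x>0$ and its extension to all $x$), combined with $q - \delta < q$. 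Hence $F$ is strictly increasing on $[0,\infty)$, which gives the strict decrease of $\Psi_1(t)/t$ in $t$.

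Finally, for the divergence at $t \to 0$, equivalently $q \to +\infty$, I would use the classical tail asymptotics $\overline{\Phi}(q) \sim \phi(q)/q$ (Lemma~\ref{lem:quantile}), which yields
\[
  F(q) \;\sim\; \frac{\phi(q-\delta)/(q-\delta)}{\phi(q)/q} \;=\; \frac{q}{q-\delta}\,\exp\!\Big(q\delta - \tfrac{\delta^2}{2}\Big) \;\xrightarrow[q\to\infty]{}\; +\infty\,,
\]
since $\delta > 0$. Dividing by $2$ then gives $\Psi_1(t)/t \to +\infty$ as $t \to 0^+$. There is no real obstacle here; the only point requiring care is invoking the Mills ratio monotonicity, which is standard and already used elsewhere in the paper (Section~\ref{sec:aux}).
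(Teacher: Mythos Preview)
Your proof is correct. The approach differs from the paper's in the monotonicity step: you change variables to $q=\overline{\Phi}^{-1}(t/2)$ and reduce to the strict decrease of the Mills ratio $m(x)=\overline{\Phi}(x)/\phi(x)$ on $\mathbb{R}$, whereas the paper instead shows directly that $\Psi_1$ is strictly concave with $\Psi_1(0^+)=0$, by computing
\[
\Psi_1'(t)=\frac{1}{2}\frac{\phi(\overline{\Phi}^{-1}(t/2)-\delta)}{\phi(\overline{\Phi}^{-1}(t/2))}=\frac{1}{2}\,e^{\delta\,\overline{\Phi}^{-1}(t/2)-\delta^2/2},
\]
which is strictly decreasing in $t$; strict concavity through the origin then gives $\Psi_1(t)/t$ strictly decreasing. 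Both arguments are short and elementary. The paper's route is slightly more self-contained (no appeal to Mills ratio monotonicity, just an explicit derivative), while yours is arguably more conceptual and makes the link to a classical Gaussian-tail fact explicit. For the limit $t\to 0$ both proofs use the same asymptotics $\overline{\Phi}(q)\sim \phi(q)/q$.
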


Since $\Psi_1(1)= \ol{\Phi}(- \delta)\leq 1 < 2/\alpha$,   the equation $\Psi_1(t)=2t/\alpha$ has only one solution on  $(0,1)$, denoted $t^\star_{\alpha}$. 
Write $t_0= \tfrac{\alpha}{4} n_0^{-1/4} $, we claim that, for $n_0\geq N(\alpha)$, $\Psi_1(t_0)\geq 4t_0/\alpha$.  This claim is justified at the end of the proof.  
This implies $\Psi_1(t_0)/t_0\geq 4/\alpha = \Psi_1(t^\star_{\alpha/2})/t^\star_{\alpha/2}$. Hence, we have $t^\star_{\alpha/2}\geq t_0$.
On the event $\Omega_0^-=\left\{\sup_{t\in[0,1]}|\wh{\mathbb{G}}_0^-(t)-t| \leq \sqrt{\log (2n)/(2n_0)}\right\}$, we have $\wh{\mathbb{G}}_0^-(\Psi_1(t))\geq \Psi_1(t) - \sqrt{\log (2n)/(2n_0)}$ for all $t\in[0,1]$, hence $T_0^{-}= \max\{ t \in[0,1]\::\:  \wh{\mathbb{G}}_0^-(\Psi_1(t)) \geq   2 t/\alpha\}$ is such that
\begin{align*}
 T_0^- &\geq\max\left\{t\in[0,1]\::\:\Psi_1(t) \geq 2t/\alpha + \sqrt{\log (2n)/(2n_0)}\right\}\\
   &\geq\max\left\{t\in[0,1]\::\:\Psi_1(t) \geq 2t/\alpha + 2t_0/\alpha\right\}\\
 &\geq \max\left\{t\in[0,1]\::\:\Psi_1(t) \geq 2t/\alpha + 2t^\star_{\alpha/2}/\alpha \right\} \\
 & \geq t^\star_{\alpha/2}\geq t_0,
 \end{align*}
 since  $\Psi_1(t^\star_{\alpha/2})=4 t^\star_{\alpha/2}/\alpha$. Since $\Psi_1$ is non-decreasing, we conclude that 
 \[
 \Psi_1(T_0^-)\geq \Psi_1(t_0)\geq \frac{4t_0}{\alpha}= n_0^{-1/4}\ ,
 \]
which is the statement of the lemma.

It remains to prove the claim  $\Psi_1(t_0)\geq 4t_0/\alpha$ for $n_0\geq N(\alpha)$ and all $\delta\geq \delta_0=2\log(32/\alpha)/ \sqrt{\log (2/t_0)}$ (Condition~\eqref{equ-prooflowerboundeta}).
 Define $x_0=\overline{\Phi}^{-1}(t_0/2)$ 
so that, by \eqref{eq:encadrement_quantile_1plus} of Lemma~\ref{lem:quantile}, $\delta_0 x_0\geq 2\log(32/\alpha)$ for $n_0\geq N(\alpha)$. 
Since $\Psi_1(t)$ is increasing with respect to $\delta$, it suffices to prove the claim for $\delta= \delta_0$. For $n_0\geq N(\alpha)$, we have $\delta_0\leq 1$ and $x_0\geq 2$. 
Applying twice Lemma~\ref{lem:quantile}, we obtain 
\beqn 
\Psi_1(t_0)&\geq& \frac{(x_0-\delta_0)}{1+(x_0-\delta_0)^2}\phi(x_0-\delta_0)\geq \frac{(x_0-\delta_0)e^{x_0\delta_0-\delta_0^2/2}}{1+(x_0-\delta_0)^2}\phi(x_0)
\\
&\geq&  t_0 \frac{x_0(x_0-\delta_0)}{2[1+(x_0-\delta_0)^2]}e^{x_0\delta_0-\delta_0^2/2}\geq \frac{t_0x_0^2}{4(1+x_0^2)}e^{x_0\delta_0/2}\\
&\geq & \frac{t_0}{8}e^{x_0\delta_0/2}\geq \frac{4t_0}{\alpha}\ , 
\eeqn 
where we used $\delta_0 x_0\geq 2\log(32/\alpha)$ in the last line.

\begin{proof}[Proof of Lemma~\ref{lem:psi}]
For $t$ going to $0$,  $\Phi^{-1}(t/2)$ goes to infinity. Furthermore,  Lemma~\ref{lem:quantile} ensures that 
$\ol{\Phi}(x)\sim \phi(x)/x$. Hence, for $t$ converging to $0$, we have 
\[
\frac{ \Psi_1(t)}{t} = \frac{ \Psi_1(t)}{\ol{\Phi}(\ol{\Phi}^{-1}(t/2))}\sim \frac{\phi(\ol{\Phi}^{-1}(t/2)-\delta)}{\phi(\ol{\Phi}^{-1}(t/2))}=  e^{\delta \ol{\Phi}^{-1}(t/2))- \delta^2/2 }\rightarrow \infty\ . 
\]
To show that $t\in(0,1]\mapsto \Psi_1(t)/t$ is decreasing, we prove that $t\in(0,1]\mapsto \Psi_1(t)$ is strictly concave. This holds because
$$
\Psi'_1(t)= \frac{1}{2}\frac{\phi\left(\ol{\Phi}^{-1}(t/2)-\delta \right)}{\phi\left(\ol{\Phi}^{-1}(t/2)\right)} = e^{\delta \ol{\Phi}^{-1}(t/2))- \delta^2/2 }
$$
is decreasing in $t$.
\end{proof}

\section{Auxiliary results}\label{sec:aux}

\begin{lem}[DKW inequality~\cite{Mass1990}]\label{lem:DKW}
 Let $X_1,\ldots, X_n$ be i.i.d. distributed with cumulative function $F$. Denote $F_n$ the empirical distribution function defined by $F_n(x)= n^{-1}\sum_{i=1}^n \ind\{X_i\leq x\}$. Then, for any $t\geq \log(2)$, we have 
 \[
  \P\left[\sup_{x\in \mathbb{R}}(F_n(x)- F(x))\geq \sqrt{\frac{t}{2n}} \right]\leq e^{-t}\ . 
 \]

\end{lem}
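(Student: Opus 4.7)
The plan is to view this as a classical result (Massart's sharp version of the Dvoretzky--Kiefer--Wolfowitz inequality) whose proof proceeds in two stages: a reduction to the uniform distribution, followed by an exponential bound on the one-sided deviation of the uniform empirical process. Rewriting $\epsilon = \sqrt{t/(2n)}$, the target inequality is the familiar form
\[
\mathbb{P}\Big[\sup_{x\in\mathbb{R}}(F_n(x) - F(x)) \geq \epsilon\Big] \leq e^{-2n\epsilon^2}.
\]

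First I would reduce to the uniform case. For continuous $F$, the variables $U_i := F(X_i)$ are i.i.d.\ $U(0,1)$, and if $G_n$ denotes their empirical c.d.f.\ then $F_n(x) - F(x) = G_n(F(x)) - F(x)$ for every $x$, so $\sup_{x\in\mathbb{R}}(F_n(x)-F(x)) = \sup_{u\in[0,1]}(G_n(u)-u)$. The case of a general $F$ is handled by a standard continuity/coupling argument (e.g.\ perturbing $F$ by a tiny continuous c.d.f.\ and passing to the limit; monotonicity of the relevant functional makes the reduction rigorous). With this reduction in hand, writing $U_{(1)} \leq \cdots \leq U_{(n)}$ for the order statistics, one has $\sup_{u\in[0,1]}(G_n(u)-u) = \max_{1\leq k\leq n}(k/n - U_{(k)})$, and the problem becomes a tail bound for this maximum.

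The second, and harder, stage is to establish the one-sided exponential bound
\[
\mathbb{P}\Big[\max_{1\leq k\leq n}(k/n - U_{(k)}) \geq \epsilon\Big] \leq e^{-2n\epsilon^2}
\]
with the sharp constant $2$ in the exponent. The naive route, covering $[0,1]$ by the at most $n$ jump points and applying Hoeffding to each, together with a union bound, costs an extra polynomial prefactor. To remove that prefactor one needs a genuine process-level argument. Two clean options: (i) Massart's approach, which exploits the Birnbaum--Tingey exact formula for the distribution of $\max_k(k/n - U_{(k)})$ together with a careful telescoping of exponential bounds; (ii) a Poissonization argument, embedding the $U_i$ into a rate-$n$ Poisson process on $\mathbb{R}_+$ and invoking ballot/hitting-time identities combined with a Chernoff bound on the total count to de-Poissonize. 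In both variants the constant $2$ arises from the Legendre transform of the Bernoulli log-MGF evaluated at $\epsilon$.

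The restriction $t \geq \log 2$ in the statement is minor and comes from the fact that the uniform bound $e^{-t}$ is only interesting when it is below $1/2$; for $t<\log 2$ the inequality is either vacuous or follows by trivially inflating the trivial bound $1$. The principal obstacle is clearly the second stage: obtaining the optimal constant. For the applications in this paper any constant would suffice (every use of Lemma~\ref{lem:DKW} is absorbed in $O(\sqrt{\log n / n})$ terms), so a short self-contained argument via union bound and Hoeffding would already yield a workable version; the sharp form is invoked purely for convenience and is imported from \cite{Mass1990}.
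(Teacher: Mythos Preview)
The paper does not prove this lemma at all: it is stated as an auxiliary result with a citation to \cite{Mass1990} and no proof is given. Your outline is a reasonable sketch of how the classical result is established, and your closing remark that the sharp form is simply imported from the literature is exactly what the paper does.
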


\begin{lem}[\cite{CDRV2018}]\label{lem:quantile}
We have 
\beq 
\max\left(\frac{t\phi(t)}{1+t^2}, \frac{1}{2}- \frac{t}{\sqrt{2\pi}}\right) \leq \ol{\Phi}(t)\leq \phi(t) \min\left(\frac{1}{t}, \sqrt{\frac{\pi}{2}}\right) , \:\:\:\:\mbox{ for all $t>0$}\label{eq:maj-fonctionrepgauss}\ . 
\eeq
As a consequence, for any $x< 0.5$, we have
\begin{align}
\sqrt{2\pi}(1/2- x)&\leq \overline{\Phi}^{-1}(x)\leq \sqrt{2\log\left(\frac{1}{2x}\right)} \ , \label{eq:encadrement_quantile_1}\\
   \log\left( \frac{[\overline{\Phi}^{-1}(x)]^2}{[\overline{\Phi}^{-1}(x)]^2+1}\right) &\leq \frac{[\overline{\Phi}^{-1}(x)]^2}{2}  -  \log\left(\frac{1}{x}\right) + \log\left(\sqrt{2\pi} \overline{\Phi}^{-1}(x)\right)\leq 0\ ,\label{eq:encadrement_quantile}
\end{align}
and if additionally $x\leq 0.004$, we have
\begin{align}
\overline{\Phi}^{-1}(x)\geq \sqrt{\log\left(\frac{1}{x}\right)} \label{eq:encadrement_quantile_1plus}\ .
\end{align}

\end{lem}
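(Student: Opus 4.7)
The statement attributes the bounds to \cite{CDRV2018}; they are classical Mills'-ratio estimates, so the plan is to derive each of them by elementary calculus.

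\textbf{Step 1 (bounds on $\overline{\Phi}(t)$).} For the upper bound $\overline{\Phi}(t)\leq \phi(t)/t$, I would write $\overline{\Phi}(t)=\int_t^\infty \phi(u)\,du\leq t^{-1}\int_t^\infty u\phi(u)\,du=\phi(t)/t$, using $u\geq t$ and $\int_t^\infty u\phi(u)\,du=\phi(t)$. For the companion upper bound $\overline{\Phi}(t)\leq\sqrt{\pi/2}\,\phi(t)$, I would study the ratio $r(t)=\overline{\Phi}(t)/\phi(t)$: one checks $r'(t)=-1+tr(t)\leq -1+t\cdot(1/t)=0$ from the first bound, so $r$ is nonincreasing on $(0,\infty)$ and $r(0^+)=\sqrt{\pi/2}$. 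The lower bound $\overline{\Phi}(t)\geq t\phi(t)/(1+t^2)$ follows by setting $g(t)=(1+t^2)\overline{\Phi}(t)-t\phi(t)$: a direct computation gives $g'(t)=2t\overline{\Phi}(t)-2\phi(t)\leq 0$ by the first upper bound, and $g(+\infty)=0$, hence $g\geq 0$. Finally, the lower bound $\overline{\Phi}(t)\geq 1/2-t/\sqrt{2\pi}$ comes from the convexity of $\overline{\Phi}$ on $[0,\infty)$ (since $\overline{\Phi}''(t)=t\phi(t)\geq 0$), applied as the tangent inequality at $0$ with $\overline{\Phi}(0)=1/2$ and $\overline{\Phi}'(0)=-\phi(0)=-1/\sqrt{2\pi}$.

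\textbf{Step 2 (deriving \eqref{eq:encadrement_quantile_1} and \eqref{eq:encadrement_quantile}).} Setting $t=\overline{\Phi}^{-1}(x)\geq 0$ (valid since $x<1/2$), the lower bound in Step~1 gives $x\geq 1/2-t/\sqrt{2\pi}$, i.e.\ $t\geq\sqrt{2\pi}(1/2-x)$. For the upper bound on $t$, combining $\overline{\Phi}(t)\leq\sqrt{\pi/2}\,\phi(t)$ yields $x\leq \tfrac12 e^{-t^2/2}$, hence $t\leq\sqrt{2\log(1/(2x))}$. Then \eqref{eq:encadrement_quantile} is nothing but the Mills-ratio sandwich
\[
\frac{t\phi(t)}{1+t^2}\leq\overline{\Phi}(t)\leq\frac{\phi(t)}{t}
\]
rewritten by taking logarithms and using $\log\phi(t)=-t^2/2-\log\sqrt{2\pi}$: the right inequality becomes $t^2/2-\log(1/x)+\log(\sqrt{2\pi}\,t)\leq 0$ and the left one yields the $\log\bigl(t^2/(1+t^2)\bigr)$ correction on the other side.

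\textbf{Step 3 (the sharper lower bound \eqref{eq:encadrement_quantile_1plus}).} This is the only place where the explicit threshold $x\leq 0.004$ enters and will be the main technical obstacle, since the crude bound from \eqref{eq:encadrement_quantile_1} is weaker. Setting $s=\overline{\Phi}^{-1}(x)$, I would read the lower Mills bound of Step~1 as
\[
\log(1/x)\leq \tfrac{s^2}{2}+\log\!\Big(\tfrac{\sqrt{2\pi}(1+s^2)}{s}\Big),
\]
so that $s^2\geq\log(1/x)$ reduces to the one-variable inequality $s^2/2\geq \log\bigl(\sqrt{2\pi}(1+s^2)/s\bigr)$. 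The function $s\mapsto s^2/2-\log\bigl(\sqrt{2\pi}(1+s^2)/s\bigr)$ is increasing for $s$ beyond a small constant, so it suffices to check nonnegativity at $s_0=\overline{\Phi}^{-1}(0.004)\approx 2.65$, where $s_0^2/2\approx 3.51$ while $\log(\sqrt{2\pi}(1+s_0^2)/s_0)\approx 2.03$. A clean way to present it is to verify the inequality at $s=\sqrt{\log(1/0.004)}$ using \eqref{eq:maj-fonctionrepgauss} directly and conclude by monotonicity in $x$.

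The overall difficulty is low since each step is a standard calculus exercise; the one point requiring care is the numerical threshold $0.004$ in Step~3, where I would spell out the monotonicity argument to avoid appealing to tabulated values of $\overline{\Phi}^{-1}$.
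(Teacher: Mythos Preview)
The paper does not supply its own proof of this lemma: it is stated as an auxiliary result with attribution to \cite{CDRV2018} and no proof environment follows. There is therefore nothing to compare your argument against.

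Your derivation is correct. A small sharpening for Step~3: the function $h(s)=s^2/2-\log\bigl(\sqrt{2\pi}(1+s^2)/s\bigr)$ has derivative
\[
h'(s)=s-\frac{2s}{1+s^2}+\frac{1}{s}=\frac{s^4+1}{s(1+s^2)}>0\quad\text{for all }s>0,
\]
so it is increasing on the whole half-line, not merely ``beyond a small constant''. This lets you reduce the verification to the single point $s_0=\overline{\Phi}^{-1}(0.004)$ with no caveat. Your numerical check there ($s_0\approx 2.65$, $s_0^2/2\approx 3.51$ versus $\log(\sqrt{2\pi}(1+s_0^2)/s_0)\approx 2.03$) is accurate; if you prefer to avoid tabulated quantile values, you can instead bound $s_0$ from below via the upper Mills bound $0.004\leq \phi(s_0)/s_0$, which already forces $s_0>2.5$, and $h(2.5)>0$ by direct evaluation.
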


\begin{lem}\label{lem:difference_quantile}
 For $0.2\leq x \leq y \leq 0.8$, we have 
\beq\label{eq:upper_difference_quantile}
 \overline{\Phi}^{-1}(x)- \overline{\Phi}^{-1}(y)\leq
 3.6 (y-x).
\eeq
\end{lem}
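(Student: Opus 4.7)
\textbf{Proof plan for Lemma~\ref{lem:difference_quantile}.} The plan is to apply the mean value theorem to the strictly decreasing, differentiable function $\overline{\Phi}^{-1}$ on the interval $[x,y]\subset[0.2,0.8]$. Since $(\overline{\Phi}^{-1})'(t)=-1/\phi(\overline{\Phi}^{-1}(t))$, there exists $\xi\in[x,y]$ with
\[
\overline{\Phi}^{-1}(x)-\overline{\Phi}^{-1}(y)=\frac{y-x}{\phi(\overline{\Phi}^{-1}(\xi))}.
\]
So it suffices to produce a uniform lower bound on $\phi\circ\overline{\Phi}^{-1}$ on $[0.2,0.8]$, and check that its reciprocal is at most $3.6$.

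Next, I would observe that $\overline{\Phi}^{-1}$ is odd around $1/2$ (i.e.\ $\overline{\Phi}^{-1}(1-t)=-\overline{\Phi}^{-1}(t)$) and $\phi$ is even, so $\phi\circ\overline{\Phi}^{-1}$ is symmetric about $t=1/2$ and unimodal there, with its minimum on $[0.2,0.8]$ attained at the endpoints $t=0.2$ and $t=0.8$. Both endpoints give $|\overline{\Phi}^{-1}(\cdot)|=z_{0.8}$, the usual $80\%$ standard normal quantile. Thus
\[
\min_{\xi\in[0.2,0.8]}\phi(\overline{\Phi}^{-1}(\xi))=\phi(z_{0.8}).
\]

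Finally I would plug in numerical values. Using $z_{0.8}\le 0.8416$ gives
\[
\phi(z_{0.8})=\frac{1}{\sqrt{2\pi}}e^{-z_{0.8}^2/2}\ge \frac{1}{\sqrt{2\pi}}e^{-0.8416^2/2}\ge 0.2780,
\]
so $1/\phi(z_{0.8})\le 3.598<3.6$, which yields the claimed Lipschitz constant. The only genuinely delicate point is verifying that the minimum of $\phi\circ\overline{\Phi}^{-1}$ on $[0.2,0.8]$ is indeed at the endpoints (which is immediate from the symmetry/unimodality argument) and checking the final numerical bound is sharp enough to beat $3.6$; both are routine, so there is no real obstacle here.
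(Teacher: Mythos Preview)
Your proof is correct and follows essentially the same approach as the paper: apply the mean value theorem to $\overline{\Phi}^{-1}$, use the symmetry and unimodality of $\phi\circ\overline{\Phi}^{-1}$ about $1/2$ to locate the minimum at the endpoints, and check numerically that $\phi(\overline{\Phi}^{-1}(0.2))>1/3.6$. (A tiny quibble: $z_{0.8}\approx 0.84162$, so $z_{0.8}\leq 0.8416$ is not quite true, but the slack is ample and the conclusion $1/\phi(z_{0.8})<3.6$ holds regardless.)
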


\begin{proof}[Proof of Lemma \ref{lem:difference_quantile}]
By the mean-value theorem, we have 
\beq\label{eq:lower_mean_value_theorem}
\frac{y-x}{\sup_{z\in [x,y]}\phi(\overline{\Phi}^{-1}(z))} \leq \overline{\Phi}^{-1}(x) - \overline{\Phi}^{-1}(y) \leq \frac{y-x}{\inf_{z\in [x,y]}\phi(\overline{\Phi}^{-1}(z))}\ . 
\eeq
The function $t\mapsto \phi(\overline{\Phi}^{-1}(t+1/2))$ defined on $[-1/2,1/2]$  is symmetric and increasing on $[-1/2,0]$. Thus if $0.2\leq x\leq y\leq 0.8$, the above  infimum equals 
$\phi(\overline{\Phi}^{-1}(0.2))$ which is larger than $1/3.6$. 
\end{proof}

\begin{lem}[\cite{CDRV2018}] \label{lem:quantile_empirique}
 Let $\xi=(\xi_{1},\ldots, \xi_{n})$ be a standard Gaussian vector of size $n$. 
 For any integer $q\in (0.3n, 0.7n)$ and  for all $0< x\leq \frac{8}{225}q \wedge\big( \frac{n^2}{18q}[\overline{\Phi}^{-1}(q/n)-\overline{\Phi}^{-1}(0.7)]^{2}\big)$, we have
\begin{align}
& \P\Big[\xi_{(q)}+ \overline{\Phi}^{-1}(q/n) \geq  3\frac{\sqrt{2qx}}{n}\Big]\leq  e^{-x} \ ,\label{equ1Nico}
 \end{align}
 where we denote $\xi_{(1)}\geq \dots \geq \xi_{(n)}$ the values of $\xi_{1},\ldots, \xi_{n}$ ordered decreasingly.
 \end{lem}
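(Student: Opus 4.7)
The natural plan is a Binomial reduction combined with Bernstein's inequality. For any threshold $\tau$, the event $\{\xi_{(q)} \geq \tau\}$ is identical to $\{N(\tau) \geq q\}$, where $N(\tau) = \sum_{i=1}^n \ind\{\xi_i \geq \tau\}$ has the Binomial$(n, \overline{\Phi}(\tau))$ distribution. Taking $\tau$ equal to the right-hand side inside the probability in~\eqref{equ1Nico}, the task reduces to bounding the upper Binomial tail $\P[N(\tau) \geq q] \leq e^{-x}$.

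To do so, I would apply Bernstein's inequality,
\[
\P\big[\mathrm{Bin}(n,p) \geq np + \lambda\big] \leq \exp\!\left(-\frac{\lambda^2}{2np(1-p) + 2\lambda/3}\right),
\]
with $p = \overline{\Phi}(\tau)$ and $\lambda = q - np$, and then verify that the exponent is at least $x$. The lower bound on $\lambda$ comes from the mean value theorem applied to $\overline{\Phi}$ between $\overline{\Phi}^{-1}(q/n)$ and $\tau$: there exists $\zeta$ in that interval with $\lambda = n\,\phi(\zeta)\,|\tau - \overline{\Phi}^{-1}(q/n)|$, which for the specific choice of $\tau$ becomes $\lambda = 3\,\phi(\zeta)\,\sqrt{2qx}$.

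The two assumptions on $x$ are precisely what make this Bernstein estimate close. The second, $x \leq (n^2/18q)[\overline{\Phi}^{-1}(q/n)-\overline{\Phi}^{-1}(0.7)]^2$, rearranges exactly to $3\sqrt{2qx}/n \leq \overline{\Phi}^{-1}(q/n) - \overline{\Phi}^{-1}(0.7)$, keeping $\zeta$ inside the symmetric interval $[\overline{\Phi}^{-1}(0.7), \overline{\Phi}^{-1}(0.3)]$; there, by symmetry and monotonicity of $\phi$, one has $\phi(\zeta) \geq \phi(\overline{\Phi}^{-1}(0.3)) > 1/3$. The first, $x \leq 8q/225$, forces $\lambda \leq 3\sqrt{2qx}$ to be small enough that the linear term $2\lambda/3$ is dominated by the variance term $2np(1-p) \leq 2q$ in the Bernstein denominator. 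Combining these ingredients yields a Bernstein exponent bounded below by $9\,\phi(\overline{\Phi}^{-1}(0.3))^2\,x \geq x$.

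The main obstacle is the careful bookkeeping of numerical constants. The prefactor $3$ in $3\sqrt{2qx}/n$ is nearly tight with respect to the reciprocal $1/\phi(\overline{\Phi}^{-1}(0.3)) \approx 2.88$, and the two numerical constants $8/225$ and $1/18$ in the admissible range of $x$ must be chosen coherently so that both the mean-value-theorem interval control and the variance-dominated Bernstein regime hold simultaneously. No individual step is conceptually difficult, but the slack in each inequality — for instance, bounding $np(1-p) \leq q$, or bounding $\phi(\zeta)$ from below on an interval slightly larger than strictly necessary — must be tracked explicitly so that the final Bernstein exponent is at least $x$ with the stated numerical constants.
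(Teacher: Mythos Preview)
The paper does not actually prove this lemma; it is stated with a citation to \cite{CDRV2018} and no proof is given. There is therefore no ``paper's own proof'' to compare against directly.

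That said, your approach is correct and is exactly the methodology the paper uses to prove the closely analogous Lemma~\ref{lem:quantile_empirique2} (for order statistics of $|\xi_i|$): reduce the order-statistic event to a Binomial tail, apply Bernstein's inequality, and control the gap $q-np$ via the mean value theorem for $\overline{\Phi}$, using the range restriction on $q/n$ to lower-bound the density $\phi$ at the intermediate point. Your reading of the two constraints on $x$ is also right: the second keeps the MVT point inside the interval where $\phi(\zeta)\geq\phi(\overline{\Phi}^{-1}(0.3))>1/3$, and the first keeps the linear Bernstein term subordinate to the variance term. The numerical check $9\,\phi(\overline{\Phi}^{-1}(0.3))^2\approx 1.09\geq 1$ closes the argument.
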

 
% \nv{j'ai du change les constantes dans le lemme qui suit. A reverifier. }
 
 \begin{lem} \label{lem:quantile_empirique2}
 Let $\xi=(\xi_{1},\ldots, \xi_{n})$ be a standard Gaussian vector of size $n$ and denote $|\xi|_{(1)}\leq \dots \leq |\xi|_{(n)}$ the values of $|\xi_{1}|,\ldots, |\xi_{n}|$ ordered increasingly.
 For any integer $q\in [0.2n,0.6 n]$ and for all $0< x\leq  0.04q \wedge\big( \frac{n^2}{14q}[ \overline{\Phi}^{-1}(0.2)-\overline{\Phi}^{-1}((1-q/n)/2) ]^{2}\big)$, we have
\begin{align}
& \P\Big[|\xi|_{(q)}- \overline{\Phi}^{-1}((1- q/n)/2) \geq  \frac{4\sqrt{qx}}{n}\Big]\leq  e^{-x}\  .\label{newequ1Nico}
 \end{align}
 For any integer $q\in (0.4 n, n)$ and  for all $0< x\leq  \frac{n^2}{8q}[ \overline{\Phi}^{-1}((1-q/n)/2) - \overline{\Phi}^{-1}(0.3) ]^{2} $, we have
\begin{align}
& \P\Big[ |\xi|_{(q)}-\overline{\Phi}^{-1}((1- q/n)/2) \leq - 2\frac{\sqrt{2 qx}}{n}\Big]\leq  e^{-x}\  .\label{newequ2Nico}
 \end{align}

 \end{lem}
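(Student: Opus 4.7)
The plan is to reduce to Binomial concentration, following the strategy already used for Lemma~\ref{lem:quantile_empirique}. Writing $F(x)=1-2\overline{\Phi}(x)$ for the CDF of $|\xi_1|$ on $[0,\infty)$ and $N(y)=\sum_{i=1}^n\mathbf{1}\{|\xi_i|\leq y\}$, we have $\{|\xi|_{(q)}>y\}=\{N(y)<q\}$ and $N(y)\sim \mathrm{Bin}(n,F(y))$. Thus both \eqref{newequ1Nico} and \eqref{newequ2Nico} amount to Bernstein-type concentration bounds for $N(y)$ evaluated at a carefully chosen $y$.

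For \eqref{newequ1Nico}, I would take $y:=t_q^{*}+\delta$ with $t_q^{*}:=\overline{\Phi}^{-1}((1-q/n)/2)$ and $\delta:=4\sqrt{qx}/n$, and bound the excess probability $F(y)-q/n=2\int_{t_q^{*}}^{y}\phi(u)\,du$ from above by $2\delta\,\phi(t_q^{*})\leq \delta/\sqrt{\pi/2}$. Since $q/n\in[0.2,0.6]$, $t_q^{*}$ lies in a bounded interval, so $nF(y)-q\leq c\sqrt{qx}$ for a small explicit constant. Applying Bernstein's inequality to $\mathrm{Bin}(n,F(y))$ around its mean yields
\[
\mathbf{P}[N(y)<q]\leq \exp\!\left(-\frac{(nF(y)-q)^{2}}{2nF(y)(1-F(y))+(nF(y)-q)/3}\right)\leq e^{-x},
\]
after calibrating constants; the assumption $x\leq 0.04q$ is used to keep the correction term $(nF(y)-q)/3$ comparable to the variance, and the second restriction on $x$ guarantees $y$ stays in the range where the linearization of $\overline{\Phi}$ around $t_q^{*}$ is valid (so the linear upper bound by $2\delta\,\phi(t_q^{*})$ is not exceeded).

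For the lower bound \eqref{newequ2Nico}, I would mirror the argument with $y:=t_q^{*}-\delta'$ and $\delta':=2\sqrt{2qx}/n$, now using $q/n\in(0.4,1)$, so that $t_q^{*}\leq \overline{\Phi}^{-1}(0.3)$. The restriction $x\leq \tfrac{n^2}{8q}[\overline{\Phi}^{-1}((1-q/n)/2)-\overline{\Phi}^{-1}(0.3)]^{2}$ ensures $y\geq\overline{\Phi}^{-1}(0.3)>0$, so the density $\phi$ stays bounded on $[y,t_q^{*}]$. The same mean-value computation gives $q/n-F(y)\leq 2\delta'\phi(y)$ with an absolute upper bound on $\phi(y)$, so Bernstein's inequality applied to $\mathbf{P}[N(y)\geq q]$ again yields $e^{-x}$ for an appropriate constant in $\delta'$.

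The only real obstacle is bookkeeping: verifying that under the two numerical restrictions on $x$, the deviation $\delta$ (resp.\ $\delta'$) stays small enough that (i) the chosen $y$ remains in a region where $\phi$ is essentially a constant on $[t_q^{*},y]$ (resp.\ $[y,t_q^{*}]$), so that the mean-value bound gives a linear increment in $\delta$, and (ii) the variance term $nF(y)(1-F(y))$ dominates the Bernstein correction so that the exponent reduces to $-x$ exactly, not $-x/C$. These are the origin of the specific numerical constants $4$ and $2\sqrt{2}$ (and $0.04$, $\tfrac{1}{14}$, $\tfrac{1}{8}$), and matching them requires keeping track of the constants from Lemma~\ref{lem:quantile}.
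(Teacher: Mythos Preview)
Your overall strategy---reduce to a binomial tail via $\{|\xi|_{(q)}>y\}=\{N(y)<q\}$ and apply Bernstein---is exactly what the paper does. However, there is a direction-of-inequality error that invalidates the argument as written.

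For \eqref{newequ1Nico}, you set $y=t_q^*+\delta$ and want $\P[N(y)<q]\leq e^{-x}$. Since $N(y)\sim\mathrm{Bin}(n,F(y))$ with mean $nF(y)>q$, Bernstein gives
\[
\P[N(y)<q]\leq \exp\!\left(-\frac{(nF(y)-q)^2}{2nF(y)(1-F(y))+(nF(y)-q)/3}\right),
\]
and for this exponent to be at least $x$ you need a \emph{lower} bound on $nF(y)-q$, not the upper bound $nF(y)-q\leq c\sqrt{qx}$ that you derive. Your mean-value step $2\int_{t_q^*}^{y}\phi(u)\,du\leq 2\delta\,\phi(t_q^*)$ is correct as an inequality, but it goes the wrong way: since $\phi$ is decreasing on $[t_q^*,y]$, the useful bound is
\[
F(y)-\frac{q}{n}=2\int_{t_q^*}^{y}\phi(u)\,du\ \geq\ 2\delta\,\phi(y).
\]
This is precisely where the second restriction on $x$ enters: it forces $y\leq \overline{\Phi}^{-1}(0.2)$, so that $\phi(y)\geq \phi(\overline{\Phi}^{-1}(0.2))$ is bounded below by a universal constant, yielding $nF(y)-q\geq \delta n/2=2\sqrt{qx}$. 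The first restriction $x\leq 0.04q$ then controls the Bernstein denominator. The same reversal occurs in your argument for \eqref{newequ2Nico}: you write $q/n-F(y)\leq 2\delta'\phi(y)$, whereas what is needed is the lower bound $q/n-F(y)\geq 2\delta'\phi(t_q^*)$ (here the density is evaluated at the larger endpoint $t_q^*$, and the restriction on $x$ ensures $t_q^*-\delta'\geq \overline{\Phi}^{-1}(0.3)>0$ so that $\phi$ is monotone on the interval).

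Once you reverse these two bounds, your sketch coincides with the paper's proof.
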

 
\begin{proof}[Proof of Lemma \ref{lem:quantile_empirique2}]
Consider any  $t>0$ and denote $p= 1-2\overline{\Phi}[\overline{\Phi}^{-1}((1-q/n)/2)+ t]$ which belongs to $[q/n,1)$. Denote $\cB(n,p)$ the binomial distribution with parameters $n$ and $p$. 
We have 
 \beq\label{eq:first_upper_proba}
 \P\Big[|\xi|_{(q)}\geq \overline{\Phi}^{-1}((1-q/n)/2)+t\Big]= \P\left[\mathcal{B}(n,p)\leq q-1\right]\leq\P\left[\mathcal{B}(n,p)\leq q\right]\ . 
 \eeq
 By the mean value theorem, we have 
 $$ p-q/n \geq t \inf_{x\in [0,t]}\phi[\overline{\Phi}^{-1}((1-q/n)/2)+x]=t \phi[\overline{\Phi}^{-1}((1-q/n)/2)+t],$$
 because $\overline{\Phi}^{-1}((1-q/n)/2)\geq 0$.
 Assume that $\overline{\Phi}^{-1}((1-q/n)/2)+t \leq \overline{\Phi}^{-1}(0.2)$. Then, it follows from the previous inequality that 
 $p-q/n \geq 2 t \phi[\overline{\Phi}^{-1}(0.2)]\geq t/2$. Together with Bernstein's inequality, we obtain  
\beqn 
 \P\Big[|\xi|_{(q)}\geq \overline{\Phi}^{-1}((1-q/n)/2)+t\Big]&\leq & \P\left[\mathcal{B}(n,q/n+ t/2)\leq q\right]\\
 &\leq & \exp\left[- \frac{n^2t^2}{8[(q+nt/2)(1-q/n)+ nt/3]}\right]
 \eeqn 
 Since $q\geq 0.2n$ and further assuming that $nt\leq 0.8q$, we conclude that 
\[
 \P\Big[|\xi|_{(q)}\geq \overline{\Phi}^{-1}((1-q/n)/2)+t\Big]\leq e^{-n^2t^2/14}\ , 
\]
for any $0< t\leq 0.8q/n \wedge [ \overline{\Phi}^{-1}(0.2)-\overline{\Phi}^{-1}((1-q/n)/2)]$. We have proved  \eqref{newequ1Nico}.

Next, we consider the left deviations. 
Assume that  $q/n>0.4$ (so that $(1-q/n)/2)<0.3$) and take $0\leq t \leq \overline{\Phi}^{-1}((1-q/n)/2) - \overline{\Phi}^{-1}(0.3)$.
Write $p= 1-2\overline{\Phi}[\overline{\Phi}^{-1}((1-q/n)/2) - t]$. We have $p\in [0.4,q/n]$ and 
$$\P[|\xi|_{(q)}\leq \overline{\Phi}^{-1}((1-q/n)/2)-t]= \P[\mathcal{B}(n,p)\geq   q].$$ 
By the mean value theorem,
$$
  q/n-p \geq 2 t \inf_{x\in [0,t]}\phi\Big[\overline{\Phi}^{-1}((1-q/n)/2) - x\Big]\geq 2 t \phi\Big[\overline{\Phi}^{-1}((1-q/n)/2) \Big] \geq 2 t \phi(\overline{\Phi}^{-1}(0.2)) \geq t/2,
$$ 
Then, Bernstein's inequality yields
\beq\label{eq:second_upper_proba} 
\P[|\xi|_{(q)}\leq \overline{\Phi}^{-1}((1-q/n)/2)-t]\leq
  \exp\left[- \frac{(q-np)^2}{2np(1-p)+ 2(q-np)/3}\right]\leq  \exp\left[- \frac{(q-np)^2}{2q}\right]\ ,
 \eeq
because $2np(1-p)+ 2(q-np)/3\leq (2-2/3)np+ 2q/3\leq 2q$ since $p\leq q/n$.
which implies 
\[
 \P\Big[\xi_{(q)}\leq -\overline{\Phi}^{-1}((1-q/n)/2)-t\Big]\leq \exp\left[- \frac{n^2 t^2 }{8q}\right]\ . 
\]
We have shown \eqref{newequ2Nico}. 

\end{proof}

\end{document}